\newcommand{\nc}{\newcommand}
\renewcommand{\frak}{\mathfrak}
\providecommand{\cal}{\mathcal}
\renewcommand{\bold}{\mathbf}
\numberwithin{equation}{section}
\newcommand{\pfname}{Proof.}
\newenvironment{pfof}[1]{\vskip-\lastskip\vskip\medskipamount{\it
    Proof of #1.}}%
                      {$\square$\vskip\medskipamount\par}
\newtheorem{thm}{Theorem}[section]
\newtheorem{theorem}[thm]{Theorem}
\newtheorem{corollary}[thm]{Corollary}
\newtheorem{prop}[thm]{Proposition}
\newtheorem{proposition}[thm]{Proposition}
\newtheorem{lemma}[thm]{Lemma} 
\theoremstyle{definition}
\newtheorem{definition}[thm]{Definition}
\theoremstyle{definition}
\newtheorem{remark}[thm]{Remark}
\newtheorem{example}[thm]{Example} 
\nc{\Theorem}[1]{Theorem~{#1}}
\nc{\Th}[1]{({\sl Th.}~#1)}
\nc{\Thd}[2]{({\sl Th.}~{#1} {#2})}
\nc{\Theorems}[2]{Theorems~{#1} and ~{#2}}
\nc{\Thms}[2]{({\it Thms. ~{#1} and ~{#2}})}
\nc{\Lemmas}[2]{Lemma~{#1} and ~{#2}}
\nc{\manga}[6]{({\it Thms. ~ #1, ~ #2, ~ #3,\\ ~ #4, ~ #5, ~ #6})}
\nc{\Prop}[1]{({\sl Prop.}~{#1})}
\nc{\Proposition}[1]{Proposition~{#1}}
\nc{\Propositions}[2]{Propositions~{#1} and ~{#2}}
\nc{\Props}[2]{({\sl Props.}~{#1} and ~{#2})}
\nc{\Cor}[1]{({\sl Cor.}~{#1})}
\nc{\Corollary}[1]{Corollary~{#1}}
\nc{\Corollaries}[2]{Corollaries~{#1} and ~{#2}}
\nc{\Definition}[1]{Definition~{#1}}
\nc{\Defn}[1]{({\sl Def.}~{#1})}
\nc{\Lemma}[1]{Lemma~{#1}} 
\nc{\Lem}[1]{({\sl Lem.} ~{#1})} 
\nc{\Eq}[1]{equation~({#1})}
\nc{\Equation}[1]{Equation~({#1})}
\nc{\Section}[1]{Section~{#1}}
\nc{\Sections}[1]{Sections~{#1}}
\nc{\Sec}[1]{({\sl Sec.} ~{#1})} 
\nc{\Chapter}[1]{Chapter~{#1}}
\nc{\Chapt}[1]{({\sl Ch.}~{#1})}
\nc{\Ex}[1]{{\sl Ex.}~{#1}}
\nc{\Exa}[1]{{\sl Example}~{#1}}
\nc{\Example}[1]{{\sl Example}~{#1}}
\nc{\Examples}[1]{{\sl Examples}~{#1}}
\nc{\Exercise}[1]{{\sl Exercise}~{#1}}
\nc{\Rem}[1]{({\sl Rem.}~{#1})}
\nc{\Remark}[1]{{\sl Remark}~{#1}}
\nc{\Remarks}[1]{{\sl Remarks}~{#1}}
\nc{\Note}[1]{{\sl Note}~{#1}}
\nc{\Conjecture}[1]{Conjecture~{#1}}
\nc{\Claim}[1]{Claim~{#1}}
\nc \Proof{{  \it Proof. }}
\nc \Dbb{\mathbb D}
\nc{\xmu}{\mu}
\nc{\w}{\omega}
\nc{\xv}{\mbox{\boldmath$x$}}
\nc{\uv}{\mbox{\boldmath$u$}}
\nc{\xiv}{\mbox{\boldmath$\xi$}}
\nc{\bbeta}{\mbox{\boldmath$\beta$}}
\nc{\balpha}{\mbox{\boldmath$\alpha$}}
\nc{\bgamma}{\mbox{\boldmath$\gamma$}}
\nc{\bdelta}{\mbox{\boldmath$\delta$}}
\nc{\bepsilon}{\mbox{\boldmath$\epsilon$}}
\nc \Ab{{\ensuremath{\bold A}}}
\nc \ab{{\ensuremath{\bold a}}}
\nc \bb{{\ensuremath{\bold b}}}
\nc \cb{{\ensuremath{\bold c}}}
\nc \db{{\ensuremath{\bold d}}}
\nc \Bb{{\ensuremath{\bold B}}}
\nc \Gb{{\ensuremath{\bold G}}}
\nc \Qb{{\ensuremath{\bold Q}}}
\nc \Rb{{\ensuremath{\bold R}}} \nc \Cb{{\ensuremath{\bold C}}} 
\nc \Eb{{\ensuremath{\bold E}}}
\nc \eb{{\ensuremath{\bold e}}}
\nc \Db{{\ensuremath{\bold D}}}
\nc \Fb{{\ensuremath{\bold F}}}
\nc \ib{{\ensuremath{\bold i}}}
\nc \jb{{\ensuremath{\bold j}}}
\nc \kb{{\ensuremath{\bold k}}}
\nc \Kb{{\ensuremath{\bold K}}}
\nc \nb{{\ensuremath{\bold n}}}
\nc \rb{{\ensuremath{\bold r}}}
\nc \Ob{{\ensuremath{\bold O}}}
\nc \Pb{{\ensuremath{\bold P}}}
\nc \pb{{\ensuremath{\bold p}}}
\nc \qb{{\ensuremath{\bold q}}}
\nc \SPb{{\ensuremath{\bold {SP}}}}
\nc \Zb{{\ensuremath{\bold Z}}} 
\nc \zb{{\ensuremath{\bold z}}} 
\nc \gb{{\ensuremath{\bold g}}} 
\nc \fb{{\ensuremath{\bold f}}} 
\nc \ub{{\ensuremath{\bold u}}} 
\nc \vb{{\ensuremath{\bold v}}} 
\nc \yb{{\ensuremath{\bold y}}} 
\nc \xb{{\ensuremath{\bold x}}} 
\nc \tb{{\ensuremath{\bold t}}} 
\nc \Xb{{\ensuremath{\bold X}}} 
\nc \Yb{{\ensuremath{\bold Y}}} 
\nc \xib{{\ensuremath{\bold \xi}}} 
\nc \Nb{{\ensuremath{\bold N}}} 
\nc \Hb{{\ensuremath{\bold H}}} 
\nc \wb{{\ensuremath{\bold w}}} 
\nc \Wb{{\ensuremath{\bold W}}} 
\nc \syz{{\mathbf {syz}}}
\nc \bnoll{{\ensuremath{\bold 0}}} 
\nc \mf{\frak m} 
\nc \mh{\hat{\mf}} 
\nc \nf{\frak n}
\nc \Of{\frak O}
\nc \of{\frak o}
\nc \rf{\frak r}
\nc \tf{\frak t}
\nc \mufr{{\mathbf \mu}}
\nc \hf{\frak h} 
\nc \qf{\frak q} 
\nc \bfr{\frak b} 
\nc \kfr{\frak k} 
\nc \pfr{\frak p} 
\nc \af{\frak a }
\nc \cf{\frak c }
\nc \sfr{\frak s} 
\nc \ufr{\frak u} 
\nc \g{\frak g} 
\nc \gA{\g_{\Ao}} 
\nc \lfr{\frak l}
\nc \afr{\frak a}
\nc \gfh{\hat {\frak g}}
\nc \gl{\frak { gl }}
\nc \Sl{\frak {sl}}
\nc \SU{\frak {SU}}
\nc{\Homf}{\frak{Hom}}
\newcommand{\on}{\operatorname}
\nc\hankel{\on {Hankel}}
\nc\row{\on {row\ }}
\nc\nullity{\on {nullity }}
\nc\col{\on {col\ }}
\nc\rowm{\on {Row \ }}
\nc\loc{\on {lc \ }}
\nc\nullo{\on {null\ }}
\nc\Nul{\on {Nul\ }}
\nc \Ann {\on {Ann }}
\nc \Ass {\on {Ass}}
\nc \Coker {\on {Coker}}
\nc \Co{\on C}
\nc \Homo{\on {Hom}}
\nc \Ker {\on {Ker}}
\nc \omod{\on {mod}}
\nc \No {\on N}
\nc \NN {\on {NN}}
\nc \NGo {\on {NG}}
\nc \Oo {\on O}
\nc \ch {\on {ch}}
\nc \rko {\on {rk}}
\nc \Sing {\on {Sing\ }}
\nc \Reg {\on {Reg}}
\nc \CoI {\on {CI}}
\nc \CoM {\on {CM}}
\nc \Gor {\on {Gor}}
\nc \Type {\on {Type}}
\nc \can {\on {can}}
\nc \Top {\on {T}}
\nc \Tr {\on {Tr}}
\nc \rel {\on {rel}}
\nc \tr {\on {tr}}
\nc \sgn {\on {sgn }}
\nc \trdeg {\on {tr.deg}}
\nc \codim {\on {codim }}
\nc \coht {\on {coht}}
\nc \divo {\on {div \ }}
\nc \rot {\on {rot }}
\nc \coh {\on {coh}}
\nc \Clo {\on {Cl}}
\nc \embdim{\on {embdim}}
\nc \ord{\on {ord}}
\nc \ed{\on {ed}}
\nc \embcodim{\on {embcodim  }}
\nc \qcoh {\on {qcoh}}
\nc \grad {\on {grad}}
\nc \grade {\on {grade}}
\nc \hto {\on {ht}}
\nc \depth {\on {depth}}
\nc \prof {\on {prof}}
\nc \reso{\on {res}}
\nc \ind{\on {ind}}
\nc \prodo{\on {prod}}
\nc \coind{\on {coind}}
\nc \Con{\on {Con}}
\nc \Crit{\on {Crit}}
\nc \Der{\on {Der}}
\nc \Des{\on {Des}}
\nc \Char{\on {Char}}
\nc \Ch{\on {Ch}}
\nc \Ext{\on {Ext}}
\nc \Eo{\on {E}}
\nc \End{\on {End}}
\nc \ad{\on {ad}}
\nc \Ad{\on {Ad}}
\nc \gr{\on {gr}}
\nc \Fo{\on {F}}
\nc \Gr{\on {Gr}}
\nc \Go{\on {G}}
\nc \GFo{\on {GF}}
\nc \Glo{\on {Gl}}
\nc \PGlo{\on {PGl}}
\nc \Ho{\on {H}}
\nc \CMo{\on {\CM}}
\nc \SCM{\on {SCM}}
\nc \rig{\on {right}}
\nc \lef{\on {left}}
\nc \hol{\on {hol}}
\nc{\sgd}{\on{sgd}}
\nc \supp{\on {supp}}
\nc \ssupp{\on {s-supp}}
\nc \singsupp{\on {singsupp}}
\nc \msupp{\on {msupp}}
\nc \spec{\on {spec}}
\nc \spano{\on {span }}
\nc \Span{\on {Span }}
\nc \Max{\on {Max}}
\nc \Mat{\on {Mat}}
\nc \Min{\on {Min}}
\nc \nil{\on {nil}}
\nc \Mod{\on {Mod}}
\nc \Rad {\on {Rad}}
\nc \rad {\on {rad}}
\nc \rank {\on {rank}}
\nc \range {\on {range}}
\nc \Slo{\on {SL}}
\nc \soc {\on {soc}}
\nc \dt {\on {dt}_Z}
\nc \Irr {\on {Irr}}
\nc \Reo {\on {Re}}
\nc \Imo {\on {Im}}
\nc \SSo{\on {SS}}
\nc \lub{\on {lub}}
\nc \gldim{\on {gl.d.}}
\nc \length{\on {length}}
\nc \pdo{\on {p.d.}} 
\nc \fdo{\on {f.d.}} 
\nc \ido{\on {i.d.}} 
\nc \dSSo{\dot {\SSo}}
\nc \So{\on S}
\nc \SOo{\on{ SO}}
\nc \Io{\on I}
\nc \Jo{\on J}
\nc \jo{\on j}
\nc \Ko{\on K}
\nc \PBW{\Ac_{PBW}}
\nc \Ro{\on R}
\nc \To{\on T}
\nc \Ao{\on A}
\nc \Do{{\on D}}
\nc \Bo{\on B}
\nc \Po{\on P}
\nc \Qo{\on Q}
\nc \Zo{\on Z}
\nc \Uo{\on U}
\nc \wt{\on {wt}}
\nc \Uoh{\hat {\Uo}}
\nc \Lo{\on L}
\nc \Loc{\on {Loc}}
\nc{\dop}{\on d}
\nc{\eo}{\on e}
\nc{\ado}{\on{ad}}
\nc{\Tot}{\on{Tot}}
\nc{\Aut}{\on{Aut}}
\nc{\sinc}{\on {sinc}}
\nc{\overrightleftarrows}[2]{\overset{#1}{\underset{#2}{\rightleftarrows}}}
\nc{\CCF}{\cal{CF}}
\nc{\CDF}{\cal{DF}}
\nc{\CHC}{\check{\cal C}}
\nc{\Cone}{\on{Cone}}
\nc{\dec}{\on{dec}}
\nc{\Diff}{\on{Diff}}
\nc{\dirlim}{\underset{\to}{\on{lim}}}
\nc{\dpar}{\partial}
\nc{\GL}{\on{GL}}
\nc{\glo}{\on{gl}}
\nc{\CGr}{\cal{G}r}
\nc{\pr}{\on{pr}}
\nc{\semid}{|\!\!\!\times}
\nc{\Hom}{\on{Hom}}
\nc \RHom{\on {RHom}}
\nc \Proj{\mathrm {Proj\ }}
\nc \proj{\mathrm {proj}}
\nc{\Id}{\on{Id}}
\nc{\id}{\on{id}}
\nc{\Ima}{\on{Im}}
\nc{\invtimes}{\underset{\gets}{\otimes}}
\nc{\invlim}{\underset{\gets}{\on{lim}}}
\nc{\Lie}{\on{Lie}}
\nc{\re}{\on{Re }}
\nc{\Pic}{\on{Pic }}
\nc{\LPic}{\on{LPic }}
\nc{\Sch}{\on{Sch}}
\nc{\Sh}{\on{Sh}}
\nc{\Set}{\on{Set}}
\nc{\spo}{\on{sp\  }}
\nc{\Spec}{\on{Spec}}
\nc{\mSpec}{\on{mSpec}}
\nc{\Specb}{\bold {Spec}\ }
\nc{\Projb}{\bold {Proj}}
\nc{\Specan}{\on{Specan}}
\nc{\Spo}{\on{Sp}}
\nc{\Mpo}{\on{Mp}}
\nc{\Spf}{\on{Spf}}
\nc{\sym}{\on{sym}}
\nc{\symm}{\on{symm}}
\nc{\rop}{\on{r}}
\nc{\Td}{\on{Td}}
\nc{\Tor}{\on{Tor}}
\nc{\Alg}{\on {Alg}}
\nc{\Artin}{\cal{A}rtin}
\nc{\Dgcoalg}{\cal{D}gcoalg} \nc{\Dglie}{\cal{D}glie}
\nc{\Ens}{\cal{E}ns} \nc{\Fsch}{\cal{F}sch}
\nc{\Groupoids}{\cal{G}roupoids}
\nc{\Holie}{\cal{H}olie}
\nc{\Mor}{\cal{M}or}
\nc \Dd{\mathbb D}
\nc{\CF}{\ensuremath{\cal{F}}}
\nc \Kc{{\ensuremath{\cal K}}}
\nc{\Kzind}[4]{{\ensuremath{{\mathcal K^{#4e} (#3)}_{#1 \rightarrow
        #2}}}}
\nc{\Kz}[3]{{\ensuremath{{\mathcal K^\bullet (#3)}_{#1 \rightarrow
        #2}}}}
\nc{\Kzd}[2]{{\ensuremath{{\mathcal K}^\bullet_{#1 \rightarrow #2}}}}
\nc \Lc{{\ensuremath{\cal L}}}
\nc \lcc{{\mathcal l}} 
\nc \CC{{\ensuremath{\cal C}}} 
\nc \Cc{{\ensuremath {\cal C}}}
\nc \Pc{{\ensuremath{\cal P}}}
\nc{\Dl}[2]{{\ensuremath{{\mathcal D}_{#1 \leftarrow #2}}}}
\nc{\Dr}[2]{{\ensuremath{{\mathcal D}_{#1 \rightarrow #2}}}}
\nc \Dc{\ensuremath{\mathcal D}}
\nc \Ac{{\ensuremath{\cal A}}} 
\nc \Bc{{\ensuremath{\cal B}}}
\nc \Ec{{\ensuremath{\cal E}}}
\nc \Fc{{\ensuremath{\cal F}}}
\nc \Mcc{{\ensuremath{\cal M}}} 
\nc \hM{\hat{\Mcc}} 
\nc \bM{\bar {\Mcc}} 
\nc\hbM{\hat{\bar \Mcc}}  
\nc \Nc{{\ensuremath{\cal N}}}
\nc \Hc{{\ensuremath{\cal H}}} 
\nc \Ic{{\ensuremath{\cal I}}} 
\nc \Jc{{\ensuremath{\cal J}}} 
\nc \Oc{\ensuremath{{\cal O}}}
\nc \Och{\hat{\cal O}} 
\nc \Sc{{\ensuremath{{\cal S}}}}
\nc \Tc{\ensuremath{{\cal T}}} 
\nc \Vc{{\ensuremath{{\cal V}}}} 
\nc{\CA}{{\ensuremath{{\cal A}}}}
\nc{\CB}{{\ensuremath{{\cal B}}}}
\nc{\Fcc}{\ensuremath{{\cal F}}}
\nc{\Gc}{{\ensuremath{{\cal G}}}}
\nc{\CH}{\ensuremath{\mathcal H}}
\nc{\CI}{{\ensuremath{{\cal I}}}}
\nc{\CM}{{\ensuremath{{\cal M}}}}
\nc{\CN}{{\ensuremath{{\cal N}}}}
\nc{\CO}{{\ensuremath{{\cal O}}}}
\nc{\Rc}{{\ensuremath{{\cal R}}}}
\nc{\CT}{{\ensuremath{\mathcal T}}}
\nc{\CU}{\ensuremath{{\cal U}}}
\nc{\Uc}{\ensuremath{{\cal U}}}
\nc{\Yc}{\ensuremath{{\cal Y}}}
\nc{\CV}{\ensuremath{{\cal V}}}
\nc{\CZ}{\ensuremath{{\cal Z}}}
\nc{\Homc}{\ensuremath{{\cal {Hom}}}}
\nc{\fa}{\frak{a}}
\nc{\fA}{\frak{A}}
\nc{\fg}{\frak{g}}
\nc{\fh}{\frak{h}}
\nc{\fI}{\frak{I}}
\nc{\fK}{\frak{K}}
\nc{\fm}{\frak{m}}
\nc{\fP}{\frak{P}}
\nc{\fS}{\frak{S}}
\nc{\ft}{\frak{t}}
\nc{\fX}{\frak{X}}
\nc{\fY}{\frak{Y}}
\nc{\bF}{\bar{F}}
\nc{\bCP}{\bar{\cal{P}}}
\nc{\bmbox}{\mbox{\bf{m}}}
\nc{\bT}{\mbox{\bf{T}}}
\nc{\hB}{\hat{B}}
\nc{\hC}{\hat{C}}
\nc{\hP}{\hat{P}}
\nc{\htest}{\hat P}
\nc{\nen}{\newenvironment}
\nc{\ol}{\overline}
\nc{\unl}{\underline}
\nc{\ra}{\to}
\nc{\lla}{\longleftarrow}
\nc{\lra}{\longrightarrow}
\nc{\Lra}{\Longrightarrow}
\nc{\Lla}{\Longleftarrow}
\nc{\Llra}{\Longleftrightarrow}
\nc{\hra}{\hookrightarrow}
\nc{\iso}{\overset{\sim}{\lra}}
\nc{\dsize}{\displaystyle}
\nc{\sst}{\scriptstyle}
\nc{\tsize}{\textstyle}
\theoremstyle{definition}
\theoremstyle{remark}
\nc{\Sats}[1]{Sats~\ref{#1}}
\nc{\Sa}[1]{({\sl Sa.}~\ref{#1})}
\nc{\Kor}[1]{({\sl Kor.}~\ref{#1})}
\nc{\Korollarium}[1]{Korollarium~\ref{#1}}
\nc{\Exe}[1]{{\sl Exempel}~\ref{#1}}
\nc{\Anm}[1]{{\sl Anmärkning}~\ref{#1}}
\nc{\Not}[1]{{\sl Not}~\ref{#1}}
\begin{document}

\title[Decomposition over invariant differential operators ]{Decomposition of modules over invariant differential operators}
\author{Rikard B{\o}gvad}
\address{Department of Mathematics, Stockholm University}
\email{rikard@math.su.se}
\author{Rolf K{\"a}llstr{\"om}}
\address{Department of Mathematics, University of G{\"a}vle}
\email{rkm@hig.se}
\date \today \maketitle
\tableofcontents
\section{Introduction} 
Given a finite subgroup $G\subset \Glo(V)$ of the linear group of a finite-dimensional complex
vector space $V$, it is a well-studied problem to describe the structure of the symmetric algebra
$B=\So(V)$ as a representation of $G$, and also as a module over the ring of invariant differential
operators $\Dc= \Dc_B^G \subset \Dc_B$ in the ring of differential operators on $B$, where we
mention in passing that $\Dc$ is also the set of liftable differentiable operators with respect to
the map $B^G \to B$ (see \cite{knop:gradcofinite}). In fact, the two perspectives are known to be
equivalent; for a precise statement, see \Proposition{\ref{thm:montgomery}}. The ring $\Dc$ inherits
the natural grading of $B$, and we let $\Dc^0\subset \Dc$ and $\Dc^-\subset \Dc$ be the invariant
differential operators of degree $0$ and strictly negative degree, respectively. Our first and main
result is that there is for all such finite groups a ``lowest weight'' description of the category
of $\Dc$-submodules of $B$, where the ring $\Rc=\Dc^0/(\Dc^0\cap (\Dc \Dc^-))$ plays the role of
``Cartan algebra''. \newtheorem{thmlabel}{\bf Theorem} \renewcommand{\thethmlabel}{\ref{main}}
\begin{thmlabel}\label{maintheorem}   The functor
$$
N\mapsto N^{ann}=\Ann_{\Dc^-}(N) = \{n \in N \ \vert \ \Dc^- n=0\}$$
is an equivalence between the category of $\Dc$-submodules of $B$ and the category of
$\Rc$-submodules of $B^{ann}$.
\end{thmlabel}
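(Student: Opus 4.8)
The plan is to exhibit an explicit quasi-inverse and reduce everything to orthogonality with respect to a Hermitian form. Since the two categories are partially ordered sets of subobjects (of $B$ and of $B^{ann}$), with morphisms the inclusions, an equivalence is nothing but an inclusion-preserving bijection whose inverse is inclusion-preserving. Both the functor $N\mapsto N^{ann}$ and the candidate inverse $M\mapsto \Dc M$ (the $\Dc$-submodule of $B$ generated by $M$) are manifestly monotone, so it suffices to prove the two identities $\Dc(N^{ann})=N$ for every $\Dc$-submodule $N\subseteq B$, and $(\Dc M)^{ann}=M$ for every $\Rc$-submodule $M\subseteq B^{ann}$.

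First I would record the tools. Averaging a Hermitian inner product on $V$ makes $G$ unitary, and the induced Fischer (apolar) form $\langle\cdot,\cdot\rangle$ on $B=\So(V)$ is then $G$-invariant, positive definite, and has the graded pieces $B_d$ finite-dimensional and mutually orthogonal. In an orthonormal basis the operators $x_i$ and $\partial_i$ are mutually adjoint, so the antilinear anti-automorphism $P\mapsto P^*$ of the Weyl algebra reverses degree; because $G$ acts unitarily it commutes with $*$, whence $\Dc=\Dc_B^G$ is $*$-stable with $(\Dc^-)^*=\Dc^+$, $(\Dc^+)^*=\Dc^-$ and $(\Dc^0)^*=\Dc^0$, where $\Dc^+$ denotes the invariant operators of strictly positive degree. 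The Euler operator lies in $\Dc^0$, so every $\Dc$-submodule $N$ is the direct sum of its graded pieces $N_d=N\cap B_d$. Finally, $[\Dc^-,\Dc^0]\subseteq\Dc^-$ shows $\Dc^0$ preserves $B^{ann}$, while $\Dc^0\cap(\Dc\Dc^-)$ annihilates $B^{ann}$; hence $B^{ann}$, and each $N^{ann}=N\cap B^{ann}$, is an $\Rc$-module and the functor is well defined.

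The generation identity $\Dc(N^{ann})=N$ I would get from an orthogonal decomposition in each degree. For homogeneous $b\in N_d$ one has $b\perp(\Dc^+N\cap N_d)$ iff $\langle P^*b,c\rangle=0$ for all $P\in\Dc^+$ and $c\in N$, i.e. iff $P^*b=0$ for all $P\in\Dc^+$ (take $c=P^*b\in N$ and use positive definiteness), i.e. iff $b\in N^{ann}$; thus $N_d=N^{ann}_d\oplus(\Dc^+N\cap N_d)$. Induction on $d$ then gives $N\subseteq\Dc N^{ann}$, the reverse inclusion being clear, so $\Dc(N^{ann})=N$. In particular $N\mapsto N^{ann}$ is injective and $M\mapsto\Dc M$ surjects onto all $\Dc$-submodules.

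It remains to prove $(\Dc M)^{ann}=M$, where the real content sits. Let $M^\perp$ be the orthogonal complement of $M$ inside $B^{ann}$, so $B^{ann}=M\oplus M^\perp$. I claim $\Dc M\perp M^\perp$: for homogeneous $Q\in\Dc$ of degree $k$, and $m\in M$, $w\in M^\perp$, one checks $\langle Qm,w\rangle=0$ in three cases — if $k<0$ then $Qm\in\Dc^-M=0$; if $k=0$ then $Qm\in\Dc^0M\subseteq M\perp w$; and if $k>0$ then $\langle Qm,w\rangle=\langle m,Q^*w\rangle$ with $Q^*\in\Dc^-$ and $Q^*w\in\Dc^-B^{ann}=0$. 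Hence $\Dc M\cap M^\perp=0$, and since $M\subseteq(\Dc M)^{ann}\subseteq B^{ann}=M\oplus M^\perp$, projecting gives $(\Dc M)^{ann}=M$. The two identities together yield the asserted equivalence. The main obstacle is the construction of the $*$-structure: one must produce a grading-reversing, $G$-equivariant, non-degenerate pairing, and it is this single input that simultaneously delivers the splitting $N=N^{ann}\oplus\Dc^+N$ and the orthogonality $\Dc M\perp M^\perp$. If one wishes to avoid Hermitian analysis, the same role can be played by a purely algebraic non-degenerate contravariant pairing induced by $*$, and verifying its non-degeneracy on each $N_d$ compatibly with $G$-invariance becomes the technical heart of the argument.
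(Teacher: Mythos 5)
Your orthogonality argument for the object-level statement is correct, and it takes a genuinely different route from the paper's. The paper imports semisimplicity of $B$ as a $\Dc$-module from Proposition \ref{thm:montgomery} (Montgomery, Levasseur--Stafford, Wallach), verifies $\Dc\cdot B^{ann}=B$ by a lowest-degree argument, and then runs the abstract Theorem \ref{main}; you instead derive both identities $\Dc(N^{ann})=N$ and $(\Dc M)^{ann}=M$ directly from the $G$-invariant positive-definite Fischer pairing and the degree-reversing $*$ with $(\Dc^{\pm})^{*}=\Dc^{\mp}$. Your degreewise splitting $N_d=N^{ann}_d\oplus(\Dc^{+}N\cap N_d)$ is sound (the adjunction step $\langle b,Pc\rangle=\langle P^{*}b,c\rangle$ with $c=P^{*}b\in N$ is exactly right), and since $N^{\perp}$ is again a $\Dc$-submodule, your pairing in effect reproves the semisimplicity input rather than citing it. On these points your argument is more self-contained than the paper's, and it is close in spirit to the bilinear form the paper only introduces much later, in the proof of Theorem \ref{branch2}.

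There is, however, a genuine gap at the very first step: the reduction ``the two categories are posets with morphisms the inclusions'' misreads the statement. In the paper, $\Mod_{\Dc}(B)$ and $\Mod_{\Rc}(B^{ann})$ have as morphisms \emph{all} module homomorphisms between submodules, not only inclusions (this is explicit in the definition of $\Mod_R(M)$ in Section \ref{abstract-eq}, and the remark after Theorem \ref{main} singles out the inclusion-only categories as mere subcategories that happen to be preserved). So besides the bijection on objects one must show that restriction induces a bijection $\Hom_{\Dc}(N_1,N_2)\to\Hom_{\Rc}(N_1^{ann},N_2^{ann})$, which is steps (c)--(d) of the paper's proof. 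Faithfulness is immediate from your identity $N=\Dc(N^{ann})$, but fullness --- extending an $\Rc$-homomorphism $\psi:N_1^{ann}\to N_2^{ann}$ to a $\Dc$-homomorphism --- is exactly where the paper invokes Lemma \ref{verma-lemma}, namely that the induced module $\Dc\otimes_{\Bc}\ell(N_1)$ has a unique maximal proper submodule. This is not vacuous: simple $\Dc$-submodules occur in $B$ with multiplicity $\dim_{\Cb}V_{\chi}>1$ in general, so the Hom-spaces carry strictly more information than the inclusion order. Your framework can absorb the missing step without the Verma lemma --- for distinct simples, the graph of $\psi$ is an $\Rc$-submodule $V_{\psi}\subset B^{ann}$ and your two identities show $\Dc V_{\psi}$ meets $N_2$ trivially and projects onto $N_1$, hence is the graph of an extension; endomorphisms of a simple are scalars by Schur; the general case follows by the semisimplicity your form provides --- but as written your proposal establishes only the poset isomorphism, not the asserted equivalence of categories.
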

The $\Rc$-module $B^{ann}$ is also a subrepresentation of the space of
$G$-harmonic polynomials, and is therefore finite-dimensional. As an
example of what the theorem contains, we mention that for $G$ a
generalized symmetric group, $\Rc$ is a quotient of the commutative
algebra $R_n= \Cb [x_1\partial_1, \ldots , x_n\partial_n]^{S_n}$. As a
rather immediate consequence, isomorphism classes of simple
$\Rc$-modules are 1-dimensional and classified by (ordered)
partitions, hence by \Theorem{\ref{maintheorem}} the same
classification applies to the simple $\Dc$-submodules of $B$ as well
as to all representations of $G$. This is of course well-known but
here it is a consequence of the explicit structure of $R_n$.

The $G$-representation $B^{ann}$, which contains a copy of all irreducibles
\Prop{\ref{thm:montgomery}}, has been studied under the name of the polynomial model
\citelist{\cite{aguado-araujo:symmetric} \cite{gelfand.garge}}, in particular when $G$ is a complex
reflection group, with the aim to determine when each irreducible occurs with multiplicity 1; one
then says that $B^{ann}$ is a Gelfand model.  The $\Rc$-structure, however, seems not to have been
exploited, in spite of the fact that the above theorem has the following nice immediate consequence,
just using the fact that simple modules over commutative $\Cb$-algebras are one-dimensional:
\renewcommand{\thethmlabel}{\ref{thm:rcomm}}
\begin{thmlabel}\label{gelfandcor}  
If $\Rc$ is commutative then $B^{ann}$ is a Gelfand model for $G$.
\end{thmlabel}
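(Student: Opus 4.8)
The plan is to prove the Gelfand model property by computing, for each irreducible $G$-representation $V_\lambda$, its multiplicity in $B^{ann}$ and showing that this multiplicity equals $1$. Since \Proposition{\ref{thm:montgomery}} already guarantees that every irreducible of $G$ occurs in $B^{ann}$ at least once, it suffices to bound every multiplicity above by $1$. This is precisely where the commutativity hypothesis on $\Rc$ will be used, through the elementary fact that a finite-dimensional simple module over a commutative $\Cb$-algebra is one-dimensional.

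First I would record the compatibility of the two actions on $B^{ann}$. The operators in $\Dc=\Dc_B^G$ are $G$-invariant, hence act $G$-equivariantly on $B$, and this action descends to an action of $\Rc$ on the $G$-stable subspace $B^{ann}$ that commutes with $G$. Thus $B^{ann}$ is a $G\times\Rc$-module and decomposes as $B^{ann}=\bigoplus_\lambda V_\lambda\otimes W_\lambda$, where the multiplicity space $W_\lambda=\Hom_G(V_\lambda,B^{ann})$ is an $\Rc$-module and the multiplicity of $V_\lambda$ in $B^{ann}$ equals $\dim_{\Cb}W_\lambda$.

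Next I would identify $W_\lambda$ as a \emph{simple} $\Rc$-module by transporting the structure through \Theorem{\ref{maintheorem}}. Writing the $G$-isotypic decomposition $B=\bigoplus_\lambda V_\lambda\otimes M_\lambda$ with $M_\lambda=\Hom_G(V_\lambda,B)$, the equivalence of the two perspectives in \Proposition{\ref{thm:montgomery}} makes each $M_\lambda$ a simple $\Dc$-module, and for any nonzero $v\in V_\lambda$ the evaluation $\phi\mapsto\phi(v)$ embeds $M_\lambda$ as a simple $\Dc$-submodule of $B$. Because $\Dc^-$ acts only on the $M_\lambda$-factor and commutes with $G$, the annihilator functor respects this decomposition, so that $B^{ann}=\bigoplus_\lambda V_\lambda\otimes M_\lambda^{ann}$ and $W_\lambda=M_\lambda^{ann}=\Ann_{\Dc^-}(M_\lambda)$. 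As \Theorem{\ref{maintheorem}} is an equivalence of categories it carries the simple object $M_\lambda$ to a simple object, whence $W_\lambda$ is a simple $\Rc$-submodule of $B^{ann}$.

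Finally I would invoke commutativity. Since $B^{ann}$ is finite-dimensional, so is $W_\lambda$; for every $r\in\Rc$ multiplication by $r$ is an $\Rc$-endomorphism of $W_\lambda$ because $\Rc$ is commutative, hence a scalar by Schur's lemma over the algebraically closed field $\Cb$. Thus $\Rc$ acts on $W_\lambda$ by scalars, and simplicity forces $\dim_{\Cb}W_\lambda=1$, so each $V_\lambda$ occurs in $B^{ann}$ with multiplicity exactly $1$ and $B^{ann}$ is a Gelfand model. The main obstacle is the middle step: one must verify that $N\mapsto N^{ann}$ is compatible with the $G$-isotypic decomposition and that the simple-object correspondence of \Theorem{\ref{maintheorem}} genuinely applies to the copies of $M_\lambda$ sitting inside $B$, so that $W_\lambda$ is truly simple. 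This cannot be bypassed, since a finite-dimensional module over a commutative algebra need not be semisimple, and it is the simplicity of the whole multiplicity space $W_\lambda$—not merely a decomposition of $B^{ann}$ into one-dimensional pieces—that yields the multiplicity-one conclusion.
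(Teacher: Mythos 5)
Your proof is correct and follows essentially the same route as the paper: decompose $B^{ann}\cong\bigoplus_{\chi}V_\chi\otimes N_\chi^{ann}$ via \Proposition{\ref{thm:montgomery}}, use the equivalence of \Theorem{\ref{main}} (via \Corollary{\ref{cor:d0eqconcrete}}) to conclude each multiplicity space $N_\chi^{ann}$ is a simple $\Rc$-module, and finish with the fact that a finite-dimensional simple module over a commutative $\Cb$-algebra is one-dimensional. The only difference is cosmetic: you spell out the compatibility of $N\mapsto N^{ann}$ with the $G$-isotypic decomposition and phrase the last step via Schur's lemma, both of which the paper leaves implicit.
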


As already mentioned $\Rc$ is commutative for $G$ a generalized symmetric group $G(d,1,n)$ (which
includes all Weyl groups of type $A$ and $B$), as well as when $G$ is a dihedral group. Hence we
have in particular a short and conceptual proof that $B^{ann}$ is a Gelfand model for these groups,
a result due to \cite{generalized} when $G=G(d,1,n)$. Several authors have attempted to construct Gelfand
models for $G(de,e,n)$ with $e>1$, and it might be hoped that a study of $\Rc$ in this case would be
similarly helpful.

One way of computing $\Dc^-$ and $\Rc$ is by utilizing the strong result by Levasseur and Stafford
\cite{levasseur-stafford:invariantdiff} that $\Dc$  is generated as an algebra by its two commutative
subrings $B^G= \So(V)^G$ and $\So (V^*)^G$, where $V^*=\oplus_{i=1}^n\Cb \partial_i$ is the vector
space of constant derivations and $V=\oplus_{i=1}^n\Cb x_i$.  Ring generators $f_i(x)$ of
$\So(V)^G $ and $f_i(\partial)\in \So (V^*)^G $ thus give generators of $\Dc$, but they also
generate a Lie subalgebra $\af\subset \Dc$, for which the PBW-theorem then is available.  In the
case of the generalized symmetric group a good choice is to let $f_i$ be power sums, which gives a
basis of $\af$ by elements of the form $\sum_{i=1}^n x_i^k \partial_i^l$, which we call power
differential operators; these operators turn out to be amenable to effective computation. For our
calculations with the dihedral group we use a different and more straightforward technique to get
$\Dc^-$ and $\Rc$.

There is another context in which $B^{ann}$ occurs, though only
implicitly, and without using differential operators, namely that of
Macdonald-Spaltenstein-Lusztig induction of representations relative
to an inclusion of finite groups $H\subset G$. In fact, this induction
is best understood as an operation on $\Dc$-modules, described in
\Theorem{\ref{prop:MLS}} (which relies on
\Theorem{\ref{maintheorem}}), instead of $G$-modules; for the relation
with the usual definition for groups, see
\Proposition{\ref{prop:MLSrelation}}. In our differential algebra
context, MLS-restriction (instead of induction) will be
\begin{displaymath}
 \Jo_H^G: \Mod_{\Dc_1}(B)\to \Mod_{\Dc_2}(B),\quad N \mapsto \Jo_H^G (N)=\Dc_2\cdot \Ann_{\Dc_1^-}(N), 
\end{displaymath}
where $\Dc_2=\Dc_B^G\subset \Dc_1=\Dc_B^H$. We exemplify the use of
$\Jo_H^G$ by constructing the simple components of the $\Dc_2$-module
$B$ when $G$ is the generalized symmetric group.

Another application of \Theorem{\ref{maintheorem}} is to get an
abstract branching rule (multiplicity 1) \Th{\ref{branch-diff}},
exemplified with generalized symmetric groups, and a rather detailed
decomposition of restricted modules for the symmetric group
\Th{\ref{branch2}}, providing a new proof of the classical branching
rule using lowest weight arguments.

Our last application is to a new construction of Young bases for
representations of the symmetric group, showing the close relation
between the Jucys-Murphy elements $L_i= \sum_{j=1}^{i-1} (j \ i)$ (a
sum of transpositions) in the group algebra of $S_n$ and the nabla
operators $\nabla_i=x_i\partial_i$. Put $\Dc_n= \Dc_B^{S_n}$ and
$B_i^{ann}= \Ann_{\Dc_i^-}(B)$, $1\leq i \leq n$. Given a basis
$\{v_j^{(i-1)}\}$ of $B^{ann}_{i-1}$, by the branch rule the
$R_i$-module $\Ann_{\Dc_i^-}(\Dc_{i-1}v_j^{(i-1)})$ is multiplicity
free, so a decomposition into simples gives a basis. Iterating this
procedure one gets a {\it canonical basis} $\{v_T\}_{T\in \Sc}$ of
$B^{ann}$, indexed by the set of paths in a branching graph, which in
turn are encoded by standard Young tableux. Interestingly enough, it
turns out that the canonical basis is the same as the Young basis
\Th{\ref{can-base}}.
The weights of the commutative algebra generated by the $L_i$ that is
used in \cite{okounkov-vershik} here has a natural and more immediate
analogue in the multidegree of nabla operators. One can conclude from
\Theorem{\ref{branch2}} and \Theorem{\ref{can-base}} that it is
possible to build up the representation theory of the symmetric group
from the action of nabla operators in the ring of differential
operators.

  
In the final section we study the dihedral group $D_{2e}$ of order $2e$ acting on
$\Cb^2$. Noteworthy is the fact that $\Rc$ for its cyclic subgroup $C_{e}$ is non-commutative
(though still simple to describe), that moreover in this case the lowest weight space $N^{ann}$ of a
certain simple module $N$ may have dimension strictly larger than 1, and that for this module
MLS-restriction $\Jo^G_H$ does not preserve simplicity, where $H=C_{e}$ and $G= D_{2e}$.

We conclude by the remark that though most of our examples are taken
from reflection groups, they serve primarily as examples of the use of
the setup. This setup, however, is valid quite generally, and we
suspect it is worthwhile, e.g., to compute $\Rc$ for other groups.



\section{Preliminaries}
\subsection{Notation}
\label{section:notation}
We will throughout the paper assume that we have a finite subgroup $G$ of the general linear group
$\Glo(V)$ of a complex finite dimensional vector space $V$, inducing a graded action on the graded
polynomial algebra $B=\So(V)$ (with $V$ in degree $1$).  The algebra of differential operators on
$B$ is denoted by $\Dc_B$ (and sometimes $\Dc(V)$), which is the Weyl algebra in $n=\dim_{\Cb} V$
variables.  The canonical map $V\otimes_\Cb V^*\to \Cb$ can be extended to an isomorphism of left
$\So(V)$-module and right $\So(V^*)$-module (not as rings)
\begin{displaymath}\tag{*}
\So(V)\otimes_{\Cb}\So(V^*) \to \Dc_B, \quad p\otimes q \mapsto (b \mapsto   p(x_1, \dots , x_n) q(\partial_1, \dots , \partial_n)(b)),
\end{displaymath}
where $x_1,\ldots, x_n$ is a basis of $V$, $\partial_1,\ldots , \partial_n$ its dual basis of $V^*$, and
$q(\partial_1, \dots , \partial_n)(b)$  the usual action of a constant coefficient differential
operator an a polynomial $b$.  Note that as a Lie sub algebra of $\Dc_B$ (with the commutator as
bracket) the homogeneous derivations can be identified with the general Lie algebra
$V\otimes_{\Cb} V^*=\glo(V)$, and that this Lie subalgebra contains the canonical element
$\nabla=x_1\partial_1+\cdots +x_n\partial_n$. The adjoint action of $\nabla$ on $\Dc_B$ gives a
decomposition $\Dc_B= \oplus \Dc_B(n)$, where
$\Dc_B(n) = \{P \in \Dc_B \ \vert \ [\nabla, P]= nP\}$; it gives $\Dc_B$ the same grading as the
natural one that is induced by the identification \thetag{*}, placing $V^*$ in degree $-1$
and $V$ in degree $1$.

There is an induced action of $G$ on $\Dc_B$ that can be described
using \thetag{*}, as coming from the canonical left action on
$V^*$ and $V$.  The algebra of invariant differential operators
$\Dc = \Dc_B^G$ naturally acts on the invariant ring $A=B^G$, so there
is a homomorphism $\Dc\to \Dc_A$. 

\subsection{Nabla operators} When $\dim V=1$ the above construction
gives us the Weyl algebra $\Dc(\Cb) =\Cb[x,\partial]$ in 1
variable. Its subspace $\Dc(\Cb)^0= \Dc(\Cb)(0)$ of degree $0$ has the
basis $\{x^i\partial^i\}_{i\geq 0}$, where we in particular have the
canonical element $\nabla= x\partial $.  The following easy result
concerned with $\nabla$ will prove useful.
\begin{lemma}
\label{lemma:nabla}
\begin{enumerate}
\item $\Dc(\Cb)^0=\Cb[\nabla]$. In particular there are polynomials
  $p_k\in \Cb[t], \ k=0,1,2,\ldots$, such that $x^k\partial^k=p_k(\nabla)$.
\item $[\nabla,x^i\partial^j]=(i-j)x^i\partial$.
\item Assume that $[\nabla,v]=av$, where $v\in V$. Then, for any polynomial $p(t)\in \Cb [t]$,
  $ p(\nabla)v=vp(\nabla+a)$ and consequently also $p(\nabla-a)v=vp(\nabla)$.
\end{enumerate}
\end{lemma}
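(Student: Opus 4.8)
The plan is to reduce the whole lemma to the two fundamental commutators in the rank-one Weyl algebra $\Dc(\Cb)=\Cb[x,\partial]$, namely $[\nabla,x]=x$ and $[\nabla,\partial]=-\partial$, together with the observation that $\ad(\nabla)=[\nabla,-]$ is a derivation of $\Dc(\Cb)$. Both commutators follow at once from $\nabla=x\partial$ and the Heisenberg relation $\partial x=x\partial+1$, so I would verify them first. For part (2), since $\ad(\nabla)$ is a derivation, the weight of a monomial is additive over its factors, each $x$ contributing $+1$ and each $\partial$ contributing $-1$; hence $[\nabla,x^i\partial^j]=(i-j)x^i\partial^j$ (the intended right-hand side, correcting the evident typo). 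Concretely I would induct on $i+j$, peeling off one factor at a time via the Leibniz rule $[\nabla,PQ]=[\nabla,P]Q+P[\nabla,Q]$.

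For part (1), the inclusion $\Cb[\nabla]\subseteq\Dc(\Cb)^0$ is immediate since $\nabla$ has degree $0$. For the reverse inclusion it suffices, given the stated basis $\{x^k\partial^k\}_{k\ge 0}$ of $\Dc(\Cb)^0$, to show each $x^k\partial^k$ lies in $\Cb[\nabla]$. Writing $D_k=x^k\partial^k$, I would establish the recursion $D_{k+1}=(\nabla-k)D_k$ by the short computation $\nabla D_k=x(\partial x^k)\partial^k=D_{k+1}+kD_k$, using $\partial x^k=x^k\partial+kx^{k-1}$. Induction then gives the closed form $D_k=\nabla(\nabla-1)\cdots(\nabla-k+1)$, the falling factorial, so one may take $p_k(t)=t(t-1)\cdots(t-k+1)$. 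As $p_k$ has degree $k$ with leading coefficient $1$, these polynomials form a basis of $\Cb[t]$, which simultaneously confirms that $\{x^k\partial^k\}$ and $\{\nabla^k\}$ span the same space and hence that $\Dc(\Cb)^0=\Cb[\nabla]$.

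For part (3), the hypothesis $[\nabla,v]=av$ rewrites as $\nabla v=v(\nabla+a)$, which is the $p(t)=t$ case. I would then prove $\nabla^k v=v(\nabla+a)^k$ by induction on $k$, pushing one copy of $\nabla$ past $v$ at each step and using that $(\nabla+a)^k$ is itself a polynomial in $\nabla$; extending by $\Cb$-linearity yields $p(\nabla)v=vp(\nabla+a)$ for all $p$. The final identity $p(\nabla-a)v=vp(\nabla)$ then follows by applying this to the shifted polynomial $q(t)=p(t-a)$.

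None of the three parts poses a genuine obstacle: the entire statement is a formal consequence of the derivation property of $\ad(\nabla)$ and a single commutation identity. The only points requiring care are the bookkeeping in the recursion $D_{k+1}=(\nabla-k)D_k$ of part (1) — pinning down the shift $-k$ rather than $+k$ or $-(k+1)$ — and, in part (3), keeping track of the side on which $\nabla$ is translated by $+a$ versus $-a$.
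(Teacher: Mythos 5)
Your proof is correct; the paper in fact states this lemma without proof (presenting it as an easy standard fact), and your argument is exactly the intended one: the commutators $[\nabla,x]=x$, $[\nabla,\partial]=-\partial$ plus the derivation property of $\ad(\nabla)$ for (2), the recursion $x^{k+1}\partial^{k+1}=(\nabla-k)\,x^k\partial^k$ yielding the falling factorial $p_k(t)=t(t-1)\cdots(t-k+1)$ for (1), and induction on powers of $\nabla$ for (3). You are also right that the right-hand side of (2) should read $(i-j)x^i\partial^j$, and your explicit closed form for $p_k$ is a small bonus the paper leaves implicit.
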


\subsection{Representations of groups and $\Dc$-modules}
The group algebra $\Dc [G]$ of $G$ with coefficients in $\Dc$ consists
of functions $\sum_{g\in G} P_g g : G \to \Dc$, $g\mapsto P_g$ , where
the product is
\begin{displaymath}
  \sum_{g_1\in G} P_{g_1} g_1 \cdot \sum_{g_2\in G} Q_{g_2} g_2 =
  \sum_{g\in G}\sum_{g_1, g_2= g} (P_{g_1} Q_{g_2}) g.
\end{displaymath}
Then $B$ is a $\Dc [G]$-module.  Recall also that if $M$ is a
semi-simple module over a ring $R$, and $N$ is an simple $R$-module,
then the isotypic component $M_N$ of $M$ associated to $N$ is the sum
$\sum N'\subset M$ of all $N'\subset M$ such that $ N'\cong N$. Let
$\hat G$ denote the set of isomorphism classes of irreducible complex
$G$-representations.

\begin{proposition}
\label{thm:montgomery}
As a $\Dc[G]$-module, 
we have a decomposition into simple submodules 
\begin{displaymath}
  B= \bigoplus_{\chi\in \hat G}B_\chi,
  \end{displaymath}
 where each simple $B_\chi$ occurs with multiplicity one.  
   \begin{enumerate}
  \item This decomposition coincides with the decomposition of $B$ into isotypic components either
    as a representation of  $G$ or as a $\Dc$-module.
  \item If $B_\chi$ is the isotypic component of the irreducible $G$-representation $V_\chi$ and of the
    simple $\Dc$-module $N_\chi$, respectively, then, as a $\Dc[G]$-module,
 \begin{displaymath}
B_{\chi}\cong V_\chi\otimes_{\Cb} N_\chi.
\end{displaymath}
 (Here the action on the right is given by $(gP)(v\otimes n)=gv\otimes Pn,\ g\in G,\ P\in \Dc $). 
\item In the situation in (2), $$N_\chi \cong Hom_G(V_\chi, B),$$ as a
  $\Dc$-module, and $$V_\chi\cong Hom_{\Dc}(N_\chi,B),$$as a representation of
  $G$.
\end{enumerate}
\end{proposition}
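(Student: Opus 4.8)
The plan is to trade the statement for a transparent fact about a single simple module over a skew group algebra. Since $G$ is finite and $\Cb$ has characteristic $0$, Maschke's theorem makes $\Cb[G]$ semisimple, so $B$ is a semisimple $\Cb[G]$-module and splits into its $G$-isotypic components $B=\bigoplus_{\chi\in\hat G}B_\chi$, with $B_\chi\cong V_\chi\otimes_\Cb N_\chi$ where $N_\chi=\Hom_G(V_\chi,B)$ is the multiplicity space. Every $P\in\Dc$ is $G$-invariant, hence commutes with the $G$-action, preserves each $B_\chi$, and acts through the second factor by post-composition $P\cdot\varphi=P\circ\varphi$ (which is again $G$-equivariant exactly because $P$ is). This records $B_\chi\cong V_\chi\otimes N_\chi$ as a $\Dc[G]=\Dc\otimes_\Cb\Cb[G]$-module, i.e.\ part (2), together with the description $N_\chi\cong\Hom_G(V_\chi,B)$ of part (3). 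It remains to show each $N_\chi$ is a simple $\Dc$-module and that the $N_\chi$ are pairwise non-isomorphic; the rest is then formal.

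The device is to realise $N_\chi$ as the fixed-point module of a simple module over the skew group algebra $\Lambda=\Dc_B\# G$ (with $G$ acting on $\Dc_B$ as in the preliminaries). Put $M_\chi=V_\chi^*\otimes_\Cb B$, a $\Lambda$-module on which $\Dc_B$ acts through the second factor and $G$ acts diagonally. Using $V_\chi^*\otimes B\cong\Hom_\Cb(V_\chi,B)$ and the averaging idempotent $e=\tfrac1{|G|}\sum_{g}g\in\Lambda$, one has $eM_\chi=(V_\chi^*\otimes B)^G\cong\Hom_G(V_\chi,B)=N_\chi$ (nonzero, as every irreducible occurs in $B$); moreover $e\Lambda e=\Dc_B^G\,e\cong\Dc$, and under this isomorphism the action of $e\Lambda e$ on $eM_\chi$ is precisely the post-composition action of $\Dc$ on $N_\chi$ recorded above. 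The crucial point is that $M_\chi$ is a \emph{simple} $\Lambda$-module: because $B$ is a simple module over the Weyl algebra $\Dc_B$ with $\End_{\Dc_B}(B)=\Cb$, every $\Dc_B$-submodule of $M_\chi=V_\chi^*\otimes B$ has the form $U\otimes B$ for a subspace $U\subseteq V_\chi^*$, and such a submodule is $G$-stable iff $U$ is $G$-stable; since $V_\chi^*$ is an irreducible $G$-module, $U$ is $0$ or $V_\chi^*$.

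Now the idempotent functor finishes the job. For any idempotent $e$ and simple module $M_\chi$ with $eM_\chi\neq0$, the module $eM_\chi$ is simple over $e\Lambda e$; applying this gives that $N_\chi$ is a simple $\Dc$-module. For distinctness, a $\Lambda$-homomorphism $M_\chi\to M_{\chi'}$ is a $\Dc_B$-homomorphism commuting with $G$, hence (as $\End_{\Dc_B}(B)=\Cb$) an element of $\Hom_G(V_\chi^*,V_{\chi'}^*)$, which is $0$ for $\chi\neq\chi'$ by Schur's lemma; thus the $M_\chi$ are pairwise non-isomorphic, and therefore so are the $N_\chi=eM_\chi$. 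Consequently $B=\bigoplus_\chi N_\chi^{\oplus\dim V_\chi}$ is the decomposition of $B$ into simple $\Dc$-modules, it is the $\Dc$-isotypic decomposition, and it coincides blockwise with the $G$-isotypic one (each $B_\chi=V_\chi\otimes N_\chi$ being simple over $\Dc[G]$ as the tensor product of the irreducible $\Cb[G]$-module $V_\chi$ and the simple $\Dc$-module $N_\chi$ under commuting full actions); this is part (1) and the multiplicity-one assertion. Finally $\Hom_\Dc(N_\chi,B)=\bigoplus_{\chi'}V_{\chi'}\otimes\Hom_\Dc(N_\chi,N_{\chi'})=V_\chi$ as a $G$-representation, using $\End_\Dc(N_\chi)=\Cb$ (Schur, in the countable-dimensional form of Dixmier's lemma) and the distinctness just proved; this is the second half of part (3).

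The single global input is that $B$ is a simple faithful module over the Weyl algebra $\Dc_B$ (the polynomial representation), whence $\End_{\Dc_B}(B)=\Cb$; everything else is bookkeeping with Maschke, Schur, and the idempotent functor. I expect the main point to get right to be the identification $N_\chi\cong eM_\chi$ \emph{as $\Dc$-modules} --- that is, checking that the abstract isomorphism $e\Lambda e\cong\Dc$ carries the $e\Lambda e$-action on $eM_\chi$ to the post-composition action of invariant differential operators on $\Hom_G(V_\chi,B)$ --- since it is this compatibility, rather than any hard estimate, that lets the simplicity of $M_\chi$ over $\Lambda$ descend to simplicity of $N_\chi$ over $\Dc$.
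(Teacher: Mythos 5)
Your proof is correct, but it is worth noting that the paper does not actually prove this proposition at all: its ``proof'' is a pointer to the literature (Montgomery's fixed-ring theory, Levasseur--Stafford Lemma~3.3 and Theorem~3.4, Wallach Propositions~1.5 and 1.6). What you have written is, in effect, a self-contained reconstruction of exactly those cited arguments: the simplicity of $B$ over the Weyl algebra $\Dc_B$ with $\End_{\Dc_B}(B)=\Cb$, the skew group algebra $\Lambda=\Dc_B\# G$ with the averaging idempotent $e$ and the isomorphism $e\Lambda e\cong \Dc_B^G=\Dc$, and the idempotent (Morita-type) correspondence carrying the simple $\Lambda$-modules $V_\chi^*\otimes_{\Cb} B$ to the simple $\Dc$-modules $N_\chi=\Hom_G(V_\chi,B)$ --- this is precisely the mechanism in Montgomery and Levasseur--Stafford. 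So your route is not an alternative to the paper's so much as a filling-in of the black box it invokes, which is genuinely useful: it isolates that the only global input is the simplicity of the polynomial representation of the Weyl algebra, everything else being Maschke, Schur, and formal idempotent bookkeeping, and your emphasis on checking that $e\Lambda e\cong\Dc$ intertwines the $e\Lambda e$-action on $eM_\chi$ with post-composition on $\Hom_G(V_\chi,B)$ is indeed the one compatibility that must not be waved through.

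Two small points you use silently are both fine but deserve a word. First, ``every irreducible occurs in $B$'' requires that $G$ act faithfully on $V$; this holds here because $G$ is given as a subgroup of $\Glo(V)$, so a generic $G$-orbit in $V^*$ is free and evaluation on it surjects $B$ onto a copy of the regular representation. Second, your passage from ``the $M_\chi$ are pairwise non-isomorphic'' to ``the $N_\chi=eM_\chi$ are pairwise non-isomorphic'' uses the standard fact that $M\mapsto eM$ is a bijection between isomorphism classes of simple $\Lambda$-modules not killed by $e$ and simple $e\Lambda e$-modules; citing this (or arguing directly via $\Lambda e\otimes_{e\Lambda e}-$) would close the one step you assert rather than prove.
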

\begin{proof} These results, though parts occur in
  \cite{montgomery:fixed}, may be found in \cite[Lemma 3.3 and Thm.
  3.4]{levasseur-stafford:invariantdiff} and \cite[Prop.1.5 and Thm.
  1.6]{wallach:invariantdiff}.
\end{proof}

If the isotypic component of an irreducible $G$-representation $V$ in $B$ coincides with the
isotypic component of the $\Dc$-module $N$, as in (2) above, we will write $N\sim_G V$.  Note that
as a direct corollary of (2), the isotypic component corresponding to a linear
character $\phi: G\to\Cb^*$ is in itself a simple $\Dc$-module. In this case the isotypic component
is called the {\it module of semi-invariants} \/ associated to $\phi$.  The above results may also be
viewed as consequences of the decomposition theorem of direct images in D-module theory,
\cite{kallstrom-directimage}.

Let $H$ be a subgroup of $G$, so that $\Dc_B^G \subset \Dc^H_B$.  For a $\Dc^{H}$-submodule $N$ of
 $B$, we let $\reso^{\Dc^G_B}_{\Dc_B^H} (N) = N$, where $N$ is considered as $\Dc^G_B$-module by
 restriction to the subring. For an $H$-representation $V$ we let
 $\ind^G_H V = \Cb[G]\otimes_{\Cb[H]} V$ be the induced representation of $G$.
 \begin{proposition}\label{ind-res} Assume that $V\sim_ H N $ in the correspondence
   \Proposition{\ref{thm:montgomery}}, (3) (with $G=H$). Then
   \begin{displaymath}
     \reso^{\Dc^G_B}_{\Dc_B^H} (N) \sim_G \ind^G_H V.
   \end{displaymath}
 \end{proposition}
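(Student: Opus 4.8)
The plan is to read off the relation $\sim_G$ at the level of multiplicities and reduce everything to two applications of \Proposition{\ref{thm:montgomery}} together with Frobenius reciprocity. Since $V\sim_H N$, we have $V=V_\psi$ and $N=N_\psi^H$ for a single $\psi\in\hat H$, where $B=\bigoplus_{\psi\in\hat H}V_\psi\otimes_\Cb N_\psi^H$ is the $\Dc_B^H[H]$-decomposition supplied by the proposition applied to $H$; we continue to write $B=\bigoplus_{\chi\in\hat G}V_\chi\otimes_\Cb N_\chi$ for the $\Dc_B^G[G]$-decomposition. The assertion $\reso^{\Dc_B^G}_{\Dc_B^H}(N)\sim_G\ind_H^G V$ will be interpreted, and proved, as the equality of multiplicities
$$[\reso^{\Dc_B^G}_{\Dc_B^H}(N_\psi^H):N_\chi]=[\ind_H^G V_\psi:V_\chi]\qquad\text{for every }\chi\in\hat G,$$
under the bijection $N_\chi\leftrightarrow V_\chi$; this is exactly the matching of isotypic data that $\sim_G$ records.

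The preliminary observation is that $B$ is semisimple as a $\Dc_B^G$-module, since each $B_\chi\cong N_\chi^{\oplus\dim V_\chi}$ by \Proposition{\ref{thm:montgomery}}(2). As $N_\psi^H$ is isomorphic to a $\Dc_B^H$-summand of $B$, it is in particular isomorphic to a $\Dc_B^G$-submodule of the semisimple module $B$, so $\reso(N_\psi^H)$ is a finite direct sum of the $N_\chi$ whose multiplicities are read off by $\dim\Hom_{\Dc_B^G}(N_\chi,-)$, using $\End_{\Dc_B^G}(N_\chi)=\Cb$ (implicit in the tensor factorisation of $B_\chi$). The heart of the argument is to compute the $H$-representation $\Hom_{\Dc_B^G}(N_\chi,B)$ in two ways; note here that the $G$- and $H$-actions commute with $\Dc_B^G$ precisely because $\Dc_B^G$ consists of invariant operators, so all these $\Hom$ spaces are equivariant. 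Using the $G$-decomposition and \Proposition{\ref{thm:montgomery}}(3) gives $\Hom_{\Dc_B^G}(N_\chi,B)\cong V_\chi$ as a $G$-representation, hence $\reso_H^G V_\chi$ as an $H$-representation. Using instead the $H$-decomposition, on which $\Dc_B^G\subset\Dc_B^H$ acts through the factor $N_\psi^H$ and $H$ through the factor $V_\psi$, gives
$$\Hom_{\Dc_B^G}(N_\chi,B)\cong\bigoplus_{\psi}V_\psi\otimes_\Cb\Hom_{\Dc_B^G}(N_\chi,N_\psi^H)$$
as $H$-representations. Comparing $V_\psi$-isotypic components yields $[\reso_H^G V_\chi:V_\psi]=\dim\Hom_{\Dc_B^G}(N_\chi,N_\psi^H)=[\reso(N_\psi^H):N_\chi]$.

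It then remains to invoke Frobenius reciprocity, $[\ind_H^G V_\psi:V_\chi]=[\reso_H^G V_\chi:V_\psi]$, and combine it with the identity just obtained to conclude the desired equality of multiplicities for all $\chi\in\hat G$.

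The step I expect to require the most care is pinning down the meaning of $\sim_G$ when $\reso(N)$ need not be $\Dc_B^G$-simple and $\ind_H^G V$ need not be $G$-irreducible, and ensuring that the $\Dc_B^G$-isotypic decomposition of $\reso(N)$ and the $G$-isotypic decomposition of $\ind_H^G V$ are matched by the one and the same bijection $\chi\leftrightarrow N_\chi$ of \Proposition{\ref{thm:montgomery}}. The double computation of $\Hom_{\Dc_B^G}(N_\chi,B)$ is the device that transports the $H$-side Frobenius reciprocity onto the $\Dc_B^G$-side restriction; the single routine point to check, rather than assume, is that within the $H$-decomposition the group $H$ and the ring $\Dc_B^G$ genuinely act on disjoint tensor factors, which is \Proposition{\ref{thm:montgomery}}(2) read for the subgroup $H$.
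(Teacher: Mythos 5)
Your argument is correct, but it takes a genuinely different route from the paper's. The paper's proof is a purely functorial three-liner: setting $\Loc_G(V)=\Hom_G(V,B)$ and $\Delta_H(N)=\Hom_{\Dc_B^H}(N,B)$, it computes
\begin{displaymath}
\Loc_G\circ\ind_H^G\circ\Delta_H(N)=\Hom_G(\Cb[G]\otimes_{\Cb[H]}\Hom_{\Dc_B^H}(N,B),B)\cong\Hom_H(\Hom_{\Dc_B^H}(N,B),B)=N,
\end{displaymath}
where the middle step is tensor--hom adjunction (Frobenius reciprocity in its module form, via $\Hom_G(\Cb[G],B)\cong B$) and the two ends are \Proposition{\ref{thm:montgomery}}, (3), applied with $G=H$; this yields an isomorphism of $\Dc_B^G$-modules $\Loc_G(\ind_H^G V)\cong\reso^{\Dc_B^G}_{\Dc_B^H}(N)$ with no decomposition or counting at all. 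You instead unwind the statement into multiplicities: you decompose $B$ both as a $\Dc_B^G[G]$-module and as a $\Dc_B^H[H]$-module, compute the $H$-representation $\Hom_{\Dc_B^G}(N_\chi,B)$ in two ways --- a see-saw giving $[\reso_H^G V_\chi:V_\psi]=\dim_\Cb\Hom_{\Dc_B^G}(N_\chi,N_\psi^H)$ --- and close with numerical Frobenius reciprocity. The ingredients coincide (\Proposition{\ref{thm:montgomery}} plus Frobenius reciprocity), and your see-saw is in effect the decomposed shadow of the paper's single adjunction; but the executions differ, and each buys something. The paper's route buys brevity and a canonical module isomorphism rather than an equality of numbers. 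Yours buys precision exactly where the paper is implicit: the meaning of $\sim_G$ for the non-simple modules $\reso(N)$ and $\ind_H^G V$ (matching of multiplicities under $\chi\leftrightarrow N_\chi$, which is equivalent to the paper's implicit criterion $\Loc_G(\ind_H^G V)\cong\reso(N)$, since $\Loc_G$ is additive and sends $V_\chi$ to $N_\chi$), the commutation of the $H$-action with $\Dc_B^G$, and the Schur-type fact $\End_{\Dc_B^G}(N_\chi)=\Cb$ --- which, should you want it fully justified rather than ``implicit in the tensor factorisation'', follows from the embedding $\End_{\Dc_B^G}(N_\chi)\hookrightarrow\Hom_{\Dc_B^G}(N_\chi,B)\cong V_\chi$ together with the fact that a finite-dimensional division algebra over $\Cb$ is $\Cb$.
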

 \begin{proof}
   Put $\Loc_G (V) = Hom_G(V,B)$ and $\Delta_H (N) = Hom_{\Dc_B^H}(N,B)$, so that $\Loc_G(V)$ is a
   $\Dc_B^G$-module and $\Delta_H(N)$ is an $H$-representation. Then
   \begin{align*}
     &\Loc_G \circ \ind^G_H \circ \Delta_H(N) = Hom_G(\Cb[G]\otimes_{\Cb[H]} Hom_{\Dc_B^H}(N,B),B)\\ & =
                                                                                                       Hom_H(Hom_{\Dc_B^H}(N,B), Hom_G(\Cb[G], B)) =
                                                                                                       Hom_H(Hom_{\Dc_B^H}(N,B),B)
                                                                                                       = N,
   \end{align*}
where $N$ is  only regarded as a $\Dc_B^G$-module. 
 \end{proof}




\section{Equivalence between $\Dc$-modules and $\Dc^0$-modules}
In (\ref{abstract-eq}) and (\ref{inv-rings}) we present our main result, which is about studying
$\Dc$-modules $M$ by its lowest weight space $\Ann_{\Dc^-}(M)$, where the latter is a module over
$\Rc= \Dc^0/(\Dc^0\cap (\Dc \Dc^-))$. In (\ref{section:d0}) we work out methods to compute $\Rc$ and
$\Dc^-$, which are also exemplified. Gelfand models are discussed in (\ref{gelfand-section}).
\subsection{Abstract equivalence}\label{abstract-eq}
We describe the equivalence first in a more general setting than we need, to facilitate the proof
and to give a model that perhaps can be used elsewhere.  If $M$ is an arbitrary module over a ring
$R$, then $\Mod_R(M)$ denotes the category with objects all $R$-submodules of $M$ and as morphisms
all $R$-homomorphisms between these modules.

 Assume that the element $\nabla\in \Dc$ has an adjoint action on a
$\Cb$-algebra $\Dc$, $P \mapsto [\nabla, P]$, which is semisimple, and that the semisimple
decomposition gives a grading $\Dc= \oplus \Dc(n)$, where $P\in \Dc(n)$ if $[\nabla, P]= n P$.  We
make the triangular decomposition
\begin{displaymath}
  \tag{T}
  \Dc = \Dc^- \oplus \Dc^0 \oplus \Dc^+
\end{displaymath}
where $\Dc^{-}= \oplus_{n<0} \Dc(n)$, $\Dc^0= \Dc(0)$, and $\Dc^+ = \oplus_{n>0} \Dc(n)$. Define
also the ring $\Rc = \Dc^0/\Dc^0 \cap (\Dc \Dc^-)$.  

Define the functor
\begin{align*}
  \ell : \Mod_{\Dc}(M) &\to \Mod(\Rc),\\ N &\mapsto \ell (N) =  Hom_{\Dc}(\frac{\Dc}{\Dc \Dc^-},
                                             N)= \{ n\in N\ \vert \ \Dc^-n=0\} 
\end{align*} 
and the map
\begin{align*}
  \delta :  \Mod_{\Rc}(\ell(M)) &\to  \Mod_D(M),\\   V &\to  \delta(V)=\Imo ( \frac{\Dc}{\Dc \Dc^-} \otimes_{\Rc} V\to M) = \Dc V.
\end{align*}
Here $\Dc/\Dc\Dc^-$ is a $(\Dc, \Rc)$-bimodule, so that one gets the adjoint pair of functors
$(\Dc/\Dc\Dc^-\otimes_{\Rc}\cdot , Hom_{\Dc}(\Dc/\Dc\Dc^-, \cdot ))$, while $\delta$ in general does
not give a functor on the category $ \Mod_{\Rc}(\ell (M))$. However, if $M$ is sufficiently nice we
do get a functor.

 In the main part of the paper we will use the more evocative and convenient notation $M^{ann}=\ell(M)$. 

\begin{theorem}\label{main}
  Let $M$ be a semisimple $\Dc$-module which is semisimple over $\nabla$ and satisfying
  $\delta\circ \ell(M) = M$. Then $\ell : \Mod_\Dc(M)\to \Mod_{\Rc}(\ell(M))$ defines an isomorphism
  of categories, with inverse $\delta :\Mod_{\Rc }(\ell (M)) \to \Mod_\Dc(M) $.
\end{theorem}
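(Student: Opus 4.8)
The plan is to prove the statement as an equality of object-sets together with a matching of morphisms, since both $\Mod_\Dc(M)$ and $\Mod_{\Rc}(\ell(M))$ are categories of submodules of a fixed module. First I would record that the two maps are well defined. That $\ell(N)=\{n\in N:\Dc^- n=0\}$ is stable under $\Dc^0$ and that the action factors through $\Rc$ is a one-line check: if $P\in\Dc^0$ and $Q\in\Dc^-$ then $QP\in\Dc^-$ (it has negative degree), so $Q(Pn)=(QP)n=0$; and if $P\in\Dc^0\cap(\Dc\Dc^-)$, writing $P=\sum_iQ_iR_i$ with $R_i\in\Dc^-$ gives $Pn=0$ on $\ell(N)$. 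Thus $\ell(N)$ is genuinely an $\Rc$-module and $\ell$ lands in $\Mod_{\Rc}(\ell(M))$; dually $\delta(V)=\Dc V$ is a $\Dc$-submodule of $M$. Note $\ell(N)=N\cap\ell(M)$, so $\ell$ is inclusion-preserving, and it is additive on direct sums of $\Dc$-submodules because $\Dc^-$ respects such a decomposition.

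Next I would prove $\delta\circ\ell=\id$ on objects, i.e. $\Dc\,\ell(N)=N$ for every $\Dc$-submodule $N$. This is the clean half and uses semisimplicity directly. Choose a $\Dc$-submodule complement $M=N\oplus N'$ (it is automatically $\nabla$-stable); additivity gives $\ell(M)=\ell(N)\oplus\ell(N')$, whence the hypothesis $\Dc\,\ell(M)=M$ reads $\Dc\,\ell(N)+\Dc\,\ell(N')=N\oplus N'$ with $\Dc\,\ell(N)\subseteq N$ and $\Dc\,\ell(N')\subseteq N'$. Projecting onto $N$ forces $\Dc\,\ell(N)=N$. An immediate corollary is that $\ell$ is strictly monotone and injective on $\Dc$-submodules: if $\ell(N_1)=\ell(N_2)$ then $N_i=\Dc\,\ell(N_i)$ coincide.

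Then I come to the hard half, $\ell\circ\delta=\id$, i.e. $\ell(\Dc V)=V$ for every $\Rc$-submodule $V\subseteq\ell(M)$; I expect this to be the main obstacle. Grading helps organize it: since $\Dc(n)V\subseteq\Dc^-V=0$ for $n<0$ and $\Dc^0V=\Rc V=V$, one gets $\Dc V=V+\Dc^+V$, and since $\Dc^-$ kills $V$ a vector $v+u\in\Dc V$ (with $v\in V$, $u\in\Dc^+V$) is annihilated by $\Dc^-$ iff $\Dc^-u=0$. Hence $\ell(\Dc V)=V+\big(\ell(M)\cap\Dc^+V\big)$, and everything reduces to the inclusion $\ell(M)\cap\Dc^+V\subseteq V$: one must show that applying raising operators to $V$ creates no new lowest-weight vectors. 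This is where the $\nabla$-semisimplicity must be used essentially. Since $\Dc^+$ strictly raises the $\nabla$-weight and the weights occurring in $M$ are bounded below within each coset of $\ZZ$ (in the applications $\nabla$ is the degree operator and the weights are non-negative integers), I would argue by induction on the weight: a putative $w\in(\ell(M)\cap\Dc^+V)\setminus V$ of minimal weight generates, together with $V$, the same $\Dc$-submodule $N=\Dc V$, and feeding this back through $\Dc\,\ell(N)=N$ and semisimplicity (peeling off a lowest-weight simple constituent of $N$) produces the contradiction. Conceptually this step is exactly the assertion that the unit $V\to\ell(\Dc/\Dc\Dc^-\otimes_\Rc V)$ of the adjunction $(\Dc/\Dc\Dc^-\otimes_\Rc-,\ \ell)$ is an isomorphism on $\Mod_{\Rc}(\ell(M))$, the counit surjectivity being precisely $\delta\circ\ell=\id$.

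Finally I would upgrade the object bijection to an isomorphism of categories by checking that $\ell$ is fully faithful. Faithfulness is immediate from $\Dc\,\ell(N)=N$: a $\Dc$-map $f$ with $f|_{\ell(N_1)}=0$ kills $\Dc\,\ell(N_1)=N_1$. For fullness, given an $\Rc$-map $g:\ell(N_1)\to\ell(N_2)$ I would produce the $\Dc$-map by applying $\Dc/\Dc\Dc^-\otimes_\Rc-$ and composing with the counits $\Dc/\Dc\Dc^-\otimes_\Rc\ell(N_i)\twoheadrightarrow N_i$ (surjective by $\delta\circ\ell=\id$), checking by naturality that the map descends through the left-hand counit; its restriction to $\ell(N_1)$ recovers $g$. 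With $\ell$ bijective on objects and fully faithful, $\ell$ is an isomorphism of categories whose inverse is $\delta$ on objects, completing the proof.
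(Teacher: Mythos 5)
Your preliminaries and the first composite are sound: the well-definedness checks, the identity $\ell(\Dc V)=V+\bigl(\ell(M)\cap \Dc^+V\bigr)$, and your proof of $\delta\circ\ell=\id$ via a semisimple complement and additivity of $\ell$ coincide with the paper's step (a). The first genuine gap is in $\ell\circ\delta=\id$. You correctly reduce it to the inclusion $\ell(M)\cap\Dc^+V\subseteq V$, but what you offer for that inclusion --- a minimal-weight $w$ ``produces the contradiction'' by ``peeling off a lowest-weight simple constituent'' --- is an assertion, not an argument: nothing you wrote rules out a new $\Dc^-$-annihilated vector appearing inside $\Dc^+V$ at strictly higher weight, and ruling this out is precisely the content of the statement. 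The missing ingredient, which drives the paper's proof, is the observation that for a \emph{simple} $\Dc$-submodule $N\subseteq M$ the space $\ell(N)$ is concentrated in a single $\nabla$-weight (if $v,v'\in\ell(N)$ are weight vectors of weights $\lambda,\mu$, then $N=\Dc v=\Dc v'$ has all its weights in $\lambda+\Zb_{\geq 0}$ and in $\mu+\Zb_{\geq 0}$, forcing $\lambda=\mu$). With this in hand the paper first treats $V$ of a single weight: decompose $\delta(V)=\oplus_{i\in I}N_i$ into simples, note $\ell(\delta(V))=V\oplus V'$ with $V'$ in strictly higher weights, identify $V=\oplus_{i\in I'}\ell(N_i)$ for the subset $I'$ of constituents of lowest weight equal to that of $V$, and use step (a) to force $I=I'$, hence $V'=0$; a general $V$ is then handled by induction over its $\nabla$-isotypic components, using $\delta(V_1\oplus V_2)=\delta(V_1)\oplus\delta(V_2)$, which again rests on (a). Your sketch never engages these points and cannot close as written; note also that the theorem's hypotheses do not include your assumed lower bound on weights, which the paper's route avoids needing.

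The second gap is fullness. You propose to take an $\Rc$-map $g:\ell(N_1)\to\ell(N_2)$, apply $\Dc/\Dc\Dc^-\otimes_\Rc(-)$, compose with the counit onto $N_2$, and ``check by naturality'' that the result descends through the counit $\Dc/\Dc\Dc^-\otimes_\Rc\ell(N_1)\twoheadrightarrow N_1$. Naturality gives no such thing: descent requires that the composite annihilate the kernel of that counit, and this is exactly the nontrivial step. The paper supplies it with a Verma-module-style result, \Lemma{\ref{verma-lemma}}: for a simple, single-weight $\Rc$-module $W$, the induced module $\Dc\otimes_{\Bc}W$ (where $\Bc=\Dc^0+\Dc^-$) has a \emph{unique} maximal proper submodule, because every proper submodule has $\nabla$-support disjoint from the lowest weight, so the sum of all of them is still proper. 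Reducing to simple $N_1,N_2$ by semisimplicity and additivity (where $\ell(N_i)$ is simple by your object-level bijection), a nonzero $g$ is an isomorphism and makes both $N_1$ and $N_2$ simple quotients of $\Dc\otimes_{\Bc}\ell(N_1)$; uniqueness of the maximal submodule then forces a unique isomorphism $N_1\to N_2$ extending $g$. Without this lemma or an equivalent substitute, your fullness step fails, so the proposal has real gaps at exactly the two places --- parts (b) and (c) of the paper's proof --- where the actual work is done.
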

Note that we actually have an isomorphism of categories in the theorem, not only an equivalence, and
that this isomorphism preserves the subcategories with the same objects, but in which the morphisms
are restricted to being inclusions of submodules.

With the {\it support} \/ of a semi-simple $\Cb [\nabla]$-module is meant the set of non-zero
eigenvalues of $\nabla$.

\begin{lemma}\label{verma-lemma}
  Let $W$ be a simple $\Rc$-module, that is semi-simple as a $\Cb [\nabla]$-module, and which we also regard as a simple module over the ring
  $\Bc= \Dc^0+\Dc^-$ by the projection $\Dc^0 \to \Rc$ and trivial action of $\Dc^-$. Then
 \begin{enumerate} 
 \item The support of $W$ as a $\Cb [\nabla]$-module consists of one element.
  \item $\Dc\otimes_{\Bc} V$ contains a unique maximal submodule.
  \end{enumerate}
\end{lemma}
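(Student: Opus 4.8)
The plan is to recognise $\tilde W := \Dc\otimes_\Bc W$ (the module called $V$ in part (2)) as a lowest-weight module and run the standard Verma-module argument, once the $\nabla$-grading has been pinned down. The whole point of (1) is to make this grading one-sided.

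First I would prove (1). The key observation is that $\nabla\in\Dc^0$ and, by the very definition of the grading, $[\nabla,P]=0$ for every $P\in\Dc^0$; hence $\nabla$ is central in $\Dc^0$, and its image $\bar\nabla$ is central in $\Rc=\Dc^0/(\Dc^0\cap(\Dc\Dc^-))$. Centrality means the $\Rc$-action commutes with $\nabla$, so each $\nabla$-eigenspace of $W$ is an $\Rc$-submodule. Since $W$ is semisimple over $\Cb[\nabla]$ it is the direct sum of these eigenspaces, and simplicity over $\Rc$ forces exactly one of them to be nonzero. This is (1); write $\lambda_0$ for the unique eigenvalue, so $\nabla$ acts on $W$ as the scalar $\lambda_0$.

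For (2) I would first check the formal setup: $\Bc=\Dc^0\oplus\Dc^-$ is a subalgebra in which $\Dc^-$ is a two-sided ideal with $\Bc/\Dc^-\cong\Dc^0$, so the prescribed action really makes $W$ a $\Bc$-module. The structural heart of the argument is the identification $\tilde W\cong(\Dc/\Dc\Dc^-)\otimes_\Rc W$ as graded $\Dc$-modules: the relation $d\,d^-\otimes w=d\otimes d^-w=0$ collapses $\Dc\Dc^-$, and the right $\Dc^0$-action on $\Dc/\Dc\Dc^-$ factors through $\Rc$. I would then analyse the $\nabla$-grading of $\Dc/\Dc\Dc^-$: it is $\nabla$-stable, its degree-$0$ component is exactly $\Rc$, and every strictly negative component vanishes because $\Dc^-\subseteq\Dc\Dc^-$. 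Hence $\Dc/\Dc\Dc^-$ is concentrated in degrees $\ge 0$, so $\tilde W=\bigoplus_{n\ge 0}\tilde W_{\lambda_0+n}$ with lowest layer $\tilde W_{\lambda_0}=\Rc\otimes_\Rc W\cong W$, which is simple over $\Rc$ and generates $\tilde W$ as a $\Dc$-module.

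The conclusion is then the classical bookkeeping. Because $\nabla\in\Dc$ acts semisimply on $\tilde W$, every $\Dc$-submodule $N$ is $\nabla$-stable, hence graded: $N=\bigoplus_n(N\cap\tilde W_{\lambda_0+n})$. The intersection $N\cap\tilde W_{\lambda_0}$ is an $\Rc$-submodule of the simple module $\tilde W_{\lambda_0}\cong W$, so it is $0$ or everything; in the latter case $N\supseteq\Dc\cdot\tilde W_{\lambda_0}=\tilde W$. Thus every proper submodule lies in $\bigoplus_{n\ge 1}\tilde W_{\lambda_0+n}$, and so does the sum $\Nc$ of all proper submodules; since $\Nc\cap\tilde W_{\lambda_0}=0$ we get $\Nc\ne\tilde W$, and $\Nc$ is the unique maximal submodule. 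I expect the one genuinely delicate step to be the grading analysis of $\Dc/\Dc\Dc^-$: verifying that no negative degrees survive and that the degree-$0$ part is honestly $\Rc$, so that the generating layer $\tilde W_{\lambda_0}$ is a faithful nonzero copy of $W$ rather than collapsing. Everything downstream is formal lowest-weight-category reasoning; the real content rests on the elementary inclusion $\Dc^-\subseteq\Dc\Dc^-$ together with the centrality of $\nabla$.
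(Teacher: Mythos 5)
Your proof is correct and follows essentially the same route as the paper's: part (1) via the centrality of $\nabla$ in $\Dc^0$ (hence in $\Rc$) forcing a single eigenvalue, and part (2) via the semisimple $\nabla$-action on $\Dc\otimes_{\Bc}W$, the one-sided support $\lambda_0+\Zb_{\geq 0}$, and the observation that every proper submodule avoids the bottom layer, so the sum of all proper submodules is the unique maximal one. Your identification $\Dc\otimes_{\Bc}W\cong(\Dc/\Dc\Dc^-)\otimes_{\Rc}W$ and the grading analysis of $\Dc/\Dc\Dc^-$ merely spell out details (in particular that the $\lambda_0$-eigenspace is a genuine copy of $W$) that the paper's terser proof leaves implicit.
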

\begin{proof} (1) is clear since $\Rc$ preserves any eigenspace of $\nabla$. Also 
$\nabla$ acts semi-simply on $ \Dc\otimes_{\Bc}W$ as a derivation by $\nabla(Q\otimes v)=[\nabla,Q]\otimes v+
Q\otimes \nabla v$.
  Since $W$ is simple, the support of any proper submodule of
  $ \Dc\otimes_{\Bc}W$, regarded as $\Cb [\nabla]$-module, is disjoint
  from the support of $W$. The maximal proper submodule is then the sum of all proper submodules.
\end{proof}

\begin{pfof}{\Theorem{\ref{main}}}
  All direct sums below are internal, and by an $\nabla$-isotypical component of $M$ associated to $\lambda$ we intend the  subspace of $M$ consisting of elements $m$ such that $\nabla \cdot m = \lambda
  m$. Let $V$ be a $\Rc$-submodule of $\ell (M)$ and $N$ a submodule of $M$.

  (a) $\delta\circ \ell (N) =  N$: If $N$ is a submodule of
  $M$, by semisimplicity there exists a module $N_1$ such that $M= N\oplus N_1$, so that 
  \begin{displaymath}
N\oplus N_1= M  = \delta \ell (M) = \delta (\ell (N)) \oplus \delta (\ell (N_1))
\end{displaymath}
and hence $N= \delta (\ell (N))$.

(b) $\ell\circ \delta (V) = V$: We note that it follows from the decomposition \thetag{T}, that if $N\subset M$ is
 a simple $\Dc$-module, $\ell(N)$ contains only one isotypical component with respect to the action of $\nabla$.
Assume first that
 $V$ contains only a single $\nabla$-isotypical component, and that 
 $\delta (V) = \oplus_{i\in I} N_i,$ where $N_i$ are
simple $\Dc$-submodules of $M$. Hence 
\begin{displaymath}\tag{*}
  \ell (\delta (V)) = \bigoplus_{i\in I} \ell (N_i).
\end{displaymath}
Since $\delta (V)=V\oplus \Dc^+V$, and $V\subset \ell (\delta (V))$, it is clear that $ \ell (\delta (V))=V\oplus V'$, where $V$ and $V'$ have different $\nabla$-isotypic components. Thus, there is a subset $I'\subset I$ such
that $V= \oplus_{i\in I'} \ell (N_i)$.  Then 
\begin{displaymath}
  \delta(V) = \bigoplus_{i\in I'} \delta( \ell (N_i) )= \bigoplus_{i\in I'} N_i = \bigoplus _{i\in I} N_i,
\end{displaymath}
where the second equality follows from (a). Therefore $I=I'$ and so $\ell\circ \delta (V)$ on the right side of \thetag{*} equals $V$. 

Assume then that  $V=V_1\oplus V_2$, where $V_1$ and $V_2$ have no common $\nabla$-isotypical component, and satisfy that $\ell\circ \delta (V_i) = V_i,\ i=1,2$.   
Then
$\ell (\delta (V_1)\cap \delta (V_2)) \subset V_1 \cap V_2 =0$, hence by (a)
$\delta (V_1)\cap \delta (V_2)=0$, so that 
\begin{displaymath} \delta(V_1 \oplus V_2) = \delta
  (V_1)\oplus \delta(V_2), 
\end{displaymath} 
and, by assumption,
$$
\ell (\delta (V))=\ell(\delta(V_1)) \oplus \ell(\delta(V_2)) = V_1\oplus V_2.$$
Since any $V$ may be decomposed as a $\Dc^0$-module into isotypic components for $\nabla$, it follows by induction that $\ell\circ \delta (V) = V$.

(c) $\Hom_{\Dc} (N_1, N_2) = \Hom_{\Rc} (\ell (N_1), \ell (N_2))$: Since $\ell$ is additive and the
category $\Mod_\Dc(M)$ is semisimple, it suffices to prove this when $N_1$ and $N_2$ are simple. If
$V_1\subset \ell(N_1)$, $V_1\neq 0$, then $\delta (V_1) = N_1$; hence by (b) $V_1= \ell(N_1)$; hence
$\ell(N_1)$ is simple and for the same reason $\ell (N_2)$ is also simple. It is obvious that an
isomorphism $\phi : N_1 \to N_2$ induces an isomorphism $\ell(N_1)\to \ell (N_2)$.  Conversely, let
$\psi : \ell(N_1)\to \ell (N_2) $ be a non-zero homomorphism, hence it is an isomorphism.  There is
a canonical inclusion homomorphism of $\Dc^0$-modules $f:\ell(N_2) \to N_2$, so that we get a map of
$\Dc^0$-modules $f\circ \psi : \ell(N_1)\to N_2$. Hence we get a non-zero homomorphism of
$\Dc$-modules $h: \Dc\otimes_{\Bc}\ell(N_1)\to N_2 $, which is surjective since $N_2$ is simple. We
moreover have a surjective map $ \Dc\otimes_{\Bc}\ell(N_1)\to N_1$.  Since $\ell(N_1)$ is simple, by
\Lemma{\ref{verma-lemma}} $ \Dc\otimes_{\Bc}\ell(N_1)$ has a unique maximal proper
submodule. Therefore we get a unique isomorphism $N_1 \to N_2$ that extends $\psi$.

(d) $\Hom_{\Rc}(V_1, V_2) = \Hom_{\Dc}(\delta(V_1), \delta (V_2))$:
Putting $N_1= \delta(V_i)$ we have by (b) that $\ell(N_i)= V_i$,
$i=1,2$. Hence by (c)
\begin{displaymath}
\Hom_{\Rc}(V_1, V_2) = \Hom_{\Rc}(\ell(N_1), \ell(N_2)) =
\Hom_{\Dc}(N_1, N_2) = \Hom_{\Dc}(\delta(V_1), \delta(V_2)).
\end{displaymath}
\end{pfof}

\subsection{Equivalence for invariant rings}\label{inv-rings}
\Theorem{\ref{main}} applies immediately to algebras $\Dc=\Dc_B^G$ of invariant differential
operators, where we use the notation of \Section{\ref{section:notation}}.  The semi-simple adjoint
action of the Euler operator $\nabla$ induces a $\mathbb Z$-grading, we have the decomposition
\thetag{T}, and by \Proposition{\ref{thm:montgomery}} $B$ is a semi-simple $\Dc$-module. If
$N\subset B$ is a simple $\Dc$-submodule, the vector space $N^a$ of lowest degree homogeneous
elements in $N$ will be annihilated by $\Dc^-$, hence $N^a\subset B^{ann}$, so that
$N=\Dc N^a\subset \Dc B^{ann}$.  Since this is true for any simple submodule of the semi-simple
module $B$, we have $B=\Dc \cdot B^{ann}$.  Hence the conditions of \Theorem{\ref{main}} are
obtained for $\Dc$ and $M=B$.  We can therefore immediately conclude most of the following basic
result:
\begin{corollary}
\label{cor:d0eqconcrete}
Suppose that $G$ is a finite group acting on $B=\So(V)$ and $\Dc_B$.  Put $\Dc=\Dc_B^G$ and
$\Rc = \Dc^0/\Dc^0 \cap (\Dc \Dc^-)$.
 \begin{enumerate}
\item  $B^{ann}$ is a finite-dimensional semisimple $\Rc$-module.
\item There is an isomorphism of categories between the category of
  $\Dc$-submodules of $B$ and the category of $\Rc$-submodules of
  $B^{ann}$. The isomorphism is $N\mapsto N^{ann}$, where $N$ is a
  submodule of $B$, and its inverse is $V \mapsto \Dc V$, where $ V$
  is an $\Rc$-submodule of $B^{ann}$.
 \item Simple $\Dc$-submodules of $B$ correspond to simple $\Rc$-submodules of $B^{ann}$.
 \item Each simple $\Rc$-submodule of $B^{ann}$ is concentrated in a single degree.
\end{enumerate}
\end{corollary}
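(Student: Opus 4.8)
The plan is to obtain the entire corollary as the specialization of \Theorem{\ref{main}} to $M=B$, with only part (1) requiring input beyond the abstract machinery. The three hypotheses of \Theorem{\ref{main}} have already been checked in the paragraph preceding the statement: $B$ is a semisimple $\Dc$-module by \Proposition{\ref{thm:montgomery}}, the Euler operator $\nabla$ acts semisimply on $B$ (its eigenspace decomposition is precisely the grading by polynomial degree), and the identity $\delta\circ\ell(B)=\Dc\cdot B^{ann}=B$ follows from the fact that every simple $\Dc$-submodule $N\subseteq B$ is generated by its lowest-degree homogeneous part $N^a\subseteq B^{ann}$. Feeding this into \Theorem{\ref{main}} yields that $\ell=(\cdot)^{ann}$ is an isomorphism of categories $\Mod_\Dc(B)\to\Mod_\Rc(B^{ann})$ with inverse $\delta\colon V\mapsto\Dc V$, which is precisely assertion (2).

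Assertions (3) and (4) I would read off from this isomorphism. For (3), an isomorphism of categories that preserves the inclusion lattice of submodules sends simple objects to simple objects; more concretely, step (c) in the proof of \Theorem{\ref{main}} already shows that $\ell(N)$ is a simple $\Rc$-module exactly when $N$ is a simple $\Dc$-submodule, and symmetrically for $\delta$, so simples correspond. For (4), let $V\subseteq B^{ann}$ be a simple $\Rc$-submodule; by (3) it equals $N^{ann}$ for a simple $\Dc$-submodule $N$ of $B$, and, as noted in step (b) of the proof of \Theorem{\ref{main}}, $\ell(N)$ meets only a single $\nabla$-isotypical component. Since $\nabla$ acts on $B$ as the degree operator, this single eigenvalue is a single polynomial degree; equivalently $V=N^a$ is the lowest-degree homogeneous piece of $N$. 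One may alternatively invoke \Lemma{\ref{verma-lemma}}(1) directly, as $V$ is a simple $\Rc$-module that is semisimple over $\Cb[\nabla]$.

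For (1), semisimplicity of $B^{ann}$ over $\Rc$ transports through the lattice isomorphism of (2): given an $\Rc$-submodule $V=N^{ann}$, a $\Dc$-complement $B=N\oplus N'$ yields $B^{ann}=\ell(N)\oplus\ell(N')=V\oplus (N')^{ann}$, since $\ell$ respects internal direct sums (the same computation as in step (a)). The one point that is not formal is finite-dimensionality, and this I regard as the genuine obstacle. Here I would argue that $B^{ann}$ sits inside the finite-dimensional space of $G$-harmonic polynomials: since $\So(V^*)^G$ is a graded algebra of Krull dimension $n$, Noether normalization provides a homogeneous system of parameters $\theta_1,\ldots,\theta_n\in\So(V^*)^G$ of positive degrees $d_1,\ldots,d_n$, whose associated constant-coefficient operators $\theta_i(\partial)$ are $G$-invariant of negative $\nabla$-degree and hence lie in $\Dc^-$. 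Thus $B^{ann}\subseteq\bigcap_{i=1}^n\ker\theta_i(\partial)$, a space which is finite-dimensional (of dimension $\prod_i d_i$) by the theory of Macaulay inverse systems. This invariant-theoretic finiteness is the only step resting on something outside \Theorem{\ref{main}}; everything else is elementary bookkeeping with the grading on top of the abstract equivalence.
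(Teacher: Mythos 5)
Your proposal is correct, and for parts (2)--(4) it follows the paper's own route exactly: the hypotheses of \Theorem{\ref{main}} are verified for $M=B$ in the paragraph preceding the corollary (semisimplicity of $B$ over $\Dc$ from \Proposition{\ref{thm:montgomery}}, semisimplicity of $\nabla$ from the grading, and $B=\Dc\cdot B^{ann}$ from the lowest-degree-part argument), after which (2) is the theorem itself, (3) follows from the lattice-preserving isomorphism, and (4) from the single-$\nabla$-isotype observation in step (b) of the proof of \Theorem{\ref{main}} (equivalently \Lemma{\ref{verma-lemma}}(1)). Where you genuinely diverge is the one non-formal point, $\dim_{\Cb}B^{ann}<\infty$. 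The paper's official proof deduces it from \Proposition{\ref{thm:montgomery}}: $\hat G$ is finite, each lowest weight space $\Ann_{\Dc^-}(B_\chi)$ is concentrated in a single degree, and each graded piece of $B$ is finite-dimensional. You instead embed $B^{ann}$ in a space of harmonics: a homogeneous system of parameters $\theta_1,\dots,\theta_n$ of $\So(V^*)^G$ (which exists since $G$ is finite, so the invariant ring has Krull dimension $n$) gives operators $\theta_i(\partial)\in\Dc^-$, and $\bigcap_i\ker\theta_i(\partial)$ has dimension $\prod_i d_i$ by Macaulay duality for the complete intersection $(\theta_1,\dots,\theta_n)\subset\So(V^*)$. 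This is essentially the alternative the authors themselves record in \Remark{\ref{harmonics}} via $B^{ann}\subset\Hc\cong B/\mf_A B$, except that you replace finiteness of $B$ over $A$ by inverse-system duality. What your route buys: an explicit numerical bound $\prod_i d_i$, and independence of the multiplicity-one decomposition (the Levasseur--Stafford/Wallach input) for this particular step, since you only need that $\Dc^-$ contains the positive-degree invariant constant-coefficient operators. What the paper's route buys: no commutative-algebra input beyond what is already on the table, and the slightly sharper structural statement that $B^{ann}$ is the direct sum over $\chi\in\hat G$ of the single-degree spaces $\Ann_{\Dc^-}(B_\chi)$, which is what later sections actually use. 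Your semisimplicity argument for (1) via internal complements is fine (one could equally note $B^{ann}=\bigoplus\ell(N_i)$ over a simple decomposition of $B$, with each $\ell(N_i)$ simple by (3)), and the reduction of (4) to a single polynomial degree is correct because $\nabla$ acts on $B$ as the degree operator.
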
 
\begin{proof}
  There remains to prove $\dim_{\Cb} B^{ann} < \infty$. This follows
  from \Proposition{\ref{thm:montgomery}} and \thetag{A}, together
  with the following facts: $\dim V_\chi < \infty$,
  $\Ann_{\Dc^-}(B_\chi)$ is concentrated in one degree, and each
  homogeneous degree of $B$ is of finite dimension.
\end{proof}

\begin{remark} \label{harmonics} The ring $\So(V^*)$ is isomorphic to the subring of constant
  differential operators in $\Dc$ and $\Hc= \Hom_{\So(V^*)}(\So(V^*)/\So(V^*)^G_+, B) $ is the space
  of harmonic elements in $B$, so that $B^{ann}\subset \Hc $. Since $\Hc\cong B/\mf_A B$, this gives
  another argument for $\dim_{\Cb} B^{ann}< \infty$. The space $\Hc$ is isomorphic to the regular
  $G$-representation if and only if $G$ is a complex reflexion group
  \cite{steinberg:differential}. In this case the $A$-modules $N_\chi$ in
  \Proposition{\ref{thm:montgomery}} are free of rank $\dim_{\Cb} V_\chi$.
\end{remark}

To fix ideas we give a non-trivial example.
\begin{example} Let the symmetric group $G=S_3$ act on $ B = \Cb [x_1,x_2,x_3]$ by permuting the
  variables; put $A=B^G$ and $\Dc= \Dc_B^G$. Let $\alpha_{ij}=x_i-x_j, \ i\neq j$ be the equations of irreducible
  reflecting hyperplanes. Then
  $B^{ann}= \Cb 1+ \Cb \alpha_{13}+ \Cb\alpha_{23}+ \Cb \alpha_{12} \alpha_{13} \alpha_{23}$ is a
  four-dimensional vector space, and the simple $\Rc$-modules are the one-dimensional vector spaces
  $ N_0=\Cb$, $N_1= \Cb \alpha_{12}\alpha_{13}\alpha_{23}$ and $N_p= \Cb p$ where
  $p\in \langle \alpha_{13},\alpha_{23}\rangle$. Corresponding representants for the three classes
  of simple $\Dc$-modules are $\Dc \cdot N_0=A$,
  $\Dc \cdot N_1=A\alpha_{12}\alpha_{13}\alpha_{23}$ and, selecting $p= \alpha_{12}$,
  $\Dc\cdot N_{x_1-x_2}=A(x_1-x_2)\oplus A(x_1^2-x_2^2)$. A complete description of the
  $\Rc$-module structure of $B^{ann}$ for the general symmetric group $S_n$ is given
  in \Section{\ref{branchrule}}.
\end{example}

\subsection{Computation of $\Dc^-$, $\Dc^0$ and $\Rc$} \label{section:d0} \subsubsection{General
  procedure using basic invariants}\label{gen-proc} Assume that $\{f_i\}$, $\{g_i\}$ are homogeneous generators of
$\So(V)^G$ and $\So(V^*)^G$, respectively, where one observes that $\{g_i\}\subset  \Dc^-$.  Let $\afr$ be a
graded Lie subalgebra of $\Dc = \Dc(V)^G$ which contains the Lie algebra $\text{Lie} <f_i, g_j>$
that is generated by the set $\{f_i\}\cup \{g_i\}$.

Letting $\Uo(\afr)$ be the enveloping algebra of $\afr$ we have a canonical
homorphism 
\begin{displaymath} \jo : \Uo (\af) \to \Dc.
\end{displaymath} 
\begin{proposition}\label{gen-R} \label{prop:pbw}
  \begin{enumerate}
 \item The homomorphism $\jo$ is surjective.
 \item Let $\{r_k\}$ be a homogeneous basis of $\af$ and
   $\afr^- = \sum_{\deg (r_k) < 0} \Cb r_k $ be the subalgebra of
   elements of negative degree. Then
   $ \afr^- \subset \Dc^- \subset \Dc \afr^- $.
  \item Let $\af^0$ be the subalgebra of degree $0$ in $\af$. Then we
  have a surjective homomorphism
  \begin{displaymath}
    \Uo(\af^0) \to \Rc
  \end{displaymath}
  In particular, if $\af^0$ is commutative, then $ \Rc$ is commutative.
  \end{enumerate}
\end{proposition}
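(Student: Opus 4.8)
The plan is to deduce all three parts from the Levasseur--Stafford generation theorem together with the PBW theorem for $\af$. For (1), the image $\jo(\Uo(\af))$ is the associative subalgebra of $\Dc$ generated by $\af$; since $\af$ contains $\{f_i\}\cup\{g_i\}$, this subalgebra contains $\So(V)^G$ and $\So(V^*)^G$, which together generate $\Dc$ by Levasseur--Stafford, so $\jo$ is surjective. For (2) and (3) I would first record that, $\af$ being graded, $\Uo(\af)$ inherits a $\ZZ$-grading (its defining relations are homogeneous) and $\jo$ is a graded homomorphism; hence every homogeneous $P\in\Dc(d)$ has a homogeneous preimage in $\Uo(\af)$ of degree $d$. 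Writing the graded splitting $\af=\af^+\oplus\af^0\oplus\afr^-$ into the Lie subalgebras of positive, zero and negative degree, I would fix a PBW basis ordered so that the negative-degree generators come last, so that each PBW monomial factors as $a^+a^0a^-$ with $a^+\in\Uo(\af^+)$, $a^0\in\Uo(\af^0)$, $a^-\in\Uo(\afr^-)$.

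For (2), the inclusion $\afr^-\subset\Dc^-$ is immediate, the chosen basis being homogeneous. For $\Dc^-\subset\Dc\afr^-$, take $P\in\Dc(d)$ with $d<0$ and a homogeneous preimage $u$ of degree $d$. In each PBW monomial $a^+a^0a^-$ occurring in $u$ one has $\deg a^+\ge 0$ and $\deg a^0=0$, so $\deg a^-=d-\deg a^+<0$; thus $a^-$ is a nonempty product of negative-degree generators, and $\jo(a^+a^0a^-)=\jo(a^+a^0)\jo(a^-)$ lies in $\Dc\afr^-$, its rightmost factor being an element of $\afr^-$. Since $\Dc\afr^-$ is a left ideal, summing over the monomials of $u$ gives $P\in\Dc\afr^-$.

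For (3), consider the composite $\Uo(\af^0)\to\Dc^0\to\Rc$, the first arrow being $\jo$ (which lands in $\Dc^0$ because $\af^0$ sits in degree $0$) and the second the projection; this is an algebra homomorphism and I claim it is onto. Given $P\in\Dc^0$ with a homogeneous degree-$0$ preimage $u$, decompose $u$ into PBW monomials $a^+a^0a^-$. If $a^-$ is nonempty, the argument of (2) gives $\jo(a^+a^0a^-)\in\Dc\Dc^-\cap\Dc^0$; if $a^-$ is empty, then $\deg a^+=0$ forces $a^+$ empty too, leaving the monomial equal to $a^0\in\Uo(\af^0)$. Hence $P\in\jo(\Uo(\af^0))+(\Dc^0\cap\Dc\Dc^-)$, so the map onto $\Rc$ is surjective. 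If in addition $\af^0$ is abelian, then $\Uo(\af^0)$ is commutative and so is its homomorphic image $\Rc$.

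The only delicate point is the degree bookkeeping in (2) and (3): one must order the PBW basis so that the negative-degree factors are rightmost and use that $\jo$ is graded in order to reduce to homogeneous $P$, after which the sign of the total degree dictates whether an $\afr^-$-factor must appear. Everything else is formal.
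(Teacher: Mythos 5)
Your proof is correct and takes essentially the same route as the paper's: part (1) is the Levasseur--Stafford theorem, and parts (2)--(3) are obtained from a PBW expansion in which negative-degree basis elements stand rightmost (the paper orders its homogeneous basis by descending degree, which produces exactly your $a^+a^0a^-$ factorization) together with the degree bookkeeping that forces a negative-degree final factor when the total degree is negative, and forces all factors to have degree zero modulo $\Dc^0\cap\Dc\Dc^-$ when the total degree is zero. Your explicit observation that $\Uo(\af)$ is graded and $\jo$ is a graded homomorphism, so one may pass to homogeneous preimages, is the one point the paper leaves implicit, but it is the same argument.
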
  It follows from (3) that $M^{ann} = \Ann_{\afr^-}(M)$ and from (4)
  that the $\Rc$-module structure of $M^{ann}$ come from its structure as $\afr^0$-module.  
\begin{proof}
  (1): This follows from the famous theorem of Levasseur and Stafford
  \cite{levasseur-stafford:invariantdiff}, stating that the sets $\{f_i\}$, $\{g_i\}$ together generate $\Dc$.  

  (2-3): Provide the homogeneous basis $\{r_k\}$ with a total ordering that is compatible with
  the degrees in the sense that $\deg (r_i)\leq \deg (r_j)$, when $i \geq j$. It follows from the
  Poincar{\'e}-Birkhoff-Witt theorem, using the homomorphism above, that any element $P\in \Dc$ can be
  expressed (non-uniquely) in the form
\begin{displaymath}
  P= \sum \alpha_{i_1 \ldots i_s}  r_{i_1} \cdots r_{i_s}
\end{displaymath}
where $i_1 \geq \cdots \geq i_r$.  Hence the factors in each term $ r_{i_1} \cdots r_{i_s}$ have
descending degree $\deg r_1 \geq \cdots \geq \deg r_{i_s}$.  If $P\in \Dc^-$ then the last factor in
each term has $\deg(r_{i_s})<0$ and so $P\in \Dc \afr^-$. This gives (2).  Furthermore, when
$P\in \Dc^0$ we can write
\begin{displaymath}
\label{eq:pbw}
  P =  \sum \alpha_{i_1\ldots i_s}  r_{i_1} \cdots r_{i_s} \mod \Dc^0\cap \Dc\Dc^-,
\end{displaymath}
where $\deg (r_{i_1})= \cdots = \deg (r_{i_s}) =0$ and $ \alpha_{i_1\ldots i_s}\in \Cb$. This gives
(3).
\end{proof}

\subsubsection{$\Rc$ for $G(m,e,n)$}\label{imprimitive}
As an example we will consider the irreducible imprimitive complex reflection groups $G=G(m,e,n)$,
where $e$ and $m$ are positive integers such that $e \ \vert \ m$, and determine generators of $\Rc$
when $e=1$. Let $V$ be a complex vector space of dimension $n$. Then
\begin{displaymath}
G= A(m,e,n)\rtimes S_n \subset \Glo(V)
\end{displaymath}
where $S_n$ is realized as permutation matrices and $A(m,e,n)$ as diagonal matrices whose entries
belong to $\mu_{m}$, the group of $m$-roots of unity, such that their determinant belongs to
$\mu_d\subset \mu_{m}$, where $d= m/e$, and in the semi-direct product $S_n$ acts on $A(m,e,n)$ by
permutation.  This means that $G$ can be realized as permutation matrices with entries in $\mu_{m}$,
such that the product of the non-zero entries belongs to $\mu_d\subset \mu_{m}$; see \cite{broue}.
Here $G(1,1,n)= S_n$ is of type $A_{n-1}$, $G(2,1,n)$ is of type $B_n=C_n$; the dihedral group
$G(e,e,2)= I_2(e) $, where $I_2(6)$ is of type $G_2$, is treated in more detail in (\ref{dihedral});
$G(2,2,n)$ is of type type $D_n$, and $G(de,e,1)= C_d$ is a cyclic group.

Put $\bar A = A(m,1,n)$, $A= A(m,e,n)$, and $\bar G = \bar A \rtimes S_n= G(m,1,n)$ so that
$A\subset \bar A$ and $G\subset \bar G $. Define the $S_n$-invariant elements
$\Theta = (\prod_{i=1}^n x_i)^d$, $\Psi = (\prod_{i=1}^n \partial_i)^d$, and  $h_i(x)=
\sum_{j=1}^n x_j^i$.

\begin{lemma}\label{power-inv}
  \begin{enumerate}
\item
  \begin{align*}
B^{\bar G} &= \Cb [f_1, \ldots , f_n]\\ &\subset  B^G= \Cb [f_1, \ldots , f_{n-1}, \Theta]
\end{align*}
where 
\begin{displaymath}
  f_i = h_i (x_1^{m}, \dots , x_n^{m})= \sum_{j=1}^n x_j^{mi}, \quad i=1, \dots , n
\end{displaymath}
\item
  \begin{align*}
\bar \Dc_n &:= \Dc_B^{\bar G}=  \Cb \langle f_i(x), f_i(\partial)\rangle \\ &\subset  \Dc_B^{G}=  \Cb \langle f_i(x), f_i(\partial), \Psi, \Theta\rangle
\end{align*}
  \end{enumerate}
\end{lemma}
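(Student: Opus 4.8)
The plan is to reduce the whole statement to classical polynomial invariant theory together with the Levasseur--Stafford generation theorem already invoked in \Proposition{\ref{gen-R}}. Once part (1) is in hand, part (2) is essentially formal. By Levasseur--Stafford \cite{levasseur-stafford:invariantdiff} the ring $\Dc_B^H$ is generated as a $\Cb$-algebra by its two commutative subrings $\So(V)^H$ and $\So(V^*)^H$, for $H=\bar G$ and for $H=G$. Since $\bar A$ and $A$ are closed under inversion and $S_n$ permutes $\partial_1,\dots,\partial_n$, the action of $\bar G$ (resp.\ $G$) on $V^*=\oplus_i \Cb\partial_i$ is isomorphic to its action on $V$. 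Hence part (1), read verbatim with $x_i$ replaced by $\partial_i$ and $\Theta$ by $\Psi=(\prod_i \partial_i)^d$, gives $\So(V^*)^{\bar G}=\Cb[f_1(\partial),\dots,f_n(\partial)]$ and $\So(V^*)^G=\Cb[f_1(\partial),\dots,f_{n-1}(\partial),\Psi]$. Combining the two commutative subrings via Levasseur--Stafford then yields both displayed descriptions of $\bar\Dc_n$ and $\Dc_B^G$, the generators $f_n(x)$ and $f_n(\partial)$ being redundant but harmless.

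For part (1) I would work first with the diagonal subgroups and take $S_n$-invariants afterwards, using $B^{\bar G}=(B^{\bar A})^{S_n}$ and $B^G=(B^A)^{S_n}$. A monomial $x^\alpha$ is fixed by all of $\bar A$ exactly when every $\alpha_i\equiv 0 \pmod m$, so $B^{\bar A}=\Cb[x_1^m,\dots,x_n^m]$; writing $y_i=x_i^m$ and recalling that the symmetric polynomials in $n$ variables are freely generated by the power sums $p_i(y)=\sum_j y_j^i=f_i$, this gives $B^{\bar G}=\Cb[f_1,\dots,f_n]$, the first line of (1). For $A$ I would identify the invariant monomials by a character computation: with $\zeta_i=\omega^{k_i}$ for $\omega$ a primitive $m$-th root of unity, $x^\alpha$ is $A$-fixed iff $\sum_i k_i\alpha_i\equiv 0\pmod m$ whenever $\sum_i k_i\equiv 0\pmod e$. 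Testing this against the vectors $\epsilon_i-\epsilon_j$ and $e\,\epsilon_1$, which generate that subgroup, forces all exponents to share a common residue mod $m$ that is itself divisible by $d=m/e$; equivalently $x^\alpha\in\Cb[x_1^m,\dots,x_n^m]\cdot\Theta^t$ for some $t\ge 0$. This yields $B^A=\Cb[x_1^m,\dots,x_n^m,\Theta]$.

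Taking $S_n$-invariants is then immediate: $\Theta=(\prod_i x_i)^d$ is symmetric and $S_n$ permutes the $y_i$, so $B^G=(B^A)^{S_n}=\Cb[f_1,\dots,f_n,\Theta]$. Finally I would eliminate $f_n$ using the relation $\Theta^e=\prod_i x_i^m=e_n(y)$: by Newton's identities $e_n(y)$ is a polynomial in $p_1(y),\dots,p_n(y)=f_1,\dots,f_n$ in which $f_n=p_n(y)$ occurs linearly with nonzero coefficient. Solving for $f_n$ shows $f_n\in\Cb[f_1,\dots,f_{n-1},\Theta]$, whence $B^G=\Cb[f_1,\dots,f_{n-1},\Theta]$, the second line of (1). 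As a check, the degrees $m,2m,\dots,(n-1)m,nd$ of these generators match the Shephard--Todd degree list of $G(m,e,n)$.

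The main obstacle is the monomial/congruence analysis identifying $B^A$; everything else (the symmetric-function step, Newton's identities, the self-duality of the action on $V^*$, and the appeal to Levasseur--Stafford) is routine. The delicate point is to verify that the conditions coming from $\epsilon_i-\epsilon_j$ and $e\,\epsilon_1$ exhaust all $A$-invariance constraints and force precisely the divisibility $d\mid a$ of the common residue $a$, so that $\Theta$ --- rather than some smaller power of $\prod_i x_i$ --- is the correct extra generator.
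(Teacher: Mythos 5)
Your proposal is correct and follows essentially the same route as the paper's proof: you compute $B^{A(m,e,n)}$ by monomial/character analysis (the paper asserts the same description of the invariant monomials), pass to $S_n$-invariants via the power sums, eliminate $f_n$ using the algebraic dependence $\Theta^e=e_n(x_1^m,\dots,x_n^m)$, and deduce part (2) from the Levasseur--Stafford generation theorem. You merely supply details the paper leaves implicit --- the generators $\epsilon_i-\epsilon_j$ and $e\,\epsilon_1$ of the character lattice, the Newton-identity elimination of $f_n$, and the check that the contragredient action on $V^*$ is again by $G$ --- all of which are sound.
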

\begin{proof}
(1):  This is of course well known, but let us at least sketch the argument.  We have (as detailed
  below)
  \begin{eqnarray*}
   B^{A(m,e,n)\rtimes S_n} &=& (B^{A(m,e,n)})^{S_n} = (\Cb[x_1^{m}, \dots , x_n^{m},(x_1\cdots
         x_n)^d ])^{S_n}\\ &=& (\Cb [(x_1\cdots
                               x_n)^d ] [x_1^{m}, \dots , x_n^{m}])^{S_n}= \Cb [(x_1\cdots
                               x_n)^d ][f_1, \dots , f_n]
\end{eqnarray*}
The second equality can be seen by first noting that if a polynomial is $A(m,e,n)$-invariant, then
each of its monomial terms is invariant, and these are exactly given by powers of the monomials
$ x_1^{m}, \dots , x_n^{m},(x_1\cdots x_n)^d$. The $n$ monomials $x_i^m$ are algebraically
independent, while the last is $S_n$-invariant and algebraically dependent on the other ones.
Therefore the second equality on the second line follows from the well-known fact that
$\{f_i\}_{i=1}^n$ is an algebraically independent set of generators of
$\Cb[x^{m}_1, \dots, x^{m}_n]^{S_n}$.  When $e> 1$, then
$f_n \in \Cb [f_1, \dots , f_{n-1},\Theta]$, while if $e=1$ (so that $m=d$) , then
$\Theta \in \Cb[f_1, \dots , f_n]$, so that $B^{\bar G}= \Cb[f_1, \dots , f_n ]$.

(2): This follows from the theorem of Levasseur and Stafford \cite{levasseur-stafford:invariantdiff}.
\end{proof}

\Lemma{\ref{power-inv}} means that we have good control of the invariants $\{f_i\}$ for the ring
$B^{G(m,e,n)} $ which are needed in \Proposition{\ref{gen-R}}. However, it is only in the case $e=1$
that we obtain a really useful description of the Lie subalgebra
$\bar \af = Lie <f_i(x), f_i(\partial)>$ of $\bar \Dc_n$ (or more precisely of a Lie algebra
containing $\bar \af $), using the basic invariants $f_i(x)= h_i(x_1^{m}, \ldots , x_n^m )$ for
$B=\So(V)$ and $f_i(\partial)$ for its isomorphic ring $\So(V^*)$.



\begin{prop}\label{symm-lie} 
  \begin{enumerate}
  \item The Lie algebra $\bar \af$ is contained in a Lie algebra $\bar \af'$ with the basis
    \begin{displaymath}
      \left\{\sum_{i=1}^n  x_i^{k}  \partial_i^l\right \}_{k \geq 0, l\geq 0, m \vert l-k}.
    \end{displaymath}
    If $m=1$, then $\bar \af = \bar \af'$.
  \item A basis of $(\bar \afr')^-$ is provided by the elements in (1) with $0 \leq k < l$. If
    $z\in B^{ann}$, then its degree $\deg (z) < nm$. In particular, if $M$ is a
    $ \bar \Dc_n$-submodule of $B$, then 
      \begin{displaymath}
\Ann_{\bar \Dc^-_n}(M) =  \{z \in M \ \vert \  \sum_{i=1 }^n  x_i^{k}  \partial_i^l \cdot z =0, \text{when }
0\leq k < l \leq nm-1, \ m\ \vert \  l-k  \}.
\end{displaymath}
 \item   We have
  \begin{displaymath}
    (\bar \af')^0   = \Cb[\nabla_1, \dots , \nabla_n]^{S_n},
\end{displaymath}
and therefore $\bar \af^0$ is commutative.
  \end{enumerate}
\end{prop}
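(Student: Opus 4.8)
The whole proposition rests on one commutator identity for the power differential operators $\sigma_{k,l}:=\sum_{i=1}^n x_i^k\partial_i^l$, so I would establish that first. Because for $i\ne i'$ the operators $x_i^k\partial_i^l$ and $x_{i'}^{k'}\partial_{i'}^{l'}$ commute, only the diagonal terms survive, and a one-variable Weyl-algebra computation (using $\partial^l x^{k'}=\sum_j\binom{l}{j}(k')_j x^{k'-j}\partial^{l-j}$, with $(k')_j$ the falling factorial) yields
\begin{displaymath}
[\sigma_{k,l},\sigma_{k',l'}]=\sum_{j\ge 1}\Bigl(\binom{l}{j}(k')_j-\binom{l'}{j}(k)_j\Bigr)\,\sigma_{k+k'-j,\ l+l'-j}.
\end{displaymath}
Every term on the right has index difference $(l-k)+(l'-k')$, so the span $\bar\af'$ of the $\sigma_{k,l}$ with $m\mid l-k$ is a Lie subalgebra; since $f_i(x)=\sigma_{mi,0}$ and $f_i(\partial)=\sigma_{0,mi}$ lie in it, $\bar\af\subseteq\bar\af'$. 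For the equality when $m=1$ I would show every $\sigma_{k,l}$ is an iterated bracket of the generators $\sigma_{i,0},\sigma_{0,i}$ ($i\le n$): the identities $[\sigma_{0,1},\sigma_{k,l}]=k\sigma_{k-1,l}$ and $[\sigma_{1,0},\sigma_{k,l}]=-l\sigma_{k,l-1}$ lower indices, $\ad(\sigma_{2,0})$, $\ad(\sigma_{3,0})$ and $\ad(\sigma_{0,2})$, $\ad(\sigma_{0,3})$ raise them, and one inducts on $k+l$.

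For (2) the basis statement is immediate: the strictly negative part of $\bar\af'$ consists of the $\sigma_{k,l}$ with $k-l<0$, i.e.\ $0\le k<l$ and $m\mid l-k$. The degree estimate is the real content. Since $\So(V^*)^{\bar G}_+\subseteq\bar\Dc_n^-$, any $z\in B^{ann}$ is $\bar G$-harmonic, hence lies in $\Hc$ (Remark~\ref{harmonics}); and $\Hc$ is the $\Cb[\partial_1,\dots,\partial_n]$-span of the Jacobian $J=\det(\partial f_i/\partial x_j)=c\,\det(x_j^{im-1})_{i,j}$, whose degree in each single variable is $nm-1$. As differentiation never increases the degree in a variable, every $z\in B^{ann}$ has degree $<nm$ in each $x_i$, so $\partial_i^l z=0$ once $l\ge nm$; thus every basis element $\sigma_{k,l}$ of $(\bar\af')^-$ with $l\ge nm$ kills $z$ automatically. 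Using $\Ann_{\bar\Dc_n^-}(M)=M\cap B^{ann}=\Ann_{(\bar\af')^-}(M)$ (Proposition~\ref{gen-R}), it remains to see that the capped conditions $l\le nm-1$ already force $z\in B^{ann}$: they contain $f_1(\partial),\dots,f_{n-1}(\partial)$ outright, and the last harmonicity operator $f_n(\partial)=\sigma_{0,nm}$ is recovered as a bracket of two capped operators (for instance $[\sigma_{m,2m},\sigma_{0,m(n-1)}]$ when $n\ge3$) via the identity above, whence $z$ is harmonic and all remaining conditions are automatic.

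For (3), the degree-$0$ part of $\bar\af'$ is spanned by $\sigma_{k,k}=\sum_i x_i^k\partial_i^k=\sum_i\nabla_i(\nabla_i-1)\cdots(\nabla_i-k+1)$, where I use Lemma~\ref{lemma:nabla}(1) in each variable. These are symmetric functions of the pairwise commuting operators $\nabla_1,\dots,\nabla_n$, so they commute with one another; hence $(\bar\af')^0$, and a fortiori $\bar\af^0$, is an abelian Lie algebra, and by Proposition~\ref{gen-R} this makes $\Rc$ commutative. Moreover the change from falling factorials to powers is triangular and invertible, so $\spano\{\sigma_{k,k}\}=\spano\{\sum_i\nabla_i^k\}$ and the unital subalgebra of $\Dc$ they generate is exactly $\Cb[\nabla_1,\dots,\nabla_n]^{S_n}$, the power sums generating all symmetric functions.

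The main obstacle is the per-variable degree bound in (2): the bracket identity together with parts (1) and (3) is essentially bookkeeping once that formula is in hand. The two delicate points are (i) that one must bound the degree in each \emph{separate} variable, not the total degree---$B^{ann}$ does contain elements of total degree $nm$, e.g.\ the Vandermonde for $S_3$---which is precisely what the Jacobian/harmonics description supplies; and (ii) that recovering $f_n(\partial)$ from the capped operators degenerates in the very smallest case ($m=1$, $n=2$), so that the cut-off $l\le nm-1$ and the bracket-recovery step must be checked with some care.
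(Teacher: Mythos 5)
Your parts (1) and (3) follow essentially the paper's own route: the one-variable commutator identity (the paper states it as $[x_i^k\partial_i^l,x_i^{k_1}\partial_i^{l_1}]=\sum_j c_j x_i^{k+k_1-j}\partial_i^{l+l_1-j}$ with the divisibility of the index difference preserved), generation in the $m=1$ case from the degree $\leq 3$ operators exactly as in Lemma~\ref{cyclic-do} (note that both your argument and the paper's implicitly need $n\geq 3$ so that $\sigma_{3,0},\sigma_{0,3}$ are available; Remark~\ref{rem-sn}(2) records that $\bar\af\neq\bar\af'$ when $n=2$, $m=1$), and the identification $\sum_i x_i^k\partial_i^k=\sum_i p_k(\nabla_i)$ with a triangular change of basis to power sums --- where your formulation via the generated unital subalgebra is in fact more careful than the paper's literal equality of $(\bar\af')^0$ with $\Cb[\nabla_1,\dots,\nabla_n]^{S_n}$.

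On part (2) you genuinely diverge, and the comparison is instructive. The paper never recovers $p_{0,nm}$ from the capped conditions: its proof assumes $p_{k,l}z=0$ for $0\leq k<l\leq nm$ (note, $nm$, not $nm-1$), writes the elementary symmetric polynomials $e_i(\partial_1^m,\dots,\partial_n^m)\in\Cb[p_{0,m},\dots,p_{0,nm}]_+$, and uses $Q(\partial^m)=\prod_i(\partial^m-\partial_i^m)$ to conclude $\partial_i^{nm}z=0$, hence $\deg_i(z)<nm$; this is self-contained and derives the per-variable bound directly from the annihilation hypotheses, whereas your route imports Steinberg's theorem (harmonics are spanned by derivatives of the Jacobian) but attempts the literal $nm-1$ cut-off of the statement. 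Two points there. First, your displayed bracket $[\sigma_{m,2m},\sigma_{0,m(n-1)}]$ does not isolate $\sigma_{0,nm}$ when $m\geq 2$: by your own identity it equals $-\sum_{j=1}^{m}\binom{m(n-1)}{j}(m)_j\,\sigma_{m-j,\,m(n+1)-j}$, and the terms with $1\leq j<m$ cannot be discarded at that stage (the degree bound that would kill them only becomes available \emph{after} harmonicity, so the argument would be circular). The fix is to take $k=1$, where the bracket collapses to a single term: $[\sigma_{1,\,m(n-1)+1},\sigma_{0,m}]=-m\,\sigma_{0,nm}$ with both factors capped whenever $m\geq 2$, and $[\sigma_{1,n-1},\sigma_{0,2}]=-2\sigma_{0,n}$ when $m=1$, $n\geq 3$. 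Second, your suspicion about $(n,m)=(2,1)$ is correct and is in fact an erratum in the paper: there the only capped operator is $\partial_1+\partial_2$, whose kernel in $B$ is all of $\Cb[x_1-x_2]$, so e.g.\ $(x_1-x_2)^2$ satisfies the capped conditions but is not killed by $x_1\partial_1^2+x_2\partial_2^2\in\bar\Dc_2^-$; the statement with cut-off $nm-1$ fails for $M=B$, while with cut-off $nm$ (which is what the paper's proof actually uses) it holds. So, modulo replacing the one faulty bracket, your proof establishes the sharper $nm-1$ version in all cases where it is true, at the cost of a heavier external input.
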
 
Let us agree to call $\sum_{i=1}^n x_i^{k} \partial_i^l$, $m\vert (l-k)$, a {\it power differential
  operator}.  Of particular interest is of course the case $m=1$, so that $\bar G= S_n$ and
$\bar \Dc_n$ is the ring of symmetric differential operators, and there are then no restrictions on
$l-k$.

\begin{lemma}\label{cyclic-do} Let $\Dc(\Cb)^{Lie}= \Cb\langle x,\partial\rangle ^{Lie}$ be the Weyl algebra in $1$
  variable, considered as a Lie algebra. Let the cyclic group $C_m$ act on $x$ by a primitive $m$th
  root of unity, and thus inducing an action on $\Dc(\Cb)^{Lie}$. Then the invariant algebra
  $(\Dc(\Cb)^{Lie})^{C_m} = \Cb \langle x^m, \partial^m\rangle ^{Lie}$. If $m=1$, then this Lie
  algebra is generated by the set $\{x^{k}, \partial^{l}\}_{0 \leq k \leq 3, 0 \leq l \leq 3}$.
\end{lemma}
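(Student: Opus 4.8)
The plan is to prove the two assertions separately, since they are of different natures. For the equality $(\Dc(\Cb)^{Lie})^{C_m}=\Cb\langle x^m,\partial^m\rangle^{Lie}$ I would first make the action completely explicit and then quote Levasseur--Stafford. Writing $\zeta$ for a primitive $m$-th root of unity, the generator of $C_m$ sends $x\mapsto\zeta x$ and, dually, $\partial\mapsto\zeta^{-1}\partial$, so that $x^i\partial^j\mapsto\zeta^{\,i-j}x^i\partial^j$. As $\{x^i\partial^j\}_{i,j\ge 0}$ is a basis of $\Dc(\Cb)$, this identifies the invariants as
\[
(\Dc(\Cb))^{C_m}=\operatorname{span}_{\Cb}\{x^i\partial^j : i\equiv j \ (\mathrm{mod}\ m)\},
\]
a subspace that is the same whether $\Dc(\Cb)$ is regarded as an associative or as a Lie algebra. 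It therefore suffices to identify this span with the associative subalgebra generated by $x^m$ and $\partial^m$. Now the invariant polynomials are $\So(V)^{C_m}=\Cb[x^m]$ and the invariant constant-coefficient operators are $\So(V^*)^{C_m}=\Cb[\partial^m]$, so by the theorem of Levasseur and Stafford \cite{levasseur-stafford:invariantdiff} (used already in Proposition~\ref{gen-R}) the algebra $(\Dc(\Cb))^{C_m}$ is generated by these two commutative subrings, i.e. by $x^m$ and $\partial^m$; this is exactly the desired equality.

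For the $m=1$ statement the content is genuinely Lie-theoretic, since here the full Weyl algebra must be recovered using only iterated brackets of $S=\{1,x,x^2,x^3,\partial,\partial^2,\partial^3\}$. Let $\mathfrak g$ be the Lie subalgebra they generate. The seeding brackets are
\[
[\partial^2,x^2]=4\nabla+2,\qquad [\partial^2,x^3]=6x^2\partial+6x ,
\]
the first of which puts $\nabla=x\partial\in\mathfrak g$ and the second, using $x\in\mathfrak g$, puts the raising operator $x^2\partial=x\nabla\in\mathfrak g$. From the identity $[x^a,x^2\partial]=-a\,x^{a+1}$ one then obtains $x^{a+1}\in\mathfrak g$ inductively, hence $x^n\in\mathfrak g$ for all $n$; the symmetric computation, starting from $[x^2,\partial^3]=-6x\partial^2-6\partial$ and using $[\partial^a,x\partial^2]=a\,\partial^{a+1}$, gives $\partial^n\in\mathfrak g$ for all $n$.

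It then remains to fill out each graded piece $\Dc(\Cb)(n)$, which for $n\ge 0$ has basis $\{x^{n+j}\partial^{j}\}_{j\ge 0}$ (and dually for $n<0$). Having all powers $x^{n+j+1}$ and $\partial^{\,j+1}$ in $\mathfrak g$, one computes that $[\partial^{\,j+1},x^{n+j+1}]$ is a degree-$n$ operator whose expansion in this basis is $c\,x^{n+j}\partial^{j}+(\text{terms }x^{n+i}\partial^{i},\ i<j)$ with $c=(j+1)(n+j+1)\neq 0$. Induction on $j$ (the lower terms being already in $\mathfrak g$) isolates $x^{n+j}\partial^{j}\in\mathfrak g$ for every $j$, so $\Dc(\Cb)(n)\subseteq\mathfrak g$; together with the dual statement this gives $\mathfrak g=\Dc(\Cb)$, as required. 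By Lemma~\ref{lemma:nabla}(1) one may equally phrase this as $x^n\Cb[\nabla]\subseteq\mathfrak g$.

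The step I expect to require the most care is not any individual bracket but the bookkeeping in this final induction: one must verify that the coefficient $(j+1)(n+j+1)$ of the top monomial $x^{n+j}\partial^{j}$ in $[\partial^{\,j+1},x^{n+j+1}]$ is nonzero and that every strictly lower term $x^{n+i}\partial^{i}$ has already been produced at an earlier stage, so that the induction genuinely peels off one new basis element at a time. The bracket computations themselves are routine consequences of $[\partial,x]=1$ together with Lemma~\ref{lemma:nabla}, and I would relegate them to a short verification rather than spelling each out.
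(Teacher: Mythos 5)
Your proof is correct. For the invariance statement you take exactly the paper's route: both arguments identify the invariants as the span of the monomials $x^i\partial^j$ with $m \mid i-j$ and invoke the Levasseur--Stafford theorem to get $(\Dc(\Cb))^{C_m}=\Cb\langle x^m,\partial^m\rangle$ as associative algebras, the Lie-algebra statement following because invariance is a linear condition on the underlying vector space. For the $m=1$ assertion your argument has the same elementary flavor but is organized differently, in a way that genuinely streamlines the induction. The paper extracts the raising operators $E_x=x^2\partial$ and $E_\partial=x\partial^2$ from the same seed brackets you use (its formula $[\partial^3,x^2]=6x\partial^2+6x$ is a typo for $6x\partial^2+6\partial$, which is what your version has), and then climbs through arbitrary monomials via the leading-term identities $[E_x,x^k\partial^l]=(k-2l)x^{k+1}\partial^l+(\mathrm{l.o.})$ and $[E_\partial,x^k\partial^l]=(2k-l)x^{k}\partial^{l+1}+(\mathrm{l.o.})$, inducting on $k$ and $l$. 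That recursion degenerates along the lines $k=2l$ and $l=2k$, where the leading coefficients vanish, so the paper's ``straightforward induction'' implicitly requires routing around those cases by alternating between $E_x$ and $E_\partial$ (this works: at any given target monomial the two approach coefficients $(k-1)-2l$ and $2k-(l-1)$ cannot vanish simultaneously, and the corner cases $x^1$, $\partial^1$ are generators). Your two-stage scheme avoids the case analysis altogether: the coefficients $-a$ and $a$ in generating the pure powers via $[x^a,x^2\partial]$ and $[\partial^a,x\partial^2]$, and the leading coefficient $(j+1)(n+j+1)$ of $x^{n+j}\partial^j$ in $[\partial^{j+1},x^{n+j+1}]$, are nonzero for all relevant indices, so the triangular induction on $j$ within each graded piece $\Dc(\Cb)(n)$ is unconditional. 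What the paper's version buys is economy (one pair of operators drives the whole recursion); what yours buys is an induction with no degenerate cases to check --- precisely the bookkeeping point you flagged at the end.
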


\begin{remark}\label{rem-sn}
  \begin{enumerate}
  \item Notice (in the proof) that if $n> 2$ then $\bar \af'$ is isomorphic to the Lie algebra
    $(\Dc(\Cb)^{C_m})^{Lie}$ in \Lemma{\ref{cyclic-do}}, and
    $(\bar \af')^0\cong \Cb [\nabla]^{Lie}\subset (\Dc(\Cb)^{Lie})^{C_m}$, where
    $\nabla = x \partial_x$. The Lie subalgebra $(\bar \af')^-$ is not finitely generated.
  \item When $n=2$ and $m=1$ then the Lie algebra
    $\bar \af = \Cb \cdot 1 + \Cb \nabla + \sum_{i=1}^2 (\Cb f_i(x)+ \Cb f_i(\partial))$ is
    finite-dimensional and $\bar \af^0 = \Cb \cdot 1 + \Cb \nabla $.
  \item One may ask when the Lie algebras $\bar \af$ are isomorphic for different choices of basic
    sets of invariants.  For instance, when $n=2$ and $m=1$ using the basic invariants
    $e_1= x_1+x_2 , e_2= x_1x_2 $ we get
    $\af^{(1)} = \Cb \cdot 1 + \Cb \nabla + \sum_{i=1}^2 (\Cb e_i(x)+ \Cb e_i(\partial))$, so that
    $\af^{(1)} \neq \af$ but still $\af^{(1)} \cong \af$.
  \end{enumerate}
\end{remark}
\begin{proof} The equality $\Dc(\Cb)^{C_m} = \Cb \langle x^m, \partial^m\rangle $ follows from the
  theorem of Levasseur and Stafford \cite{levasseur-stafford:invariantdiff} (already when $m=3$ it
  is a nontrivial fact that $x\partial \in \Cb \langle x^m, \partial^m\rangle $), so that
  $(\Dc(\Cb)^{Lie})^{C_m} = \Cb \langle x^m, \partial^m\rangle ^{Lie}$.  It is elementary to see,
  however, that the set $\{ x^{k}\partial^{l}\}_{m \vert (l-k)}$ is a basis of
  $(\Dc(\Cb)^{Lie})^{C_m}$.  Assume now that $m=1$ and let $\of$ be Lie algebra that is generated by
  the elements $\{x^{r},\partial^{s}\}_{0 \leq r \leq 3, 0 \leq s \leq 3}$.  Since
  $[\partial^2,x^3]=6 x^2\partial + 6x $ and $[\partial^3, x^2 ]= 6x\partial^2+ 6x$ it follows that
  $E_x= x^2\partial, E_\partial=x\partial^2\in \of$. Now
  \begin{displaymath}
  [x^{k_1}\partial^{l_1}, x^{k_2}\partial^{l_2}]= (l_1k_2-l_2k_1) x^{k_1+k_2-1}\partial^{l_1+l_2-1} + (l.o.) 
\end{displaymath}
where "l.o.'' signifies a linear combination of terms $x^r\partial^s$ where
$r < k_1+k_2-1, s< l_1+l_2-1 $.  In particular,
  \begin{displaymath}
    [E_x, x^k\partial^l] = (k-2l) x^{k+1}\partial^l + (l.o.) , \quad      [E_\partial,
    x^k\partial^l] = (2k- l) x^{k}\partial^{l+1} + (l.o.). 
  \end{displaymath}
  A straightforward  induction in $k$ and $l$ now shows that $\of = \Dc(\Cb)^{Lie}$.

\end{proof}

\begin{pfof}{\Proposition{\ref{symm-lie}}}
  (1): This is essentially a 1-variable assertion, based first on the fact that
  \begin{displaymath}
  [x^k_i\partial^{l}_i, x^{k_1}_i\partial^{l_1}_i]= \sum_{j=1}^{r} c_j x^{k+k_1-j}_i\partial^{l+l_1-j}_i, 
\end{displaymath}
where $r = \max (\min(l,k_1), \min(k,l_1))$ (unless the Lie bracket is 0), where of course the
coefficients $c_j$ do not depend on $i$, and secondly that variables do not mix in Lie brackets
since $[x^k_i\partial^{l}_i, x^{k_1}_j\partial^{l_1}_j]=0$ when $j\neq i$.  If $m\vert (l-k)$ and
$m \vert (l_1- k_1)$, then $m \ |\ (k+k_1 -j - (l+l_1-j))$. This implies that the bracket of two
power differential operator is a linear combination of power differential operators, so that the
vector space $\bar \af'$ that is spanned by such differential operators forms a Lie algebra, and
clearly $\bar \af \subset \bar \af'$. Assume that $m=1$.  By \Lemma{\ref{cyclic-do}} the set
$\{x_i^{k}, \partial^{l}_i\}_{0 \leq k \leq 3, 0 \leq l \leq 3}$ generates $\Dc(\Cb)^{Lie}$, which
implies that the corresponding set of power differential operators
$\{\sum_{i=1}^nx_i^{k}, \sum_{i=1}^n\partial^{l}_i\}_{0 \leq k \leq 3, 0 \leq l \leq 3}$ generates
$\bar \af$, i.e.  $\bar \af' = \bar \af$.

(2): It is evident by definition that $(\bar \af')^-$ has the given basis of power differential
operators $p_{k,l} = \sum_{i=1 }^n x_i^{k} \partial_i^l$, $l > k$, $m\ \vert \ (l-k)$.  Since
$(\bar \af')^- \subset \bar \Dc_n^-$ it is also evident that if $z \in \Ann_{\bar \Dc_n^-}(M)$, then
$p_{k,l} \cdot z =0$, when $0 \leq k < l \leq nm$, $m \ \vert \ (l-k)$. Assume now the converse,
that $p_{k,l} \cdot z =0$ for such $l$ and $k$, so that in particular $p_{0,lm} \cdot z =0$ for
$l=0, \dots , {n} $. We have
 \begin{displaymath}
Q(\partial^m)= \prod_{i=1}^n(\partial^m - \partial^m_i)=\partial^{nm} + \sum_{i=1}^{n} (-1)^ie_i(\partial^m_1, \dots
, \partial^m_n) \partial^{(n-i)m},
\end{displaymath}
where the elementary symmetric polynomials
\begin{displaymath}
  e_i=e_i(\partial^m_1, \dots, \partial^m_n) \in \Cb [\partial^m_1,\ldots,  \partial^m_n]_+^{S_n} = \Cb [p_{0,m}, \ldots , p_{0,{nm}}]_+,
\end{displaymath}
so that $e_i\cdot z =0$. Since $Q(\partial^m_i)=0$, it follows that $\partial_i^{nm} \cdot z =0$ and
hence $\deg_i (z)< nm$, $i=1, \ldots, n$.  Therefore $p_{k,l}\cdot z =0 $ also when $l \geq nm$.
Since $\bar \Dc_n^- \subset \bar \Dc_n \cdot (\bar \af')^-$, by \Proposition{\ref{gen-R}} (2), it
follows that $\bar \Dc_n^- \cdot z =0 $.

(3): Clearly $f_j(\nabla_1, \dots , \nabla_n) $ is a power sum of degree $0$ so it belongs to
$(\bar \af')^0$; hence $\Cb[\nabla_1, \dots , \nabla_n]^{S_n}\subset (\bar \af')^0$.  Conversely,
since $x_i^k\partial^k_{i} = p_k(\nabla_i)$ (see \Lemma{\ref{lemma:nabla}}) it is also clear that
$(\bar \af')^0 \subset \Cb[\nabla_1, \dots , \nabla_n]^{S_n} $.
\end{pfof}

In general $\af$ is an extension of $\bar \af$ by the elements $\Theta$ and $\Psi$, similarly to
\Lemma{\ref{power-inv}},  and these elements will mix the variables, making it considerably more
difficult to describe bases of $\af$, $\af^-$ and $\af^0$.  The cases $n=2$ and
$e \geq 1$ are studied in  \Section {\ref{cycl-dihedral}}.

\subsubsection{Using $\gl(V)$ to determine generators of $\Rc$}

There is another way to think of (3) in \Lemma{\ref{symm-lie}}. 
We have inclusions
\begin{displaymath}
  \hf \subset \gl(V) \subset \Dc^0(V)
\end{displaymath}
where $\hf$ is a Cartan algebra in the general  Lie algebra $\gl(V)$, and we have a surjective map 
$l: \Uo(\gl(V))\to \Dc^0(V)$, where $\Uo(\gl(V))$   is the enveloping algebra of $\gl(V)$.
Since $G$ is finite the induced (and same noted) map
\begin{displaymath}\label{diff-map}
 l: \Uo(\gl(V))^G \twoheadrightarrow \Dc^0
\end{displaymath}
is again surjective.  The maximal subgroup of $\Glo(V)$ that preserves the Cartan algebra $\hf$ is
of the form $T \rtimes S_n$, where $S_n$ is the symmetric group and the torus $T$, is the maximal
subgroup that leaves $\hf$ invariant, where moreover $T$ acts trivially on $\hf$. Thus, if $G\subset
T \rtimes S_n $ and $\bar G $ is the image of $G$ in $S_n$, then $\bar G$ preserves $\hf$,
and we have the commutative subring
\begin{displaymath}
   l( \So(\hf)^{\bar G}) \subset \Dc^0.
\end{displaymath}
This is in particular true when $G=G(m, e,n)$, where $A(m,e,n)\subset T$, so that we get the subring
$ l(\So(\hf)^{S_n})$.  \Lemma{\ref{symm-lie}} therefore implies
\begin{proposition}\label{gen-sym-R}
  If $G=G(m,1,n)$, then
\begin{displaymath}
l(\So(\hf)^{S_n})  \mod \Dc^0\cap (\Dc\cdot \Dc^-)  =  \Rc.
\end{displaymath}
\end{proposition}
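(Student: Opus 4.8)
The plan is to read the left-hand side as the associative subalgebra of $\Dc^0$ generated by the degree-zero power differential operators, and then to quote the surjectivity already built into \Proposition{\ref{gen-R}}. Since $e=1$ we have $m=d$ and $G=G(m,1,n)=\bar G$, so $\Dc=\Dc_B^G=\bar\Dc_n$, and the general machinery of \Proposition{\ref{gen-R}} is available with $\af=\bar\af'$: indeed $\bar\af'$ is a graded Lie subalgebra of $\Dc$ containing $\bar\af=\text{Lie}\langle f_i(x),f_i(\partial)\rangle=\text{Lie}\langle f_i,g_j\rangle$ by \Proposition{\ref{symm-lie}}~(1).

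First I would unwind $l(\So(\hf)^{S_n})$. The Cartan $\hf$ is abelian with basis $\nabla_1,\dots,\nabla_n$, so $\So(\hf)=\Cb[\nabla_1,\dots,\nabla_n]$, and because the operators $\nabla_i=x_i\partial_i$ commute in $\Dc^0$, the map $l$ sends $\So(\hf)$ isomorphically onto the commutative subring $\Cb[\nabla_1,\dots,\nabla_n]\subset\Dc^0$; hence $l(\So(\hf)^{S_n})=\Cb[\nabla_1,\dots,\nabla_n]^{S_n}$ inside $\Dc^0$. Next, \Proposition{\ref{gen-R}}~(3) furnishes a surjection $\Uo((\bar\af')^0)\twoheadrightarrow\Rc$ which by construction factors as $\Uo((\bar\af')^0)\xrightarrow{\jo}\Dc^0\to\Rc$, the last arrow being the quotient by $\Dc^0\cap(\Dc\Dc^-)$. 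The image of $\jo$ here is exactly the associative subalgebra of $\Dc^0$ generated by the Lie algebra $(\bar\af')^0$, which by \Proposition{\ref{symm-lie}}~(3) is spanned by the operators $\sum_i x_i^k\partial_i^k=\sum_i p_k(\nabla_i)$ (using \Lemma{\ref{lemma:nabla}}); since power sums generate the ring of symmetric functions, this subalgebra is precisely $\Cb[\nabla_1,\dots,\nabla_n]^{S_n}=l(\So(\hf)^{S_n})$.

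Putting the two identifications together, the composite $l(\So(\hf)^{S_n})\hookrightarrow\Dc^0\to\Rc$ is surjective, which is exactly the claimed equality $l(\So(\hf)^{S_n})\bmod(\Dc^0\cap(\Dc\Dc^-))=\Rc$. The one point I would take care over---and the place I expect the only friction---is the last identification: the vector space $(\bar\af')^0$ is spanned by the individual power sums and is a proper subspace of $\Cb[\nabla_1,\dots,\nabla_n]^{S_n}$, so it is the associative algebra it generates, rather than its span, that maps onto $\Rc$. The enveloping-algebra formulation of \Proposition{\ref{gen-R}}~(3) is precisely what supplies this generated algebra, so no further work beyond bookkeeping is needed.
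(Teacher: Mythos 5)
Your proof is correct and follows essentially the paper's own route: the authors likewise deduce this proposition by combining \Proposition{\ref{gen-R}}~(3) (the PBW/Levasseur--Stafford surjection $\Uo(\af^0)\to\Rc$, applied with $\af=\bar\af'$) with \Proposition{\ref{symm-lie}}~(3) and the identification $l(\So(\hf)^{S_n})=\Cb[\nabla_1,\dots,\nabla_n]^{S_n}$. Your closing caveat in fact makes explicit a point the paper glosses over, since $(\bar\af')^0$ is literally only the linear span of the operators $\sum_i x_i^k\partial_i^k$, and it is the enveloping-algebra formulation that supplies the full symmetric polynomial ring mapping onto $\Rc$.
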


We may also use $l$ and invariant theory of commutative rings, to find algebra generators of
$ \Dc^0$. (This will be the method used in \Section{\ref{cycl-dihedral}}, for the cyclic and
dihedral groups.)

The natural order filtration $\{\Dc_n(V)\}$ of $\Dc(V)$ is $G$-invariant,
$G\cdot \Dc_n(V)\subset \Dc_n(V)$ and therefore induces a filtration $\Dc_n^0=\Dc_n(V)^G \cap \Dc^0$
of the subring $\Dc^0\subset \Dc \subset \Dc(V)$.  Similarly, the enveloping algebra $\Uo((\gl(V))$
is also provided with a natural filtration $\{\Uo_n(\gl(V))\}$ such that
$G\cdot \Uo_n(\gl(V)) \subset \Uo_n(\gl(V))$, and thus induces a filtration $\{\Uo_n(\gl(V))^G\}$ of
$\Uo(\gl(V))^G$.  Put 
$$\gr^\bullet (\Dc^0)= \bigoplus_{n\geq -1} \frac{\Dc_{n+1}^0}{\Dc_n^0} $$ (where
$\gr^0(\Dc^0) = \Sc(V)^G$ ). By the   Poincar{\'e}-Birkhoff-Witt theorem
$$\bigoplus_{n\geq -1}  \frac{\Uo_{n+1}(\gl(V))}{\Uo_n(\gl(V))} = \So^\bullet (\gl (V)),$$ and therefore
$$ \bigoplus_{n\geq -1} \frac{\Uo_{n+1} (\gl(V))^G}{\Uo_{n} (\gl(V))^G} = \So^\bullet (\gl (V))^G,$$since
taking $G$-invariants is an exact functor.  By the same reason,
we have that $$l(\Uo_n(\gl(V))^G) = \Dc_n^0,$$so there is a surjective homomorphism 
  \begin{displaymath}
    l^{gr}:    \So^\bullet (\gl (V))^G\to \gr^\bullet (\Dc^0).
  \end{displaymath}
 Since a homogeneous set of generators of $\gr^\bullet (\Dc^0)$
  can be lifted to generators of $\Dc^0$, we conclude that to get generators of $\Dc^0$ it suffices
  to compute generators of $ \So^\bullet (\gl (V))^G= \So^\bullet (V \otimes_{\Cb} V^*)^G $, where
  $G$ acts diagonally on $V\otimes_{\Cb} V^*$.  We summarize this in a lemma:
\begin{lemma}\label{lemma:gr} Let $\{\bar a_i\}_{i\in I}$   be a set of 
  homogeneous elements in $\So(V\otimes_{\Cb} V^*)^G$ that generates $\So(V\otimes_{\Cb} V^*)^G$, and
  let $\{ a_i \}_i \in I$ be a subset of $\Uo((\gl(V))$ such that $a_i$ represents $\bar a_i$. Then
  $\{l(a_i)\}_{i\in I}$ is a generating subset of $\Dc^0$.
\end{lemma}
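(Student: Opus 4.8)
The plan is to reduce the statement to the surjective graded homomorphism $l^{gr}\colon \So^\bullet(\gl(V))^G \to \gr^\bullet(\Dc^0)$ constructed just above, and then to invoke the standard principle that a generating set of an associated graded algebra lifts to a generating set of the filtered algebra. Concretely, I would show first that $\{l^{gr}(\bar a_i)\}$ generates $\gr^\bullet(\Dc^0)$, then that each $l(a_i)$ is a genuine filtered lift of $l^{gr}(\bar a_i)$, and finally run an induction on the order filtration.

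First I would record the two compatibilities that make the argument work. Since $\{\bar a_i\}_{i\in I}$ generates $\So(V\otimes_\Cb V^*)^G = \So^\bullet(\gl(V))^G$ as a $\Cb$-algebra and $l^{gr}$ is a surjective algebra homomorphism, the family $\{l^{gr}(\bar a_i)\}_{i\in I}$ generates $\gr^\bullet(\Dc^0)$ as a graded $\Cb$-algebra. Next, because $a_i \in \Uo(\gl(V))^G$ represents the homogeneous element $\bar a_i$ of degree $d_i$, we have $a_i \in \Uo_{d_i}(\gl(V))^G$ with principal symbol $\bar a_i$; applying the filtered surjection $l$, which satisfies $l(\Uo_n(\gl(V))^G) = \Dc_n^0$, gives $l(a_i) \in \Dc_{d_i}^0$ whose symbol in $\gr^{d_i}(\Dc^0)$ is exactly $l^{gr}(\bar a_i)$. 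Thus each $l(a_i)$ is a filtered lift of the corresponding graded generator.

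It remains to carry out the lifting induction on the order filtration, which is exhaustive, with $\Dc^0 = \bigcup_n \Dc_n^0$ and $\Dc_n^0 = 0$ for $n<0$, and multiplicative, $\Dc_a^0 \cdot \Dc_b^0 \subset \Dc_{a+b}^0$. Given $P \in \Dc_N^0$, its symbol in $\gr^N(\Dc^0)$ is by the first compatibility a homogeneous polynomial of degree $N$ in the generators $l^{gr}(\bar a_i)$; substituting $l(a_i)$ (in any fixed order) into the monomials of this polynomial yields an element $Q$ of the subalgebra generated by the $l(a_i)$, lying in $\Dc_N^0$ by multiplicativity, and—since the symbol of a product equals the product of the symbols in the commutative ring $\gr^\bullet(\Dc^0)$, so the chosen ordering is immaterial—having the same symbol as $P$. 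Hence $P - Q \in \Dc_{N-1}^0$, and induction on $N$ (the base case $N\le 0$ being constants) expresses $P$ as a polynomial in the $l(a_i)$, which proves that $\{l(a_i)\}_{i\in I}$ generates $\Dc^0$.

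The argument is essentially formal once the surjectivity of $l^{gr}$ is in hand, so I do not expect a serious obstacle; the only points demanding care are the compatibility of $l$ with the two filtrations (already recorded as $l(\Uo_n(\gl(V))^G) = \Dc_n^0$) and the reading of ``represents'' in the filtered sense, namely that $a_i$ sits in filtration degree equal to the degree of the homogeneous element $\bar a_i$, so that its symbol is genuinely $\bar a_i$ rather than a lower-order correction. With these fixed, the induction closes.
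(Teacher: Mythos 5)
Your proposal is correct and follows essentially the same route as the paper, which reduces the lemma to the surjectivity of $l^{gr}:\So^\bullet(\gl(V))^G\to\gr^\bullet(\Dc^0)$ together with the compatibility $l(\Uo_n(\gl(V))^G)=\Dc_n^0$ and then invokes the standard principle that homogeneous generators of the associated graded lift to generators of the filtered ring. You merely spell out the lifting induction (and the sensible reading that the representatives $a_i$ be taken $G$-invariant, which is harmless since $G$ is finite and averaging preserves the symbol) that the paper leaves implicit.
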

\begin{remark}
  The map $l^{gr}$ is in fact a homomorphism of Poisson algebras. Using the Poisson product one can
  sometimes, for example when $G$ is a Weyl group with no factors of type $E_m$, prove that
  generators of the subrings $\So^\bullet (V)^G$ and $\So^\bullet (V^*)^G$ together generate the
  Poisson algebra $\So^\bullet (V \oplus V^*)^G$, which then can be lifted to generators of
  $\Dc(V)^G$; see \cite{wallach:invariantdiff, levasseur-stafford:invariantdiff}.  For
  $G=A(e,e,1)\subset \Glo(\Cb^1)$ one does not get generators of the whole Poisson algebra in this
  way, but even so lifts of the generators of $\So^\bullet (V)^G$ and $\So^\bullet (V^*)^G$ do give
  generators of $\Dc(V)^G$; see \cite[p.371]{levasseur-stafford:invariantdiff}.
\end{remark}

\subsection{Gelfand models and $\Rc$}\label{gelfand-section}
The space $B^{ann}$ has been considered by other authors, under the name of the polynomial model, in
the context of finding {\it Gelfand models}\/ of a finite group $G$ (see e.g. \cite{generalized,
  polynomial,gelfand.garge}). Such a model is defined to be a $G$-representation that is a direct
sum of a representative of each isotypical class in $\hat G$. Now, by \Proposition{\ref{thm:montgomery}}, we have
\begin{displaymath}
  B^{ann}\cong \bigoplus_{\chi \in \hat G } V_\chi \otimes N_\chi^{ann},
\end{displaymath}
so $B^{ann}$ is a Gelfand model if and only if, for all $\chi\in \hat G$, $N_\chi^{ann}$ is a
1-dimensional complex vector space. We note the relation to fake degree \cite[5.3.3]{geck}, which is
the   Poincar{\'e} polynomial $P_\chi(t)=\sum \dim_{\Cb} \bar N_\chi(i) t ^i$ of the graded vector space
$\bar N_\chi:=N_\chi/\mf_A N_\chi =\oplus_i \bar N_\chi(i)$. In terms of the fake degree, $B^{ann}$ is a Gelfand model
if and only if, the least non-zero coefficient of all $P_\chi(t)$ is $1$.  The fake degree has been
calculated for Coxeter groups; this is used in \cite{gelfand.garge} to give a uniform proof of the
fact that $B^{ann}$ is a Gelfand model when $G$ is a finite Coxeter group not of type
$D_{2n},\ n\geq 2$, $E_7$ or $E_8$.

 We do know by the correspondence above in \Corollary{\ref{cor:d0eqconcrete}} that $N_\chi^{ann}$ is
 a simple $\Rc$-module. Since the only simple modules over a commutative algebra over
 $\Cb$ are 1-dimensional, we have the following immediate result, that gives a simple proof of
 a main result in \cite{generalized}.

\begin{theorem}
\label{thm:rcomm}If $\Rc$ is commutative, then
 $B^{ann}$ is a Gelfand model of $G$.  In particular this is true for $G=G(m,1,n)$.
\end{theorem}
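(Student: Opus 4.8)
The plan is to leverage the decomposition
$B^{ann}\cong \bigoplus_{\chi\in\hat G} V_\chi\otimes N_\chi^{ann}$
recorded just before the statement, which shows that $B^{ann}$ is a Gelfand model precisely when every factor $N_\chi^{ann}$ is one-dimensional over $\Cb$. Thus the whole first assertion reduces to controlling the dimension of the lowest-weight spaces $N_\chi^{ann}$ attached to the simple $\Dc$-modules $N_\chi$ of \Proposition{\ref{thm:montgomery}}.

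First I would invoke \Corollary{\ref{cor:d0eqconcrete}}: under the isomorphism of categories $N\mapsto N^{ann}$, each simple $\Dc$-module $N_\chi$ (realized as a submodule of $B$) corresponds to the simple $\Rc$-submodule $N_\chi^{ann}=\Ann_{\Dc^-}(N_\chi)$ of $B^{ann}$, and moreover $B^{ann}$ is finite-dimensional. Hence every $N_\chi^{ann}$ is a finite-dimensional simple module over the commutative $\Cb$-algebra $\Rc$. The key elementary step is then that such a module must be one-dimensional: writing $N_\chi^{ann}\cong \Rc/\mathfrak{m}$ for a maximal ideal $\mathfrak{m}$, the quotient is a field that is finite-dimensional over the algebraically closed field $\Cb$, hence equal to $\Cb$. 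This yields $\dim_\Cb N_\chi^{ann}=1$ for every $\chi$, and together with the displayed decomposition this finishes the first assertion. The only point needing care is the finite-dimensionality of $B^{ann}$, but that is exactly \Corollary{\ref{cor:d0eqconcrete}}(1), so there is no genuine obstacle.

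Finally, to obtain the statement for $G=G(m,1,n)$, I would verify that $\Rc$ is commutative in this case and then apply the first part verbatim. Taking $\af=\bar\af'$ as the graded Lie subalgebra in \Proposition{\ref{gen-R}}, part (3) of \Proposition{\ref{symm-lie}} identifies its degree-zero piece as $\af^0=\Cb[\nabla_1,\dots,\nabla_n]^{S_n}$, which is commutative. By \Proposition{\ref{gen-R}}(4), commutativity of $\af^0$ forces $\Rc$ to be commutative. In short, the entire argument is a formal consequence of the category equivalence of \Corollary{\ref{cor:d0eqconcrete}} together with the explicit commutativity of $\af^0$ for $G(m,1,n)$; the substantive work has already been carried out in establishing those results, so the proof here is essentially a one-line deduction.
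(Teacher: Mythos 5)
Your proposal is correct and follows the paper's own argument essentially verbatim: it uses the same decomposition $B^{ann}\cong\bigoplus_{\chi}V_\chi\otimes N_\chi^{ann}$, identifies $N_\chi^{ann}$ as a simple $\Rc$-module via \Corollary{\ref{cor:d0eqconcrete}}, concludes one-dimensionality from commutativity of $\Rc$ over $\Cb$, and obtains commutativity for $G(m,1,n)$ from \Proposition{\ref{symm-lie}} together with \Proposition{\ref{gen-R}}, exactly as the paper does via \Proposition{\ref{gen-sym-R}}. The only quibbles are cosmetic: the commutativity transfer is item (3), not (4), of \Proposition{\ref{gen-R}} (a numbering slip the paper itself also makes), and your explicit $\Rc/\mathfrak{m}$ argument, using finite-dimensionality of $B^{ann}$ and algebraic closedness of $\Cb$, is if anything slightly more careful than the paper's one-line assertion.
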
 
  \begin{remark}
    The quotient $\Rc \to\bar \Rc = \Dc^0/(\Ann_{\Dc^0}(B^{ann}))= \End_{G}(B^{ann})$, implying that
    $B^{ann}$ is a Gelfand model if and only if $\bar \Rc $ is commutative. One may ask whether the same connection between
    commutativity and the fact that $B^{ann} $ is a Gelfand model holds for $\Rc$, as for $\bar\Rc$.
    When $G= G(e,e,2)$, a dihedral group, we prove in \Proposition{\ref{prop:dihedral1}} that $\Rc$
    is commutative and hence $B^{ann}$ is a Gelfand model, but on the other hand we will see in
    \Proposition{\ref{prop:cyclic3}} that $\Rc$ is not commutative for the action of a cyclic group
    on $\Cb^2$, and that $B^{ann}$ is then not a Gelfand model.
\end{remark}

\section{Macdonald-Lusztig-Spaltenstein restriction for D-modules}
In (\ref{mls-section}) we present the MLS-restriction functor, which generalises and clarifies a
construction in group theory. This is applied to the case of generalized symmetric groups in
(\ref{gen-symm-modules}), after a  discussion in (\ref{section:partition}) of the polynomials $B$
considered as a module over the ring $R_n=\Cb [x_1\partial_1, \ldots , x_n\partial_n]^{S_n} $.
\subsection{MLS-restriction}\label{mls-section}
Suppose we have an inclusion of graded algebras $ \Dc_2\subset \Dc_1$ of the type
in \Section{\ref{abstract-eq}}, and a $\Dc_1$-module $M$ which is semisimple both as $\Dc_1$- and
$\Dc_2$-module, and moreover that $\Ann_{\Dc_2^{-}}(M)$ generates $M$ over $\Dc_2$. Denote the
category of $\Dc_i$-submodules of $M$ by $\Mod_{\Dc_i}(M)$, $i=1,2$, and similarly the equivalent
categories of $\Dc^0_i$-submodules of $M_i^{ann}:=\Ann_{\Dc^-_i}(M)$ by $\Mod_{\Dc^0_i}(M_i^{ann})$.
Since $ \Dc_2^-\subset \Dc_1^-$ we have
 $$M_1^{ann}\subset  M_2^{ann}$$
 and since also $ \Dc_2^0\subset \Dc_1^0$, there is a restriction
 functor $$\Mod_{\Dc^0_1}(M_1^{ann})\to \Mod_{\Dc^0_2}(M_2^{ann})$$ that takes $V\subset M_1^{ann}$
 to $V\subset M_2^{ann}$. This functor corresponds by the category equivalence in \Corollary{\ref{cor:d0eqconcrete}} to the functor
 $$ \Jo_+: \Mod_{\Dc_1}(M)\to \Mod_{\Dc_2}(M), \quad N\mapsto \Dc_2 \cdot N_1^{ann}.$$
\begin{theorem} \label{prop:MLS} Let $N$ be a $\Dc_1$-submodule of $M$.
\begin{enumerate} 
\item If the restriction of $N_1^{ann}$ to a $\Dc_2^0$-module is simple, then $\Jo_+(N)$ is a simple
  $\Dc_2$-module.  This holds for example if $N_1^{ann}$ is a 1-dimensional complex vector space.
\item  If $N_1^{ann}$ is regarded as a $\Dc_2^0$-module by restriction, then
$$ N_1^{ann}=  (\Jo_+(N))_2^{ann}.$$
\end{enumerate}
 \end{theorem}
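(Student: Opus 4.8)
The plan is to deduce both parts from the isomorphism of categories of \Theorem{\ref{main}} applied to the smaller ring $\Dc_2$ and the module $M$. First I would verify that the hypotheses of \Theorem{\ref{main}} hold for $\Dc_2$ acting on $M$: by assumption $M$ is semisimple as a $\Dc_2$-module, it is semisimple over $\nabla$ since the modules here are graded with $\nabla$ acting as the degree operator, and the condition $\delta\circ\ell(M)=M$ is precisely the standing hypothesis that $M_2^{ann}=\Ann_{\Dc_2^-}(M)$ generates $M$ over $\Dc_2$. Thus $\ell_2\colon N\mapsto \Ann_{\Dc_2^-}(N)$ and $\delta_2\colon V\mapsto \Dc_2 V$ are mutually inverse isomorphisms between $\Mod_{\Dc_2}(M)$ and $\Mod_{\Dc_2^0}(M_2^{ann})$, and this isomorphism preserves the lattice of submodules, hence carries simple objects to simple objects.

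The second ingredient is to observe that $N_1^{ann}$ is naturally an object of $\Mod_{\Dc_2^0}(M_2^{ann})$: its elements lie in $N\subset M$ and are annihilated by $\Dc_1^-\supset\Dc_2^-$, so $N_1^{ann}\subset M_2^{ann}$, and since $N_1^{ann}$ is a $\Dc_1^0$-submodule with $\Dc_2^0\subset\Dc_1^0$, it is closed under $\Dc_2^0$. With this identification in place, the functor $\Jo_+$ is exactly $\delta_2$ applied to $N_1^{ann}$ viewed in $M_2^{ann}$, that is $\Jo_+(N)=\Dc_2\cdot N_1^{ann}=\delta_2(N_1^{ann})$, as already recorded in the paragraph that defines $\Jo_+$.

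Part (2) is then one line: applying $\ell_2$ and using $\ell_2\circ\delta_2=\id$ yields $(\Jo_+(N))_2^{ann}=\ell_2(\delta_2(N_1^{ann}))=N_1^{ann}$. For part (1), since the isomorphism of categories sends simple $\Dc_2^0$-submodules of $M_2^{ann}$ to simple $\Dc_2$-submodules of $M$ (cf.\ \Corollary{\ref{cor:d0eqconcrete}}(3)), simplicity of the restriction of $N_1^{ann}$ forces $\Jo_+(N)=\delta_2(N_1^{ann})$ to be a simple $\Dc_2$-module; the parenthetical case is immediate because a one-dimensional vector space is a simple module over every ring, in particular over $\Dc_2^0$.

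I expect the only genuinely delicate point to be the verification that \Theorem{\ref{main}} applies to $\Dc_2$—that $M$ is semisimple over $\nabla$ and that $M_2^{ann}$ generates $M$ over $\Dc_2$—since once the equivalence for $\Dc_2$ is available, both assertions are purely formal consequences of $\ell_2\circ\delta_2=\id$ and the preservation of simplicity under the category isomorphism, after one recognizes that $\Jo_+=\delta_2\circ(\text{restriction})\circ\ell_1$.
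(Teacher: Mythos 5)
Your proposal is correct and follows essentially the same route as the paper: both reduce the statement to \Theorem{\ref{main}} applied to $\Dc_2$ acting on $M$ (the standing hypotheses of the section guarantee semisimplicity and that $M_2^{ann}$ generates $M$), identify $\Jo_+(N)=\delta_2(N_1^{ann})$ after noting $N_1^{ann}\subset M_2^{ann}$, and then read off (2) from $\ell_2\circ\delta_2=\id$ and (1) from the preservation of simplicity under the category isomorphism. Your write-up merely makes explicit the verifications the paper leaves implicit.
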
 
Notice that the assumption in (1) implies, by
\Theorem{\ref{main}}, that $N$ is simple.
\begin{proof} 
  (1): Since $N_1^{ann}$ is a simple $\Dc_2^0$-module, it follows by
  \Theorem{\ref{main}} that $ \Dc_2 \cdot N_1^{ann}$ is a simple
  $\Dc_2$-module.

  (2): By definition, $\Jo_+$ is just restriction from $\Dc_1^0$ to
  $\Dc_2^0$ on submodules of $M^{ann}$, so that using the category
  equivalence of \Theorem{\ref{main}} we get
  $(\Jo_+(N))_2^{ann} =(\Dc_2 \cdot N_1^{ann})_2^{ann} =N_1^{ann}$.
 \end{proof}

We apply the above construction to the ring $B=\So(V)$, where $V$ is a representation of a finite
group $G$, and $H\subset G$ is a subgroup, so that $B$ is both a $G$- and $H$-representation. Set
$\Dc_B=\Dc(V)$ and
\begin{displaymath}
  \Dc_2=\Dc_B^G\subset \Dc_1=\Dc_B^H
\end{displaymath}
Letting  $M=B$, by  \Corollary{\ref{cor:d0eqconcrete}}  we are
  in the above situation, and the functor $\Jo_+$  is given by: 
\begin{definition}\label{mls}  Define the functor
  \begin{displaymath}
 \Jo_H^G: \Mod_{\Dc_1}(B)\to \Mod_{\Dc_2}(B),\quad N \mapsto \Jo_H^G (N)=\Dc_2\cdot \Ann_{\Dc_1^-}(N). 
\end{displaymath}
We call $\Jo_H^G$ the differential  MLS-restriction.  \end{definition}

We record the  behaviour of the differential MLS-restriction in a chain of subgroups. 
\begin{lemma}\label{chain-lemma} Let $G_2 \subset G_1 \subset G$ be an inclusion of finite groups  and $V$ be a 
  representation of $G$. Then we have 
  \begin{displaymath}
    \Jo_{G_2}^G = \Jo^G_{G_1} \circ \Jo_{G_2}^{G_1}.
  \end{displaymath}
  
  \end{lemma}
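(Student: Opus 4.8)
The plan is to reduce the transitivity to the single fact recorded in \Theorem{\ref{prop:MLS}}(2)---that differential MLS-restriction leaves the lowest-weight space unchanged---and then to apply the two functors one after the other.

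First I would record the basic inclusions. From $G_2\subset G_1\subset G$ one gets the reversed chain of invariant rings $\Dc_B^{G}\subset \Dc_B^{G_1}\subset \Dc_B^{G_2}$, and by \Proposition{\ref{thm:montgomery}}, applied separately to $G_2$, $G_1$ and $G$, the polynomial ring $B$ is a semisimple module over each of them. Hence the hypotheses of \Section{\ref{mls-section}} are met for both pairs $\Dc_B^{G}\subset \Dc_B^{G_1}$ and $\Dc_B^{G_1}\subset \Dc_B^{G_2}$, so that $\Jo^{G_1}_{G_2}$ and $\Jo^{G}_{G_1}$ are defined and the composite $\Jo^{G}_{G_1}\circ \Jo^{G_1}_{G_2}$ makes sense. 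In particular $\Jo^{G_1}_{G_2}(N)=\Dc_B^{G_1}\cdot \Ann_{(\Dc_B^{G_2})^-}(N)$ is a genuine $\Dc_B^{G_1}$-submodule of $B$, which is exactly what $\Jo^G_{G_1}$ takes as input.

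Next I would fix a $\Dc_B^{G_2}$-submodule $N\subset B$ and abbreviate $W=\Ann_{(\Dc_B^{G_2})^-}(N)$, so that by \Definition{\ref{mls}} one has $\Jo^{G_1}_{G_2}(N)=\Dc_B^{G_1}\cdot W$ and $\Jo^{G}_{G_2}(N)=\Dc_B^{G}\cdot W$. The key step is then to apply \Theorem{\ref{prop:MLS}}(2) to the restriction $\Jo^{G_1}_{G_2}$, in which $\Dc_B^{G_2}$ plays the role of the larger ring and $\Dc_B^{G_1}$ that of the smaller ring. This identifies the $\Dc_B^{G_1}$-lowest-weight space of the output with the $\Dc_B^{G_2}$-lowest-weight space of the input, namely
\[
\Ann_{(\Dc_B^{G_1})^-}\bigl(\Jo^{G_1}_{G_2}(N)\bigr)=W.
\]
Feeding this into the definition of $\Jo^G_{G_1}$ gives $\Jo^G_{G_1}\bigl(\Jo^{G_1}_{G_2}(N)\bigr)=\Dc_B^{G}\cdot \Ann_{(\Dc_B^{G_1})^-}(\Jo^{G_1}_{G_2}(N))=\Dc_B^{G}\cdot W=\Jo^{G}_{G_2}(N)$, which is the desired equality on objects. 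Since under the category isomorphism of \Theorem{\ref{main}} each $\Jo$ is simply the restriction functor on submodules of the relevant annihilator space, it acts as the identity on morphisms, and so the two functors agree.

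I do not expect a substantial obstacle here: once \Theorem{\ref{prop:MLS}}(2) is invoked the computation is immediate. The one point requiring care is the bookkeeping of roles---keeping track of which ring is ``larger'' and with respect to which ring each annihilator is formed---so that the nested annihilator spaces $\Ann_{(\Dc_B^{G_2})^-}(B)\subset \Ann_{(\Dc_B^{G_1})^-}(B)\subset \Ann_{(\Dc_B^{G})^-}(B)$ are used in the correct direction. Conceptually, transitivity is just the statement that the composite of the two ``view $W$ in the next-larger annihilator space'' inclusions is the single inclusion $\Ann_{(\Dc_B^{G_2})^-}(B)\subset \Ann_{(\Dc_B^{G})^-}(B)$.
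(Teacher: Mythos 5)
Your proof is correct and follows essentially the same route as the paper: both reduce the claim to the identity $\Ann_{(\Dc_B^{G_1})^-}\bigl(\Dc_B^{G_1}\cdot W\bigr)=W$ for $W=\Ann_{(\Dc_B^{G_2})^-}(N)$ and then compute $\Jo^G_{G_1}\circ\Jo^{G_1}_{G_2}(N)=\Dc_B^G\cdot W=\Jo^G_{G_2}(N)$. The only cosmetic difference is that you invoke \Theorem{\ref{prop:MLS}}(2) for this identity, while the paper cites \Corollary{\ref{cor:d0eqconcrete}} directly--but \Theorem{\ref{prop:MLS}}(2) is itself just that corollary restated, so the arguments coincide.
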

  \begin{proof}
    Letting $\Dc=\Dc_B^{G}\subset \Dc_2=\Dc_B^{G_1}\subset \Dc_1=\Dc_B^{G_2}$ we have
    \begin{displaymath}
      \Jo^G_{G_1} \circ \Jo_{G_2}^{G_1}(N) = \Dc \Ann_{\Dc_1^-}(\Dc_1 \Ann_{\Dc_2^-}(N)) = \Dc
      \Ann_{\Dc_2^-}(N)=    \Jo_{G_2}^G(N),
    \end{displaymath}
  where the second equality follows from \Corollary{\ref{cor:d0eqconcrete}} since the $\Dc_1^0$-module $\Ann_{\Dc_2^-}(N)$
  is by restriction $\Dc_1^0 \subset \Dc_2^0$.
\end{proof}

The terminology is motivated by the fact that $\Jo_H^G$ is closely
related to Macdonald-Lusztig-Spaltenstein induction for group
representations, which is defined in the following manner
\cite{macdonald-weyl,Lusztig-Spaltenstein}; see also \cite[5.2]{geck},
and in particular for the construction of the representations of $S_n$
and the generalized symmetric group, see \cite[5.4]{geck} and
\cite{Ariki}. Suppose that $W$ is a representation of $H$ with
$W$-isotypic component $B_W$ in $B$. Let $\dop_W$ be the least integer
such that the homogeneous component ${B^{\dop_W}_W}$ of degree
$\dop_W$ is nonzero, and assume that ${B^{\dop_W}_W}$ is isomorphic to
$W$. Then the MLS-induced representation is defined as the isomorphism
class of the representation
\begin{displaymath}
\jo_H^G(W)= \Cb[G]B^{\dop_W}_W\subset B,
\end{displaymath}
and it is not difficult to prove that it is an irreducible $G$-representation (see [loc.\  cit.] or
\Proposition{\ref{prop:MLSrelation}} below).  In this way one gets a partially defined map
$\jo_H^G: \hat H\to \hat G$.

Note that it makes sense to extend the definition by dropping the condition
${B^{\dop_W}_W} \cong W$, but then $\Cb[G]{B^{\dop_W}_W}$ need not be irreducible. However, using
the $\Dc^0$-module structure one may still keep track of the decomposition of $\jo_H^G(W)$, as described
in the following proposition. We use the notation in \Definition{\ref{mls}}.
\begin{proposition}
  \label{prop:MLSrelation} Let $W$ be an irreducible representation over $H$ and $N$ be a
  $\Dc_1$-module such that $N\sim_H W$ \Prop{\ref{thm:montgomery}}. Assume that the restriction of
  $N^{ann}=\Ann_{\Dc_1^-}(N)$ to a $\Dc_2^0$-module is simple, and put
  $r=\dim_{\Cb}N^{ann}$. Then we have:
\begin{enumerate}
\item $\jo_H^G(W)=(W_1)^r$, where $W_1$ is an irreducible $G$-representation,
\item  $W_1\sim_G \Jo_H^G(N)$, so if $r=1$, then $\jo_H^G(W)\sim_G \Jo_H^G(N)$.
\end{enumerate}
\end{proposition}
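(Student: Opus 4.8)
The plan is to express both the group-theoretic induction $\jo_H^G(W)$ and the differential restriction $\Jo_H^G(N)$ through the single subspace $N^{ann}=\Ann_{\Dc_1^-}(N)$, by applying the tensor factorization of \Proposition{\ref{thm:montgomery}}(2) twice---once for $H$ and once for $G$---and then comparing $\Cb[G]$-spans. The key bridge is the identity $\jo_H^G(W)=\Cb[G]\cdot N^{ann}$, after which everything follows from \Theorem{\ref{prop:MLS}}.

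First I would pin down the degree of $N^{ann}$. Since $N\sim_H W$, the $H$-isotypic component is $B_W\cong W\otimes_\Cb N$ as a $\Dc_1[H]$-module, with $H$ acting on $W$ and $\Dc_1$ on $N$; as $W$ carries no grading, the lowest degree of $B_W$ is the lowest degree of $N$, namely $\dop_W$. By \Corollary{\ref{cor:d0eqconcrete}}(4) the space $N^{ann}$ lives in a single degree, and since the lowest homogeneous elements of $N$ are killed by $\Dc_1^-$, that degree is $\dop_W$ and $N^{ann}$ is exactly the bottom homogeneous piece of $N$. Realizing the concrete submodule as $\Cb w_0\otimes N\subset W\otimes_\Cb N$ with $0\neq w_0\in W$, I get $N^{ann}=\Cb w_0\otimes N^{ann}$ and $B_W^{\dop_W}=W\otimes N^{ann}$. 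Irreducibility of $W$ gives $\Cb[H]w_0=W$, hence the bottom piece satisfies $B_W^{\dop_W}=\Cb[H]\cdot N^{ann}$; feeding this into the classical definition and using $\Cb[G]\,\Cb[H]=\Cb[G]$ yields
\[
\jo_H^G(W)=\Cb[G]\cdot B_W^{\dop_W}=\Cb[G]\cdot N^{ann}.
\]

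Next I would run the matching analysis on the $G$-side. The hypothesis that $N^{ann}$ is simple over $\Dc_2^0$ lets \Theorem{\ref{prop:MLS}}(1) conclude that $\Jo_H^G(N)=\Dc_2\cdot N^{ann}$ is a simple $\Dc_2$-module, so by \Proposition{\ref{thm:montgomery}} there is a unique irreducible $G$-representation $W_1$ with $W_1\sim_G\Jo_H^G(N)$; this already gives (2). Moreover \Theorem{\ref{prop:MLS}}(2) identifies the subspaces $\Ann_{\Dc_2^-}(\Jo_H^G(N))=N^{ann}$ inside $B$. Writing $\Jo_H^G(N)=\Cb u_0\otimes M$ inside $B_{W_1}\cong W_1\otimes_\Cb M$ (again by \Proposition{\ref{thm:montgomery}}(2), now for $G$), this identity reads $N^{ann}=\Cb u_0\otimes M^{ann}$, so that
\[
\Cb[G]\cdot N^{ann}=(\Cb[G]u_0)\otimes M^{ann}=W_1\otimes M^{ann}\cong (W_1)^{r},
\]
where $r=\dim_\Cb N^{ann}=\dim_\Cb M^{ann}$. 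Combining this with the previous display proves (1), and the case $r=1$ collapses to $\jo_H^G(W)\sim_G\Jo_H^G(N)$.

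I do not expect a deep obstacle here; the work is entirely bookkeeping two independent tensor factorizations and checking that the same concrete subspace $N^{ann}\subset B$ appears as a line times a lowest-weight space on each side. The one step deserving genuine care is the claim that the bottom homogeneous piece of the simple module $N$ is precisely $N^{ann}$ and sits in the degree $\dop_W$ singled out by the classical construction; this rests on \Corollary{\ref{cor:d0eqconcrete}}(4), and it is also what makes the dropped hypothesis $B_W^{\dop_W}\cong W$ of the classical definition correspond exactly to the multiplicity $r=1$.
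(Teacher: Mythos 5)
Your proposal is correct and follows essentially the same route as the paper's own proof: both identify $N^{ann}$ with the lowest homogeneous component $N^{\dop_W}$, invoke Theorem~\ref{prop:MLS} to get simplicity of $\Jo_H^G(N)=\Dc_2\cdot N^{ann}$ together with $(\Jo_H^G(N))^{ann}=N^{ann}$, and then read off $\jo_H^G(W)=W_1\otimes_{\Cb}N^{ann}\cong (W_1)^r$ from the tensor decomposition of Proposition~\ref{thm:montgomery}. Your write-up only makes explicit the bookkeeping (e.g.\ $B_W^{\dop_W}=\Cb[H]\cdot N^{ann}$ and the single-degree claim via Corollary~\ref{cor:d0eqconcrete}) that the paper leaves implicit.
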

Notice that the condition on $N^{ann}$  is trivially satisfied when $r=1$.
\begin{proof} That $N\sim_H W$ means that the $W$-isotypic component $B_W$ of $B$ is
    isomorphic to $W\otimes_\Cb N$ as a $\Dc_1[H]$-module, for some (simple) $\Dc_1$-submodule $N\subset B$
    \Prop{\ref{thm:montgomery}}.   Since $N^{ann}= N^{\dop_W}$ is a simple $\Dc_2^0$-module,
    \Theorem{\ref{prop:MLS}} implies that the $\Dc_2$-module $N_1=\Jo_H^G(N)=\Dc_2N^{\dop_W}\subset B$ is simple; hence by
    \Proposition{\ref{thm:montgomery}}
  \begin{displaymath}
    \Cb[G]N_1 \cong W_1\otimes_{\Cb} N_1
  \end{displaymath}
  for some irreducible $G$-representation $W_1$. In particular
  $\jo_H^G(W)=W_1\otimes_{\Cb} N^{\dop_W}$, since
  $N_1^{\dop_W}= N_1^{ann} = N^{ann}=N^{\dop_W} $.  This implies (1)
  and (2).  \end{proof}

One should note the slight conceptual difference: MLS-restriction of $\Dc$-modules as defined above
takes {\it submodules}\/ of $B$ to submodules of $B$, but MLS-induction of $G$-representations takes
(certain){\it isomorphism classes}\/ of irreducible representations to isomorphism classes of irreducible
representations. Our definition is partly motivated by the fact that we are interested in the actual
generators of the irreducible $\Dc$-submodules, not only the isomorphism classes. In the rest of the
section we will exemplify this for the generalized symmetric group.

\subsection{Simple  $R$-modules and partitions}
\label{section:partition}
Before we can describe MLS-restriction for the generalized symmetric group, we need to understand the
simple submodules of $B$ over the algebra
\begin{displaymath} 
  R_n=\Cb [\hf]^{S_n}
\end{displaymath}
that appeared in
\Proposition{\ref{gen-sym-R}}, mapping surjectively to $\Rc$.  

A multi-index is a function $\alpha : [n]= \{1, \dots , n\}\to \Nb$, and this determines the
monomial $x^\alpha= \prod x_i^{\alpha(i)}$.  Since $\nabla_i (x^\alpha) = \alpha(i)x^\alpha$ it
follows that the algebra $\Cb[\hf]$ acts multiplicity-free on $B$, where $M_\alpha=\Cb x^\alpha$ is
the unique simple submodule of $B$ in its isomorphism class. Therefore $M_\alpha$ is also an
$R_n$-module, necessarily simple, and $B$ is a semi-simple $R$-module. Using partitions, it is easy to
describe when $M_\alpha\cong M_\beta$ as $R_n$-modules.

The function $\alpha$ has fibres $P_\alpha (i) = \{j:\alpha (j)=i\}$, that induce a partition of the
set $[n]= \cup_{i\geq 0}P_\alpha(i)$. Note that some of the sets may be empty and that the order of
the sets in the partition is significant; we will call a sequence $P= (P_i)_{i=1}^r$\footnote{This
  should be regarded as an infinite sequence where $P_i= \emptyset$ when $i >r$, for some integer
  $r$.} such that $\cup P_i=[n]$ an {\it ordered partition}. Note also that $\alpha$ is determined
by $P_\alpha =(P_\alpha(i))_{i \geq 0}$.  Similarily, a sequence $(\lambda_1, \ldots, \lambda_r)$\footnote{Again this is regarded as an infinite sequence such that $\lambda_i =0$ when $i >r$.}, is
called an ordered partition of an integer $n$, denoted by
$\lambda = (\lambda_i)_{i\geq 1} = (\lambda_1, \ldots, \lambda_r) \vdash^o n$, if the integers
$\lambda_i \geq 0$ and $\lambda_1 + \cdots + \lambda_r =n$.  Given an ordered partition
$P= (P_i)_{i=1}^s$ of the set $[n]$, the partition
$\lambda^P= (\lambda^P_1, \dots , \lambda^P_s) \vdash^o n$ is defined by $\lambda_i^P = |P_i|$.  In
particular, we put $\lambda^\alpha = \lambda^{P_\alpha}$, and say that $\lambda^\alpha$ is the
ordered partition of the integer $n$, associated to $P_\alpha$.
  
The unordered partition $\bar P = \{P_{i_1}, \ldots , P_{i_r}\}$ of $[n]$ corresponding to an
ordered partition $P= (P_i)_{i \geq 1}$ is the set of subsets such that $P_i \neq \emptyset$.
Similarly, the unordered partition
$\bar \lambda = \{\lambda_{i_1}, \ldots , \lambda _{i_r} \} \vdash n$ of an ordered partition
$\lambda \vdash ^0 n$ is the set of nonzero elements in the sequence $\lambda$.\footnote{If one
  thinks of the ordered partition of an integer $n$ as a sequence of columns with
  $\vert P_\alpha(0)\vert, \vert P_\alpha(1)\vert$ boxes, the relation between the concepts of
  ordered partitions $P$ and integers $\lambda \vdash^o n$ is similar to the one between Young
  tableaux and Young diagrams \cite[2.1]{sagan}.}

The action of the symmetric group on the set $[n]$ induces an action on the set of multiindices
$\alpha$ by $\sigma\cdot \alpha = \alpha \circ \sigma^{-1}$, so that
$P_{\sigma \cdot \alpha}(i) = \sigma (P_{\alpha}(i))$. Clearly, then $P_\alpha$ and $P_\beta$ belong
to the same orbit under the symmetric group if and only if
$\vert P_\alpha(i)\vert=\vert P_\beta(i)\vert$ for $l=0,1,\ldots,$ , that is, exactly when the
induced partitions of numbers $\lambda^\alpha=\lambda^\beta$.

 \begin{prop}\label{r-iso} 
   \begin{enumerate}
   \item For any simple $R_n$-module $M\subset B$ there is a multiindex $\alpha$ such that
     $M\cong M_\alpha$.  There is an isomorphism $M_\alpha\cong M_\beta$ if and only if there exists
     $\sigma\in S_n$ such that $\sigma \cdot \alpha = \beta\iff \lambda^\alpha=\lambda^\beta$.
   \item  The decomposition of the $R_n$-module $B$ into isotypical components is
   \begin{displaymath}
     B= \bigoplus_{\lambda \vdash^o n}B_{\lambda}
   \end{displaymath}
   where for each ordered partition $\lambda$ we have an isotypical
   component $B_{\lambda}$ of the form
\begin{displaymath}
  B_{\lambda} = \bigoplus_{\alpha  \in \Omega_\lambda} \Cb x^\alpha,
\end{displaymath}
and $\Omega_\lambda=\{ \alpha: [n]\to \Nb \ \vert  \ \lambda^\alpha=\lambda\}$.

 \end{enumerate}
 \end{prop}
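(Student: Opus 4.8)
The plan is to prove the two parts of Proposition~\ref{r-iso} by reducing everything to the combinatorics of multi-indices, using the fact already established in the surrounding text that $\Cb[\hf]$ acts diagonally on the monomial basis of $B$ with $\nabla_i(x^\alpha) = \alpha(i) x^\alpha$. The key structural observation, which I would record first, is that each monomial line $\Cb x^\alpha$ is a simultaneous eigenspace for the commuting operators $\nabla_1, \dots, \nabla_n$, with the ordered tuple of eigenvalues $(\alpha(1), \dots, \alpha(n))$. Since $R_n = \Cb[\hf]^{S_n}$ consists of the $S_n$-invariant polynomials in the $\nabla_i$, the monomial line $\Cb x^\alpha$ is automatically an $R_n$-submodule, and it is simple because it is one-dimensional.

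For part (1), I would argue as follows. First, any simple $R_n$-submodule $M \subset B$ must be one-dimensional and spanned by a common eigenvector for the operators $f_j(\nabla_1, \dots, \nabla_n)$ generating $R_n$; since $B$ decomposes as a direct sum of the monomial lines $\Cb x^\alpha$ and these are already the simple $\Cb[\hf]$-submodules, any simple $R_n$-submodule is contained in a single $\Cb[\hf]$-isotypic block and hence is isomorphic to some $M_\alpha$. The isomorphism criterion is then the heart of the matter: two simple one-dimensional $R_n$-modules are isomorphic exactly when the generators of $R_n$ act by the same scalars. The scalar by which a symmetric polynomial $p(\nabla_1, \dots, \nabla_n)$ acts on $\Cb x^\alpha$ is $p(\alpha(1), \dots, \alpha(n))$, so $M_\alpha \cong M_\beta$ as $R_n$-modules if and only if every symmetric polynomial takes the same value on the tuple $(\alpha(i))_i$ as on $(\beta(i))_i$. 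Because symmetric polynomials separate $S_n$-orbits of $n$-tuples, this is equivalent to the existence of $\sigma \in S_n$ with $\sigma \cdot \alpha = \beta$. The final equivalence $\sigma \cdot \alpha = \beta \iff \lambda^\alpha = \lambda^\beta$ is exactly the orbit computation already carried out in the paragraph preceding the proposition (namely $P_\alpha$ and $P_\beta$ lie in the same $S_n$-orbit iff $|P_\alpha(i)| = |P_\beta(i)|$ for all $i$, i.e.\ iff the induced integer partitions agree), so I would simply cite it.

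For part (2), the decomposition is then a bookkeeping consequence of part (1) together with the multiplicity-freeness of the $\Cb[\hf]$-action. Since $B = \bigoplus_{\alpha} \Cb x^\alpha$ as a vector space and each $\Cb x^\alpha = M_\alpha$ is a simple $R_n$-module, grouping the summands by their isomorphism class gives the isotypic decomposition. By part (1) the isomorphism class of $M_\alpha$ is determined precisely by the associated ordered integer partition $\lambda^\alpha$, so the isotypic components are indexed by $\lambda \vdash^o n$, and the $\lambda$-component is $\bigoplus_{\alpha \in \Omega_\lambda} \Cb x^\alpha$ with $\Omega_\lambda = \{\alpha : \lambda^\alpha = \lambda\}$, exactly as stated. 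The only subtlety worth flagging is that an a priori isotypic component is the sum of \emph{all} submodules isomorphic to a given simple, not merely the sum of the monomial lines of the right type; so I should check that no isomorphic copy of $M_\alpha$ sneaks in transversally to the monomial decomposition. This follows because the $\nabla_i$ act diagonally and semisimply with the monomials as a full eigenbasis, forcing any $R_n$-eigenline to be one of the coordinate lines $\Cb x^\alpha$.

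The main obstacle, such as it is, is the isomorphism criterion in part (1): I must be careful to phrase ``isomorphic as $R_n$-modules'' correctly, since for one-dimensional modules an isomorphism is just a nonzero scalar intertwining the (scalar) actions, and I need that the separating power of symmetric polynomials over $\Cb$ detects $S_n$-orbits of integer tuples. This is elementary (symmetric polynomials generate the invariant ring and the orbit map $\mathbb{N}^n/S_n \to \Cb^n$ given by power sums or elementary symmetric functions is injective), but it is the step that does the real work, and everything else is reorganization of the orbit combinatorics already set up before the statement.
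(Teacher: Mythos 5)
There is a genuine flaw in your argument, concentrated in the sentence claiming that the diagonal action of the $\nabla_i$ forces ``any $R_n$-eigenline to be one of the coordinate lines $\Cb x^\alpha$'' (and, equivalently, in your earlier assertion that a simple $R_n$-submodule of $B$ lies in a single $\Cb[\hf]$-isotypic block). This is false: $R_n=\Cb[\hf]^{S_n}$ only sees \emph{symmetric} functions of the joint $\nabla$-eigenvalues, and these do not separate monomials within one $S_n$-orbit. Already for $n=2$ the line $\Cb(x_1+x_2)$ is a simple $R_2$-submodule of $B$ --- every $p(\nabla_1,\nabla_2)\in R_2$ acts on $x_1$ and on $x_2$ by the common scalar $p(1,0)=p(0,1)$ --- yet it is not a coordinate line; the paper itself relies on such non-monomial eigenlines, e.g.\ $\Cb(x_1+x_2-2x_3)$ in Example~\ref{example-3}. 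So isomorphic copies of $M_\alpha$ genuinely do ``sneak in transversally'' to the monomial decomposition, and the subtlety you correctly flagged for part (2) is resolved incorrectly as written; the same false containment is also what your proof of the first assertion of part (1) leans on.

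The gap is local and the proposition survives. If $v=\sum_\beta c_\beta x^\beta$ spans an $R_n$-eigenline with character $\chi$, then $p(\beta(1),\dots,\beta(n))=\chi(p)$ for every symmetric $p$ and every $\beta$ with $c_\beta\neq 0$; since symmetric polynomials separate $S_n$-orbits of tuples in $\Nb^n$ (your own key lemma from part (1)), all such $\beta$ lie in a single $\Omega_\lambda$, whence $v\in B_\lambda$. Thus the isotypic component of the class $\lambda$ --- the sum of \emph{all} submodules of that class, monomial or not --- is contained in, and therefore equals, $B_\lambda=\bigoplus_{\alpha\in\Omega_\lambda}\Cb x^\alpha$, even though its simple constituents need not be coordinate lines. (Alternatively: $B$ is a sum of the simple lines $\Cb x^\alpha$, hence semisimple, and any simple submodule of a semisimple module is isomorphic to one of the summands; this also yields the first claim of (1) without the false containment.) With this substitution your proof is correct and matches the paper's in substance: the paper's part (1) is exactly your scalar/separation argument, carried out via the description of $\Rc$ in Proposition~\ref{gen-sym-R}, and it declares part (2) immediate from part (1).
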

  \begin{proof}
   (1): 
   The first assertion is already motivated. The mapping $x^\alpha$ to $x^{\sigma\cdot \alpha}$
   defines an isomorphism $M_\alpha \to M_{\sigma \alpha}$ of $R_n$-modules. Conversely, if
   $M_\alpha \cong M_\beta$, then, using the description of $\Rc$ in \Proposition{\ref{gen-sym-R}},   $p(\alpha(1), \dots , \alpha(n)) = p(\beta(1), \dots , \beta(n))$
   for all symmetric polynomials $p(x_1, \dots , x_n)\in \Cb[x_1, \dots , x_n]^{S_n}$.  This implies
   that $\beta (i) = \alpha (\sigma(i))$ for some $\sigma \in S_n$.

   (2): Immediate from (1).
 \end{proof}

 The set $\Omega_\lambda$ is the set of $S_n$-orbits of multi-indices $\alpha : [n]\to \Nb$ for a
 given $\lambda = \lambda^\alpha$. Select an ordered partition $P^\lambda=(P^\lambda_i)_{i=1}^r$ of
 $[n]$ such that $|P^\lambda_i|= \lambda_i$. Then $\Omega_\lambda \cong S_n/G_P$, where
 $G_P= \prod S(P^{\lambda}_i)$ and $S(P^{\lambda}_i)$ is the symmetric group of the set
 $P^{\lambda}_i$.

 \subsection{Decomposition of B for some complex reflection groups}\label{gen-symm-modules}
 There is a striking use of $\Rc$ for the construction of simple
 $\Dc_B^G$-modules for generalized symmetric groups
 $G=G(d,1,n)= A(d,1, n)\rtimes S_n = A\rtimes S_n$.  The main point of
 our proof is that the description of the simple modules of $\Rc$ in
 \Proposition{\ref{r-iso} } makes it easy to determine when, for
 simple $\Dc_B^{H}$ modules $L_1\not \cong L_2$, we have
 $\Jo_{G_P}^G(L_1)\not \cong \Jo_{G_P}^G(L_2) $, where
 $H = A \rtimes G_P$ and $G_P$ is a Young subgroup of $S_n$. For the
 equivalent group representation case (using
 \Proposition{\ref{thm:montgomery}}), the results about the symmetric
 group ($d=1$) go back to Specht \cite{specht:dieirrede}, see also
 \cite{peel}, and for the generalized symmetric group ($d>1$) see
 \cite{Ariki}.

To a multi-index $\alpha : [n]\to \Nb$ we have associated an ordered partition $P_\alpha$ of $[n]$
(\ref{section:partition}).  Say that an {\it unordered}\/ partition $P= \{P_{ij}\}$,
$[n]= \cup_{i, j} P_{ij}$ is an $\alpha$-partition if $\cup_j P_{ij} = P_\alpha(i)$. Let
$\lambda^P\vdash n$ be the (unordered) integer partition that is determined by $P$, so that $\lambda^P$ can be
visualized by a sequence of at most $n$ Young diagrams, each one of cardinality
$\vert P_\alpha(i)\vert$.

Let $S(\Omega)$ be the symmetric group of a subset $\Omega $ in $[n ]$.  Given a multi-index
$\alpha: [n]\to \Nb$ we put $G^\alpha =A \rtimes \prod_i S(P_\alpha (i))$ and given an
$\alpha$-partition $P$ we put $G^\alpha_P = A \rtimes \prod_{ij} S(P_{ij})\subset G^\alpha $.
\begin{proposition} 
\label{prop:specht}
\begin{enumerate}
\item The simple $\Dc_B^A$-submodules of $B$ are of the form
  $N_\alpha = B^A x^\alpha$, where $\alpha : [n]\to [d-1]$. If
  $N_\beta = B^A x^\alpha$, $\beta : [n]\to [d-1]$, is another such
  module, then $N_\beta \cong N_\alpha \iff \alpha = \beta $.
\item Let $P $ be an $\alpha$-partition and define the polynomial $s^\alpha_{P}= s_{P}x^\alpha$, where $s_{P}$ is the
  Jacobian of the map $B^{G^\alpha_P}\to B^A$. Then
  \begin{displaymath}
    N_P^\alpha= \Dc_B^{G^\alpha_P}s^\alpha_{P} = B^{G^\alpha_P} s^\alpha_{P} 
\end{displaymath}
is a simple
  $\Dc_B^{G^\alpha_P}$-module.
\item The module $M^\alpha_P=\Jo_{G^\alpha_P}^G(N_P^\alpha)=\Dc_B^G s^\alpha_{P}$ is a simple
  $\Dc_B^G$-submodule of $B$.
\item Let $\beta : [n]\to [d-1]$ be another multi-index and $Q$ be a $\beta$-partition. Then
  \begin{displaymath}
    M^\alpha_{P}\cong M^\beta _{Q}\iff \lambda^P = \lambda^Q \text{ and } \beta \in S_n \cdot \alpha.
\end{displaymath}
\item Let $M$ be a simple $\Dc_B^G$-submodule of $B$. Then there
  exists a multiindex $\alpha : [n]\to [d-1]$ and an $\alpha$-partition
  $P$ such that $M\cong M^\alpha_P$.
   \end{enumerate}
\end{proposition}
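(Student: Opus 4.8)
The plan is to treat parts (1)--(3) through the semi-invariant formalism of \Proposition{\ref{thm:montgomery}}, part (4) by computing the $\Rc$-module structure of the lowest-weight line of $M^\alpha_P$, and part (5) by a counting argument against the number of irreducibles of $G=G(d,1,n)$. For (1) I would use that $A=A(d,1,n)\cong\mu_d^n$ is abelian, so its irreducibles are the characters $\alpha$ and $x^\alpha$ is the minimal-degree $\alpha$-semi-invariant; the $\alpha$-isotypic component is $B^A x^\alpha$, a simple $\Dc_B^A$-module by the discussion of modules of semi-invariants following \Proposition{\ref{thm:montgomery}}, and distinct characters are non-isomorphic, so by \Proposition{\ref{thm:montgomery}}(3) one gets $N_\alpha\cong N_\beta\iff\alpha=\beta$. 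For (2) I note that $G^\alpha_P=A\rtimes\prod_{ij}S(P_{ij})$ is a complex reflection group (a product of groups $G(d,1,|P_{ij}|)$) and that $s^\alpha_P=s_Px^\alpha$ is the relative invariant for the linear character $\chi$ equal to $\alpha$ on $A$ and to the sign on the symmetric factors: the Jacobian $s_P$ is, up to scalar, the product $\prod_{ij}\prod_{k<l\in P_{ij}}(x_k^d-x_l^d)$ of the equations of the non-diagonal reflecting hyperplanes within blocks, while $x^\alpha$ accounts for the diagonal reflections and is fixed by $\prod_{ij}S(P_{ij})$ since $\alpha$ is constant on each block. By the standard description of relative invariants of a reflection group, the $\chi$-relative invariants form a free rank-one $B^{G^\alpha_P}$-module generated by $s^\alpha_P$; as this module is a simple $\Dc_B^{G^\alpha_P}$-module and is $\Dc_B^{G^\alpha_P}$-stable, both equalities $\Dc_B^{G^\alpha_P}s^\alpha_P=B^{G^\alpha_P}s^\alpha_P=N^\alpha_P$ follow. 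For (3), $N^\alpha_P$ starts in degree $\deg s^\alpha_P$, so its lowest-weight space $\Ann_{(\Dc_B^{G^\alpha_P})^-}(N^\alpha_P)$ is the line $\Cb s^\alpha_P$, and \Theorem{\ref{prop:MLS}}(1) in its one-dimensional case gives that $M^\alpha_P=\Jo_{G^\alpha_P}^G(N^\alpha_P)=\Dc_B^Gs^\alpha_P$ is simple.

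The heart is (4). By \Theorem{\ref{prop:MLS}}(2) the lowest-weight space of $M^\alpha_P$, as an $\Rc$-module, is again the line $\Cb s^\alpha_P$, so by \Corollary{\ref{cor:d0eqconcrete}} classifying the $M^\alpha_P$ up to isomorphism amounts to classifying these lines as $\Rc$-modules; since $\Rc$ is a quotient of $R_n=\Cb[\nabla_1,\dots,\nabla_n]^{S_n}$ by \Proposition{\ref{gen-sym-R}}, it suffices to find the $R_n$-character of $s^\alpha_P$. Expanding the Vandermonde factor of each block shows that every monomial of $s^\alpha_P$ carries the same multiset of $\nabla_i$-eigenvalues, namely, for each block $P_{ij}$ refining $P_\alpha(i)$, the values $\{\,i+dc:0\le c\le|P_{ij}|-1\,\}$; hence each power sum $\sum_i\nabla_i^k$ acts by a scalar and $\Cb s^\alpha_P$ is a one-dimensional $R_n$-module whose class, by \Proposition{\ref{r-iso}}, is exactly this exponent multiset $E^\alpha_P$ up to $S_n$. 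Reducing $E^\alpha_P$ modulo $d$ recovers the residue sizes $|P_\alpha(i)|$, i.e.\ the $S_n$-orbit of $\alpha$, while the integer quotients within residue $i$ recover the block-size partition of $|P_\alpha(i)|$, i.e.\ the component of $\lambda^P$ at $i$; therefore $E^\alpha_P=E^\beta_Q$ if and only if $\beta\in S_n\cdot\alpha$ and $\lambda^P=\lambda^Q$, which is the asserted criterion.

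Part (5) is then a counting argument. Because $\Rc$ is commutative by \Theorem{\ref{thm:rcomm}}, every simple $\Rc$-submodule of $B^{ann}$ is one-dimensional, and by \Corollary{\ref{cor:d0eqconcrete}} the simple $\Dc_B^G$-submodules of $B$ biject with these; as $B^{ann}$ is a Gelfand model there are exactly $|\hat G|$ of them. The data classifying the $M^\alpha_P$ by (4)---an $S_n$-orbit of $\alpha$ together with a partition of each $|P_\alpha(i)|$---is precisely a $d$-tuple of partitions of total size $n$, and the number of such $d$-multipartitions equals $|\hat G|$ for $G=G(d,1,n)$. Since by (3)--(4) the $M^\alpha_P$ form $|\hat G|$ pairwise non-isomorphic simple submodules, they exhaust the simple $\Dc_B^G$-submodules, giving (5).

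The step I expect to be the main obstacle is the exponent computation in (4): one must verify that $s^\alpha_P$, though not a monomial, spans a one-dimensional $R_n$-module because all its monomials share a single multiset of $\nabla_i$-eigenvalues, and then translate equality of these multisets cleanly into the two conditions $\beta\in S_n\cdot\alpha$ and $\lambda^P=\lambda^Q$---in particular fixing the convention that $\lambda^P$ records the block partition residue by residue, which is exactly what makes the criterion hold. By contrast, identifying $s^\alpha_P$ as the relative-invariant generator in (2) is routine once the reflecting hyperplanes of $G^\alpha_P$ are listed, and (5) is bookkeeping once (4) is in hand.
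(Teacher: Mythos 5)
Your proposal is correct and takes essentially the same route as the paper's own proof: (1)--(2) via semi-invariants and Proposition~\ref{thm:montgomery}, (3) via Theorem~\ref{prop:MLS} with the one-dimensional lowest-weight line, (4) by classifying the line $\Cb s^\alpha_P$ through its $R_n$-eigenvalue data --- your inline exponent-multiset computation, including the conjugate-partition observation, is exactly the content of the paper's Lemma~\ref{keypartition} --- and (5) by the same multipartition count, where the paper matches against the conjugacy classes of $G(d,1,n)$ (citing Osima) instead of against $|\hat G|$ via the Gelfand model. These variations are cosmetic rather than structural, and all the steps check out.
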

\begin{remark} \label{specht-explicit}
  \begin{enumerate}
  \item The group $\prod S(P_\alpha(i))$ is the inertial group of the $\Dc_B^A$-module $N_\alpha$
    with respect to the homomorphism $B^{G^\alpha}\to B^A$ (see \cite{kallstrom-directimage})
    and subgroups of the form $\prod S(P_{ij}) $ are its parabolic subgroups, i.e. \ subgroups that
    preserves some closed point in $\Spec B^A$.  \item Let
      $G_P= \prod S(P_j) \subset S_n$ be the Young group of a partition $P= \{P_j\}$ of $[n]$. The
      Jacobian of the invariant map $B^{G_P}\to B$ is independent (up to a multiplicative constant)
      of the choice of homogeneous coordinates in the polynomial ring $B^{G_P}$; in the calculation
      below we will use that it may be taken as
      $s_P=\prod_{j}\prod_{k < l\in P_{j}}(x_k-x_l)$.  Similarly, for a 2-step partition
      $P=\{P_{ij}\}$, we may take the van der Monde determinants
  $$s_P=\prod_{ij}\prod_{k <  l\in P_{ij}}(x_k^d-x_l^d).$$
\end{enumerate}
\end{remark}

A key step in the proof of \Proposition{\ref{prop:specht}} is the following relation between an
integer partition $\lambda^P\vdash n$ that comes from a partition $P$ of the set $[n]$ and the
partition $\lambda^\alpha\vdash n$, where $\alpha$ is a multiindex that occurs in an expansion of
the Specht polynomial $s_P$ of $P$.  In our context, the result identifies which isomorphism class
of $\Rc$-modules the Specht polynomial corresponds to. 

Let $\lambda^c\vdash n$ denote the conjugate of a partition $\lambda \vdash n$, i.e.\ the partition
whose $i$th part $\lambda^c_i$ is the number of $m$ with $\lambda_m \geq i$.  The conjugate of a
partition $P=\{P_i\}$ is $P ^c = \{Q_i\}$, where
$Q_i= \{j \in [n] \ \vert \ j\in P_s \text{ belongs to at least $i$ different } P_s \}$, so that
$\lambda_P^c = \lambda_{P^c}$.
\begin{lemma}\label{keypartition} Let $x^\alpha$ be a non-zero term  in an expansion of $s_P$. Then 
  $(\lambda^{\alpha})^c = \lambda^P$.  More precisely, in the notation of \Proposition{\ref{r-iso}},
  we have $s_P \in B_\lambda$, where the ordered partition
  $\lambda = (|Q_1|,\ldots , |Q_s| )\vdash^0 n$ and $Q$ is the conjugate of $P$.
\end{lemma}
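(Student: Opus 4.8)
The plan is to work directly from the explicit form of the Specht polynomial recorded in \Remark{\ref{specht-explicit}}, namely $s_P=\prod_{j}\prod_{k<l\in P_j}(x_k-x_l)$, a product of Vandermonde determinants over the blocks of $P$. Since distinct blocks involve disjoint sets of variables, any monomial $x^\alpha$ occurring in $s_P$ factors as a product of one monomial from each block-Vandermonde. First I would record the elementary fact that the Vandermonde on a block $P_j$ of size $m_j$ is $\det(x_a^{b})_{a\in P_j,\,0\le b\le m_j-1}$, so that every monomial in its expansion is $\pm\prod_{a\in P_j}x_a^{\tau(a)}$ for some bijection $\tau\colon P_j\to\{0,1,\dots,m_j-1\}$. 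Consequently, for any monomial $x^\alpha$ of $s_P$ the restriction $\alpha|_{P_j}$ is a bijection onto $\{0,\dots,m_j-1\}$, \emph{independently of which monomial we picked}.

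Next I would count exponent multiplicities. For each value $i\ge 0$, the number of variables $x_a$ carrying exponent $i$ in $x^\alpha$, i.e.\ $|P_\alpha(i)|$, equals the number of blocks $P_j$ with $|P_j|-1\ge i$, that is $|\{\,j:|P_j|\ge i+1\,\}|$. This count does not depend on the chosen monomial, and as a function of $i$ it is precisely the conjugate of the block-size partition: $|P_\alpha(i)|=(\lambda^P)^c_{i+1}$, where $(\lambda^P)^c_k=|\{j:\lambda^P_j\ge k\}|$ and $\lambda^P_j=|P_j|$. Reading this off across all $i$ gives $\lambda^\alpha=(\lambda^P)^c$ as partitions, and since conjugation is an involution we obtain $(\lambda^\alpha)^c=\lambda^P$, the first assertion.

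For the ``more precisely'' statement I would simply observe that the multiplicity profile $(|P_\alpha(i)|)_{i\ge 0}$ computed above is the \emph{same} ordered sequence for every monomial $x^\alpha$ of $s_P$; hence every such $\alpha$ lies in the single set $\Omega_\lambda$ with $\lambda=(\lambda^P)^c$, and therefore $s_P\in\bigoplus_{\alpha\in\Omega_\lambda}\Cb x^\alpha=B_\lambda$ by \Proposition{\ref{r-iso}}. The identification $\lambda=(|Q_1|,\dots,|Q_s|)$ with $Q$ the conjugate set-partition of $P$ then follows from the defining relation $\lambda_{P^c}=\lambda_P^c$ recorded just before the statement.

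The mathematics here is essentially a single observation about Vandermonde exponents, so I expect no real difficulty in the core argument; the main care will go into bookkeeping with the indexing conventions --- that $P_\alpha$ and the ordered partitions are indexed from $0$, that the conjugate $(\lambda^P)^c$ shifts this index by one, and that the set-partition conjugate $Q$ is normalised so that $|Q_i|=(\lambda^P)^c_i$ --- in order to guarantee that the ordered partition $\lambda$ that $s_P$ lands in is exactly the one claimed rather than a reindexed version of it.
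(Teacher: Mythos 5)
Your proof is correct and follows essentially the same route as the paper's: expand each block Vandermonde, observe that the surviving exponents on a block of size $m_j$ form a permutation of $\{0,\dots,m_j-1\}$, and count, for each $i$, the blocks with $|P_j|>i$ to conclude $\lambda^\alpha=(\lambda^P)^c$ uniformly over all monomials. The only (cosmetic) difference is that you justify the permutation fact via the determinant form $\det(x_a^b)$ and make the block-disjointness and monomial-independence explicit, where the paper simply asserts them.
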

\begin{proof}  If $P= \{P_j\}_{j=1}^r$ we write $\lambda^P = \{n_1, \ldots , n_r\}\vdash n$, where
  $n_j = |P_j|$. Expanding the Specht polynomial of one subset $P_j$
\begin{displaymath}
  \prod_{1\leq k<l\leq n_j} (x_k-x_l) = \sum c_{\alpha_i} x^{\alpha_i}, \quad \alpha_i: [n_j]\to \Nb,
\end{displaymath}
then if $c_{\alpha_i} \neq 0$,  it follows that the (unordered) set
$\{\alpha_i (1),\alpha_i (2), \ldots , \alpha_i(n_j) \} = \{n_j-1, n_j-2, \ldots , 1, 0\}$;  hence
$\lambda^{\alpha_i}=\{1,1,\ldots , 1\}\vdash n_j$ (so that $\lambda^{\alpha_i} = \{n_j\}^c$).    Since
\begin{displaymath}
  s_P= \prod_{j=1}^r s_{P_j} = \prod_{j=1}^r(\sum c_{\alpha_i} x^{\alpha_i})= \sum a_\alpha x^\alpha
\end{displaymath}
it follows that if $a_\alpha \neq 0$, then $\lambda^\alpha_i = |\{l  \vert \ \alpha (l)= i \}|$ is the number of subsets $P_j$ with
$|P_j| > i$.  This implies that $\lambda^\alpha =( \lambda^P)^c$.
\end{proof}
\begin{pfof}{\Proposition{\ref{prop:specht}}}
  (1): We have
  \begin{displaymath}
    B^A = \Cb [x_1^d, \ldots, x_n^d]\subset \Dc_B^A= \Cb [x_1^d, \ldots , x_n^d, \partial_1^d, \ldots
    , \partial_n^d ],
\end{displaymath}
so that
$\Cb[\partial_1^d, \ldots , \partial_n^d ]\subset (\Dc_B^A)^- \subset
\sum_{i=1}^n \Dc_B^A \partial_i^d$
and hence
$\Ann_{(\Dc_B^A)^-} (B) = \{x^\alpha \ \vert \ \alpha :[n]\to
[d-1]\}$.
Notice that $\Cb[\nabla_1, \ldots , \nabla_n]\subset \Dc_B^A$.  If
$\Cb x^\alpha \cong \Cb x^\beta$ as
$\Cb[\nabla_1, \ldots , \nabla_n]$-modules, then $\alpha =
\beta$.
Therefore $B= \oplus \Dc_B^A x^\alpha$, where the sum runs over
multi-indices $\alpha : [n]\to [d-1]$, and $N_\alpha \cong N_\beta$
implies $\alpha = \beta$.  It is straightforward to see that $B^A$ is
a simple $\Dc_B^A$-module, implying that each $N_\alpha$ is also
simple.

Another way to see that $N_\alpha$ is a simple $\Dc_B^A$-module is to appeal to $A$-semiinvariants,
which is in a sense more easy to see. This is what we will have to do in (2) below.

  (2): The element $s_{P}x^\alpha $ defines a $G_P^\alpha$-semiinvariant
  $\chi : G_P^\alpha\to \Cb^* $, and generates the $B^{G_P^\alpha}$-module of all semi-invariants associated to $\chi$. It then follows from
  \Proposition{\ref{thm:montgomery}} that 
  $N_P^\alpha= \Dc_B^{G^\alpha_P} s_{P}x^\alpha = B^{G_P^\alpha}s_{P}x^\alpha$ and that this is a
  simple $\Dc_B^{G^\alpha_P}$-module.

  (3): Put $\Dc_1= \Dc_B^{G^\alpha_P} $ and $\Dc_2 = \Dc_B^G$. Then $\Cb s^\alpha_{P}$ forms a
  1-dimensional $\Dc_1^0$-module, so the assertion follows from \Theorem{\ref{prop:MLS}}, (1).

(4): If $ M^\beta _{Q}  \cong M^\alpha_{P} $, then
\begin{displaymath}
  \Cb s_{Q} x^\beta  \cong \Cb s_{P} x^\alpha 
\end{displaymath}
as $ R_n$-modules. Since $s_P,s_Q \in B^A $ it follows from  (1) and \Proposition{\ref{r-iso}} (1) that there
exists $\sigma \in S_n$ such that $ \beta \equiv \sigma \alpha (\omod d) $, and since
$\alpha, \beta : [n]\to [d-1] $ this implies that $\beta = \sigma \alpha $.  Therefore we have
isomorphisms, where the second one comes from the action of $\sigma$,
\begin{displaymath}
 \Cb s_{Q} x^\beta \cong   \Cb s_{P} x^\alpha  \cong   \Cb s_{\sigma \cdot P} x^\beta,  
\end{displaymath}
and hence $ \Cb s_{Q} \cong \Cb s_{\sigma \cdot P} \cong \Cb s_P $; hence by 
\Lemma{\ref{keypartition}},  $\lambda^Q= \lambda^P$.

Conversely, if $\lambda^Q= \lambda^P$ and $\beta \in S_n \cdot \alpha$, then there exists
$\sigma \in S_n$ such that $\beta = \sigma \alpha$ and $Q= \sigma P$, and hence the
$\Dc_n$-homomorphism $\sigma : B \to B$ induces an isomorphism $ M^\alpha_{P} \cong  M^\beta _{Q}$.

(5): First note that if $\lambda^{\alpha}\not = \lambda^{\beta}\vdash n$, then
$\beta \not \in S_n \cdot \alpha $, so that the corresponding modules are non-isomorphic.  It
follows that the set of nonisomorphic simple $\Dc_B^G$-submodules that arise in (4) is parametrised by
a sequence of at most $d$ Young diagrams.  This agrees with the parametrisation of the set of
conjugacy classes $\Clo (G(d,1,n))$ \cite{osima-gen-symm}.
\end{pfof}

Consider now $G=G(m,e,n)= A\rtimes S_n$ with $e>1$, and put $\Dc= \Dc_B^{G}= (\Dc^A_B)^{S_n}$. A
possible strategy to construct the simple $\Dc$-submodules of $B$ is as follows. Let
$\{N_i\}_{i=1}^r$ be a set of representatives of the simple $\Dc_B^A$-submodules of $B$ (this is
done for $A= A(e,e,2)$ in \Section{\ref{cyclic-group}}). Let $G_i\subset S_n$ be the inertial group
of $N_i$, $i=1, \ldots , r$, and put $\bar G_i = A\rtimes G_i$. Let $\{M_{ij}\}$ be a set of
representatives of the simple $\Dc_B^{\bar G_i}$-submodules of $B$. To find such modules $M_{ij}$ it
is natural to consider parabolic subgroups $\bar G^P_i \subset \bar G_i $, let $s_{P,i}$ be the
Jacobian of the invariant map $B^{\bar G^P_i}\to B$, and expect that $M_{ij}$ is contained in the
composition series of $\Jo^{\bar G_i}_{\bar G^P_i}(\Dc_B^{\bar G^P_i} s_{P,i})$.

Then we can construct the $\Dc$-module
  \begin{displaymath}
\bar M_{ij}  = \Jo^G_{A}(N_i)\otimes_{B^G} \Jo^G_{G_i}(M_{ij}).
  \end{displaymath}
  Since $B$ is semisimple over $\Dc$ and $B$ is free over $B^G$, $G$ being a reflection group, it
  follows that the $\Dc$-submodules $\Jo^G_{G_i}(M_{ij}) $ and $ \Jo^G_{A}(N_i)$ also are free over
  $B^G$. An interesting problem would be to understand the decomposition of $\bar M_{ij}$ into
  simples. When $e=1$, so that $B^A$ is polynomial ring, we are in the situation of
  \Proposition{\ref{prop:specht}} where these modules are already studied, albeit expressed
  differently.

\section{The branch rule for $S_n$}
\label{branchrule}
We start with a fairly general condition in (\ref{gen-symm}) that ensures that the restriction of a
simple $\Dc_1$-module to a module over a subring $\Dc_2 \subset \Dc_1$ is multiplicity free. In
(\ref{symm-group}) we give a proof of the classical branching rule for the symmetric group $S_n$,
expressed in terms of $\Dc$-modules and based on a lowest weight argument, where $\Dc= \Dc_B^{S_n}$.
In (\ref{branching-graph}) and ({\ref{canonical-section}}) we discuss the branching graph of the
$\Dc$-module $B$ and provide $B^{ann}$ with canonical bases, which turn out to coincide with Young
bases.

\subsection{The generalized symmetric groups}\label{gen-symm}
The generalized symmetric group $G_n= G(m,1,n)= A(m,1,n)\rtimes S_n$ acts on
$B= \Cb[x_1, \dots , x_n]$ by permuting the coordinates and by multiplying by $m$th roots of
unities. It contains the subgroup $G_{n-1}\subset G_n$ of elements that fix the variable $x_n$.  Let
$\Dc_n= \Dc_B^{G_n}$ be the ring of invariant differential operators, so that
$\Dc_n\subset \Dc_{n-1}$. Letting $B_{n-1}= \Cb[x_1, \dots , x_{n-1}]$, we note that 
\begin{equation}
\label{eq:n-1}
 \Dc_{n-1}= \widetilde  \Dc_{n-1}[x_n,\partial_n],
\end{equation}
where  $\widetilde  \Dc_{n-1}=\Dc_{B_{n-1}}^{G_{n-1}}$.

The branch rule for representations of the generalized symmetric groups, describing induction from
$G_{n-1}$ - to $G_n$ -representations, is the second statement in \Proposition{\ref{gen-branch}}
below.  By \Proposition{\ref{thm:montgomery}} it is equivalent to the first statement on restriction
from $\Dc_{n-1}$- to $\Dc_n$-modules, which we will see is a consequence of the determination of
$\Rc$ for the generalized symmetric group in \Proposition{\ref{gen-sym-R}}.\footnote{The
  correspondence between operations like induction and restrictions for representations and direct
  and inverse images of $\Dc$-modules is described in more detail in
  \cite{kallstrom-directimage}.}

\begin{prop}\label{gen-branch}
\label{quick} Let $N$ be a simple $\Dc_{n-1}$-submodule of $B$ and  $V$ be
    the corresponding irreducible $G_n$-representation, so that $V\sim_{G_{n}} N$ in
    \Proposition{\ref{thm:montgomery}}.
  \begin{enumerate}
 \item The restriction of $N$ to a $\Dc_{n}$-module $\reso(N)$ by  the inclusion 
 $\Dc_n\subset \Dc_{n-1}$  splits into a direct sum
 \begin{displaymath}
   \reso(N) =\bigoplus M_i
 \end{displaymath}
 of pairwise non-isomorphic simple  submodules.
 \item   The induced representation $\ind_{G_{n-1}}^{G_n} (V)$ splits into a sum 
   \begin{displaymath}
     \ind_{G_{n-1}}^{G_n} (V) = \bigoplus W_i,
   \end{displaymath}
of pairwise non-isomorphic irreducible representations, where $W_i\sim_{G_n} M_i$.
 \end{enumerate}
\end{prop}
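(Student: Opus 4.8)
The plan is to deduce (2) from (1) and then to establish (1) by the lowest–weight formalism. First I would reduce (2) to (1): applying \Proposition{\ref{ind-res}} with $H=G_{n-1}$ and $G=G_n$ to the hypothesis $V\sim_{G_{n-1}}N$ gives $\reso(N)\sim_{G_n}\ind_{G_{n-1}}^{G_n}(V)$, so under the bijection of \Proposition{\ref{thm:montgomery}} between simple $\Dc_n$-submodules of $B$ and $\hat G_n$ a decomposition of $\reso(N)$ into pairwise non-isomorphic $\Dc_n$-simples $M_i$ corresponds termwise to a decomposition of $\ind_{G_{n-1}}^{G_n}(V)$ into pairwise non-isomorphic irreducibles $W_i$ with $W_i\sim_{G_n}M_i$. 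Hence it suffices to prove (1). That $\reso(N)$ is a finite direct sum of simples is automatic: $B$ is semisimple over $\Dc_n$ by \Proposition{\ref{thm:montgomery}}, so the submodule $\reso(N)$ is semisimple, and it has finite length because $\Ann_{\Dc_n^-}(B)$ is finite-dimensional \Corollary{\ref{cor:d0eqconcrete}}. The whole content is therefore that no simple type occurs twice.

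By the isomorphism of categories \Corollary{\ref{cor:d0eqconcrete}} attached to $\Dc_n$, the constituents of $\reso(N)$ are in bijection with the simple $\Rc$-submodules of its lowest-weight space $L:=\Ann_{\Dc_n^-}(N)$, and two constituents are isomorphic iff the associated simple $\Rc$-modules are. Since $\Rc$ is commutative for $G_n=G(m,1,n)$ \Theorem{\ref{thm:rcomm}} and is a quotient of $R_n=\Cb[\nabla_1,\dots,\nabla_n]^{S_n}$ \Proposition{\ref{gen-sym-R}}, its simple modules are one-dimensional and, by \Proposition{\ref{r-iso}}, are separated by their $R_n$-weight, i.e.\ by the multiset $\lambda=\{\alpha(1),\dots,\alpha(n)\}$ of $\nabla$-eigenvalues. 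Writing $B_\lambda$ for the span of the monomials of weight $\lambda$, statement (1) reduces to the quantitative claim
\[
\dim_{\Cb}\bigl(L\cap B_\lambda\bigr)\le 1\qquad\text{for every ordered partition }\lambda\vdash^o n.
\]

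To prove this I would exploit the product structure \qr{eq:n-1}, $\Dc_{n-1}=\widetilde\Dc_{n-1}[x_n,\partial_n]$ with $\widetilde\Dc_{n-1}=\Dc_{B_{n-1}}^{G_{n-1}}$. Since $B_{n-1}$ is semisimple over $\widetilde\Dc_{n-1}$ and $\Cb[x_n]$ is simple over the one-variable Weyl algebra $\Cb[x_n,\partial_n]$, the decomposition $B=B_{n-1}\otimes_{\Cb}\Cb[x_n]$ exhibits $N$ in the form $N=\widetilde N\otimes_{\Cb}\Cb[x_n]$ with $\widetilde N\subset B_{n-1}$ a simple $\widetilde\Dc_{n-1}$-module; moreover $\widetilde N^{ann}=\Ann_{\widetilde\Dc_{n-1}^-}(\widetilde N)$ is one-dimensional, say $\widetilde N^{ann}=\Cb w_0$ of $R_{n-1}$-weight $\mu\vdash^o(n-1)$, again because the ring $\Rc$ for $G_{n-1}$ is commutative \Theorem{\ref{thm:rcomm}}. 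Now take $z\in L$ and expand $z=\sum_c z_c\,x_n^c$ with $z_c\in\widetilde N$. By \Proposition{\ref{symm-lie}}(2) the space $L$ is cut out by the power differential operators $p_{k,l}=\sum_{i=1}^n x_i^k\partial_i^l$ with $0\le k<l\le nm-1$, $m\mid(l-k)$, and each splits as $p_{k,l}=\widetilde p_{k,l}+x_n^k\partial_n^l$ with $\widetilde p_{k,l}=\sum_{i=1}^{n-1}x_i^k\partial_i^l$. Because $x_n^k\partial_n^l$ strictly lowers the $x_n$-degree by $l-k>0$, the top $x_n$-degree component $z_{c_0}$ (with $c_0$ maximal such that $z_{c_0}\ne0$) satisfies $\widetilde p_{k,l}z_{c_0}=0$ for all admissible $k<l$; these are exactly the conditions of \Proposition{\ref{symm-lie}}(2) for $G_{n-1}$, whence $z_{c_0}\in\widetilde N^{ann}=\Cb w_0$. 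Reading off weights, $z_{c_0}$ has $R_{n-1}$-weight $\lambda\setminus\{c_0\}$, and lying in $\Cb w_0$ forces $\lambda\setminus\{c_0\}=\mu$, i.e.\ $\lambda=\mu\uplus\{c_0\}$, so $c_0$ is determined by $\lambda$. Consequently the map $z\mapsto z_{c_0}\in\Cb w_0$ on $L\cap B_\lambda$ is well defined and injective — a nonzero $z$ has a nonzero top component, just shown to be a nonzero multiple of $w_0$ — giving $\dim_{\Cb}(L\cap B_\lambda)\le1$ and hence (1), and with it (2).

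The one genuinely structural input is the reduction $N=\widetilde N\otimes_{\Cb}\Cb[x_n]$ together with the splitting $p_{k,l}=\widetilde p_{k,l}+x_n^k\partial_n^l$, which converts the multivariable annihilator condition into a statement about the single extra variable $x_n$; this is where I expect the main care to be needed, in particular in verifying that the operator range $0\le k<l\le nm-1$ of \Proposition{\ref{symm-lie}}(2) for $G_n$ contains the range $0\le k<l\le(n-1)m-1$ needed to detect $\widetilde N^{ann}$ for $G_{n-1}$, so that the conditions $\widetilde p_{k,l}z_{c_0}=0$ are genuinely available. Everything else is weight bookkeeping. Note that I never need to exhibit the constituents explicitly, only the upper bound on multiplicities, which is why the classical ``add one box'' combinatorics enters solely through the trivial observation that distinct values $c_0$ yield distinct multisets $\mu\uplus\{c_0\}$.
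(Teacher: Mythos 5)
Your proof is correct and is essentially the paper's own argument: the paper packages it as the abstract multiplicity-one criterion of Theorem~\ref{branch-diff} (where the Lie algebra $\bar\af_{n-1}$ of power differential operators and the splitting $p^{(n)}_{k,l}=p^{(n-1)}_{k,l}+x_n^k\partial_n^l$ play exactly the role of your hypotheses), and its proof is the same expansion of a lowest-weight vector in powers of $x_n$, with the top coefficient forced into the one-dimensional space $\Ann_{\Dc_{n-1}^-}(N)$ and a subtraction argument giving uniqueness, while statement (2) is likewise deduced from (1) via \Proposition{\ref{ind-res}}. The only deviations are cosmetic: you track $R_n$-weight multisets where the paper tracks degrees, and your factorization $N=\widetilde N\otimes_{\Cb}\Cb[x_n]$ --- true, but asserted without the endomorphism-ring computation it requires --- can be bypassed as in the paper by observing that the coefficients $z_c$ already lie in $N$ because $x_n,\partial_n\in\Dc_{n-1}$.
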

We put $\Dc_B^{x_n}(k) = \{P \in \Dc_B \ \vert \ [\nabla_n, P] = k P \}$ and
$\Dc_B^{x_n^-} = \oplus_{k<0} \Dc_B^{x_n}(k) $.  Let
$\bar \af_n = \sum_{k \geq 0, l\geq 0, m \vert l-k} \Cb p^{(n)}_{k,l}$ be the Lie algebra in
\Proposition{\ref{symm-lie}}, where we have the power differential operator
$p^{(n)}_{k,l}= \sum_{i=1}^n x_i^{k} \partial_i^l$.  We notice here the following:
\begin{enumerate}[label=(\roman*)]
\item $x_n, \partial_n\in \Dc_{n-1}$,
\item $\bar \af^-_{n-1}\subset \tilde \Dc_{n-1}^- \subset \Dc_{n-1}^{-}\subset \Dc_{n-1} \bar \af^-_{n-1}+ \Dc_B\partial_n$,
\item $\bar \af^-_{n-1} \subset \Dc_n^-+\Dc_B^{x_n^-}$,
\end{enumerate}
where (ii) is a consequence of \Proposition{\ref{prop:pbw}}, and (iii) follows from the relation
$p^{(n)}_{k,l} = p^{(n-1)}_{k,l}+ x_n^k\partial_n^l$.

\Proposition{\ref{gen-branch}} is a consequence of (i-iii) and the
fact that the algebra $\Rc = \Dc_n^0/(\Dc_n^0\cap (\Dc_n \Dc_n^-))$ is
commutative. In the theorem below, which encodes this argument, we
consider general graded subrings
$\Dc= \oplus_{k\in \Zb} \Dc(k)= \Dc^+\oplus \Dc^0\oplus \Dc^-\subset
\Dc_B$ as in \Section{\ref{abstract-eq}}.

\begin{theorem}\label{branch-diff} Let $\Dc_2 \subset \Dc_1$ be an inclusion of graded subrings of
  $\Dc_B$ as above, where $x_n,\partial_n\in \Dc_{1}$, and put
  $\tilde \Dc_1= (\Dc_1)^{\partial_n}= \{P \in \Dc_1 \ \vert \ [\partial_n, P]=0\}$. Assume that
  there exists a graded Lie subalgebra $\af_1$ of $ \tilde \Dc_1$ such that:
  \begin{enumerate}
  \item $\af^-_1\subset  \tilde \Dc_1^- \subset  \Dc^-_1\subset \Dc_1 \af_1^- + \Dc_B\partial_n$,
  \item $\af^-_1 \subset  \Dc_2^- +\Dc_B^{x_n^-}$.
  \end{enumerate}

Let $N$ be a simple $\Dc_{1}$-submodule of $B$ such that $\dim_{\Cb} \Ann_{\Dc_1^-}(N)=1$ and assume
that for any simple $\Dc_2$-submodule $M\subset N$ we have $\dim_{\Cb} \Ann _{\Dc_2^-}(M)=1$; these
two conditions are satisfied if $\Rc_1 $ and $\Rc_2$ are commutative
($\Rc_i = \Dc_i^0/\Dc_i^0 \cap (\Dc_i \Dc_i^-)$). Then it follows that the restriction $\reso(N)$ to
a $\Dc_{2}$-module splits into a direct sum
 \begin{displaymath}
   \reso(N) =\bigoplus M_i
 \end{displaymath}
 of pairwise non-isomorphic simple submodules. 
  \end{theorem}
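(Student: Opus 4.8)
The plan is to prove two things: that $\reso(N)$ is semisimple as a $\Dc_2$-module, and that its simple summands occur with multiplicity one. Throughout I work with the lowest-weight description of \Theorem{\ref{main}} and \Corollary{\ref{cor:d0eqconcrete}}, applied to both $\Dc_1$ and $\Dc_2$. Since $\Dc_2^-\subset \Dc_1^-$ we have the inclusion of lowest-weight spaces $\Ann_{\Dc_1^-}(N)\subset \Ann_{\Dc_2^-}(N)$; fix a generator $v$ of the one-dimensional space $\Ann_{\Dc_1^-}(N)$, so that $N=\Dc_1 v$ is simple. The first observation is that $\partial_n\in \Dc_1$ has degree $-1$, hence $\partial_n\in\Dc_1^-$ and $\partial_n v=0$; thus $v$ lies in the zero eigenspace of $\nabla_n=x_n\partial_n\in\Dc_1$. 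As $\nabla_n$ acts semisimply on $N$ (it is diagonalized by the $x_n$-degree on $B$), I decompose $N=\bigoplus_{k\geq 0}N^{(k)}$ into $\nabla_n$-eigenspaces, with $v\in N^{(0)}$.

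The central step is to pin down $\Ann_{\Dc_2^-}(N)$ using the two hypotheses on $\af_1$. Writing, via condition (2), any $a\in\af_1^-$ as $a=a'+a''$ with $a'\in\Dc_2^-$ and $a''\in\Dc_B^{x_n^-}$ strictly lowering the $\nabla_n$-degree, and using $av=0$ (as $\af_1^-\subset\Dc_1^-$) together with the fact that $v$ sits in the bottom $\nabla_n$-eigenspace, I get $a''v=0$ and hence $a'v=0$; propagating this through the $\nabla_n$-grading singles out, in each eigenspace $N^{(k)}$ that occurs, a distinguished vector killed by $\Dc_2^-$. The last inclusion in condition (1), $\Dc_1^-\subset\Dc_1\af_1^-+\Dc_B\partial_n$, is what reduces $\Dc_1^-$-annihilation to $\af_1^-$- and $\partial_n$-annihilation, and it is the engine that shows these vectors generate: I expect to obtain $N=\Dc_2\cdot\Ann_{\Dc_2^-}(N)$, with $N$ homogeneous and a submodule of the ambient $\Dc_2$-module $B$, so that $\reso(N)=\bigoplus_i M_i$ with $M_i=\Dc_2 w_i$ and $w_i$ a lowest-weight vector lying in $\nabla_n$-degree $k_i$.

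For multiplicity one, I use that each $\Ann_{\Dc_2^-}(M_i)=\Cb w_i$ is one-dimensional (the second hypothesis, which holds when $\Rc_2$ is commutative), so that $M_i$ is determined by the character by which $\Dc_2^0$, equivalently $\Rc_2$, acts on $w_i$; two summands are isomorphic precisely when these characters coincide. By construction the $w_i$ carry pairwise distinct $\nabla_n$-weights $k_i$, and the task is to see that this $\Dc_1$-datum is detected by the $\Dc_2^0$-character. In the model case $\Rc_2=\Cb[\nabla_1,\dots,\nabla_n]^{S_n}$ this is transparent: the character records the entire multiset of coordinate weights of $w_i$, which differs from that of any $w_j$ exactly in the added part $k_i$, so distinct $k_i$ force distinct characters. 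Since a $\Dc_2$-isomorphism preserves the $\Dc_2^0$-character on the lowest-weight space, the $M_i$ are pairwise non-isomorphic.

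The main obstacle is the middle step: computing $\Ann_{\Dc_2^-}(N)$ exactly and showing the distinguished vectors both exist in the claimed $\nabla_n$-degrees and span the whole $\Dc_2^-$-annihilator. The difficulty is that the $\nabla_n$-grading is \emph{not} $\Dc_2$-equivariant (elements of $\Dc_2$ mix the variable $x_n$ with the others), so the eigenspaces $N^{(k)}$ are not $\Dc_2$-submodules and one cannot simply split $\reso(N)$ along $\nabla_n$-weight — the auxiliary grading only organizes the lowest-weight vectors. Reconciling the two gradings, the main $\ZZ$-grading by $\nabla$ and the auxiliary one by $\nabla_n$, while invoking condition (1) to guarantee generation and condition (2) together with commutativity of $\Rc_2$ to guarantee distinct characters, is the technical heart of the argument.
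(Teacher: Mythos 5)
You have correctly identified where the difficulty lies, but the proposal leaves exactly that step as an expectation, and the structural picture it rests on is wrong. A lowest weight vector $z$ of a simple $\Dc_2$-summand $M\subset N$ can be taken homogeneous for the total Euler operator $\nabla\in\Dc_2^0$, but it is \emph{not} $\nabla_n$-homogeneous ($\nabla_n\notin\Dc_2$), so your distinguished vectors $w_i$ do not lie in single eigenspaces $N^{(k)}$: in the motivating case $S_{n-1}\subset S_n$ they have the shape $z=y_ax_n^a+(\text{lower order in }x_n)$, cf.\ \Theorem{\ref{branch2}} and \Example{\ref{example-3}} ff. Consequently ``pairwise distinct $\nabla_n$-weights $k_i$'' is not well defined, and the multiplicity-one step is circular: that two distinct simple summands cannot share the same leading $x_n$-degree (equivalently, the same total degree) is precisely the content of the theorem, not something that can be read off from a character computation --- and the ``multiset of coordinate weights'' argument only makes sense in the model case $\Rc_2=\Cb[\nabla_1,\dots,\nabla_n]^{S_n}$, whereas the theorem is stated for general graded pairs. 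You also never use the hypothesis $\af_1\subset\tilde\Dc_1=(\Dc_1)^{\partial_n}$, which is indispensable below. Finally, generation $N=\Dc_2\cdot\Ann_{\Dc_2^-}(N)$ and semisimplicity are not what condition (1) is for; they come for free from \Theorem{\ref{main}} and \Corollary{\ref{cor:d0eqconcrete}} applied to $\Dc_2$.

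The paper's proof runs the leading-coefficient analysis you gesture at, but only on the top term, never decomposing $N$ along $\nabla_n$. Write $\Ann_{\Dc_2^-}(M)=\Cb z$ with $z$ homogeneous and expand $z=y_0+y_1x_n+\cdots+y_ax_n^a$, $\partial_n(y_i)=0$. For $r_1\in\af_1^-$ split $r_1=r_2+r^{(n)}$ by (2); since $r_2(z)=0$, one gets $r_1(z)=r^{(n)}(z)$, which has $x_n$-degree strictly below $a$. Because $r_1\in\tilde\Dc_1$ commutes with $\partial_n$, it has constant coefficients in $x_n$, so the coefficient of $x_n^a$ in $r_1(z)$ is $r_1(y_a)$; hence $r_1(y_a)=0$. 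Now condition (1), together with $\partial_n(y_a)=0$, upgrades this to $\Dc_1^-y_a=0$, so $y_a\in\Ann_{\Dc_1^-}(N)=\Cb y$ --- the top coefficient of \emph{every} $\Dc_2$-lowest-weight vector is forced onto the one-dimensional $\Dc_1$-lowest-weight line. This pins down $\deg z=a+\deg y$, so summands of different degree are non-isomorphic (as $\nabla\in\Dc_2^0$), and for equal degrees one gets $a=a'$, normalises $y_{a'}'=y_a$, and observes that a nonzero difference $z-z'$, homogeneous of degree $\deg z$, would have top coefficient again in $\Cb y$ at some $b<a$, giving $\deg(z-z')=b+\deg y<\deg z$, a contradiction; hence $M=M_1$. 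This uniqueness argument is the missing engine of your proposal, and it is what resolves the grading conflict you flagged: the two gradings are reconciled by controlling only the leading $x_n$-coefficient of a $\nabla$-homogeneous vector, not by splitting $N$ into $\nabla_n$-eigenspaces.
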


  \begin{remark} 
    \begin{enumerate}
    \item It would be interesting to find applications of \Theorem{\ref{branch-diff}} in other
      situations. Let $H \subset G\subset \Glo(V)$ be an inclusion of finite groups.  Put
      $\Dc_2= \Dc_B^G \subset \Dc_1=\Dc_B^H$, $\Rc_H= \Dc_1^0/\Dc_1^0 \cap (\Dc_1 \Dc_1^-)$,
      $\Rc_G= \Dc_2^0/\Dc_2^0 \cap (\Dc_2 \Dc_2^-)$.  Assume that $H$ fixes the variable $x_n$ and
      that $\af_1$ is a graded Lie subalgebra of $\Dc_1$ such that (1) and (2) in
      \Theorem{\ref{branch-diff}} are satisfied. If now $\Rc_{H}$ and $\Rc_G$ are commutative, it
      follows as in the proof below that any irreducible representation of $G$ restricts to a
      multiplicity free representation of $H$.
    \item Note that
      $\Rc_1= \widetilde \Dc_1^0/\ \widetilde \Dc_1^0 \cap ( \widetilde\Dc_1 \widetilde\Dc_1^-)$,
      since $\partial_n\in \Dc^-_1$.
    \end{enumerate}

  \end{remark}

\begin{pfof}{\Proposition{\ref{gen-branch}}}  
  We know that the algebras $\Rc_i$ are commutative by
  \Propositions{\ref{symm-lie}}{\ref{prop:pbw}}, and the above remark. Putting
  $\af_1 = \bar \af_{n-1}$, (i) implies the first and (ii-iii), where the ring $\tilde \Dc_{n-1}^-$
  is a subring of $\tilde \Dc_1$, implies the conditions (1-2) in \Theorem{\ref{branch-diff}}, hence
  we get (1).  The corresponding assertion (2) for representations follows since
  $\ind_{G_{n-1}}^{G_n}(V) \sim_{G_n} \reso(N)$ \Prop{\ref{ind-res}}.
   \end{pfof}

  \begin{pfof}{\Theorem{\ref{branch-diff}}}
    If $\Rc_i$ is commutative and $M$ is a simple $\Dc_i$-module, then \Theorem{\ref{main}} implies
    that $\dim_{\Cb} \Ann_{\Dc_i^-}(M) =1$.  So assume $\Ann_{\Dc_{1}^-}(N)= \Cb y$ for some
    homogeneous elements $y\in N$. Similarly for any simple $\Dc_2$-submodule $M\subset \reso (N)$
    we have $\Ann_{\Dc_2^-}(M)=\Cb z$, for a homogeneous polynomial $z\in M$, and we then put
    $\deg(M) = \deg (z)$.  Since $\nabla \in \Dc_2$ it follows that if $M_1$ is another simple
    submodule of $N$ and $\deg (M)\neq \deg (M_1)$, then $M \not \cong M_1$.  Conversely, we will
    prove that if $\deg (M)= \deg (M_1) $, then $M= M_1$.  Expand $z=y_0+y_1x_n+\cdots +y_ax_n^a$, where
    $\partial_n(y_i)=0$, $y_i\in N_1$ since $x_n, \partial_n \in \Dc_1$, and $y_a\neq 0$. Let
    $r_1\in \af^-_1 $, so that by (2), $r_1= r_2+ r^{(n)}$, where $r_2\in \Dc_2^-$,
    $r^{(n)}\in \Dc_B^{x_n^-}$. Since $r_2(z) =0$, we have (as detailed below)
  \begin{displaymath}
 r_1(z) =  r_1(y_a)x_n^a + (\text{l.o.\ in } x_n)= r^{(n)}(y_0+y_1x_n+\cdots +y_ax_n^a).
\end{displaymath}
The first equality follows since $r_1 \in \tilde \Dc^-_1$, so that
$r_1 = \sum c_{\gamma, b}(x')^\gamma(\partial')^\beta \partial_n^b$,
where $x' = (x_1, \ldots , x_{n-1})$,
$\partial' = (\partial_1, \ldots , \partial_{n-1})$,
$|\beta| + b >| \gamma |$, and $\partial_n (y_a)=0$. Therefore
$r_1(y_a)=0$. Hence by (1), $y_a\in \Ann_{\Dc_1^-}(N)$, and therefore
$\Cb y_a = \Ann_{\Dc_{1}^-}(N)=\Cb y$.  We have also
$\Ann_{\Dc_2^-}(M_1)=\Cb z'$ for some homogeneous polynomial $z'$, and
it suffices now, since $M$ and $M_1$ are simple, to prove that
$\Cb z = \Cb z'$ when $\deg z'=\deg z$.  We expand
$z'=y'_{a'}x_n^{a'}+(\text{l.o. in } x_n)$ in the same way as $z$
above, and by the same argument as before we have
$\Cb y'_{a'}=\Ann_{\Dc_{1}^-}(N)=\Cb y_a$.  Therefore
$\deg (y'_{a'}) = \deg (y_a)$, and as $\deg (z)= \deg (z')$, it
follows also that $a= a'$.  Multiplying $z'$ by a complex number so
that $y'_{a'}= y_a$, it suffices now to see that $z= z'$.  Assume on
the contrary that
\begin{displaymath}
z-z' = y'_bx_n^b+(\text {l.o. in  } x_n)\neq 0.
\end{displaymath}
Since $z$ and $z'$ are homogeneous of equal degree it follows that $\deg (z-z')= \deg (z)$. Clearly
$b<a$, and again we have $y'_b \in \Ann_{\Dc_{1}^-}(N)$ so that $\deg (y'_b) = \deg (y_a)$,
implying that $\deg (z-z') < \deg (z)$, which is a contradiction. Therefore $z=z'$.
  \end{pfof}

\subsection{The symmetric group}\label{symm-group}
The symmetric group $S_n$ is a subgroup of $G_n$, so we have actions
of $S_n$ and its subgroup $S_{n-1}$ on both $B$ and $\Dc_B$, and we
now put instead
$\Dc_n = \Dc_B^{S_n}\subset \Dc_{n-1}= \Dc_B^{S_{n-1}}$. We want to
describe the decomposition (1) in \Proposition{\ref{quick}} in more
detail when $m=1$, which, by \Proposition{\ref{thm:montgomery}}, also
implies the very well-known branching rule for the symmetric group.
\begin{remark}
  In \citelist{\cite{peel}\cite{ james-kerber}*{Th. 2.4.3}} the proof of the branching rule for
  representations of the symmetric group requires the non-trivial fact
  that the standard Specht polynomials of shape $\lambda\vdash n$
  form a basis of a simple $S_n$-module $V_\lambda$. The proof below
  is instead based on \Corollary{\ref{cor:d0eqconcrete}} and
  \Proposition{\ref{prop:specht}}, where the latter shows that any
  simple $R_n$-submodule of $B^{ann}$ is isomorphic to $k s_P$ for
  some Specht polynomial $s_P$. The fact that the standard Specht
  polynomials, indexed by the standard Young tableaux, form a basis is
  then an immediate consequence of the branching rule, as described in
\Section{\ref{branching-graph}}.
\end{remark}

We assume now that every partition $ \lambda \vdash n$ is {\it ordered},
defining a  function $\lambda: \{1, 2, \ldots \}\to \Nb $ such that
$\lambda (i)\geq \lambda (i+1)$; this is the same as associating a
Young diagram to $\lambda$.


We already know that $B^{ann}$ is a Gelfand module, but we can be more
precise. 
\begin{corollary} The $S_n$-representation $B^{ann}$  is multplicity free
and is canonically  decomposed
  \begin{displaymath} 
    B^{ann}= \bigoplus_{\lambda\vdash n } V_\lambda,
  \end{displaymath}
  where   the representations $V_\lambda$   are irreducible and of the form
  \begin{displaymath}
  V_\lambda =k[S_n]s_P= \{p\in B\ \vert \ \sum_{i=1}^n x_i^k \partial_i^l p =0, 0 \leq k
  < l \leq n-1, \text{ and } p =\sum_{\lambda^\alpha = \lambda^c}
  c_\alpha x^\alpha 
   \},
\end{displaymath}
where  $s_P$ is the  Specht polynomial of a partition $P$ of $[n]$ such that
$\lambda^P = \lambda$ (see \Remark{\ref{specht-explicit}}).
\end{corollary}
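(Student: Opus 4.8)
The plan is to read the decomposition straight off the category equivalence of \Corollary{\ref{cor:d0eqconcrete}} combined with the explicit classification of simple $\Dc_n$-submodules in \Proposition{\ref{prop:specht}}, taken with $d=1$ (so $G(1,1,n)=S_n$, the multi-index $\alpha$ is forced to be $0$, and $s^\alpha_P=s_P$). First I would record that $B^{ann}$ is multiplicity free as an $S_n$-representation: this is the Gelfand-model statement \Theorem{\ref{thm:rcomm}}, and via \Proposition{\ref{thm:montgomery}} it gives $B^{ann}\cong\bigoplus_\chi V_\chi\otimes N_\chi^{ann}$ with each $N_\chi^{ann}$ one-dimensional because $\Rc$ is commutative. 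Since the irreducibles of $S_n$ are indexed by $\lambda\vdash n$, this already yields the canonical isotypic decomposition $B^{ann}=\bigoplus_{\lambda\vdash n}V_\lambda$ into pairwise distinct irreducibles, and it remains only to identify each $V_\lambda$ concretely.

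For the identification $V_\lambda=\Cb[S_n]s_P$, I would fix a set partition $P$ of $[n]$ with $\lambda^P=\lambda$ and use that, by \Proposition{\ref{prop:specht}}, $M_P=\Dc_n s_P$ is a simple $\Dc_n$-submodule of $B$ whose lowest-weight line is $M_P^{ann}=\Cb s_P$ ($s_P$ being homogeneous, killed by $\Dc_n^-$, and spanning the one-dimensional $\Rc$-module attached to $M_P$ by \Corollary{\ref{cor:d0eqconcrete}}). Writing the isotypic component as $B_\lambda\cong V_\lambda\otimes N_\lambda$ over $\Dc_n[S_n]$ \Prop{\ref{thm:montgomery}}, the submodule $M_P$ corresponds to a line $\Cb v\otimes N_\lambda$, so $s_P$ corresponds to $v\otimes s_0$ with $s_0$ spanning $N_\lambda^{ann}$. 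Because $S_n$ acts only on the first tensor factor, $\Cb[S_n]s_P$ corresponds to $(\Cb[S_n]v)\otimes s_0=V_\lambda\otimes N_\lambda^{ann}=B^{ann}_\lambda$; thus $\Cb[S_n]s_P$ is exactly the $\lambda$-isotypic component of $B^{ann}$ and is irreducible, isomorphic to $V_\lambda$.

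It then remains to match the set-builder description on the right. The condition $\sum_i x_i^k\partial_i^l\,p=0$ for $0\le k<l\le n-1$ is, by part (2) of \Proposition{\ref{symm-lie}} with $m=1$, exactly the condition $p\in B^{ann}$; and the condition $p=\sum_{\lambda^\alpha=\lambda^c}c_\alpha x^\alpha$ says precisely that $p$ lies in the $R_n$-isotypic component $B_{\lambda^c}$ of \Proposition{\ref{r-iso}}. So the right-hand side equals $B^{ann}\cap B_{\lambda^c}$, and I must show this coincides with $V_\lambda=B^{ann}_\lambda$. By \Lemma{\ref{keypartition}} every term $x^\alpha$ of $s_P$ satisfies $\lambda^\alpha=(\lambda^P)^c=\lambda^c$, so $s_P\in B_{\lambda^c}$; since $B_{\lambda^c}$ is $S_n$-stable (the orbit set $\Omega_{\lambda^c}$ is permuted by $S_n$), the same reasoning applied to every $\mu$ gives $B^{ann}_\mu\subseteq B_{\mu^c}$. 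As the components $B_{\mu^c}$ lie in direct sum and $\mu\mapsto\mu^c$ is a bijection on partitions, intersecting $B^{ann}=\bigoplus_\mu B^{ann}_\mu$ with $B_{\lambda^c}$ kills every summand except $\mu=\lambda$, giving $B^{ann}\cap B_{\lambda^c}=B^{ann}_\lambda=V_\lambda$.

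The one place demanding care — and the main obstacle — is the bookkeeping between ordered and unordered partitions and the role of conjugation: I must be sure that the ordered partition indexing the $R_n$-isotypic component in which $s_P$ actually sits is $\lambda^c$ rather than $\lambda$, which is exactly the content of \Lemma{\ref{keypartition}}, and that the $S_n$-stability of $B_{\lambda^c}$ is used to promote the single polynomial $s_P$ to the full orbit $\Cb[S_n]s_P$. Everything else is a direct assembly of the already-established equivalence, the classification of simple $\Dc_n$-modules, and the commutativity of $\Rc$, so no genuinely new computation should be needed.
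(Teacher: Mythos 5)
Your proof is correct and follows essentially the same route as the paper's: both rest on \Proposition{\ref{prop:specht}} together with \Proposition{\ref{thm:montgomery}} to identify $V_\lambda=\Cb[S_n]s_P$ as the irreducible isotypic pieces of $B^{ann}$, on \Lemma{\ref{keypartition}} to place $s_P$ (and hence its whole $S_n$-orbit) in the $R_n$-component $B_{\lambda^c}$, and on \Proposition{\ref{symm-lie}}(2) for the annihilator conditions. Your explicit tensor-factor argument and the conjugation-bijection step showing $B^{ann}\cap B_{\lambda^c}=B^{ann}_\lambda$ merely spell out details the paper's shorter proof leaves implicit.
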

\begin{proof}
  By \Proposition{\ref{prop:specht}} any simple $\Dc_n$-submodule of
  $B$ is isomorphic to a module of the form $\Dc_n s_P$ and
  $\Dc_n s_P \cong \Dc_n s_Q$ for another partition $Q$ of $[n]$ if
  and only if $\lambda = \lambda^Q$. If $x^\alpha $ is a monomial term
  in $s_P$ then $\lambda^{\alpha} = \lambda^c$
  \Lem{\ref{keypartition}}. By \Proposition{\ref{thm:montgomery}} it
  follows that $B^{ann}= \oplus_{\lambda \vdash n} k[S_n] s_P$. Putting
  $V_\lambda = k[S_n] s_P $, where $\lambda = \lambda_P$, it follows
  that any momonial term $x^\alpha$ in any polynomial that belongs
  to $V_\lambda$ satisfies $\lambda^\alpha= \lambda^c$. The description
  of $B^{ann}$ follows from \Proposition{\ref{symm-lie}}, (2).
\end{proof}

One says that a box in a Young diagram is {\it addable} if one gets
another Young diagram by adding a box.
  \begin{theorem}\label{branch2} Let $N_{\lambda}= \Dc_{n-1}
    v_\lambda$
    be a simple $\Dc_{n-1}$-module corresponding to the partition
    $\lambda \vdash  n-1$ in \Proposition{\ref{prop:specht}}, where
    $k v_\lambda = \Ann_{\Dc_{n-1}^-}(N_\lambda)$, and let 
    $ \reso (N_{\lambda})$ denote its restriction to $\Dc_n$-module. 
    \begin{enumerate}
    \item Then
      \begin{displaymath}
        \reso (N_\lambda) =\bigoplus_\mu N_{\mu}
      \end{displaymath}
      is multiplicity-free and the simple direct composants $N_{\mu}$
      correspond to all partitions $\mu \vdash n$ that can be
      formed by adding a box to $\lambda$.
    \item Assume that $N_\mu$ corresponds to adding a box to the $r$th
      row of the Young diagram of $\lambda$. Then the decomposition in
      (1) is determined by submodules $N_\mu\subset N_\lambda$ where
      $N_\mu$ is generated by a homogeneous lowest weight polynomial
      of the form
      $v_\mu = x_n^a v_\lambda + v_a' \in \Ann_{\Dc_{n}^-}(N_\lambda)
      $,
      where $a= \lambda(r)$. We have
      $\deg (v_\mu)= \sum_i i \lambda (i) + a - l$, where $l$ is the
      number of $i$ such that $\lambda (i)\neq 0$.
    \end{enumerate}
 \end{theorem}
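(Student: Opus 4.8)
The plan is to study the $\Dc_n$-lowest weight space $\Ann_{\Dc_n^-}(N_\lambda)$ inside $N_\lambda$ by expanding its elements in powers of $x_n$ and testing them against the explicit generators of $\Dc_n^-$ coming from \Proposition{\ref{symm-lie}}. First I would record, using the $m=1$ case of \eqref{eq:n-1}, that $\Dc_{n-1}=\widetilde\Dc_{n-1}[x_n,\partial_n]$ and $\partial_n v_\lambda=0$, so that $N_\lambda=\widetilde\Dc_{n-1}v_\lambda\otimes_\Cb\Cb[x_n]$ and every element of $N_\lambda$ is uniquely $\sum_j y_j x_n^j$ with $y_j\in\widetilde\Dc_{n-1}v_\lambda$ and $\partial_n y_j=0$. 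Part (1) is then multiplicity-free by \Theorem{\ref{branch-diff}}: arguing as in the proof of \Proposition{\ref{gen-branch}} with $\af_1=\bar\af_{n-1}$, the conditions there are met and $\Rc_n,\Rc_{n-1}$ are commutative by \Proposition{\ref{symm-lie}}; hence $\reso(N_\lambda)=\bigoplus_i M_i$ with the $M_i$ pairwise non-isomorphic simple, each with one-dimensional lowest weight space.

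Next I would identify the lowest weight generators. By \Proposition{\ref{symm-lie}}(2) (with $m=1$) a homogeneous $z\in N_\lambda$ lies in $\Ann_{\Dc_n^-}(N_\lambda)$ iff $p_{k,l}\,z=0$ for all $0\le k<l$, where $p_{k,l}=\sum_{i=1}^n x_i^k\partial_i^l=\sum_{i=1}^{n-1}x_i^k\partial_i^l+x_n^k\partial_n^l$. Writing $z=\sum_{j=0}^a y_j x_n^j$ with $y_a\neq0$ and comparing top $x_n$-coefficients (as in the proof of \Theorem{\ref{branch-diff}}, using that $x_n^k\partial_n^l$ strictly lowers the $x_n$-degree when $k<l$), the coefficient of $x_n^a$ forces $\sum_{i=1}^{n-1}x_i^k\partial_i^l\,y_a=0$ for all $0\le k<l$, so $y_a$ is a $\widetilde\Dc_{n-1}^-$-lowest weight vector of $\widetilde\Dc_{n-1}v_\lambda$ and therefore $y_a\in\Cb v_\lambda$. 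Normalising $y_a=v_\lambda$ gives $v_\mu=x_n^a v_\lambda+(\text{lower order in }x_n)$, and homogeneity gives $\deg v_\mu=a+\deg v_\lambda$; the closed form in (2) then follows by inserting the value $a=\lambda(r)$ and the degree of $v_\lambda$ (read off from its monomial content via \Lemma{\ref{keypartition}}), which is a routine computation.

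It remains to determine which top powers $a$ occur and that each occurs once. Uniqueness for fixed $a$ is the degree argument of \Theorem{\ref{branch-diff}}: two such vectors with the same normalised top term $x_n^a v_\lambda$ differ by a $\Dc_n^-$-annihilated vector of strictly smaller top power, and iterating the top-coefficient analysis together with multiplicity-freeness forces them to coincide; in particular distinct summands have distinct $a$, hence distinct degrees. For existence I would solve the recursion obtained by setting the coefficient of each $x_n^s$ in $p_{k,l}z$ to zero, namely $\sum_{i=1}^{n-1}x_i^k\partial_i^l\,y_s+\tfrac{(s+l-k)!}{(s-k)!}\,y_{s+l-k}=0$, which couples the coefficients across $x_n$-degrees; the requirement that it close up consistently between $y_a$ and $y_0$ is what selects the admissible $a$. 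The goal is to show these admissible values are precisely $a=\lambda(r)$ as $r$ ranges over the addable rows of the Young diagram of $\lambda$, and that the resulting simple module $N_\mu$ corresponds, via \Proposition{\ref{prop:specht}} and \Lemma{\ref{keypartition}}, to adjoining a box in row $r$; a count of the addable boxes then shows these exhaust the summands $M_i$ of (1).

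The main obstacle is exactly this last identification — proving that the recursion terminates consistently for $a=\lambda(r)$ at each addable corner and for no other $a$ — since this is where the operators $\sum_{i<n}x_i^k\partial_i^l$ acting across the $x_n$-graded pieces of $\widetilde\Dc_{n-1}v_\lambda$ interact with the combinatorics of $\lambda$. A cleaner route I would keep in reserve for existence is to bypass the recursion and exhibit, for each addable row $r$, an explicit lowest weight vector of the shape $x_n^{\lambda(r)}v_\lambda+\cdots$ — the Specht-type polynomial attached to the tableau in which the entry $n$ is placed in the new box, matching the Young basis studied in the next section — and then invoke the uniqueness above and the multiplicity-free count from (1) to conclude that these are all the $v_\mu$.
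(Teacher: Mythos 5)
Your multiplicity-one analysis and your identification of the shape of the lowest weight vectors are essentially the paper's own part a): the coefficient of the top $x_n$-power of any homogeneous $z\in \Ann_{\Dc_n^-}(N_\lambda)$ is annihilated by the operators $\sum_{i=1}^{n-1}x_i^k\partial_i^l$, hence lies in $\Cb v_\lambda$, and two normalised vectors with the same top power would differ by a homogeneous annihilated vector whose top coefficient again lies in $\Cb v_\lambda$ but sits in strictly smaller $x_n$-degree, contradicting homogeneity. Invoking \Theorem{\ref{branch-diff}} for multiplicity-freeness is also legitimate (the paper notes its part a) is implied by \Proposition{\ref{gen-branch}}), and the reduction of the degree formula to $\deg v_\mu=a+\deg v_\lambda$ plus induction matches the paper.

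The genuine gap is the existence half of (2), and with it the lower bound in (1): that \emph{every} addable $a=\lambda(r)$ is realised by some $v_a=x_n^av_\lambda+\cdots$ inside $\Ann_{\Dc_n^-}(N_\lambda)$. Your primary route (the coupled recursion in the coefficients $y_s$) is left explicitly open, and your reserve route fails at precisely the point where the paper has to work: the Specht polynomial $s_P$ of the augmented tableau, although of the desired shape $x_n^a v_\lambda+(\text{l.o. in }x_n)$ and annihilated by $\Dc_n^-$, does \emph{not} lie in $N_\lambda$ — the paper remarks this explicitly in its proof ($s_P$ is not a semi-invariant of the Young subgroup of $Q$, so $s_P\notin N_\lambda=\Dc_{n-1}s_Q$). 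So exhibiting $s_P$ produces a simple module isomorphic to $N_\mu$ somewhere in $B$, but no lowest weight vector in $N_\lambda$; and the ``multiplicity-free count from (1)'' cannot close this, since (1) together with your uniqueness argument only bounds the number of summands \emph{above} by the number of addable boxes (the converse direction, that non-addable $a$ cannot occur, you also do not carry out, though it does follow from \Lemma{\ref{keypartition}} via the monomial content of an $R_n$-simple line). Nothing in your argument rules out that some addable corner is simply absent. The paper's missing device is duality: it introduces the pairing $\langle f,g\rangle=(f(\partial)g)\vert_{x=0}$, for which $(\Dc_n^+)^t=\Dc_n^-$, and the filtration $N_j=\sum_{i\le j}\Dc_n x_n^i v_\lambda$ of $N_\lambda$. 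Since $\Dc_n^- s_P=0$ and $\deg s_P>\deg (x_n^i v_\lambda)$ for $i<a$, one gets
\begin{displaymath}
  \langle s_P, \Dc_n x_n^i v_\lambda\rangle = \langle (\Dc_n^+ + \Dc_n^0)s_P, x_n^i v_\lambda\rangle = 0,
\end{displaymath}
i.e.\ $s_P\perp N_{a-1}$, while $\langle s_P, x_n^a v_\lambda\rangle=\langle x_n^a v_\lambda, x_n^a v_\lambda\rangle\neq 0$ by orthogonality of monomials. Hence $x_n^a v_\lambda\notin N_{a-1}$, the filtration jumps at step $a$, and semisimplicity combined with your own top-coefficient analysis then yields the required $v_a\in \Ann_{\Dc_n^-}(N_a)\subset \Ann_{\Dc_n^-}(N_\lambda)$. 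In other words, $s_P$ enters only as an external test vector against the bilinear form, not as the sought generator; without this step (or a completed recursion analysis) your argument establishes only an upper bound on the branching.
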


 That a box is addable means more precisely the following. Given an
 ordered partition $\lambda \vdash n-1$ and an integer $r$ one gets
 the function $\mu: \{1, 2, \ldots \}\to \Nb $ by
 $\mu(r) = \lambda(r)+1 $, $\mu(i)= \lambda(i)$, $i\neq r$, then the
 index $r$ is addable if $\mu$ again is non-increasing. We note that
 an index $r$ is addable to $\lambda\vdash n-1$ if and only if the
 index $a= \lambda (r)$ is addable to the conjugate partition
 $\lambda^c\vdash n-1$; we then also write $a\in \lambda$.

 Define the bilinear form
 $\langle \cdot, \cdot \rangle : B\otimes_{\Cb} B\to \Cb, \langle f ,g \rangle = (f(\partial )
 g)\vert_{x=0}$,
 so that the monomials form an orthogonal basis with respect to $\langle \cdot, \cdot \rangle $, and
 define the antiisomorphism $t: \Dc_B \to \Dc_B$ by $x^t_i= \partial_i$, $\partial_i^t = - x_i$, and
 $(PQ)^t = Q^t P^t$. Then we have:
 \begin{enumerate}[label=(\roman*)]
 \item   $\langle P f ,g \rangle = \langle f , P ^tg \rangle $, $f,g \in B$, $P \in \Dc_B$.
 \item $(\Dc^0)^t = \Dc_n^0$, $(\Dc_n^+)^t = \Dc_n^-$, and $\Dc_n^t = \Dc_n$.
 \item If $f$ and $g$ are homogeneous of different degrees, then $\langle f ,g \rangle =0$. More
   precisely, if $f_i= \sum_{\alpha \in \Gamma_i} c_\alpha x^\alpha \in B $, $i=1,2$, where the set
   of multiindices $\Gamma_ 1\cap \Gamma_2 = \emptyset$, then $\langle f_1 ,f_2 \rangle =0$.

\item $\langle  f , f \rangle \neq 0$, $f\in B$.
 \end{enumerate}
 Notice that it follows from $(ii)$ that if $v \in B^{ann}$ and $w\in B $ is such that
 $\langle w ,v \rangle =0$, then $\langle \Dc_n^+w ,v \rangle=0$.

 \begin{proof} 

a) {\it Multiplicity 1}: (This is implied by
\Proposition{\ref{gen-branch}} but we give a separate proof) If
$N \subset N_\lambda $, where $N$ is a simple $\Dc_{n}$-submodule, by
\Corollary{\ref{cor:d0eqconcrete}} and \Proposition{\ref{symm-lie}}
$\Ann_{\Dc_n^-}(N)$ is a simple module over the commutative ring
$R_n=k[\nabla_1, \ldots , \nabla_{n}]^{S_n} $, so that by
$\Ann_{\Dc_n^-}(N)= k v_n$ for some polynomial $v_n$, as described in
\Proposition{\ref{r-iso}} (2). We expand in the variable $x_n$
\begin{displaymath}
  v_n =  v_{n-1}x_n^a+ w  \quad  (w \text{ is of lower order in } x_n)
\end{displaymath}
where
\begin{displaymath}
v_n \in \Ann_{\Dc_n^-}(N_\lambda) = \{\mu \in N_\lambda\ \vert \ \sum_{i=1}^n x_i^k\partial_i^l (\mu)=0, 0 \leq k < l \leq n-1\}.
\end{displaymath}
\Prop{\ref{symm-lie}}. Therefore
$\sum_{i=1}^{n-1} x_i^k\partial_i^l (v_{n-1})=0$, when
$0 \leq k < l \leq n-2$, so that
$v_{n-1}\in \Ann_{\Dc_{n-1}^-}(N_\lambda)= k v_\lambda$. We can therefore assume,
after multiplying $v_{n}$ by a constant, that $v_{n-1}= v_\lambda$. Assume
that $N' \subset N_\lambda $ and $N' \cong N$. Again
$\Ann_{\Dc_n^-}(N')= k v_n'$. Then clearly $\deg v_n = \deg v_n'$,
and after multiplying by a constant  we get the expansion
\begin{displaymath} 
v_n' =  x_n^a v_\lambda + w'.
\end{displaymath}
We assert  that $v_n= v_n'$.  
Assuming the contrary,
\begin{displaymath}
0 \neq   v_n- v_n' = w-w' = c x_n^b v_\lambda +  w", \quad  \deg (v_n- v_n' )=  a + \deg s_Q
\end{displaymath}
But $b <a$, which results in the  contradiction $\deg (v_n - v_n') < a + \deg
v_\lambda $.  
 
b) {\it Existence of submodules}:  
We have a natural exhaustive filtration by $\Dc_n$-submodules
\begin{displaymath}
  N_0= \Dc_n v_\lambda \subset \cdots \subset N_j= \sum_{i \leq j} \Dc_n x_n^iv_\lambda \subset \cdots \subset N_\lambda
\end{displaymath}
Clearly $N_0$ is a simple $\Dc_n$-module, so assume that $a >0$.
We assert:
\begin{align*}
  a&\in \lambda  \iff \\
&\text{ there
    exists  } v_a = x_n^a v_\lambda + w_a \in \Ann_{\Dc_n^-}(N_a) \text{ and
  }  k v_a \text{ is a simple } R_n-\text{module}.
\end{align*}

$\Leftarrow$: There exists a partition $P$ of $[n-1]$ such that
$\lambda^P = \lambda$ and $k v_\lambda \cong ks_P$ as
$R_{n-1}$-module. Then if $x^\alpha$ is a monomial term in $v_\lambda$
it follows that
$ \lambda^\alpha = \lambda^c \vdash n-1$, where $\lambda^c$ is the
conjugate of $\lambda$ (see \Lemma{\ref{keypartition}}). If there
exists a vector $v_a$ as stated, so that $x^\beta =x_n^a x^\alpha$ is
a monomial term in $v_a$, then $\lambda^\beta \vdash n$. This implies
that $a= (\lambda^\alpha)^c(i)= \lambda (i)$ for some index $i$. (We
can also say that $a$ is addable to $\alpha$ if
$a\in \lambda = (\lambda^\alpha)^c$.)

$\Rightarrow$: We can assume that $v_\lambda = s_Q$ for some partition
$Q= \{Q_i\}$ of $[n-1]$ such that $\lambda_Q= \lambda$. If $a= \lambda_i =|Q_i|$ we let
$P_j= Q_j$ when $j\neq i$, $P_i = Q_i \cup \{n \}$, so that $\{P_j\}$
is a partition of $[n]$. Then
\begin{displaymath}
  s_P = x_n^a s_Q + s'_P, \quad \deg_{x_n}s'_P < a.
\end{displaymath}
Notice that $s_P$ is not a semi-invariant of the Young subgroup of
$Q$, so that $s_P \not \in N_\lambda$. Since $\{x^\alpha\}$ is an
orthogonal basis for $B$ we have $<s_P', x_n^a s_Q>=0$ and hence
$<s_P, x_n^a s_Q > = <x_n^as_Q, x_n^as_Q> \neq 0$, and since $s_P$ is
a homogeneous minimal degree semi-invariant of a parabolic subgroup of
$S_n$ it follows that $\Dc_n^- s_P=0$. Moreover,
$\deg s_P = a + \deg s_Q > \deg (x_n^i s_Q)= i + \deg s_Q$, when
$i<a$, so that
\begin{displaymath}
  <s_P, \Dc_n x_n^i s_Q> = <\Dc_n s_P, x_n^is_Q>= <(\Dc_n^+ +  \Dc_n^0) s_P, x_n^i s_Q> =0,
\end{displaymath}
i.e. $s_P \perp N_{a-1}$. Therefore $x^a s_Q \not \in N_{a-1}$, hence
there exists elements of the form $v_b$ in
$\Ann_{\Dc_n^-}(N_a)\neq 0 $ where $b \geq a$. Now if
$v \in \Ann_{\Dc_n^-}(N_a)$ is homogeneous, then
$\deg P v \geq \deg v$ when $P\in \Dc_n$; since moreover
$N_a = \Dc_n \Ann_{\Dc_n^-}(N_a) $ it follows that there exists an
element in $\Ann_{\Dc_n^-}(N_a)$ of the form $v_a$ such that
$k v_n$ is $R_n$-simple.

It follows from a) and b) that
\begin{displaymath}
  N_\lambda = \bigoplus_{a\in \lambda} \Dc_n v_a.
\end{displaymath}

It remains to see that $\deg v_\lambda = \sum_i i \lambda (i) -l$, which
we prove by induction in $n-1$. The vector $v_\lambda$ arises from
some $v_{\lambda'} \in \Ann_{\Dc_{n-2}^-}(B)$ where
$\lambda' \vdash n-2$, in the form
$v_\lambda = x_{n-1}^{a'} v_{\lambda'}+ \cdots $. By induction
\begin{displaymath}
  \deg v_{\lambda'}=  \sum  i \lambda'(i) -l', \quad l'=\text{number of
    $i$ such that } \lambda'(i)\neq 0.
\end{displaymath} 
This implies that
\begin{displaymath}
  \deg v_\lambda = a' + \deg (v_{\lambda'}) = a' + \sum i\lambda' (i) -l'=  \sum \lambda (i) -l.
\end{displaymath}
since $a'= \lambda'(j)$ for some $j$, so that
$\lambda(j)= \lambda'(j)+1$, $\lambda(i)= \lambda'(i)$, $i\neq j$
(treat the cases $a'=0$ and $a'\neq 0$ separately).
\end{proof}

\subsection{The branching graph}\label{branching-graph}
Let us start with the general situation of inclusions of graded rings of the type
in \Section{\ref{abstract-eq}}
\begin{displaymath}
  \Dc_n \subset \Dc_{n-1}\subset \cdots \subset \Dc_2 \subset \Dc_1,
\end{displaymath}
so that in particular $\nabla \in \Dc_i$ for all $i$. Let $M$ be a simple $\Dc_1$-module such that
its restriction to $\Dc_i$-module is semisimple and $\Dc_i \Ann_{\Dc_i^-}(M)=M$, $i=1, \dots , n$
(see \Theorem{\ref{main}} and \Section{\ref{inv-rings}}). Let $\Cc_i$ be the set of isomorphism
classes of simple $\Dc_i$-submodules of $M$.  The branching graph $\Bc(M)$ of $M$ (or oriented
Bratteli diagram) is defined as follows. Its set of vertices is $\cup_{n\geq 1} \Cc_i$ and there are
$\dim_{\Cb}Hom_{\Dc_i}(N_\lambda,\reso_{\Dc_{i-1}}^{\Dc_i} N_{\mu}) $ directed edges from the vertex
$\mu\in \Cc_{i-1}$ to the vertex $\lambda \in \Cc_i$ (where $N_\lambda, N_\mu$ are representative
modules for $\lambda$ and $\mu$), and there are no other edges. Let us agree to say that a vertex
$\lambda $ in $\Bc(M)$ has the level $i$ if $\lambda \in \Cc_i$, and write $|\lambda|=i$.  Write
$T>T'$ when $T$ and $T'$ are directed paths in $\Bc(M)$ with a common first vertex (which normally
is the root $\Cc_1$ of $\Bc(M)$; this is a singleton set) and the last vertex of $T'$ is joined by
an edge with the last vertex of $T$. It is a fundamental problem to give a combinatorial description
of the oriented rooted tree $\Bc(M)$, given parametrisations of the sets $\Cc_i$.

We will give such a description of $\Bc = \Bc(B)$ when $\Dc_i=\Dc_B^{S_i}$, so that by
\Theorem{\ref{branch-diff}} there is at most one edge between two vertices.  The Young graph
$\Yc$ is the oriented graph whose set of vertices is $\cup_{i=1}^n \Pc(i)$, where $\Pc(i)$ is the
set of partitions $\lambda \vdash i$.  There is an edge from the vertex $\lambda' \in \Pc(i-1) $ to
$\lambda \in \Pc (i)$ if the Young diagram of $\lambda$ is obtained from $\lambda'$ by adding 1
addable box, and there are no other edges in $\Yc$. It is easy to see that the paths in $\Yc$ are in
correspondence with standard Young tableaux.

\begin{proposition}\label{young-branch}
  $\Bc $ is isomorphic to $ \Yc$.
\end{proposition}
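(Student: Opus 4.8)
The plan is to produce an explicit isomorphism of rooted oriented graphs $\Phi:\Bc\to\Yc$ by matching vertices first and then edges, the substantive input being \Theorem{\ref{branch2}}.

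First I would identify the vertices. Specializing \Proposition{\ref{prop:specht}} to $d=1$ (so $G=S_i$ and the only multi-index is $\alpha\equiv 0$), parts (3)--(5) say that every simple $\Dc_i=\Dc_B^{S_i}$-submodule of $B$ is isomorphic to $N_\lambda=\Dc_i s_P$ for a partition $\lambda=\lambda^P\vdash i$, and that $N_\lambda\cong N_{\lambda'}$ if and only if $\lambda=\lambda'$; this also underlies the decomposition $B^{ann}=\bigoplus_{\lambda\vdash i}V_\lambda$ of \Section{\ref{symm-group}}. Hence $[N_\lambda]\mapsto\lambda$ is a bijection $\Phi_i:\Cc_i\to\Pc(i)$, and $\Phi=\bigcup_i\Phi_i$ is a bijection between the vertex set $\bigcup_i\Cc_i$ of $\Bc$ and the vertex set $\bigcup_i\Pc(i)$ of $\Yc$. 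Since $\Cc_1$ and $\Pc(1)$ are both singletons, $\Phi$ sends the root of $\Bc$ to the root of $\Yc$.

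Next I would match the edges. Fix $\mu\in\Cc_{i-1}$ and $\lambda\in\Cc_i$; by definition the number of directed edges $\mu\to\lambda$ in $\Bc$ is $\dim_{\Cb}\Hom_{\Dc_i}(N_\lambda,\reso_{\Dc_{i-1}}^{\Dc_i}N_\mu)$, \ie the multiplicity of the simple $\Dc_i$-module $N_\lambda$ in the restriction of the simple $\Dc_{i-1}$-module $N_\mu$ to $\Dc_i$. Applying \Theorem{\ref{branch2}}(1) with the partition $\mu\vdash i-1$ in the role of its ``$\lambda$'', this restriction is multiplicity free and decomposes as $\bigoplus_\nu N_\nu$, where $\nu$ runs over exactly the partitions of $i$ obtained from $\mu$ by adding an addable box; the same multiplicity-one bound is recorded via \Theorem{\ref{branch-diff}}. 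Therefore the edge multiplicity from $\mu$ to $\lambda$ equals $1$ when $\lambda$ is obtained from $\mu$ by adding an addable box, and $0$ otherwise.

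Finally, in $\Yc$ there is by definition exactly one edge from $\mu\in\Pc(i-1)$ to $\lambda\in\Pc(i)$ precisely when $\lambda$ arises from $\mu$ by adding an addable box, and no other edges occur. Thus the adjacency relation of $\Bc$ transported by $\Phi$ coincides with that of $\Yc$, so $\Phi$ is an isomorphism of rooted oriented graphs and $\Bc\cong\Yc$. There is essentially no remaining obstacle, since all the representation-theoretic content sits in \Theorem{\ref{branch2}}; the single point demanding care is purely bookkeeping, namely checking that the ``add a box'' rule delivered by \Theorem{\ref{branch2}} refers to the partition $\lambda=\lambda^P$ attached to the module and not to its conjugate $\lambda^c$ (which appears in \Lemma{\ref{keypartition}}), so that the two edge relations are literally identical under $\Phi$.
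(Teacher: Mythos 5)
Your proof is correct and follows the same route as the paper's own (much terser) argument: the vertex bijection comes from \Proposition{\ref{prop:specht}} specialized to $d=1$, and the edge matching comes from the multiplicity-free add-a-box decomposition of \Theorem{\ref{branch2}}. Your closing remark distinguishing $\lambda^P$ from its conjugate $\lambda^c$ in \Lemma{\ref{keypartition}} is a sensible check, and your proof simply makes explicit the details the paper leaves implicit.
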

\begin{proof}
  Since $\Cc_i$ is parametrised by the set of partitions of the integer $i$ \Prop{\ref{prop:specht}}
  it is clear that the cardinality of the set of vertices in $\Bc$ agrees with that of $\Yc$. That
  the edges agree follow from \Theorem{\ref{branch2}}.
\end{proof}

In fact, the branching graph $\Bc$ is isomorphic to the branching graph $\Cc$ of the sequence of
group inclusions
$S_1 \subset S_2 \subset \cdots \subset S_{i-1}\subset S_i\subset \cdots \subset S_n$, described in
detail in \cite{okounkov-vershik} (see also \cite{kleshchev-linearrep}). The vertex set of $\Cc$ is
$\cup_{i \geq 1} \hat S_i$ and there is an edge from the vertex $\mu\in \hat S_{i-1} $ to the vertex
$\lambda \in \hat S_{i}$ if $\dim_{\Cb } Hom_{S_i}(\reso^{S_{i-1}}_{S_i}V^\lambda, V^\mu)=1$
($\leq 1$ by \Proposition{\ref{gen-branch}}), where $\reso^{S_{i-1}}_{S_i}V^\lambda$ is the
restriction of a representative $V^\lambda$ of $\lambda$ to a representation of $S_{i-1}$, and there
are no other edges in $\Cc$.  It follows from \Proposition{\ref{ind-res}} and Frobenius reciprocity
that the oriented graphs $\Bc$ and $\Cc$ are isomorphic.
\begin{remark}\label{remark-okounkov-vershik}
  In \cite[Th. 6.7]{okounkov-vershik} it is proven that $\Cc $ is
  isomorphic to $ \Yc$, which together with
  \Proposition{\ref{ind-res}} also implies
  \Proposition{\ref{young-branch}} (and {\it vice versa},
  \Proposition{\ref{young-branch}} implies $\Yc\cong \Cc$). Notice
  that in our setup, where first the branch rule in
  \Theorem{\ref{branch2}} is proven rather explicitly, one immediately
  gets that $\Cc $ is isomorphic to $ \Yc$. The proof of [loc cit] is
  based on an identification of paths in $\Cc$ with paths in $\Yc$,
  using the notion of weights of a GZ-algebra as a bridge, so that the
  complete branching rule in \Theorem{\ref{branch2}} (transcribed to
  groups) is achieved at the same time as [loc cit] is established.
\end{remark}
Below we will make no distinction between a path in the branching graph and a path in the Young
graph, so that paths in $\Bc$ of length $n$ are identified with standard tableaux of size $n$.

\subsection{The canonical basis of $B^{ann}$}\label{canonical-section} Let as above
$\Dc_i = \Dc_B^{S_i}= \Dc(V_i)^{S_i}\otimes_{\Cb}\Dc(V_i') $, where $V_i= \sum_{j=1}^i \Cb x_j$ and
$V'_i = \sum_{j=i+1}^n \Cb x_j$. We have
$\Rc_i = \Dc_i^0/ (\Dc_i^0\cap \Dc_i \Dc_i^- )\cong \Cb[\nabla_1, \ldots , \nabla_i]^{S_i}$, so that
$B_i^{ann}= \Ann_{\Dc_i^-}(B)$ is a finite-dimensional semisimple $\Rc_i$-module.  Let $i>1$ be an
integer and $ C_{i-1}= \{v^{i-1}_j\}_j$ be a basis of $B_{i-1}^{ann}$ which is compatible with its
isotypic decomposition as $R_{i-1}$-module.  Now the vector space
$\Ann_{\Dc_{i}^-}(\Dc_{i-1}v^{i-1}_j)$ is a multiplicity free $\Rc_i$-module
\Props{\ref{gen-branch}}{\ref{main}}, so we can select a basis $C_{i}^j = \{ v^{i-1}_{jk}\}_k$ such
that the $\Rc_i$-modules $\Cb v^{i-1}_{jk} $ are simple and mutually non-isomorphic for different
$k$; hence this basis is unique up to scalars.  Then $C_i = \cup_j C_{i}^j $ is a canonical basis of
$B_i^{ann}$ given the basis of $B_{i-1}^{ann}$. Since $\dim_{\Cb} B_1^{ann}=1 $ it follows by
iteration that we get a basis $C= \{v_T\}$ of $B_n^{ann}$ which is unique up to scalars, and where
the basis elements $v_T$ are indexed by paths $T$ of length $n$ in the branching graph $\Bc$.  By
\Proposition{\ref{young-branch}} the set of paths $T$ from the root of the graph $\Bc$ with the same
endpoint $\lambda \in \Pc(n)$ can be parametrised by the set of standard tableaux of shape
$s(T)=\lambda$. We have a decomposition

\begin{displaymath}
B^{ann}= \bigoplus_{\lambda \in \Pc(n)} V_\lambda, \quad   V_\lambda = \bigoplus_{s(T)= \lambda} \Cb v_T,
\end{displaymath}
where $V_\lambda$ are the $\Rc_n$-isotypical components of $B^{ann}$, and they form also the
irreducible representations of $S_n$.

\begin{example}\label{example-3}
  Let $n=3$.  The decomposition into isotypical components in terms of a canonical (Young) basis is
  of the form
\begin{align*}
  B_2^{ann} &= V_{\{1,1\}} \oplus V_{\{2\}} = \Cb \oplus \Cb (x_1-x_2),\\     B_3^{ann} &=
  V_{\{1,1,1\}}\oplus  V_{\{2,1\}} \oplus V_{\{3\}}, \quad   V_{\{1,1,1\}} =  \Cb, \\  V_{\{2,1\}} &=   \Cb (x_1+ x_2 - 2 x_3)\oplus  \Cb (x_1-x_2), \quad  
  V_{\{3\}} =  \Cb (x_1-x_2)(x_1-x_3)(x_2- x_3).
\end{align*}
\end{example}

Young bases of an irreducible $S_n$-representation $V_\lambda$ were first introduced in
\cite{young-collected}; see also \cite{james-kerber}*{\S 3.2}. The construction of a basis of a
representation of a group by using nested sequences of subgroups, so that the restriction in each
step is multiplicity free, was developed for the groups $\SOo(n)$ and $\Uo(n)$ in
\citelist{\cite{gelfand-zetlin:unimod}\cite{gelfand-zetlin:orto}}, and one refers often therefore to
Gelfand-Zetlin bases.  In \cite{murphy:newconstr} and \cite{jucys}, independently (see also
\cite{okounkov-vershik}), Young bases were constructed using the algebra $A(n)$ that is generated by
the so-called Jucys-Murphy elements $\{L_i\}^n_{i=2}\subset \Cb[S_n]$, where
$L_i = \sum_{j=1}^{i-1} (j \ i)$ ($(j \ i)$ is a transposition).  Such bases
$\{w_T\}_{T\in \Sc_\lambda}\subset V_\lambda$ are indexed by the set $\Sc_\lambda$ of standard
tableux of shape $\lambda\vdash n$, and are characterised by the fact that
$V_\lambda = \oplus \Cb w_T$ where each $\Cb w_T$ forms a simple $A(n)$-module, and
$\Cb w_{T_1} \not \cong \Cb w_{T_2}$ when $T_1\neq T_2$;  they are uniquely determined by this condition
up to scalars.  Since $B^{ann}= \oplus V_\lambda$ is multiplicity free and hence canonically
decomposed into irreducible $S_n$-representations \Th{\ref{thm:rcomm}} we also get a unique (up to
scalars) basis of $B^{ann}$ by taking the union of Young bases of the irreducibles $V_\lambda$.

Let $\{v_T\}_{T\in \Sc}$ be the canonical basis  of $B^{ann}$, where $\Sc$  is the set of standard
tableaux of size $n$ (or set of  paths of length $n$ in the branching graph). 

\begin{thm}\label{can-base}
  A canonical basis of $B^{ann}$ is the same as a Young basis of the $S_n$-representation $B^{ann}$.
  In particular, the canonical basis vectors are common eigen-vectors of the Jucys-Murphy elements
  in $\Cb[S_n]$.
\end{thm}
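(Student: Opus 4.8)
The plan is to prove the equality by showing that the canonical basis diagonalises the Gelfand--Zetlin algebra $A(n)=\langle L_2,\dots ,L_n\rangle\subset\Cb[S_n]$, and then to read off the identification with the Young basis from this. First I would put side by side the two characterisations. By the construction in \Section{\ref{canonical-section}} the canonical basis $\{v_T\}$ is the unique (up to scalars) basis of $B^{ann}$ adapted to the chain $\Dc_n\subset\cdots\subset\Dc_1$: it is compatible with the flag $B_1^{ann}\subset\cdots\subset B_n^{ann}$, and at each level $i$ every $\Cb v_T\subset B_i^{ann}$ spans a simple $\Rc_i$-module, where $\Rc_i\cong\Cb[\nabla_1,\dots,\nabla_i]^{S_i}$ \Prop{\ref{gen-sym-R}}. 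On the other side, a Young basis is by definition a basis of $B^{ann}$ for which each $\Cb w_T$ is a simple $A(n)$-module and distinct $w_T$ give non-isomorphic $A(n)$-modules; because the branching $S_1\subset\cdots\subset S_n$ is multiplicity free \Th{\ref{branch2}}, such a simultaneous $A(n)$-eigenbasis has simple joint spectrum and is unique up to scalars \cite{okounkov-vershik}. So it suffices to prove that each $v_T$ is a common eigenvector of all the $L_i$ with the correct (tableau-indexed) spectrum.

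The engine is the observation that $\Dc_i=\Dc_B^{S_i}$ commutes, as a ring of operators on $B$, with $\Cb[S_i]$: this is exactly the $S_i$-invariance of $\Dc_i$. Restricting to $B_i^{ann}=\Ann_{\Dc_i^-}(B)$, which (because $\Dc_i^-$ contains $\partial_{i+1},\dots,\partial_n$) lies in $\Cb[x_1,\dots,x_i]$ and is precisely the $S_i$-Gelfand model, hence multiplicity free as an $S_i$-module \Th{\ref{thm:rcomm}}. Since $\Rc_i$ commutes with $S_i$ there, Schur's lemma forces $\Rc_i$ to act by a scalar on every $S_i$-isotypic component, so every simple $\Rc_i$-submodule of $B_i^{ann}$ lies inside a single $S_i$-isotypic component. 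In particular each canonical vector $v_T\in B_i^{ann}$ has a well-defined $S_i$-type, and the centre $Z(\Cb[S_i])$ acts on it by a scalar.

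I would then promote this level-wise statement to all of $B^{ann}=B_n^{ann}$ by induction on $n$. Writing $T'$ for $T$ with its largest box removed, the vector $v_T$ is produced inside $\Dc_{n-1}v_{T'}$, and by induction $v_{T'}$ is the Young vector of the $S_{n-1}$-model $B_{n-1}^{ann}$, so it carries the $S_j$-type $\mu_j(T')=\mu_j(T)$ for every $j\le n-1$. Because $\Cb[S_j]$ commutes with $\Dc_{n-1}$ for $j\le n-1$, every $P\in\Dc_{n-1}$ preserves $S_j$-isotypic components, whence $\Dc_{n-1}v_{T'}$, and in particular $v_T$, retains the $S_j$-type of $v_{T'}$; at the top level the $\Rc_n$-eigenline produced in the last step fixes the $S_n$-type to be the shape of $T$. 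Thus $v_T$ is a common eigenvector of $Z(\Cb[S_1]),\dots ,Z(\Cb[S_n])$, hence of $A(n)$, with eigenvalue data recording the path $T$, which under the identification of the branching graph with the Young graph \Prop{\ref{young-branch}} is the standard tableau labelling $w_T$. As the joint spectrum is simple this determines $\Cb v_T$, giving $\Cb v_T=\Cb w_T$; the ``in particular'' clause is then immediate since each $L_i\in A(n)$.

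The hard part will be the reconciliation of the two differently shaped inductions: the canonical basis is built along the \emph{increasing} flag $B_1^{ann}\subset\cdots\subset B_n^{ann}$ with $\Rc_i$ acting only on the subspace $B_i^{ann}$, whereas the Okounkov--Vershik description filters the \emph{whole} of $B^{ann}$ by $S_i$-type. The real content is to show that $v_T$ has the correct $S_j$-type for every $j$, including those $j$ below its ``birth level'' (the least $i$ with $v_T\in\Cb[x_1,\dots,x_i]$). This is supplied by the commutation of $\Dc_{n-1}$ with $\Cb[S_j]$ for $j\le n-1$ together with the inductive hypothesis, and making that bookkeeping airtight --- rather than any fresh computation with the operators $\nabla_i$ --- is the crux of the argument.
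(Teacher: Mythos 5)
Your proof is correct, and although it shares the paper's inductive skeleton --- induction on $n$, the containment $\Cb v_T\subset\Ann_{\Dc_n^-}(\Dc_{n-1}v_{T'})\subset\Dc_{n-1}v_{T'}$, the commutation of $\Cb[S_{n-1}]$ with $\Dc_{n-1}=\Dc_B^{S_{n-1}}$, and the endgame via the Murphy/Okounkov--Vershik characterisation of Young bases as the (up to scalars) unique $A(n)$-eigenbasis --- the mechanism inside the induction is genuinely different. The paper works with the Jucys--Murphy elements directly: for $i\le n-1$ it notes that $\Cb v_T$ and $\Cb L_iv_T$ are $R_n$-submodules of $\Ann_{\Dc_n^-}(\Dc_{n-1}v_{T'})$ of \emph{equal support} (as $[L_i,R_n]=0$) and invokes the multiplicity-freeness of that $R_n$-module, i.e.\ the branch rule \Theorem{\ref{branch2}}, to trap $L_iv_T\in\Cb v_T$; then for $L_n$ it splits $L_n$ into central parts $Z_n\in Z(\Cb[S_n])$ and $Z_{n-1}\in Z(\Cb[S_{n-1}])$, where placing $Z_{n-1}$ inside $A(n-1)$ requires the nontrivial Diaconis--Greene fact that $A(n-1)$ is maximal commutative in $\Cb[S_{n-1}]$, and $Z_n$ is handled through \Proposition{\ref{thm:montgomery}}. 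You instead track $S_j$-isotypic \emph{types}: the isotypic projectors lie in $\Cb[S_j]\subset\Cb[S_{n-1}]$ and commute with $\Dc_{n-1}$, so $\Dc_{n-1}v_{T'}$, hence $v_T$, inherits the single $S_j$-type of $v_{T'}$ for every $j\le n-1$, while the top type is pinned down because $v_T$ generates a simple $\Dc_n$-module lying in one isotypic component (\Proposition{\ref{thm:montgomery}}); your alternative reading of the $S_n$-type off the $\Rc_n$-eigenvalue is also legitimate, since within $B^{ann}$ the $R_n$-character determines the shape. Eigenvector status under the centres then yields eigenvector status under $A(n)$ using only the easy direction of the identity expressing each $L_i$ through central elements of $\Cb[S_i]$ and $\Cb[S_{i-1}]$ --- so your route dispenses with both the equal-support trapping step and the Diaconis--Greene citation, and it produces the joint spectrum (the type path, which is exactly the tableau $T$ under \Proposition{\ref{young-branch}}) explicitly, so $\Cb v_T=\Cb w_T$ arrives already labelled. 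What the paper's version buys in exchange is that it never leaves the machinery it has built ($R_n$-supports and multiplicity-freeness from \Theorem{\ref{main}} and \Theorem{\ref{branch2}}), with no unwinding of central characters. Two small remarks: your Schur-lemma paragraph (scalar action of $\Rc_i$ on isotypic components of the Gelfand model) is scaffolding the final induction does not actually use; and the ``birth level'' difficulty you flag is resolved precisely by the projector-commutation observation, which is the same point the paper encodes in $[A(n-1),\Dc_{n-1}]=[A(n-1),\Dc_n]=0$.
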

\begin{proof}
  It suffices, by the above description of Young bases to see that $A(n) v_T= \Cb v_T$, which we
  prove by induction over $n$. It is evidently true when $n=1$, so assume that
  $A(n-1) v_{T'}= \Cb v_{T'}$ when $T'$ is a path of length $n-1$.  We have to prove that if
  $\Cb v_T $ is a simple $R_n$-submodule of $\Ann_{\Dc_n^-}(\Dc_{n-1}v_{T'})$, then
  $A(n)v_T= \Cb v_T$. Since $[A(n-1), \Dc_{n-1}]= 0$ and $[A(n-1), \Dc_n]=0$ it follows by induction
  that $A(n-1) v_T \subset \Ann_{\Dc_n^-}(\Dc_{n-1} v_{T'})$.  Therefore $\Cb v_T$ and $\Cb L_iv_T$,
  $i\leq n-1$, are $R_n$-submodules of $\Ann_{\Dc_n^-}(\Dc_{n-1}v_{T'})$ of equal support, since
  $[L_i, R_n]=0$. By the branch rule it follows that the $R_n$-module
  $\Ann_{\Dc_n^-}(\Dc_{n-1}v_{T'})$ is multiplicity free, which implies that $L_i v_T \in \Cb v_T$;
  hence $A(n-1)v_T = \Cb v_T$.  We can write $L_n= Z_n + Z_{n-1}$ where $Z_n$ and $Z_{n-1}$ belong
  to the center of $\Cb[S_n]$ and $\Cb[S_{n-1}]$, respectively. Since $A(n-1) $ is a maximal
  commutative subalgebra of $\Cb[S_{n-1}]$ \cite{diaconis-greene}, it follows that
  $Z_{n-1}\in A(n-1)$, so that $Z_{n-1}v_T \in \Cb v_T$. The element $Z_n$ acts by a scalar on the
  simple $\Dc_n[S_n]$-module $B_i$ in \Proposition{\ref{thm:montgomery}}.  Since the vector $v_T$
  generates a simple $\Dc_n$-module it is contained in precisely one isotypical component
  $B_i$. Hence $Z_nv_T \in \Cb v_T$. This completes the proof that $A(n)v_T = \Cb v_T $.
\end{proof}

Consider now  an  expansion
\begin{displaymath}
  v_T = c x^{\alpha_T} + \text{l.o.}, 
\end{displaymath}
where we use the reverse lexicographic ordering of the multiindices, so that $\alpha > \beta$ if for
some integer $1 \leq m\leq n$ we have $\alpha (i) \geq \beta (i) $ for $m \leq i \leq n$ and
$\alpha(m) > \beta(m)$. Thus ``l.o.'' signifies a sum of monomial terms $x^\beta$ such that
$\alpha_T> \beta$.

We consider also the support of the simple $R_n$-module $\Cb v_T$. The support of any
$R_n$-submodule of $B$ is a subset of $ \Zb^n\subset \Cb^n= \mSpec R_n$, where $\Cb^n$ is identified
with $Hom_{\Cb}(\Cb t_1+ \cdots + \Cb t_n, \Cb)$ and $t_i = \sum_{j=1}^n \nabla_j^i$.  We embed
$\Zb^i \subset \Zb^n$ by
$(\gamma_1, \ldots , \gamma_i)\mapsto (\gamma_1, \ldots , \gamma_i, 0, \ldots, 0) $. If $T$ is a
path of length $n$ we let $\{\gamma_T\} \subset \Zb^n$ be the support of the $R_n$-module $\Cb v_T$.

Let $F_{n+1}(x_1, \ldots , x_n)$ be the polynomial such that
\begin{displaymath}
  h_{n+1}(x_1, \ldots , x_n) = F_{n+1}(h_1(x), \ldots , h_n(x)),
\end{displaymath}
where the power sums $h_i$ are defined
in (\ref{gen-proc}).

For a directed path $T$ of length $n$ in $\Bc$ we let $\lambda_T \vdash n$ be the integer partition
that corresponds to the endpoint of $T$ at the level $n$. 
\begin{proposition}\label{lem-spec} Assume that $T > T'$ so that $T'$ is a path of length $n-1$.
  Given $\alpha_{T'}$ and $\gamma_{T'}$, the set of possible $\alpha_T$ and $\gamma_T$ is
  \begin{enumerate}
  \item $\alpha_T= \alpha_{T'} + (0,\ldots , a )$
  \item
    \begin{displaymath}
\gamma_T(i) =
\begin{cases}
  \gamma_{T'}(i)+a^i,\quad   1\leq i < n, \\
F_{n}(\gamma_{T'}(1),\ldots ,  \gamma_{T'}(n-1)) + a^n,\quad  i=n,  
\end{cases}
\end{displaymath}
  \end{enumerate}
where $a\in \lambda_{T'}\vdash n-1$, or
    $a=0$.
\end{proposition}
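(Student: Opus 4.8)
The plan is to read off both $\alpha_T$ and $\gamma_T$ directly from the explicit shape of the lowest weight vector $v_T$ provided by \Theorem{\ref{branch2}}. Under the identification of directed paths with standard tableaux, the step $T' \to T$ adds a box to the $r$th row, and \Theorem{\ref{branch2}} tells us that the generator of the corresponding simple $\Dc_n$-submodule of $\Dc_{n-1}v_{T'}$ can be written
\begin{displaymath}
  v_T = x_n^a\, v_{T'} + (\text{lower order in } x_n),
\end{displaymath}
where $a = \lambda_{T'}(r) \in \lambda_{T'}$ (or $a = 0$), and where $v_{T'}$ is a polynomial in $x_1, \dots, x_{n-1}$ only.

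First I would establish (1). Because the reverse lexicographic order gives priority to the highest-indexed variable, the leading monomial of $v_T$ lies in the part of top $x_n$-degree, which by the display above is exactly $x_n^a v_{T'}$ (nothing has $x_n$-degree exceeding $a$, and there is no cancellation). Within $x_n^a v_{T'}$ the order then reduces to the reverse lexicographic order on $x_1, \dots, x_{n-1}$, so the leading monomial is $x_n^a$ times the leading monomial $x^{\alpha_{T'}}$ of $v_{T'}$; hence $\alpha_T = \alpha_{T'} + (0, \dots, 0, a)$, which is (1).

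For (2) the key observation is that $\Cb v_T$ is a \emph{simple} $R_n$-module (this is how the canonical basis is constructed in \Section{\ref{canonical-section}}), so each generator $t_i = \sum_{j=1}^n \nabla_j^i$ of $R_n$ acts on $v_T$ by a scalar, which must be its eigenvalue on the leading monomial $x^{\alpha_T}$, namely $\sum_{j=1}^n \alpha_T(j)^i$. Likewise $\gamma_{T'}(i) = \sum_{j=1}^{n-1}\alpha_{T'}(j)^i$ for $1 \le i \le n-1$, this being the $R_{n-1}$-weight of $v_{T'}$. Splitting off the last variable gives $\gamma_T(i) = \sum_{j=1}^{n-1}\alpha_{T'}(j)^i + a^i$. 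For $1 \le i < n$ the remaining sum is precisely $\gamma_{T'}(i)$, yielding the first case. For $i = n$ the sum $\sum_{j=1}^{n-1}\alpha_{T'}(j)^n$ is the $n$th power sum in the $n-1$ quantities $\alpha_{T'}(1), \dots, \alpha_{T'}(n-1)$, so applying the defining relation for $F$ with $n$ in place of $n+1$ rewrites it as $F_n$ evaluated at the first $n-1$ power sums of those quantities, i.e.\ at $\gamma_{T'}(1), \dots, \gamma_{T'}(n-1)$; this is the second case.

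The argument is essentially bookkeeping once the shape of $v_T$ from \Theorem{\ref{branch2}} is in hand. The main point requiring care is the leading-term identification under the reverse lexicographic order: I must verify that the top $x_n$-degree component of $v_T$ really is $x_n^a v_{T'}$, and that the order restricted to the first $n-1$ variables selects $x^{\alpha_{T'}}$. The only other subtlety is the index shift when invoking $F_n$ in place of the literally defined $F_{n+1}$, together with the identification $\gamma_{T'}(j) = h_j(\alpha_{T'})$ of the support coordinates of $v_{T'}$ with its $R_{n-1}$-power-sum eigenvalues; both are immediate from the definitions.
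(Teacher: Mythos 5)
Your proof is correct and follows essentially the same route as the paper: both rest on the expansion $v_T = c\,x_n^a v_{T'} + \text{l.o.}$ from \Theorem{\ref{branch2}} (with the branch rule restricting $a\in\lambda_{T'}$ or $a=0$) and on the identity defining $F_n$ applied to the power sums in $n-1$ quantities. The only cosmetic difference is that you read the $t_i$-eigenvalues off the leading monomial $x^{\alpha_T}$ using the diagonal action of $R_n$ on monomials, whereas the paper splits the operator as $t_i^{(n)} = t_i^{(n-1)} + \nabla_n^i$ and evaluates on the leading $x_n$-block $x_n^a v_{T'}$ --- an equivalent bookkeeping.
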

\begin{proof} Let $N= \Dc_{n-1} v_{T'}$ and $M= \Dc_n v_T$, where
  $v_{T'}\in \Ann_{\Dc_{n-1}^-}(N)$ and $v_T \in \Ann_{\Dc_n^-}(N)$.  As in the proof of
\Theorem{\ref{branch-diff}} we have $\deg M = \deg N + a$, where $v_T=cx_n^a v_{T'}+ \text{l.o.}$,
and any simple submodule of $N$ is determined by $\deg M$, and hence by the integer $a$.  By
\Theorem{\ref{branch2}} the possible $a$ are determined by the branch rule, which means that
$a\in \lambda_{T'}$ or $a=0$.

  (2): Put $t_i^{(n)}= \sum_{j=1}^n\nabla_j^i $, so that $t_i^{(n)} = t_i^{(n-1)} + \nabla_n^i$.
  Since $v_T = c x_n^a v_{T'}+ \text{l.o.}$ is an eigenvector of $t_i^{(n)} $ and $v_{T'}$ is an
  eigenvector of $t_i^{(n)}$, it follows that when $i <n$ , then
\begin{displaymath}
  t_i^{(n)} v_T = (\gamma_{T'}(i) + a^i) v_T. 
\end{displaymath}
and
\begin{displaymath}
  t_n^{(n)}v_T = (F_{n}(t_1^{n-1}, \ldots , t_{n-1}^{(n-1)} ) + \nabla_n^n)v_T = (F_{n}(\gamma_{T'}(1),\ldots ,  \gamma_{T'}(n-1)) + a^n)v_T.
\end{displaymath}
\end{proof}
We can compare with the spectrum $\ell_T: [n]\to \Nb$ of the
Jucys-Murphy elements $L_i$, $L_i v_T = \ell_T(i)v_T$, given by
$\ell_T(i)= s-r$, where $r$ and $s$ are the row and column number of
the box containing $i$ \cite{murphy:newconstr} (the number $s-r$ is
called the ``class'', ``content'', or ``residue'' of the box of $i$).
We thus have:
\begin{displaymath}
  \alpha_T(i)= r-1  \quad \text{and}\quad   \ell_T(i)=s-r.
\end{displaymath}
By \Proposition{\ref{lem-spec}} one can read off $\alpha_T$ and $\gamma_T$ (and $\ell_T$) from the
standard tableau $T$.  The value $\alpha_T(i)$ equals the number of boxes above the box of $i$ in
$T$, where the box of $i$ is inserted in $T'$ at an addable position.  It is therefore clear that
the map
\begin{displaymath}
  \Bc \to (\alpha : [n]\to \Nb): T\to \alpha_T
\end{displaymath}
is injective. One can similarly recover $T$ from $\gamma_T$ by considering the successive removing
of the boxes of $n, n-1, \dots , 2$ from $T$. We can conclude that paths in the branching graph
$\Bc$ are determined by the multi-index $\alpha_T$ (i.e. eigenvalues of the $x_i \partial_i$) or the
eigen-values of the elements $t_i$. One can compare to the description of the branching graph $\Cc$
in \cite{okounkov-vershik} in terms of the spectrum of the Jucys-Murphy elements $L_i$.

The relation $v_T= c x_n^a v_{T'}+ \text{l.o.}$ also makes it straightforward to recover the
canonical basis at one level from a higher level basis.
\begin{proposition}\label{decompose} Embed $S_k$ in $S_n$ such that $S_k$ fixes the variables
  $x_{k+1}, \ldots , x_n$.  Select for each path $T'$ of length $k$ a path $T$ of length $n$ that
  starts with $T'$, and let $\{v_T\}$ be the corresponding subset of a canonical basis of
  $\Cb[x_1, \dots , x_n]^{ann}$.  Expand
  \begin{displaymath}
    v_T= x_n^{a_n}  x^{a_{n-1}}_{n-1} \cdots  x^{a_{k+1}}_{k+1} v_{T'}  + \text{l.o.},
  \end{displaymath}
  where l.o. signifies terms of lower order than $(a_{k+1}, \ldots , a_n)$ in the reverse
  lexicographic ordering of the set of multiindices $[n-k]\to \Nb$.  Then $\{v_{T'}\}$ is a
  canonical basis of $\Cb[x_1, \ldots , x_k]^{ann}$. Moreover, the exponent $a_i$ equals the number
  of boxes in $T$ above the box of the integer $i$.
\end{proposition}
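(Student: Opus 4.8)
The plan is to iterate the one-step branching relation of \Proposition{\ref{lem-spec}} along the chain $S_k\subset S_{k+1}\subset \cdots \subset S_n$. First I would truncate the data: the chosen path $T$ of length $n$ determines its initial segments $T_k=T'\subset T_{k+1}\subset \cdots \subset T_n=T$, with $T_j$ of length $j$, and by the recursive construction of the canonical basis in \Section{\ref{canonical-section}} each $v_{T_j}$ is a canonical basis vector of $B_j^{ann}=\Cb[x_1,\ldots,x_j]^{ann}$. Applying \Proposition{\ref{lem-spec}} (with $n$ replaced by $j$) to the edge $T_{j-1}\to T_j$ then gives, for $k<j\le n$,
\[
v_{T_j}=c_j\,x_j^{a_j}\,v_{T_{j-1}}+\bigl(\text{terms of } x_j\text{-degree}<a_j\bigr),\qquad c_j\neq 0,
\]
where $a_j=\alpha_{T_j}(j)\in\lambda_{T_{j-1}}$ or $a_j=0$.

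The structural fact I would record next is that $v_{T_{j-1}}\in B_{j-1}^{ann}$ is a polynomial in $x_1,\ldots,x_{j-1}$ only: indeed $\partial_j,\ldots,\partial_n$ are $S_{j-1}$-invariant of degree $-1$, hence lie in $\Dc_{j-1}^-$, so every element of $\Ann_{\Dc_{j-1}^-}(B)$ is killed by $\partial_j,\ldots,\partial_n$ and thus has no dependence on $x_j,\ldots,x_n$. Consequently the $x_j$-degree of $c_jx_j^{a_j}v_{T_{j-1}}$ is exactly $a_j$, while the correction term has strictly smaller $x_j$-degree. I would then prove by induction on $j$ the statement $(\star_j)$: writing $v_{T_j}=\sum_\sigma x^\sigma g_\sigma$ with $\sigma$ a multiindex in $x_{k+1},\ldots,x_j$ and $g_\sigma\in\Cb[x_1,\ldots,x_k]$, the reverse-lexicographically maximal $\sigma$ with $g_\sigma\neq 0$ is $(a_{k+1},\ldots,a_j)$ and its coefficient is $c_{k+1}\cdots c_j\,v_{T'}$. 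The base case $j=k$ is trivial since $v_{T_k}=v_{T'}$. For the inductive step, multiplying $(\star_{j-1})$ by $c_jx_j^{a_j}$ raises the $x_j$-exponent of the leading partial-tail term to $a_j$; since $v_{T_{j-1}}$ contributes no $x_j$ and the correction term has $x_j$-degree $<a_j$, the reverse-lex comparison at stage $j$ is decided first by the $x_j$-exponent and then reduces to the comparison inside $v_{T_{j-1}}$, yielding $(\star_j)$. At $j=n$ this is exactly the asserted expansion, the nonzero constant $\prod_j c_j$ being absorbed into the standard normalization of canonical basis vectors up to scalars.

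Two conclusions then follow. Since the selected paths $T$ are in bijection with the length-$k$ paths $T'$, and the partial-tail leading coefficient of each $v_T$ is a scalar multiple of the canonical basis vector $v_{T'}$ of $B_k^{ann}$ attached to the truncation $T'$, the family $\{v_{T'}\}$ exhausts, and hence equals, the canonical basis of $\Cb[x_1,\ldots,x_k]^{ann}$. For the exponents, iterating \Proposition{\ref{lem-spec}}~(1) gives $\alpha_T(i)=\alpha_{T_i}(i)=a_i$ for $k<i\le n$, because the step $T_{j-1}\to T_j$ only increments coordinate $j$; and by the description following \Proposition{\ref{lem-spec}}, $\alpha_T(i)$ is the number of boxes of $T$ lying above the box of $i$, which is the final claim.

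I expect the only genuinely delicate point to be the ordering bookkeeping in $(\star_j)$, namely confirming that every correction term produced along the chain stays strictly below the running leading monomial in the reverse lexicographic order. This is precisely where the structural fact that $v_{T_{j-1}}$ depends only on $x_1,\ldots,x_{j-1}$ is indispensable, since it forces the $x_j$-degree, which is the most significant coordinate in the comparison at stage $j$, to be controlled solely by the explicit factor $x_j^{a_j}$.
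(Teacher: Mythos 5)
Your proof is correct, but it takes a genuinely different route from the paper's. The paper deduces the proposition from the standalone lemma stated immediately after it, which concerns an arbitrary homogeneous $p$ with $R_n\cdot p=\Cb p$ and $p\in \Ann_{\Dc_n^-}(B)$: part (1) shows \emph{every} coefficient in the outer-variable expansion is an $R_i$-eigenvector (via the $S_n$-orbit structure of the multiindices occurring in $p$), and part (2) shows the reverse-lex-maximal coefficient lies in $\Ann_{\Dc_i^-}(B)$, proved one variable at a time from the identity $p^{(n)}_{k,l}=p^{(n-1)}_{k,l}+x_n^k\partial_n^l$ together with the generation statement $\Dc_{n-1}^-=\sum \Dc_{n-1}p^{(n-1)}_{k,l}$ of \Proposition{\ref{symm-lie}}. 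You bypass that lemma entirely: you induct along the truncation chain $T_k\subset\cdots\subset T_n$, feeding in the one-step expansion $v_{T_j}=c_jx_j^{a_j}v_{T_{j-1}}+(\text{l.o. in }x_j)$ already established in the proofs of \Theorem{\ref{branch2}} and \Proposition{\ref{lem-spec}}, plus the clean structural observation that $\partial_j,\ldots,\partial_n\in\Dc_{j-1}^-$ forces $B_{j-1}^{ann}\subset\Cb[x_1,\ldots,x_{j-1}]$ --- a fact the paper uses only implicitly. Your route buys a \emph{direct} identification of the leading coefficient with the specific canonical vector $v_{T'}$ attached to the truncation, a step the paper's lemma by itself does not deliver (it only shows the leading coefficient lies in $B_k^{ann}$ and spans a simple $R_k$-module, leaving the matching with $v_{T'}$ to the recursive construction); the paper's lemma, conversely, is more general, applying to any $R_n$-eigenvector of $B^{ann}$ rather than only canonical basis vectors, and controlling all coefficients rather than only the maximal one. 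Both arguments ultimately rest on the same two mechanisms --- dominance of the top variable in the reverse lexicographic order, and one-variable-at-a-time reduction --- and your bookkeeping in $(\star_j)$, including the degenerate case $a_j=0$ where the correction term is empty, is sound.
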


We have already motivated the last assertion in
\Proposition{\ref{decompose}}. The proof of the remaining part  follows from the following
lemma.

\begin{lemma} 
  Let $p $ be a homogeneous polynomial in $B$ and $1\leq i < n$ an
  integer. Make the expansion
\begin{displaymath} p= \sum_{\beta_i} p_{\beta_i}x^{\beta_i}, 
\end{displaymath} 
where $p_{\beta_i}\in \Cb[x_1, \ldots , x_i]$, $\beta_i: [n-i]\to \Nb$, and
$x^{\beta_i} = \prod_{j=1}^{n-i} x_{i+j}^{\beta_i(j)}$.
\begin{enumerate}
\item If $R_n\cdot p = \Cb p$, then $R_i\cdot p_{\beta_i} = \Cb p_{\beta_i}$.
\item Assume that $\beta_a : [n-i]\to \Nb $ is maximal in the reverse lexicographic order of
  multiindices such that $p_\beta\neq 0$ in the above expansion.  If $p \in \Ann_{\Dc_n^-}( B)$,
  then $p_{\beta_a} \in \Ann_{\Dc_i^-} (B) $.
\end{enumerate}
\end{lemma}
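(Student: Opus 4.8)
The plan is to prove (1) and (2) separately; both reduce to bookkeeping with multisets of exponents in the monomial basis of $B$, using the explicit description of the $R_n$-action and of $\Ann_{\Dc_n^-}(B)$ already established.

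For (1): I would first recall from \Proposition{\ref{r-iso}} that $R_n=\Cb[\nabla_1,\dots,\nabla_n]^{S_n}$ acts diagonally on the monomial basis, with $x^\alpha$ and $x^{\alpha'}$ sharing all eigenvalues exactly when $\lambda^\alpha=\lambda^{\alpha'}$, i.e.\ when $\alpha,\alpha'$ have the same multiset of exponents. Thus the joint eigenspaces of $R_n$ are precisely the isotypic components $B_\lambda$, and $R_n\cdot p=\Cb p$ forces $p$ to lie in a single $B_\lambda$; equivalently, every monomial occurring in $p$ has one fixed multiset of exponents $\{\alpha(1),\dots,\alpha(n)\}$. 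Now fix a tail index $\beta_i$ and write the monomials of $p$ with that tail as $x^\gamma x^{\beta_i}$, where $\gamma$ is supported on $\{1,\dots,i\}$ and contributes to $p_{\beta_i}$. Because the tail exponents (those of $\beta_i$) are fixed and the total multiset is fixed, the head multiset $\{\gamma(1),\dots,\gamma(i)\}$ must be the fixed complement. Hence all monomials of $p_{\beta_i}$ share one multiset of exponents, so $p_{\beta_i}$ lies in a single $R_i$-isotypic component and $R_i\cdot p_{\beta_i}=\Cb p_{\beta_i}$.

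For (2): writing $P^{(n)}_{k,l}=\sum_{j=1}^n x_j^k\partial_j^l$ and $P^{(i)}_{k,l}=\sum_{j=1}^i x_j^k\partial_j^l$, I would test $p$ against $P^{(n)}_{k,l}$ for $0\le k<l\le i-1$; these lie in $\Dc_n^-$ (since $l\le i-1\le n-1$), so $P^{(n)}_{k,l}p=0$ by \Proposition{\ref{symm-lie}}(2). As $P^{(n)}_{k,l}=P^{(i)}_{k,l}+\sum_{j=i+1}^n x_j^k\partial_j^l$ and the two summands act on disjoint variable sets, expanding along the tail monomials gives
$$0=P^{(n)}_{k,l}p=\sum_{\beta}\bigl(P^{(i)}_{k,l}p_\beta\bigr)x^{\beta}+\sum_{\beta}p_\beta\Bigl(\sum_{j=i+1}^n x_j^k\partial_j^l\Bigr)x^{\beta}.$$
The crux is that, because $l>k$, each operator $x_j^k\partial_j^l$ with $j>i$ sends a tail monomial $x^\beta$ to a monomial $x^{\beta'}$ with $\beta'<\beta$ in the reverse lexicographic order (it lowers one tail exponent by $l-k>0$, or kills the monomial). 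Extracting the coefficient of the maximal tail monomial $x^{\beta_a}$, the first sum contributes exactly $P^{(i)}_{k,l}p_{\beta_a}$, while the second contributes nothing, since producing $x^{\beta_a}$ would require a nonzero term $x^\beta$ with $\beta>\beta_a$, contradicting maximality. By linear independence of the $x^\beta$ this yields $P^{(i)}_{k,l}p_{\beta_a}=0$ for all $0\le k<l\le i-1$, which is exactly the condition of \Proposition{\ref{symm-lie}}(2) placing $p_{\beta_a}$ in the annihilator of the symmetric negative operators in $x_1,\dots,x_i$; the remaining negative operators of $\Dc_i$ (essentially $\partial_{i+1},\dots,\partial_n$) annihilate $p_{\beta_a}$ automatically as $p_{\beta_a}\in\Cb[x_1,\dots,x_i]$, so $p_{\beta_a}\in\Ann_{\Dc_i^-}(B)$.

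The main obstacle is the order bookkeeping in (2): one must verify that the mixing term strictly lowers the tail monomial, so that the maximal coefficient isolates $P^{(i)}_{k,l}p_{\beta_a}$. This is where $l>k$ is used decisively, and it is the only place the reverse lexicographic maximality of $\beta_a$ enters; part (1), by contrast, is essentially formal once the joint eigenspaces of $R_n$ are identified with multisets of exponents.
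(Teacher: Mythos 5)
Your proof is correct and takes essentially the same route as the paper's: part (1) via the observation that a common $R_n$-eigenvector has all its exponent multi-indices in a single $S_n$-orbit (so each coefficient $p_{\beta_i}$ inherits a fixed exponent multiset), and part (2) by testing against $p^{(n)}_{k,l}=p^{(i)}_{k,l}+\sum_{j>i}x_j^k\partial_j^l$ with $k<l$, noting the mixing terms strictly lower the tail in the reverse lexicographic order, extracting the coefficient of the maximal tail $x^{\beta_a}$, and then using Proposition~\ref{symm-lie}~(2) (together with Proposition~\ref{gen-R}~(2)) to upgrade annihilation by the power operators to annihilation by all of $\Dc_i^-$. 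The only difference is presentational: the paper reduces to $i=n-1$ and iterates, peeling off one variable at a time, whereas you handle general $i$ in a single step.
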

\begin{proof} By iteration it suffices to prove (1) and (2) when
  $i=n-1$.  

  (1): We have an index $\beta_{n-1}: [1]\to \Nb$ determined by a
  single integer $\beta_{n-1}(1)= j$. Consider the expansions
  \begin{displaymath}
    p = \sum c_\alpha x^\alpha = \sum_{j=1}^a p_{\beta_{n-1}} x_n^j=   \sum_{j=1}^a(\sum_{\gamma_j} d_{\gamma_j} x^{\gamma_j}) x_n^j.
  \end{displaymath}
  Since $R_n\cdot p_n = \Cb p_n$ the multi-indices $\alpha $ with $c_\alpha\neq 0$ are of the form
  $\sigma \cdot \beta$ for some fixed $\beta$ with $\sigma$ running over some elements in
  $S_n$. This implies that the multi-indices $\gamma_j :[n-1]\to \Nb$ are of the form
  $\sigma' \cdot \beta'_j$ for some fixed $\beta'_j$ and $\sigma'$ runs over elements in $S_{n-1}$.
  This implies that $R_{n-1} p_{\beta_{n-1}} = \Cb p_{\beta_{n-1}}$.  

  (2): Since
  $p^{(n)}_{k,l} =  p^{(n-1)}_{k,l} + x^k_n \partial_n^l\in \Dc_n^-
  $ we have
\begin{displaymath}
  0= p^{(n)}_{k,l} (p)=  p^{(n-1)}_{k,l} (p_{\beta_a})x^{\beta_a} + p_{\beta_a} x^k_n \partial_n^l (x^{\beta_a})  +
  (\text{l.o. in  }  x')
  = p^{(n-1)}_{k,l} (p_{\beta_a})x^{\beta_a} + (\text{l.o. in } x'),
\end{displaymath}
implying that $p^{(n-1)}_{k,l} (p_{\beta_a})=0$. By  \Lemma{\ref{symm-lie}}
\begin{displaymath}
\Dc_{n-1}^- =\sum_{0\leq k < l < n-1}  \Dc_{n-1} p^{(n-1)} _{k,l},
      \end{displaymath}
implying that  $\Dc^-_{n-1}\cdot p_{\beta_a} =0$.
\end{proof}

\begin{proposition} Let $\Sc$ be the set of standard tableaux of size $n$ and $\{v_T\}_{T\in \Sc}$
  be a canonical (Young) basis of $B^{ann}$. This induces a decomposition of $B$ as $\Dc_n$-module
  \begin{displaymath}
    B= \bigoplus_{T\in \Sc} N_T,
  \end{displaymath}
  where the $\Dc_n$-module $N_T= \Dc_n v_T$ is simple. The $A$-module $N_T$ is free  of rank equal
  to the number of standard tableux of shape $s(T)= \lambda$.
\end{proposition}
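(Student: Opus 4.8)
The plan is to read off both statements from the isomorphism of categories in \Corollary{\ref{cor:d0eqconcrete}}, applied to the canonical decomposition of $B^{ann}$ already constructed, and then to invoke the reflection-group freeness recorded in \Remark{\ref{harmonics}}.

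First I would establish the decomposition together with the simplicity of the $N_T$. By construction the canonical basis yields an internal direct sum $B^{ann} = \bigoplus_{T\in\Sc} \Cb v_T$ of $\Rc$-submodules, in which each line $\Cb v_T$ is a \emph{simple} $\Rc$-module (recall that $\Rc \cong R_n = \Cb[\nabla_1,\dots,\nabla_n]^{S_n}$ is commutative, so that its simple modules are one-dimensional). Applying the inverse functor $V\mapsto \Dc_n V$ of \Corollary{\ref{cor:d0eqconcrete}}, and using that it carries internal direct sums to internal direct sums --- this additivity on a finite direct sum is exactly what is checked in part (b) of the proof of \Theorem{\ref{main}} --- I obtain $B = \Dc_n B^{ann} = \bigoplus_{T\in\Sc} \Dc_n v_T = \bigoplus_{T\in\Sc} N_T$. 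Each $N_T$ is simple because the equivalence sends simple $\Rc$-modules to simple $\Dc_n$-modules (\Corollary{\ref{cor:d0eqconcrete}}(3)). This proves the first two assertions.

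Next I would pin down the isomorphism type of each $N_T$. Writing $\lambda = s(T)$, the line $\Cb v_T$ lies in the $\Rc$-isotypic component $V_\lambda = \bigoplus_{s(T')=\lambda} \Cb v_{T'}$ of $B^{ann}$; by the very definition of an isotypic component, all simple submodules of $V_\lambda$ --- in particular all the lines $\Cb v_{T'}$ with $s(T')=\lambda$ --- are mutually isomorphic as $\Rc$-modules. Transporting this through the category equivalence, the corresponding modules $N_{T'}$ are all isomorphic $\Dc_n$-modules, each isomorphic to the simple $\Dc_n$-module $N_\lambda$ attached to the irreducible $V_\lambda$ in \Proposition{\ref{thm:montgomery}}.

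Finally I would conclude the freeness. Since $G = S_n = G(1,1,n)$ is a complex reflection group, \Remark{\ref{harmonics}} gives that the $A$-module $N_\lambda$ is free of rank $\dim_\Cb V_\lambda$; and by the construction of the canonical basis $\dim_\Cb V_\lambda$ equals the number of paths $T'$ with $s(T')=\lambda$, that is, the number of standard tableaux of shape $\lambda$. Since $N_T \cong N_\lambda$ as $\Dc_n$-modules it is a fortiori isomorphic as an $A$-module (as $A\subset \Dc_n$), so $N_T$ is free over $A$ of the asserted rank. I expect the only point requiring real care to be the identification $N_T\cong N_\lambda$, i.e.\ matching the $\Rc$-isotypic structure of $B^{ann}$ with the decomposition in \Proposition{\ref{thm:montgomery}}; once that is secured, everything else is a formal consequence of \Corollary{\ref{cor:d0eqconcrete}} and \Remark{\ref{harmonics}}.
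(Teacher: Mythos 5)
Your proof is correct and takes essentially the same route as the paper's: both use the category isomorphism of \Corollary{\ref{cor:d0eqconcrete}} (i.e.\ \Theorem{\ref{main}}) to convert the canonical basis decomposition $B^{ann}=\bigoplus_{T\in \Sc}\Cb v_T$ into simple $\Rc$-lines into the decomposition $B=\bigoplus_{T\in\Sc}\Dc_n v_T$ into simple $\Dc_n$-modules, matched against the isotypic decomposition of \Proposition{\ref{thm:montgomery}}. The only cosmetic difference is in the freeness step, where you cite \Remark{\ref{harmonics}} (freeness of $N_\lambda$ over $A$ of rank $\dim_\Cb V_\lambda$ for reflection groups), while the paper re-derives this inline from $B$ being $A$-free together with $B\cong A[G]$ as $A[G]$-modules.
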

\begin{proof}
  By \Proposition{\ref{thm:montgomery}} $B= \oplus B_\lambda$, where the $B_\lambda$ are isotypical
  components of $B$ and a subset $\Bc_\lambda \subset \{v_T\}_{T\in \Sc}$ gives a basis of
  $B_\lambda^{ann}= V_\lambda$.  Since $\Dc_n v_T$ is a simple $\Dc_n$-module \Th{\ref{main}} it
  follows that $B_\lambda= \oplus_{v_T\in \Bc_i } \Dc_n v_T$. Since $B$ is free over $A$, it follows
  by semisimplicity that $\Dc_n v_T$ is also free.  It is well-known that as $A[G]$-module
  $B\cong A[G]$, the regular representation, which, by \Proposition{\ref{thm:montgomery}}, implies
  that $\rank N_T = \dim_{\Cb} V_\lambda$, where $V_\lambda$ has a basis $\{v_T\}$ indexed by all
  paths $T$ that end at $\lambda$; and the number of such paths equals the number of standard
  tableux of shape $\lambda$.
\end{proof}
By \Lemma{\ref{gen-R}}, (2), and \Lemma{\ref{symm-lie}}, (2), (with $m=1$) we have
$\bar \af^- \subset \Dc_n^- = \sum_{0\leq k < l < n} \Dc_n\bar \af^-$, and $t_i v_T = \gamma_T(t_i)v_T$.
Hence
\begin{displaymath}
  \sum_{0\leq k < l < n} \Dc_n(\sum_{i=1 }^n x_i^{k} \partial_i^l) + \sum_{i=1}^n \Dc_n (t_i -
  \gamma_T(t_i)) \subset \Ann_{\Dc_n}(v_T).
\end{displaymath}
This is however not an equality, and it would be interesting to find a complete description of
$\Ann_{\Dc_n}(v_T)$. Since $N_T$ is a free module over $A$ of finite rank it follows that for fixed
$k>l$ the set $\{p_{k,l}^iv_T\}_{i=0}^r$ is not linearly independent over $A$ for sufficiently high
$r$.  Let for $0 < l < k < n$
\begin{displaymath}
  P_{k,l} = p_{k,l}^r + a_1 p_{k,l}^{r-1}+ \cdots + a_0 \in \Dc_n^+ \cap \Ann_{\Dc_n}(v_T),
\end{displaymath}
where $a_i \in A$, and $r$ is of minimal degree $r\geq 1$. One may ask if
 \begin{displaymath}
   \Ann_{\Dc_n}(v_T) =    \sum_{0\leq k < l <  n} \Dc_n(\sum_{i=1 }^n x_i^{k} \partial_i^l) + \sum_{i=1}^n \Dc_n (t_i -
  \gamma_T(t_i))  + \sum_{0 < l <k < n} \Dc_n P_{k,l},
 \end{displaymath}
 and also what is the precise form of the $P_{k,l}$?

 \begin{remark} Murphy \cite{murphy:newconstr} expressed idempotents $E_T$ in terms of the
   Jucys-Murphy elements $L_i$, so that if $\{s_T\}$ is the standard Specht basis and one puts
   $v_T = E_Ts_T$, then $\{v_T\}$ is a Young basis, where the two bases of $B^{ann}$ are transformed
   inte one another by a unimodular triangular matrix.  Still, it would be desirable to find a
   concrete decomposition of the $\Rc_i$-module $\Ann_{\Dc_i^-}(\Dc_{i-1} v_{T_{i-1}})$, perhaps by
   refining the proof of \Theorem{\ref{branch2}}, and thus getting a direct construction of
   $\{v_{T_i}\}_{T_i\in \Sc_i}$ from $\{v_{T_{i-1}}\}_{T_{i-1}\in \Sc_{i-1}}$, where $\Sc_i$ is the
   set of standard tableaux of size $i$. The complexity of finding a basis of
   $\Ann_{\Dc_i^-}(\Dc_{i-1} v_{T_{i-1}})$ using a direct linear algebra approach is determined by
   the degrees $\deg v_{T_{i-1}}= |\alpha_{T_{i-1}}|$ and $\deg v_{T_i} = |\alpha_{T_i}|$; see the
   example below.
\end{remark}

Let us take \Example{\ref{example-3}} one step further.
\begin{example}\label{level4} Consider the tableau \ytableausetup{centertableaux} T= \begin{ytableau}
    1 & 3  \\
    2 & 4  \\
\end{ytableau}
which is formed from \ytableausetup{centertableaux} T'= \begin{ytableau}
    1 & 3  \\
    2   \\
\end{ytableau} 
by adding the element 4 to the subset $\{3 \}\subset [3]$.  We have $\alpha_{T'} = (0,1,0)$ and
$\gamma_{T'}= (1,1,1)$. By \Lemma{\ref{lem-spec}} $\alpha_T = (0,1,0, 1)$ and, since 
$F_4(y_1, y_2, y_3) = \frac 16 y_1^4 - y_1^2y_2+ \frac 12 y_2^2 + \frac 43 y_1 y_3 $, so that
$F_4(1,1,1)=1$,  
\begin{displaymath}
\gamma_T = (1,1,1, 0) + (1,1,1,0) + (0,0,0, 1+F_4(1,1,1))= (2,2,2,2).
\end{displaymath}
We also can see that $\ell_T= (0,-1,1,0)$.  The canonical basis $v_{T'}= 2x_3 - (x_1+ x_2)$ (we
ignore scalars in $v_{T'}$ and $v_T$ ).  Put $p_{k,l}^{(m)}= \sum_{i=1}^m x_i^k\partial_i^l$.  A
moments reflections shows that the following Ansatz suffices
\begin{displaymath}
v_T = x_4 v_{T'} + v_T(0) = (x_4 +c_1p^{(3)}_{1,0} + c_2 p^{(3)}_{2,1}) v_{T'},
\end{displaymath}
where $v_T(0)$ is independent of $x_4$.  The conditions are
\begin{align*}
  p^{(4)}_{0,1} v_T &= [p^{(4)}_{0,1}, x_4 + c_1p^{(3)}_{1,0} + c_2 p^{(3)}_{2,1}] v_{T'} = (1+ 3c_1
                      + 2 c_2 p^{(3)}_{1,1})  v_{T'}= (1+ 3c_1 + 2 c_2) v_{T'}=0\\
  p^{(4)}_{1,2} v_T &= [p^{(4)}_{1,2}, x_4 + c_1p^{(3)}_{1,0} + c_2 p^{(3)}_{2,1}]v_{T'}  = (2x_4\partial_4  +2c_1p^{(3)}_{1,1} +
                      c_2(3p^{(3)}_{2,2} + 2p^{(3)}_{1,1}) )v_{T'} \\ 
                    &=  ( 2c_1 + 2 c_2)v_{T'}=0,
\end{align*}
which give
\begin{displaymath}
  v_T = (x_4 + \frac 13 (p^{(3)}_{1,0}- p^{(3)}_{2,1})) v_{T'} =2 x_4 x_3 -x_4
  x_2-x_4x_1-x_3x_2-x_3x_1+2x_2x_1,
\end{displaymath}
where the expansion is in reverse lexicographic order. To exemplify \Proposition{\ref{decompose}},
we have
\begin{displaymath}
v_{T} = x_4v_{T' } + v_T(0)= x_4(x_3 v_{T''} + v_{T''}(0)) +v_T(0) = x_4 x_3v_{T''} + w,
\end{displaymath}
where $T> T'>T''$, so that for $k=3$, $v_{T'} = 2x_3 -(x_1+x_2)$, and for $k=2$, $v_{T''}=2$
(again, we do not keep track of scalars).

\end{example}

\section{Cyclic and Dihedral groups} \label{cycl-dihedral}
We will in this section study invariants of the dihedral group $G= G(e,e,2) = A(e,e,2)\rtimes S_2$,
already discussed in (\ref{imprimitive}), and its normal cyclic subgroup
$C_e = A(e,e,2) = \langle1,\rho,\rho^2,\ldots ,\rho^{e-1}\rangle$, where the action of $G$ is
\begin{displaymath}
\rho (x_1)=\epsilon x_1, \quad \rho (x_2)=\epsilon^{-1} x_2,  \quad \text{and}\quad  \sigma(x_1)= x_2
\end{displaymath}
on a basis $\{x_1, x_2\}$ of $V= \Cb^2$, where $\epsilon=\exp(2\pi i/e)$. We will in particular
describe $\Rc$ and its module $B^{ann} $ for these two groups, as well as describe the MLS-restriction
functor induced by $C_e\subset G$. The results on invariants and semi-invariants are of course
classical and surely date back to Gordan, but we include the computations for completeness.  The
invariant rings are
\begin{displaymath}
A_1=B^{C_e}=\Cb [x_1^e, x_2^e,x_1x_2]\quad \text{ and } \quad
A_2=B^G=\Cb [x_1^e+x_2^e,x_1x_2]
\end{displaymath}
(see the proof of \Lemma{\ref{power-inv}}).  Notice that the cyclic
group $C_e$ is not generated by complex reflexions of $V$, so that
$A_1$ is not a polynomial ring, in contrast to $A_2$. The ring $A_1$
corresponds to one of the two infinite series of Kleinian surface
singularities; it may be of interest to also study the other infinite
series of $B^{\bar G}$, where $\bar G$ is the binary dihedral
extension of $G$, see \cite{dolgachev-mackay}.

The invariant differential operators are denoted $\Dc_1= \Dc_B^{C_e}$ and $\Dc_2 = \Dc_B^G$, so
that $\Dc_2 \subset \Dc_1$.

\subsection{The cyclic group $C_e$}\label{cyclic-group}
 Since $C_e$ is abelian, each $C_e$-isotypic component of $B$ is by \Proposition{\ref{thm:montgomery}} a
 simple $\Dc_1$-module. Hence in order to find simple  $\Dc_1$-modules it suffices to find the semi-invariants of $C_e$. Let
$$\chi_i:C_e\to \Cb^*,\ \ \ 
 \chi_i(\rho)=\epsilon^i,\ \ \ 0\leq i<e,$$
 be the linear characters of $C_e$.

\begin{lemma}
\label{prop:cyclic} The isotypic component associated to $\chi_i\in \hat C_e$ is
\begin{displaymath}
N_i=A_1 x_1^i + A_1x_2^{e-i}.
\end{displaymath}
In particular,
\begin{displaymath}
B=\bigoplus_{0\leq i\leq e-1}N_i,
\end{displaymath}
is the decomposition of $B$ into simple  $\Dc_1$-modules.
\end{lemma}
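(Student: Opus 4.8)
The plan is to reduce everything to monomials, exploiting that $C_e$ acts diagonally on $B$. Since $\rho(x_1^a x_2^b)=\epsilon^{a-b}x_1^a x_2^b$, the monomial $x_1^a x_2^b$ is a semi-invariant for the character $\chi_j$ with $j\equiv a-b \pmod e$. Consequently the $\chi_i$-isotypic component $B_{\chi_i}$ of $B$ (in the sense of \Proposition{\ref{thm:montgomery}}) is precisely the linear span of those monomials $x_1^a x_2^b$ whose \emph{weight} $a-b$ is congruent to $i$ modulo $e$. Because every monomial has a well-defined weight modulo $e$, this already gives the direct sum decomposition $B=\bigoplus_{0\leq i\leq e-1} B_{\chi_i}$ with no further work, so the substance of the lemma is the identification $B_{\chi_i}=N_i$.

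The first inclusion $N_i\subseteq B_{\chi_i}$ is immediate: the generators of $A_1=\Cb[x_1^e,x_2^e,x_1x_2]$ all have weight $\equiv 0$, so every monomial of $A_1 x_1^i$ has weight $\equiv i$ and every monomial of $A_1 x_2^{e-i}$ has weight $\equiv i-e\equiv i$. For the reverse inclusion I would take a monomial $x_1^a x_2^b$ of weight $\equiv i$, with $0\leq i<e$, and split into two cases according to which variable dominates. If $a\geq b$, write $a-b=i+ke$ with $k\geq 0$ and factor
\begin{displaymath}
  x_1^a x_2^b=(x_1^e)^k\,(x_1x_2)^b\,x_1^i\in A_1 x_1^i.
\end{displaymath}
If $a<b$, then $b-a\equiv e-i\pmod e$, so write $b-a=(e-i)+ke$ with $k\geq 0$ and factor
\begin{displaymath}
  x_1^a x_2^b=(x_2^e)^k\,(x_1x_2)^a\,x_2^{e-i}\in A_1 x_2^{e-i}.
\end{displaymath}
Taking $i=0$ in the first display also records the fact that $A_1$ is spanned exactly by the weight-$0$ monomials. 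Hence $B_{\chi_i}=N_i$ for each $i$.

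Finally I would invoke \Proposition{\ref{thm:montgomery}} together with the remark following it that the isotypic component of a linear character is in itself a simple $\Dc_1$-module. Since $C_e$ is abelian, $\hat C_e=\{\chi_0,\dots,\chi_{e-1}\}$ consists entirely of linear characters, so each $N_i=B_{\chi_i}$ is a simple $\Dc_1$-module and the displayed decomposition is the decomposition of $B$ into simple $\Dc_1$-modules. The only mildly delicate point is the two-case factoring: the content is that the two ``boundary'' monomials $x_1^i$ and $x_2^{e-i}$ together cover both rays of weight-$i$ monomials over the weight-$0$ base $A_1$, so that two generators over $A_1$ suffice to describe the whole isotypic component.
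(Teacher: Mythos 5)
Your proof is correct and follows essentially the same route as the paper: both reduce to monomials, write each monomial of weight $\equiv i \pmod e$ as an element of $A_1$ times one of the boundary monomials $x_1^i$ or $x_2^{e-i}$, and then invoke \Proposition{\ref{thm:montgomery}} (via the remark on linear characters and semi-invariants) for simplicity of the isotypic components. Your version merely makes the paper's factorization $a = b x_\delta^j$ explicit by the two-case split on $a \geq b$ versus $a < b$, which is a harmless elaboration rather than a different argument.
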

Notice that $N_i$ is not free over $A_1$ even though $A_1$ and  $N_i$ are simple $\Dc_1$-modules. 
\begin{proof} Since $x_1x_2,x_1^e,x_2^e\in A_1$, any monomial may be written as
  $$a=bx^j_\delta,\ \delta=1,2, \ 0\leq j\leq e-1, b\in A_1.$$ Such a monomial belongs to the
  $\chi_i$-isotypic component if and only if $j= i$, when $\delta=1$, or $e-j=i$, when
  $\delta=2$. This implies that $N_i$ is the $\chi_i$-isotypical component of $B$.
\end{proof}

To determine $N_i^{ann}$, notice that if $i\neq e/2$ then the lowest non-zero degree component of
$N_i$ is spanned by either $x_1^i$, when $0\leq i<e/2$, or $x_2^{e-i}$, when $0\leq e-i<e/2$.  These
components are hence one-dimensional and will therefore coincide with $N_i^{ann}$ by
\Corollary{\ref{cor:d0eqconcrete} (3)}. This gives most of the following result:
\begin{proposition}
\label{prop:cyclic2}
\begin{displaymath}
N^{ann}_i=\begin{cases} \Cb x_1^i & 0\leq i\leq (e-1)/2,\\
\Cb x_1^{e_1}+ \Cb x_2^{e_1}, & i= e_1,\text{  when $e=2e_1$ is an even number,}\\
\Cb x_2^{e-i} &  (e-1)/2< i\leq e-1.
\end{cases}
\end{displaymath}

 \end{proposition}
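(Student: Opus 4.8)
My plan is to reduce the statement to a one-line degree computation by leaning on the structural results already established. First I would record that by \Lemma{\ref{prop:cyclic}} each $N_i$ is a simple $\Dc_1$-module --- it is the module of semi-invariants attached to the linear character $\chi_i$, cf.\ the remark after \Proposition{\ref{thm:montgomery}} --- so by \Corollary{\ref{cor:d0eqconcrete}} (3)--(4) the space $N_i^{ann}=\Ann_{\Dc_1^-}(N_i)$ is a simple $\Rc$-module and is therefore concentrated in a single homogeneous degree. On the other hand, since every operator in $\Dc_1^-$ strictly lowers degree, the lowest-degree homogeneous component $L_i$ of $N_i$ is automatically annihilated by $\Dc_1^-$, whence $L_i\subseteq N_i^{ann}$. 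Putting these together, $N_i^{ann}$ lives in one degree and contains the nonzero space $L_i$, which forces that degree to be the minimal degree of $N_i$ and hence $N_i^{ann}\subseteq L_i$. So $N_i^{ann}=L_i$, and the whole proposition collapses to identifying the bottom graded piece $L_i$.

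Next I would carry out the elementary degree bookkeeping. The ring $A_1=\Cb[x_1^e,x_2^e,x_1x_2]$ is generated in degrees $e$, $e$ and $2$, so on top of any monomial it contributes only strictly higher degrees; hence the bottom piece of $A_1x_1^i$ is the line $\Cb x_1^i$ (in degree $i$) and that of $A_1x_2^{e-i}$ is $\Cb x_2^{e-i}$ (in degree $e-i$). When $0\le i<e/2$ we have $i<e-i$, so the minimal degree of $N_i=A_1x_1^i+A_1x_2^{e-i}$ is $i$ and $L_i=\Cb x_1^i$; by the symmetry interchanging $x_1$ and $x_2$, the range $e/2<i\le e-1$ gives $L_i=\Cb x_2^{e-i}$. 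This produces the first and third lines of the asserted formula.

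The one case that needs separate care --- and the genuinely interesting one, since it is where $\dim_{\Cb}N_i^{ann}>1$ as flagged in the introduction --- is $e=2e_1$ even with $i=e_1$. Here $x_1^{e_1}$ and $x_2^{e_1}$ share the degree $e_1$ (and both transform by $\chi_{e_1}$, since $\epsilon^{\pm e_1}=-1$), so the two bottom contributions coincide in degree and $L_{e_1}=\Cb x_1^{e_1}+\Cb x_2^{e_1}$ is two-dimensional. The argument of the first paragraph then applies unchanged: $N_{e_1}^{ann}$ is concentrated in a single degree and contains this two-dimensional bottom piece, so it equals it. I expect the only real subtlety to be exactly this point --- one must resist assuming $N_i^{ann}$ is one-dimensional, and instead let the single-degree concentration from \Corollary{\ref{cor:d0eqconcrete}} (rather than any \emph{a priori} dimension bound) pin $N_i^{ann}$ down, which it does without any change in the two-dimensional case.
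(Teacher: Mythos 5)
Your proof is correct. For $i\neq e/2$ it runs along the same lines as the paper: the lowest-degree component of the graded module $N_i$ is one-dimensional, is automatically killed by $\Dc_1^-$, and is identified with $N_i^{ann}$ via \Corollary{\ref{cor:d0eqconcrete}}. Where you genuinely diverge is the even case $i=e_1$. There the paper argues by showing directly that $W=\Cb x_1^{e_1}+\Cb x_2^{e_1}$ is a simple $\Rc_1$-module: it exhibits the degree-zero invariant operators $\nabla_1,\nabla_2,(x_1\partial_2)^{e_1},(x_2\partial_1)^{e_1}$ and checks that they span the four-dimensional space $\End_{\Cb}(W)$, whence $W=N_{e_1}^{ann}$ by the correspondence. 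You instead run the single-degree-concentration argument (\Corollary{\ref{cor:d0eqconcrete}} (4), resting ultimately on \Lemma{\ref{verma-lemma}}) uniformly in all cases: since $N_{e_1}$ is simple by \Lemma{\ref{prop:cyclic}}, $N_{e_1}^{ann}$ is a simple $\Rc_1$-submodule of $B^{ann}$ concentrated in one degree, and since it contains the two-dimensional bottom piece $W$ it must equal it. Your route is shorter and treats all three cases by one mechanism, and it even yields the simplicity of the two-dimensional $N_{e_1}^{ann}$ (hence, with \Theorem{\ref{thm:rcomm}}, the non-commutativity of $\Rc_1$) as a byproduct of the correspondence rather than as an input. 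What the paper's explicit computation buys, and what your argument does not deliver, is the concrete action of $\Dc_1^0$ on $W$ --- in particular that $(x_1\partial_2)^{e_1}$ and $(x_2\partial_1)^{e_1}$ together with $\nabla_1,\nabla_2$ span $\End_{\Cb}(W)$ --- which is reused downstream: in \Proposition{\ref{prop:cyclic3}} to generate $\Rc_1$, and in the proofs of \Propositions{\ref{prop:dihedral1}}{\ref{dihedral-R}}, where the element $s=(x_1\partial_2)^{e_1}+(x_2\partial_1)^{e_1}$ is used to split $W$ into the two non-isomorphic $\Dc_2^0$-lines $\Cb(x_1^{e_1}\pm x_2^{e_1})$.
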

 \begin{proof} It remains to treat $i=e_1$ when $e= 2e_1$ is even. Then the lowest non-zero
   component is $W=\Cb x_1^{e_1} + \Cb x_2^{e_1}$, and to see that $W=N^{ann}_{e_1}$ it suffices to
   see that $W$ is a simple $\Rc_1$-module. This follows since $\Dc_1^0$ contains the elements
   \begin{displaymath}
   \nabla_1,\nabla_2, (x_1\partial_2)^{e_1}, (x_2\partial_1)^{e_1},
 \end{displaymath}
 and these elements span the four-dimensional space $\End_{\Cb}(W)$.
\end{proof}

Next we will describe $\Rc_1= \Dc_1^0/\Dc_1^0 \cap (\Dc_1\cdot \Dc_1^-)$. By the just shown
existence of a simple $\Dc_1$-module $N$ such that $N^{ann}$ is not of dimension 1, we know by
\Theorem{\ref{thm:rcomm}} that, for even $e$, $\Rc_1$ is non-commutative. However, in the case of $e$
odd, $\Rc_1$ will be seen to be commutative.

\begin{proposition}
\label{prop:cyclic3}  Let $C_e$ be the cyclic group acting on  $\Cb^2 $ as above and  consider the canonical composed homomorphism 
 $$\Pi: \mathbb C[\nabla_1,\nabla_2] \to \Dc_1^0\to  \Rc_1 =\frac {\Dc_1^0}{\Dc_1^0\cap (\Dc_1 \cdot
   \Dc^-_1)}.$$ Then we have:
\begin{enumerate}
\item If $e$ is odd, then  $\Dc_1^0$ is generated as an algebra by 
$$\nabla_1,\nabla_2, (x_1\partial_2)^{e}, (x_2\partial_1)^{e},$$
$\pi$ is surjective, and hence $\Rc_1$ is a commutative ring.
 \item If $e= 2e_1$ is even,  $\Dc_1^0$ is generated as an algebra by the elements $$\nabla_1,\ \nabla_2, \ t_1=(x_1\partial_2)^{e_1}, \ t_2=(x_2\partial_1)^{e_1}.$$
Let $\Ec=\Imo (\Pi)$ be the algebra that is formed as the image of $\Cb [\nabla_1,\nabla_2]$.
Then $\Rc_1$ is generated by $1,t_1$ and $t_2$  as left (or right) module over $\Ec$.
\end{enumerate}
\end{proposition}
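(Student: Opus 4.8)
The plan is to reduce everything to explicit commutation relations among the four natural generators $\nabla_1,\nabla_2,u=x_1\partial_2,v=x_2\partial_1$ of $\Dc_B^0$, and then to isolate the single mechanism that separates the odd and even cases: whether the minimal $C_e$-invariant power of $u$ factors into $C_e$-invariant pieces, one of which is a lowering operator. The conceptual heart will be one clean identity; the rest is bookkeeping.

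First I would pin down the generators of $\Dc_1^0$. Recall $\Dc_B^0$ is the image of $\Uo(\gl(V))$, and lifting the monomial basis of the symbol algebra $\gr \Dc_B^0\cong \Cb[\bar\nabla_1,\bar\nabla_2,\bar u,\bar v]/(\bar\nabla_1\bar\nabla_2-\bar u\bar v)$ (cf. \Lemma{\ref{lemma:gr}}) gives the PBW-type basis $\{\nabla_1^p\nabla_2^q u^a\}_{p,q,a\ge 0}\cup\{\nabla_1^p\nabla_2^q v^b\}_{p,q\ge 0,\,b\ge 1}$ of $\Dc_B^0$. Each basis vector is an eigenvector for the diagonal torus, with $\rho$ scaling $u^a$ by $\epsilon^{2a}$ and $v^b$ by $\epsilon^{-2b}$; since $C_e$ acts diagonally, $\Dc_1^0=(\Dc_B^0)^{C_e}$ is spanned by exactly those basis vectors with $e\mid 2a$ (resp. $e\mid 2b$). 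For $e$ odd this forces $e\mid a$, so $\Dc_1^0$ is generated by $\nabla_1,\nabla_2,u^e,v^e$; for $e=2e_1$ even it forces $e_1\mid a$, giving the generators $\nabla_1,\nabla_2,t_1=u^{e_1},t_2=v^{e_1}$.

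The crux is the identity $u^e=(x_1\partial_2)^e=x_1^e\,\partial_2^e$, valid because $x_1$ and $\partial_2$ commute. Both factors are $C_e$-invariant (as $\rho$ scales them by $\epsilon^{\pm e}=1$), with $\partial_2^e\in\Dc_1^-$ of degree $-e$ and $x_1^e\in\Dc_1$; hence $u^e=x_1^e\partial_2^e\in \Dc_1\Dc_1^-\cap\Dc_1^0$, so its image in $\Rc_1$ vanishes, and likewise for $v^e$. In the odd case the two extra generators therefore die, so $\Rc_1=\Imo(\Pi)=\Ec$ is a quotient of the commutative ring $\Cb[\nabla_1,\nabla_2]$, which proves $\Pi$ surjective and $\Rc_1$ commutative, i.e. (1). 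In the even case the same computation applied to $t_1^2=u^{2e_1}=u^e$ yields $\bar t_1^2=\bar t_2^2=0$ in $\Rc_1$; note that $t_1=u^{e_1}$ itself has degree $0$ and that $\partial_2^{e_1}$ is \emph{not} invariant (since $\epsilon^{e_1}=-1$), which is precisely why $\bar t_1$ need not vanish.

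Finally I would settle the left $\Ec$-module structure in the even case. Using $\partial_2^{e_1}x_2^{e_1}=q(\nabla_2)$ and $x_1^{e_1}\partial_1^{e_1}=p(\nabla_1)$ for explicit polynomials $p,q$ (\Lemma{\ref{lemma:nabla}}), one gets $t_1t_2=x_1^{e_1}\partial_2^{e_1}x_2^{e_1}\partial_1^{e_1}=q(\nabla_2)p(\nabla_1)\in\Cb[\nabla_1,\nabla_2]$, and symmetrically $t_2t_1$, so $\bar t_1\bar t_2,\bar t_2\bar t_1\in\Ec$. The relations $[\nabla_1,t_1]=e_1t_1$, $[\nabla_2,t_1]=-e_1t_1$ give $t_1f(\nabla_1,\nabla_2)=f(\nabla_1-e_1,\nabla_2+e_1)t_1$ (and mirror images for $t_2$), whence $\bar t_i\Ec=\Ec\bar t_i$, so in any word the $\Ec$-factors may be collected on the left. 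Combining this with $\bar t_i^2=0$ and $\bar t_i\bar t_j\in\Ec$, every word in $\bar t_1,\bar t_2$ with $\Ec$-coefficients collapses into $\Ec\cdot 1+\Ec\bar t_1+\Ec\bar t_2$; since $\Rc_1$ is generated as an algebra by $\Ec,\bar t_1,\bar t_2$ by the first step, this gives the left-module statement, and the symmetric reduction gives the right-module one. I expect the factorization $u^e=x_1^e\partial_2^e$ straddling $\Dc_1$ and $\Dc_1^-$ to be the real point, since it produces both the commutativity in the odd case and the nilpotency $\bar t_i^2=0$ in the even case; the only mildly technical issue is the bookkeeping in the first step needed to guarantee that the invariants of $\Dc_B^0$ are \emph{exactly} generated by the listed elements, not merely contained in the algebra they generate.
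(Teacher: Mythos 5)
Your proposal is correct and follows essentially the same route as the paper: both determine the generators of $\Dc_1^0$ by passing to the associated graded algebra and taking $C_e$-invariants (your PBW basis of $\gr\Dc_B^0$ modulo the relation $\bar u\bar v=\bar\nabla_1\bar\nabla_2$ is just a hands-on version of the paper's computation of $\So(V_0)\otimes_\Cb\So(V_1)^{C_e}$ via \Lemma{\ref{lemma:gr}}), and both rest on the identical key facts $(x_1\partial_2)^e=x_1^e\partial_2^e\in\Dc_1\Dc_1^-$, $t_i^2\in\Dc_1\Dc_1^-$, and $t_1t_2,\,t_2t_1\in\Cb[\nabla_1,\nabla_2]$ by \Lemma{\ref{lemma:nabla}}. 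Your explicit commutation relation $t_1f(\nabla_1,\nabla_2)=f(\nabla_1-e_1,\nabla_2+e_1)t_1$ merely fills in a step the paper leaves implicit in concluding the left (and right) $\Ec$-module statement.
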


\begin{proof} We will use the method in (\ref{lemma:gr}), and therefore start by computing
  $\So(V\otimes_{\Cb} V^*)^{C_e}$.  Here $V\otimes_{\Cb} V^*= V_0 \oplus V_1 $ where
  $V_0 = \Cb \nabla_1 + \Cb \nabla_2$, $V_1 = \Cb b_1 + \Cb b_1$, $b_1=x_1\partial_2$, and
  $b_2=x_2\partial_1$; $C_e$ acts trivially on $V_0$ and $\rho(b_1)=\epsilon^2b_1$,
  $\rho(b_2)=\epsilon^{-2}b_2$.  If $e$ is odd $\epsilon^2$ is just another primitive $e$th root of
  unity, and
  \begin{displaymath}
    \So(V\otimes_{\Cb} V^*)^{C_e} = (\So(V_0)\otimes_{\Cb} \So(V_1))^{C_e} = \So(V_0)\otimes_{\Cb} \So(V_1)^{C_e}
    =\Cb[\nabla_1, \nabla_2]\otimes_\Cb \Cb[ b_1b_2,b_1^e,b_2^e]. 
  \end{displaymath}
If $e= 2e_1 $ is even, then  $\epsilon^2$ is an $e_1$th primitive root of unity and
$$\So(V\otimes V^* )^{C_e}= \Cb [\nabla_1,\nabla_2]\otimes_{\Cb} \Cb [b_1b_2,b_1^{e_1},b_2^{e_1}].$$

The first assertion in (1) follows since for the map $l$ in (\ref{diff-map}) we have
$l(b_1b_2)=l(\nabla_1\nabla_2+\nabla_1)$, and the second follows since the elements
$(x_1\partial_2)^e=x_1^e\partial_2^e$ and $ (x_2\partial_1)^e=x_2^e\partial_1^e $ belong to
$ \Dc_1\Dc_1^-$.

Now consider (2), so $e= 2 e_1$, and $t_i= l(b_i)$. The elements $t_1^2$ and $t_2^2$ belong to $\Dc_1\Dc_1^-$, while
$t_1t_2=(x_1\partial_2)^{e_1}(x_2\partial_1)^{e_1} = \nabla_1^{e_1}p_{e_1}(\nabla_2)$, and
$ t_2 t_1 = \nabla_2^{e_1}p_{e_1}(\nabla_1)$ (see \Lemma{\ref{lemma:nabla}}) all belong to
$\Ec$. This implies that $\Rc_1$ is generated as a left $\Ec$-module by $1,t_1$ and $t_2$.
\end{proof}
\begin{remark} Since $\nabla_1\nabla_2=x_1x_2\partial_1\partial_2$,
  $p_e(\nabla_1)=x_1^e\partial_1^e$ and $p_e(\nabla_2)$ \Lem{\ref{lemma:nabla}} belong to
  $ \Dc_1^0\cap (\Dc_1 \cdot \Dc_1^-) $, the kernel of $\Pi$ contains
  $ \nabla_1\nabla_2, p_e(\nabla_1)$ and $p_e(\nabla_1)$, implying that $\Rc_1$ is
  artinian. However, we do not know if $\Ker (\Pi)$ is generated by these three
  elements.
\end{remark}

\subsection{The dihedral group}\label{dihedral}

  Now we will consider the dihedral group $G= \Cb_e \rtimes S_2$ and the invariant map $A_2 \to
  B$. In the decomposition of $B$ as $\Dc_2$-module the following  modules will occur:
  \begin{align*}
M_0 & = A_2,\quad  M_e=\Dc_2(x_1^e-x_2^e)=A_2(x_1^e-x_2^e),\\
M_i^1 &=\Dc_2x_1^i=A_2 x_1^i+ A_2x_2^{e-i},\quad  M_i^2=\Dc_2x_2^i=A_2 x_2^i+ A_2x_1^{e-i}, \quad  1 \leq i< e/2,\\
M_{e_1}^I&=\Dc_2(x_1^{e_1}+x_2^{e_1})=A_2(x_1^{e_1}+x_2^{e_1}), \quad M_{e_1}^{II} = \Dc_2(x_1^{e_1}-x_2^{e_1})=A_2(x_1^{e_1}-x_2^{e_1})  ,
  \end{align*}
  where the latter two modules are defined only when $e= 2 e_1$ is an even number, and we notice
  that $M_e$ is generated by the Jacobian $J=x_1^e-x_2^e $ of the invariant map.

\begin{proposition}
\label{prop:dihedral1}Let $A_2=B^G=\Cb [x_1^e+x_2^e,x_1x_2]$.
\begin{enumerate}
\item $B$ has the basis $1,x_1^i,x_2^i, x_1^e-x_2^e,\ \ 1\leq i\leq e-1,$ over $A_2$.
\item
The $\Dc_2$-module $B$ has the following   decomposition into simple components
\begin{displaymath}
  B=A_2\oplus M_e\oplus   (\bigoplus_{1\leq i< e/2} (M_i^1+M_i^2))\oplus M_{e_1}^I \oplus M_{e_1}^{II} 
\end{displaymath}
where the two terms  that  contain $e_1$ only occur when $e= 2e_1$  is an even number. 
\item We have an isomorphism $M_i^1\cong M_i^2$, $1\leq i<e/2$, where the modules have rank 2  over $A_2$,
  and these are the only isomorphisms between the modules in (2). In the case of odd $e$, the only
  $\Dc_2^0$-submodules of $B$ rank $1$ as $A_2$-modules are the non-isomorphic $M_0$ and $M_e$,
  while if $e= 2e_1$ is an even number, then also $M_{e_1}^I$ and $M_{e_1}^{II}$ have rank 1, and
  are non-isomorphic.  
\item In the decompostion of the $\Dc^0_2$-module  $B^{ann}$ the simple components are
  \begin{displaymath}
M^{ann}=\begin{cases}\Cb  x_1^i, & M=M_i^1,\ 0\leq i< e/2,\\
\Cb x_2^{i}, &  M=M_i^2, \ 1\leq  i<e/2,\\
\Cb (x_1^{e}-x_2^{e}), &  M=M_e, \\
\Cb (x_1^{e_1}+x_2^{e_1}), & M=M_{e_1}^I\text{ ($e= 2 e_1$ is  even) },\\
\Cb (x_1^{e_1}-x_2^{e_1}), &M=M_{e_1}^{II}\text{ ($e= 2 e_1$ is  even) }.
\end{cases}
\end{displaymath}
\end{enumerate}
\end{proposition}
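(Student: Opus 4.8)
The plan is to establish the four parts in order, letting the general correspondence of \Proposition{\ref{thm:montgomery}} and \Corollary{\ref{cor:d0eqconcrete}} carry the structural content (decomposition, simplicity) and reserving the explicit polynomial bookkeeping for matching names to objects. For (1), I would first reduce an arbitrary monomial modulo $A_2$: writing $x_1^ax_2^b=(x_1x_2)^{\min(a,b)}\cdot(\text{a pure power of }x_1\text{ or }x_2)$ and using $x_1x_2\in A_2$ reduces to pure powers. A single power with $1\le j\le e-1$ satisfies $x_1^{e+j}=(x_1^e+x_2^e)x_1^j-(x_1x_2)^jx_2^{e-j}\in A_2x_1^j+A_2x_2^{e-j}$, while $x_1^e\equiv\tfrac12(x_1^e-x_2^e)$ and $x_2^e\equiv-\tfrac12(x_1^e-x_2^e)$ modulo $A_2$; an induction on degree then shows the $2e$ listed elements span $B$ over $A_2$. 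Since $G=G(e,e,2)$ is a complex reflection group, $B$ is free over $A_2$ of rank $|G|=2e$ by Chevalley--Shephard--Todd, so a spanning set of exactly $2e$ elements must be a basis.

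For (2), I would combine the cyclic decomposition $B=\bigoplus_{0\le i\le e-1}N_i$ of \Lemma{\ref{prop:cyclic}} with \Proposition{\ref{thm:montgomery}} applied to $G$. The transposition $\sigma$ is $\Dc_2$-linear, because $\Dc_2=\Dc_B^G$ consists of $G$-invariant operators, and it sends $N_i$ to $N_{e-i}$; using the reduction of (1) one checks $N_i=A_2x_1^i+A_2x_2^{e-i}=M_i^1$ and $N_{e-i}=M_i^2$. I would then enumerate $\hat G$: for $e$ odd the trivial and sign characters together with the two-dimensional representations $V_i$ (with $\rho\mapsto\mathrm{diag}(\epsilon^i,\epsilon^{-i})$), and for $e=2e_1$ two further linear characters with $\rho\mapsto-1$. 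By \Proposition{\ref{thm:montgomery}} each linear-character isotypic component is itself a simple $\Dc_2$-module, and checking transformation laws identifies the trivial one with $M_0=A_2$, the sign one with $M_e=A_2(x_1^e-x_2^e)$, and (for even $e$) the two characters $\rho\mapsto-1$ with $M_{e_1}^I$ and $M_{e_1}^{II}$. For each $V_i$ the component is $V_i\otimes_\Cb N_{V_i}$, i.e.\ two isomorphic simple $\Dc_2$-modules, which are precisely $M_i^1$ and $M_i^2$; since $N_i\cap N_{e-i}=0$ these sums are direct, giving the stated decomposition.

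Part (3) then follows quickly: the isomorphism $M_i^1\cong M_i^2$ is realised by $\sigma$, which maps $\Dc_2x_1^i$ to $\Dc_2x_2^i$, and since distinct irreducibles give non-isomorphic $N_\chi\cong\Hom_G(V_\chi,B)$ by \Proposition{\ref{thm:montgomery}}, no other isomorphisms occur. The $A_2$-rank of each simple component equals $\dim_\Cb V_\chi$ because $G$ is a reflection group (\Remark{\ref{harmonics}}), so the linear characters give rank $1$ and the $V_i$ give rank $2$, which yields the rank statements and the odd/even count of rank-one modules. For (4) I would read off \Corollary{\ref{cor:d0eqconcrete}}: for each simple module of (2), $M^{ann}$ is its lowest-degree homogeneous part, which here is one-dimensional (e.g.\ $\Cb x_1^i$ for $M_i^1$, using $i<e-i$), hence a simple $\Rc_2$-module. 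This is exactly where the even case diverges from the cyclic one: the two-dimensional lowest space $\Cb x_1^{e_1}+\Cb x_2^{e_1}$ of \Proposition{\ref{prop:cyclic2}} splits over $\Dc_2$ into $\Cb(x_1^{e_1}+x_2^{e_1})$ and $\Cb(x_1^{e_1}-x_2^{e_1})$.

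The main obstacle is the bookkeeping in (2): correctly enumerating $\hat G$ with its odd/even dichotomy, matching each irreducible to an explicit polynomial generator via its transformation law under $\rho$ and $\sigma$, and verifying the identification $M_i^1=N_i$ through the monomial reduction of (1). Once simplicity and the multiplicity structure are supplied structurally by \Proposition{\ref{thm:montgomery}}, the remaining parts are a matter of reading off degrees and invoking \Corollary{\ref{cor:d0eqconcrete}} and \Remark{\ref{harmonics}}.
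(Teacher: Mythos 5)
Your proposal is correct, but it runs in the opposite direction from the paper's proof, and the comparison is instructive. The paper proves part (4) \emph{first}: it computes $B^{ann}=\Ann_{\Dc_2^-}(B)$ directly, by embedding it in the space of harmonics $\{p \ \vert \ \partial_1\partial_2 p=(\partial_1^e+\partial_2^e)p=0\}$ and then applying the explicit negative-degree invariant operators $r=x_1^j\partial_2^{e-j}+x_2^j\partial_1^{e-j}\in\Dc_2^-$ to kill the powers $x_1^i,x_2^i$ with $i>e/2$ (the reverse inclusion coming from \Proposition{\ref{prop:cyclic2}}); it then splits $B^{ann}$ into simple $\Dc_2^0$-modules (using the $S_2$-action to split the $e_1$-eigenspace in the even case) and invokes \Corollary{\ref{cor:d0eqconcrete}} to \emph{deduce} the decomposition (2), while (3) is settled inside $B^{ann}$ by $\nabla$-eigenvalues together with the operator $s=(x_1\partial_2)^{e_1}+(x_2\partial_1)^{e_1}$, which distinguishes $\Cb(x_1^{e_1}\pm x_2^{e_1})$ by its eigenvalues $\pm e_1!$. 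You instead prove (2)--(3) by classical means — enumerating $\hat G$ for the dihedral group and applying \Proposition{\ref{thm:montgomery}} and \Remark{\ref{harmonics}} — and then read off (4) as the lowest-degree homogeneous parts, which is legitimate since for a simple $N\subset B$ the lowest-degree part is a nonzero $\Rc_2$-submodule of the simple module $N^{ann}$, hence equals it. Your route is shorter and outsources the content to the character table of $D_{2e}$ plus the reflection-group facts (freeness of rank $\dim_\Cb V_\chi$); the paper's route stays within its own differential-operator machinery, and its explicit computation of $B^{ann}$ and of the operator $s$ is reused later (the proof of \Proposition{\ref{dihedral-R}} cites exactly this eigenvalue computation), which your argument would not supply. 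One step you should make explicit: your claim that the two simple summands of $B_{V_i}\cong V_i\otimes N$ ``are precisely $M_i^1$ and $M_i^2$'' needs the observation that the cyclic isotypic components $N_i$ and $N_{e-i}$ remain simple over $\Dc_2$ — e.g.\ because $B_{V_i}=N_i\oplus N_{e-i}$ exhibits two nonzero proper $\Dc_2$-submodules of a semisimple module of length two, forcing each to be simple and giving $\Dc_2 x_1^i=N_i=A_2x_1^i+A_2x_2^{e-i}$; with that line added, your proof is complete.
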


\begin{proof}
 \Lemma{\ref{power-inv}} gives the equality $B^G=\Cb[x_1^e+x_2^e,x_1x_2]$, which implies (1).

 We first prove
\begin{displaymath}\tag{*}
  B^{ann} = \Ann_{\Dc_2^-} (B)=  \Cb \cdot 1  +   \sum_{i=1}^{e/2}(\Cb x_1^i+ \Cb x_2^i )  + \Cb( x_1^e-x_2^e).
\end{displaymath}
We have $B^{ann} \subset \Ann_{S(V^*)^G_+}(B)$, where the space of harmonic polynomials (see
\Remark{\ref{harmonics}})
\begin{align*}
  \Ann_{S(V^*)^G_+}(B) &= \{ p \in B \ \vert
  \ \partial_1\partial_2p= (\partial_1^e+ \partial_2^e)p =0 \}\\ &= \Cb \cdot 1 + \sum_{i=1}^{e-1}( \Cb
  x_1^i + \Cb x_2^i) + \Cb( x_1^e-x_2^e).
\end{align*}
To see this, note that the first condition $\partial_1\partial_2p =0$ implies that a harmonic
polynomial $p$ contains no mixed terms, and that $c_1x_1^e + c_2 x_2^e$ is killed by
$ (\partial_1^e+ \partial_2^e)$ only if $c_2= -c_1$.  Let
$p= a_0 + \sum_{i=1}^{e-1} (a_ix_1^i + b_i x_2^i) + c (x_1^e-x_2^e) \in \Ann_{S(V^*)^G_+}(B) $.
Assume that $j < e/2$, so $e-j > j$. Then
$r=x_1^j\partial_2^{e-j}+x_2^j\partial_1^{e-j}\in \Dc_2^-$, so that if $p\in B^{ann}$, then
$r(p)=0$ and therefore
\begin{align*}
  r(p) &= \sum _{i= e-j}^{e-1} a_ir(x_1^i) + b_i r(x_2^i) + cr  (x_1^e-x_2^e)=\sum _{i=
         e-j}^{e-1} a_ir(x_1^i) + b_i r(x_2^i) \\
&= \sum _{i=
         e-j}^{e-1} \frac {i!}{(e-j)!}  (a_ix_2^j x_1^{i-(e-j)}+ b_i x_1^j x_2^{i-(e-j)})=0.
\end{align*}
Since all monomials in the last sum are distinct ($j \neq i-(e-j)$) this implies that $a_i= b_i=0$
when $i > e/2$. Therefore the left side of \thetag{$*$} is a subset of the right side. Conversely,
\Proposition{\ref{prop:cyclic2}} implies that the right side of \thetag{$*$} is contained in
$\Ann_{\Dc_1^-}(B) \subset \Ann_{\Dc_2^-}(B) $, since $\Dc_2^- \subset \Dc_1^-$.

Since  $\Dc_2^0 \subset \Dc_1^0$ and by \Proposition{\ref{prop:cyclic2}} $\Dc_1^0$ preserves the
1-dimensional spaces $\Cb x_1^i$, $\Cb x_2^i$ when $0 \leq i < e/2$, and also the space $\Cb
(x_1^e-x_2^e)$, they define simple $\Dc_2^0$-modules.  When $e= 2 e_1$ is an even number, then
$\Dc_1^0$ and $S_2$ acts on the space  $\Cb x_1^{e_1} + \Cb x_2^{e_1}$, which can be split according
to the $S_2$-action into two   simple $\Dc_2^0$-modules $\Cb (x_1^{e_1}\pm x_2^{e_1}) $. This proves
the decomposition of $B^{ann}$ in (4). 

By \Corollary(\ref{cor:d0eqconcrete}) each of the above simple $\Dc_2^0$-modules generates a simple
$\Dc_2$-submodule of $B$, which implies (2).

It remains to see (3), and for this it suffices, by \Corollary(\ref{cor:d0eqconcrete}), to see that
the assertions hold for the terms in (3).  The element $\sigma \in G$ induces an isomorphism
$\Dc^0_2 x_1^i \cong \Dc_2^0x_2^i$.  Assume first that $e$ is an odd number. Then the eigenspace
decomposition of $B^{ann}$ with respect to the element $\nabla = \nabla_1+ \nabla_2 \in \Dc_2^0 $,
all have multplicity $1$, hence they are non-isomorphic as $\Dc_2^0$-modules as well.  When
$e= 2 e_1$ is an even number, then the $\nabla$-eigenspace
$\Cb (x_1^{e_1}+ x_2^{e_1})\oplus \Cb (x_1^{e_1}- x_2^{e_2}) $ has multiplicity 2, with different
eigenvalue $e_1$ from the other ones.  Now since the element
$ s= (x_1\partial_2)^{e_1} + (x_2\partial_1)^{e_1}$ belongs to $ \Dc_2^0$ and
\begin{displaymath}
  s (x_1^{e_1}+ x_2^{e_1}) = e_1! (x_1^{e_1}+ x_2^{e_1}), \quad   s (x_1^{e_1}- x_2^{e_1}) =   -e_1!
  (x_1^{e_1}- x_2^{e_1}), 
\end{displaymath}
it follows that the two $\Dc_2^0$-modules $\Cb (x_1^{e_1}\pm x_2^{e_1})$ are non-isomorphic.
\end{proof}
\begin{proposition}\label{dihedral-R} If $e$ is an  odd integer,  then the canonical composed homomorphism
$$
\Pi:\Cb[\nabla] \to \Dc_2^0\to  \Rc_2= \frac {\Dc_2^0}{\Dc_2^0 \cap (\Dc_2 \cdot \Dc_2^-)}
$$
is surjective. If $e= 2 e_1$ is an even integer, then $\Rc_2$ is generated as a module over
$\Ec=\Pi(\Cb [\nabla])$ by the elements $1$ and
$(x_1\partial_2)^{e_1}+(x_2\partial_1)^{e_2}\notin \Ec$. In either  case  $\Rc_2$ is commutative.
\end{proposition}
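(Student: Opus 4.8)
The plan is to produce explicit algebra generators of $\Dc_2^0$ by the associated-graded method of \Lemma{\ref{lemma:gr}} and then reduce each of them modulo $I:=\Dc_2^0\cap(\Dc_2\Dc_2^-)$, so that $\Rc_2=\Dc_2^0/I$ and $\Ec:=\Pi(\Cb[\nabla])$. Since $G=C_e\rtimes S_2$ we have $\So(V\otimes_\Cb V^*)^G=(\So(V\otimes_\Cb V^*)^{C_e})^{S_2}$, so I start from the $C_e$-invariant ring already computed in the proof of \Proposition{\ref{prop:cyclic3}} and take $S_2$-invariants. Writing $b_1=x_1\partial_2$, $b_2=x_2\partial_1$, $u=b_1b_2$ and $p=e$ (resp.\ $p=e_1$) when $e$ is odd (resp.\ $e=2e_1$), the cyclic invariants are $\Cb[\nabla_1,\nabla_2]\otimes\Cb[u,b_1^{p},b_2^{p}]$, on which $\sigma$ acts by $\nabla_1\leftrightarrow\nabla_2$, $b_1\leftrightarrow b_2$. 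Splitting into $S_2$-symmetric and antisymmetric parts shows $\So(V\otimes V^*)^G$ is generated by $\nabla_1+\nabla_2$, $\nabla_1\nabla_2$, $u$, $b_1^{p}+b_2^{p}$ and the single \emph{mixed} generator $(\nabla_1-\nabla_2)(b_1^{p}-b_2^{p})$. Taking $\sigma$-symmetrised (hence $G$-invariant) lifts through $l$, \Lemma{\ref{lemma:gr}} gives generators of $\Dc_2^0$; in particular $b_1^{p}+b_2^{p}$ lifts to $t_1+t_2$ with $t_1=(x_1\partial_2)^{p}$, $t_2=(x_2\partial_1)^{p}$.

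Two reductions are routine. First, $\nabla_1\nabla_2=(x_1x_2)(\partial_1\partial_2)\in\Dc_2\Dc_2^-$, so $\nabla_1\nabla_2\equiv0$ in $\Rc_2$; hence any symmetric polynomial in $\nabla_1,\nabla_2$ is congruent modulo $I$ to a polynomial in $\nabla=\nabla_1+\nabla_2$ alone (split off the part divisible by $\nabla_1\nabla_2$ and absorb it into $\Dc_2\Dc_2^-$). This disposes of the symmetrised lift of $u$, which by \Lemma{\ref{lemma:nabla}} is $\nabla_1\nabla_2+\tfrac12\nabla$. Second, with $S=p_e(\nabla_1)+p_e(\nabla_2)$ and $T=x_1^e\partial_2^e+x_2^e\partial_1^e$, the identity $S+T=(x_1^e+x_2^e)(\partial_1^e+\partial_2^e)\in\Dc_2\Dc_2^-$ together with the previous point gives $T\equiv-S\equiv$ a polynomial in $\nabla$. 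When $e$ is odd, $p=e$ makes $t_1+t_2=T$, so this generator already lands in $\Ec$; when $e$ is even it gives $t_1^2+t_2^2=T\equiv$ a $\nabla$-polynomial, reused below.

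The delicate step is the mixed generator $g=(\nabla_1-\nabla_2)(b_1^{p}-b_2^{p})=D(t_1-t_2)$, $D:=\nabla_1-\nabla_2$. From $[D,t_1]=2p\,t_1$, $[D,t_2]=-2p\,t_2$ one gets $g=(t_1-t_2)D+2p(t_1+t_2)$, so everything hinges on the degree-$0$ operator $(t_1-t_2)D$. Normal-ordering it (using $\partial^k x=x\partial^k+k\partial^{k-1}$) yields
\[
(t_1-t_2)D=\big(x_1^{p+1}\partial_2^{p-1}+x_2^{p+1}\partial_1^{p-1}\big)(\partial_1\partial_2)-(x_1x_2)\big(x_1^{p-1}\partial_2^{p+1}+x_2^{p-1}\partial_1^{p+1}\big)-p(t_1+t_2),
\]
and the key observation is that $\epsilon^{2p}=1$ makes the two bracketed factors $G$-invariant, of degrees $2$ and $-2$ respectively, so the first two summands lie in $\Dc_2\Dc_2^-$. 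This is the one genuine computation; everything else is bookkeeping. Thus $(t_1-t_2)D\equiv-p(t_1+t_2)$ and $g\equiv p(t_1+t_2)\pmod I$. In the odd case $g\equiv e\,T\in\Ec$, so every generator of $\Dc_2^0$ reduces into $\Ec$, whence $\Pi$ is surjective and $\Rc_2=\Ec$ is commutative as a quotient of $\Cb[\nabla]$.

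In the even case the generators reduce into $\Ec\cdot1+\Ec\cdot(t_1+t_2)$ (here $g\equiv e_1(t_1+t_2)$), which is therefore all of $\Rc_2$. It is a commutative subalgebra because $[\nabla,t_1+t_2]=0$ and $(t_1+t_2)^2=T+(t_1t_2+t_2t_1)$ is congruent to a polynomial in $\nabla$ (both $T$ and the symmetric element $t_1t_2+t_2t_1\in\Cb[\nabla_1,\nabla_2]$ of \Proposition{\ref{prop:cyclic3}} reduce to $\Ec$); hence $\Rc_2$ is commutative. Finally $t_1+t_2\notin\Ec$: by \Proposition{\ref{prop:dihedral1}} the operator $(x_1\partial_2)^{e_1}+(x_2\partial_1)^{e_1}$ acts by the opposite scalars $\pm e_1!$ on $\Cb(x_1^{e_1}+x_2^{e_1})$ and $\Cb(x_1^{e_1}-x_2^{e_1})$, whereas both lines carry the same $\nabla$-eigenvalue $e_1$ and so cannot be separated by any polynomial in $\nabla$. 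This completes the even case, and with it the proposition.
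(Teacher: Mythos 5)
Your proof is correct and takes essentially the same route as the paper's: the same \Lemma{\ref{lemma:gr}} computation of $\So(V\otimes_{\Cb} V^*)^G$ via the $S_2$-split of the $C_e$-invariants, the same five generators, and the same reductions modulo $\Dc_2^0\cap(\Dc_2\Dc_2^-)$, with your commutator rearrangement $g=(t_1-t_2)D+2p(t_1+t_2)$ being only a reshuffling of the paper's direct normal-ordering of $D(t_1-t_2)$, yielding the same congruence $g\equiv e'(t_1+t_2)$. The even-case arguments ($\bar s^2\in\Ec$ via $t_1t_2+t_2t_1\in\Cb[\nabla_1,\nabla_2]^{S_2}$, and $\bar s\notin\Ec$ via the opposite eigenvalues $\pm e_1!$ on the two lines in $B^{ann}$ carrying the same $\nabla$-eigenvalue) also coincide with the paper's.
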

\begin{proof}
  Similarly to the proof of \Proposition{\ref{prop:cyclic3}} we will use the method in
  \Lemma{\ref{lemma:gr}} to compute $\Dc_2^0$. Here the $G$-representation
  $V\otimes_{\Cb} V^*=V_0 \oplus V_1$, where $V_0 = \Cb \nabla_1 + \Cb \nabla_2$ and
  $V_1 = \Cb b_1 + \Cb b_2$, $ b_1=x_1\partial_2$, and $b_2=x_2\partial_1$.  The action of $G$ on
  $V_0$ is through the map $G\to S_2$, and and $\So(V_0)=\So(V_0)^{S_2}\oplus \So(V_0)_\chi$ where
  the character $\chi:G\to\Cb^*$ is given by $\chi(\rho)=1$ and $\chi(\sigma)=-1$.  The action of
  $G=C_e\rtimes S_2$ on $V_1 $ is similar to that on $V$, namely $\rho(b_1)=\epsilon^2$ and
  $\rho(b_2)=\epsilon^{-2}$, $\sigma (b_1)= b_2$, $\sigma (b_2)= b_1$. The invariants are therefore
\begin{align*}\tag{*}
  \So(V\otimes_{\Cb} V^*)^G&= (\So(V_0)\otimes_{\Cb}\So(V_1))^G=  ((\So(V_0)^{G}\otimes_{\Cb}
  S(V_1)^{G})) \oplus (\So(V_0)_\chi\otimes_{\Cb} \So(V_1)_\chi\\
&  = \So(V_0)^G\otimes_\Cb
  \So(V_1)^G  \oplus (\So(V_0)^G\otimes_\Cb
  \So(V_1)^G) \cdot ((\nabla_1-\nabla_2)\otimes (b_1^{e'}-b_2^{e'} )),
\end{align*}
where the last step  follows since the semi-invariants  are given by
$$\So(V_0)_\chi=\So(V_0)^{G}( \nabla_1-\nabla_2), \quad \text{ and }\quad 
\So(V_1)_\chi=\So(V_1)^G (b_1^e-b_2^e).$$
From this we can get  generators of $ \So(V\otimes_{\Cb} V^*)^G$, which  give rise to the following generators of $\Dc_2^0$ (as
described in \Proposition{\ref{prop:cyclic3}}) 
$$1, \nabla , \nabla_1\nabla_2, b_1^{e'}+b_2^{e'}, b_1b_2+b_2b_1 \text{ and } (\nabla_1-\nabla_2) (b_1^{e'}-b_2^{e'} ),$$
where $e'= e_1= e/2$ if $e$ is an even number, and otherwise $e'=e$.  Not all these generators are
required to generate $\Rc_2$.  First we have $\nabla_1 \nabla_2 \in \Dc_2^0 \cap (\Dc_2\Dc_2^-) $,
secondly $b_1b_2+b_2b_1=2\nabla_1\nabla_2+\nabla \equiv \nabla $, where the congruence is modulo
$\Dc_2^0 \cap (\Dc_2\Dc_2^-)$, and third,
\begin{align*}
&  (\nabla_1-\nabla_2) (b_1^{e'}-b_2^{e'} )) =  e'(b_1^{e'}+ b_2^{e'}) + (x_1^{e'+1}\partial_{2}^{e'-1}
                                              + x_2^{e'+1}\partial_1^{e'-1})\partial_1\partial_2 \\ & -
                                              x_1x_2(x_2^{e'-1}\partial_1^{e'+1}
                                              +x_1^{e'-1}\partial_2^{e'+1})
                                             \equiv  e' (b_1^{e'}+ b_2^{e'}).
\end{align*}
Finally, $ b_1^e + b_2^e$ maps to $\Ec$, since
$$
2(b_1^e + b_2^e)=(x_1^e+x_2^e)(\partial_1^e+\partial_2^e)- (x_1^e\partial_1^e+x_2^e\partial_2^e)
\equiv p(\nabla_1, \nabla_2 )
$$
where $p(t_1, t_2)$ is a symmetric polynomial so that
$p(\nabla_1, \nabla_2) \equiv q(\nabla)$ for some polynomial $q(t)$.
It follows that $\Ec = \Rc_2$ when $e'=e$ is odd. If $e'= e_1= e/2 $ and $e$ is even, the residue
class $\bar s$ of $s = b_1^{e_1} + b_2^{e_1} \in \Dc^0_2$ does not belong to $\Ec$.  This follows
since $\nabla$ acts as a constant on the 2-dimensional space
$\Cb (x_1^{e_1}+ x_2^{e_1}) + \Cb (x_1^{e_1} - x_2^{e_1})\subset B^{ann}$, while $s$ splits this
space into two eigenspaces with distinct eigenvalues; see the end of the proof of
\Proposition{\ref{prop:dihedral1}}. Since $[\nabla, \Dc_2^0]=0 $ it follows that
$\bar s \Ec \subset \Ec \bar s$, so it remains to prove that $\bar s^2\in \Ec$. This follows since
\begin{displaymath}
  s^2 =  b_1^e + b_2^e
  +x_1^{e'}\partial_2^{e'}x_2^{e'}\partial_1^{e'}+x_2^{e'}\partial_1^{e'}x_1^{e'}\partial_2^{e'}
\end{displaymath}
where we already have seen that the projection of $ b_1^e + b_2^e$ in $\Rc_2$ belongs to $\Ec$ and
the projection of the remaining part of the right side also belongs to $\Ec $ since
$x_1^{e'}\partial_2^{e'}x_2^{e'}\partial_1^{e'}+x_2^{e'}\partial_1^{e'}x_1^{e'}\partial_2^{e'} \in
\Cb[\nabla_1, \nabla_2]^{S_2}$,
and also recalling that $\nabla_1\nabla_2 \in \Dc_2^0\cap (\Dc_2\cap \Dc_2^-) $.
\end{proof}

\begin{remark} For $e=1$ the dihedral group is just $S_2$, so in that
  case $\Rc$ is actually generated by just $\nabla$ (compare
  \Proposition{\ref{gen-sym-R}}). For $e=3$, the dihedral group is again
  a symmetric group $S_3$, but now acting on $\Cb^2$, as compared to
  the $3$-dimensional permutation representation studied in
  \Proposition{\ref{gen-sym-R}}. 
\end{remark}

\subsection{MLS-restriction}
We can now see precisely how the MLS-restriction behaves between the cyclic group $C_e$ and the
dihedral group $D_{2e}= A(e,e,2)\rtimes S_2$, by considering how the simple modules over $\Dc_1^0$ in
\Proposition{\ref{prop:cyclic2}}  behave on restriction to the subring
$\Dc_2^0$:

\begin{corollary} Put  $J=\Jo_{C_e}^{D_{2e}}$. 
\label{lemma:mlsgeneralized}
\begin{enumerate}
\item   $\Jo(N_i)=M_i^1$, for $0\leq i<e/2$ and $\Jo(N_i)=M_{e-i}^2$, for $e/2<i<e$,
\item $\Jo (N_{e_1})=M_{e_1}^I\oplus M_{e_1}^{II}$ ($e= 2 e_1$ is even).
\item The simple $\Dc_2$-submodules of $B$ are either isomorphic to the image under MLS-restriction
  of a simple   $\Dc_1$-module, or are modules of semi-invariants belonging to a linear
  character of $D_{2e}$. The latter case occurs for $M_{e_1}^I$, $M_{e_1}^{II}$ ($e= 2 e_1$ is  even) and
  $M_e$ (all $e)$.
\end{enumerate}
\end{corollary}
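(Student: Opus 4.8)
The plan is to reduce everything to the explicit formula $\Jo(N)=\Dc_2\cdot\Ann_{\Dc_1^-}(N)$ of \Definition{\ref{mls}}, combined with the computation of the lowest weight spaces $N_i^{ann}=\Ann_{\Dc_1^-}(N_i)$ carried out in \Proposition{\ref{prop:cyclic2}}. Assertion (1) is then immediate: substituting $N_i^{ann}=\Cb x_1^i$ for $0\leq i<e/2$ gives $\Jo(N_i)=\Dc_2 x_1^i=M_i^1$ directly from the definition of $M_i^1$, and substituting $N_i^{ann}=\Cb x_2^{e-i}$ for $e/2<i<e$ gives $\Jo(N_i)=\Dc_2 x_2^{e-i}=M_{e-i}^2$.

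The substantive case is (2). Here $N_{e_1}^{ann}=\Cb x_1^{e_1}+\Cb x_2^{e_1}$ is simple over $\Rc_1$ but, as was established in the course of proving \Proposition{\ref{prop:dihedral1}}, it decomposes over the smaller ring $\Dc_2^0$ into the two eigenlines of the operator $s=(x_1\partial_2)^{e_1}+(x_2\partial_1)^{e_1}\in\Dc_2^0$, namely into $\Cb(x_1^{e_1}+x_2^{e_1})\oplus\Cb(x_1^{e_1}-x_2^{e_1})$. I would then use that $\Jo$ is, by its construction, restriction of scalars from $\Dc_1^0$ to $\Dc_2^0$ on submodules of $B^{ann}$ followed by the functor $\Dc_2\cdot(-)$, which by \Corollary{\ref{cor:d0eqconcrete}} is an isomorphism of categories and in particular additive on direct sums of $\Rc_2$-submodules; applying it to the splitting above yields $\Jo(N_{e_1})=\Dc_2(x_1^{e_1}+x_2^{e_1})\oplus\Dc_2(x_1^{e_1}-x_2^{e_1})=M_{e_1}^I\oplus M_{e_1}^{II}$. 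I expect this to be the only genuine obstacle: one must check that the failure of $\Jo$ to preserve simplicity is captured \emph{entirely} by the $\Rc_1$-to-$\Rc_2$ splitting of $N_{e_1}^{ann}$, and that no cross terms appear upon passing back through $\Dc_2\cdot(-)$. Both follow from the equivalence, but deserve to be spelled out.

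For (3) I would first list all simple $\Dc_2$-submodules of $B$ from \Proposition{\ref{prop:dihedral1}}(2): $A_2$, $M_e$, the modules $M_i^1,M_i^2$ with $1\leq i<e/2$, and, when $e=2e_1$ is even, $M_{e_1}^I,M_{e_1}^{II}$. Parts (1)--(2) already exhibit $A_2=\Jo(N_0)$ (the case $i=0$), $M_i^1=\Jo(N_i)$ and $M_i^2=\Jo(N_{e-i})$, so all of these lie in the image of MLS-restriction. It then remains only to recognise $M_e$, $M_{e_1}^I$ and $M_{e_1}^{II}$ as modules of semi-invariants, which a short character check settles: the generator $x_1^e-x_2^e$ of $M_e$ is fixed by $\rho$ (as $\epsilon^e=1$) and negated by $\sigma$, hence is a semi-invariant for the sign character $\rho\mapsto 1,\ \sigma\mapsto -1$; and for $e=2e_1$, using $\epsilon^{e_1}=-1$, the relations $\rho(x_1^{e_1}\pm x_2^{e_1})=-(x_1^{e_1}\pm x_2^{e_1})$ and $\sigma(x_1^{e_1}\pm x_2^{e_1})=\pm(x_1^{e_1}\pm x_2^{e_1})$ exhibit $x_1^{e_1}\pm x_2^{e_1}$ as semi-invariants for the two linear characters sending $\rho\mapsto -1$. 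By the remark following \Proposition{\ref{thm:montgomery}}, the isotypic component of a linear character is already a simple $\Dc_2$-module of semi-invariants, so since each of these generators generates the corresponding module, $M_e$, $M_{e_1}^I$ and $M_{e_1}^{II}$ are precisely modules of semi-invariants. This establishes the dichotomy and identifies the exceptional case exactly as claimed.
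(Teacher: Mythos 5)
Your proposal is correct and takes essentially the same route as the paper, whose entire proof of this corollary is the citation ``This follows from Propositions \ref{prop:cyclic2} and \ref{prop:dihedral1}'': you simply make explicit what that citation compresses, namely applying \Definition{\ref{mls}} to the lowest weight spaces computed in \Proposition{\ref{prop:cyclic2}}, invoking the $\Dc_2^0$-splitting of $\Cb x_1^{e_1}+\Cb x_2^{e_1}$ by the operator $s$ from the proof of \Proposition{\ref{prop:dihedral1}} together with the additivity of the equivalence in \Corollary{\ref{cor:d0eqconcrete}}, and verifying the semi-invariance of $x_1^e-x_2^e$ and $x_1^{e_1}\pm x_2^{e_1}$ by direct character computation. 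No repairs are needed.
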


\begin{proof} This follows from \Propositions{\ref{prop:cyclic2} }{\ref{prop:dihedral1}}.

\end{proof}
                \begin{bibsection}
               \begin{biblist} 
\bib{aguado-araujo:symmetric}{article}{
  author={Aguado, J. L.},
  author={Araujo, J. O.},
  title={A Gelfand model for the symmetric group},
  journal={Comm. Algebra},
  volume={29},
  date={2001},
  number={4},
  pages={1841--1851},
  issn={0092-7872},
  review={\MR {1853129 (2003d:20019)}},
  doi={10.1081/AGB-100002136},
}

\bib{generalized}{article}{
  author={Araujo, J. O.},
  author={Bige{\'o}n, J. J.},
  title={A Gelfand model for the symmetric generalized group},
  journal={Comm. Algebra},
  volume={37},
  date={2009},
  number={5},
  pages={1808--1830},
  issn={0092-7872},
  review={\MR {2526341 (2010g:20020)}},
  doi={10.1080/00927870802466918},
}
\bib{polynomial}{article}{
  author={Araujo, Jos{\'e} O.},
  author={Bratten, Tim},
  title={Gelfand models for classical Weyl groups},
  journal={J. Algebra},
  volume={403},
  date={2014},
  pages={154--178},
  issn={0021-8693},
  review={\MR {3166069}},
  doi={10.1016/j.jalgebra.2014.01.009},
}

\bib{Ariki}{article}{
  author={Ariki, Susumu},
  author={Terasoma, Tomohide},
  author={Yamada, Hiro-Fumi},
  title={Higher Specht polynomials},
  journal={Hiroshima Math. J.},
  volume={27},
  date={1997},
  number={1},
  pages={177--188},
  issn={0018-2079},
  review={\MR {1437932 (98c:05163)}},
}
\bib{broue}{book}{
  author={Brou{\'e}, Michel},
  title={Introduction to complex reflection groups and their braid groups},
  series={Lecture Notes in Mathematics},
  volume={1988},
  publisher={Springer-Verlag, Berlin},
  date={2010},
  pages={xii+138},
  isbn={978-3-642-11174-7},
  review={\MR {2590895 (2011d:20078)}},
  doi={10.1007/978-3-642-11175-4},
}
\bib{diaconis-greene}{book}{
title={Applications of Murphy's elements},
author={Diaconis, P.},
author={Greene, C.},
series={Stanford University technical report},
volume={335},
date={1989},
}
\bib{dolgachev-mackay}{unpublished}{
  author={Dolgachev, Igor},
  title={McKay correspondence},
  note={Available att http://www.math.lsa.umich.edu/ \~\ idolga/McKaybook.pdf},
}
\bib{geck}{book}{
  author={Geck, Meinolf},
  author={Jacon, Nicolas},
  title={Representations of Hecke algebras at roots of unity},
  series={Algebra and Applications},
  volume={15},
  publisher={Springer-Verlag London, Ltd., London},
  date={2011},
  pages={xii+401},
  isbn={978-0-85729-715-0},
  review={\MR {2799052 (2012d:20010)}},
  doi={10.1007/978-0-85729-716-7},
}
\bib{gelfand.garge}{article}{
  author={Garge, Shripad M.},
  author={Oesterl{\'e}, Joseph},
  title={On Gelfand models for finite Coxeter groups},
  journal={J. Group Theory},
  volume={13},
  date={2010},
  number={3},
  pages={429--439},
  issn={1433-5883},
  review={\MR {2653530 (2011e:20008)}},
  doi={10.1515/JGT.2009.060},
}
\bib{gelfand-zetlin:orto}{article}{
  author={Gelfand, I. M.},
  author={Cetlin, M. L.},
  title={Finite-dimensional representations of groups of orthogonal matrices},
  language={Russian},
  journal={Doklady Akad. Nauk SSSR (N.S.)},
  volume={71},
  date={1950},
  pages={1017--1020},
  review={\MR {0034763 (11,639e)}},
}
\bib{gelfand-zetlin:unimod}{article}{
  author={Gelfand, I. M.},
  author={Cetlin, M. L.},
  title={Finite-dimensional representations of the group of unimodular matrices},
  language={Russian},
  journal={Doklady Akad. Nauk SSSR (N.S.)},
  volume={71},
  date={1950},
  pages={825--828},
  review={\MR {0035774 (12,9j)}},
}

\bib{james-kerber}{book}{
  author={James, Gordon},
  author={Kerber, Adalbert},
  title={The representation theory of the symmetric group},
  series={Encyclopedia of Mathematics and its Applications},
  volume={16},
  note={With a foreword by P. M. Cohn; With an introduction by Gilbert de B. Robinson},
  publisher={Addison-Wesley Publishing Co., Reading, Mass.},
  date={1981},
  pages={xxviii+510},
  isbn={0-201-13515-9},
  review={\MR {644144 (83k:20003)}},
}
\bib{jucys}{article}{
  author={Jucys, A.-A. A.},
  title={Symmetric polynomials and the center of the symmetric group ring},
  journal={Rep. Mathematical Phys.},
  volume={5},
  date={1974},
  number={1},
  pages={107--112},
  issn={0034-4877},
  review={\MR {0419576 (54 \#7597)}},
}
\bib{kallstrom-directimage}{unpublished}{
  author={K{\"a}llstr{\"o}m, Rolf},
  title={Direct images of $D$-modules for finite maps},
  note={Under preparation},
}

\bib{kleshchev-linearrep}{book}{
  author={Kleshchev, Alexander},
  title={Linear and projective representations of symmetric groups},
  series={Cambridge Tracts in Mathematics},
  volume={163},
  publisher={Cambridge University Press, Cambridge},
  date={2005},
  pages={xiv+277},
  isbn={0-521-83703-0},
  review={\MR {2165457 (2007b:20022)}},
  doi={10.1017/CBO9780511542800},
}

\bib{knop:gradcofinite}{article}{
  author={Knop, Friedrich},
  title={Graded cofinite rings of differential operators},
  journal={Michigan Math. J.},
  volume={54},
  date={2006},
  number={1},
  pages={3--23},
  issn={0026-2285},
  review={\MR {2214785 (2007a:16035)}},
  doi={10.1307/mmj/1144437435},
}

\bib{levasseur-stafford:invariantdiff}{article}{
  author={Levasseur, T.},
  author={Stafford, J. T.},
  title={Invariant differential operators and an homomorphism of Harish-Chandra},
  journal={J. Amer. Math. Soc.},
  volume={8},
  date={1995},
  number={2},
  pages={365--372},
  issn={0894-0347},
  review={\MR {1284849 (95g:22029)}},
  doi={10.2307/2152821},
}
\bib{Lusztig-Spaltenstein}{article}{
  author={Lusztig, G.},
  author={Spaltenstein, N.},
  title={Induced unipotent classes},
  journal={J. London Math. Soc. (2)},
  volume={19},
  date={1979},
  number={1},
  pages={41--52},
  issn={0024-6107},
  review={\MR {527733 (82g:20070)}},
  doi={10.1112/jlms/s2-19.1.41},
}
\bib{macdonald-weyl}{article}{
  author={Macdonald, I. G.},
  title={Some irreducible representations of Weyl groups},
  journal={Bull. London Math. Soc.},
  volume={4},
  date={1972},
  pages={148--150},
  issn={0024-6093},
  review={\MR {0320171 (47 \#8710)}},
}
\bib{montgomery:fixed}{book}{
  author={Montgomery, Susan},
  title={Fixed rings of finite automorphism groups of associative rings},
  series={Lecture Notes in Mathematics},
  volume={818},
  publisher={Springer, Berlin},
  date={1980},
  pages={vii+126},
  isbn={3-540-10232-9},
  review={\MR {590245 (81j:16041)}},
}
\bib{murphy:newconstr}{article}{
  author={Murphy, G. E.},
  title={A new construction of Young's seminormal representation of the symmetric groups},
  journal={J. Algebra},
  volume={69},
  date={1981},
  number={2},
  pages={287--297},
  issn={0021-8693},
  review={\MR {617079 (82h:20014)}},
  doi={10.1016/0021-8693(81)90205-2},
}
\bib{okounkov-vershik}{article}{
  author={Okounkov, Andrei},
  author={Vershik, Anatoly},
  title={A new approach to representation theory of symmetric groups},
  journal={Selecta Math. (N.S.)},
  volume={2},
  date={1996},
  number={4},
  pages={581--605},
  issn={1022-1824},
  review={\MR {1443185 (99g:20024)}},
  doi={10.1007/PL00001384},
}
\bib{osima-gen-symm}{article}{
  author={Osima, Masaru},
  title={On the representations of the generalized symmetric group},
  journal={Math. J. Okayama Univ.},
  volume={4},
  date={1954},
  pages={39--56},
  issn={0030-1566},
  review={\MR {0067897 (16,794d)}},
}
\bib{peel}{article}{
  author={Peel, M. H.},
  title={Specht modules and symmetric groups},
  journal={J. Algebra},
  volume={36},
  date={1975},
  number={1},
  pages={88--97},
  issn={0021-8693},
  review={\MR {0374253 (51 \#10453)}},
}
\bib{sagan}{book}{
  author={Sagan, Bruce E.},
  title={The symmetric group},
  series={Graduate Texts in Mathematics},
  volume={203},
  edition={2},
  note={Representations, combinatorial algorithms, and symmetric functions},
  publisher={Springer-Verlag},
  place={New York},
  date={2001},
  pages={xvi+238},
  isbn={0-387-95067-2},
  review={\MR {1824028 (2001m:05261)}},
}

\bib{specht:dieirrede}{article}{
  author={Specht, Wilhelm},
  title={Die irreduziblen Darstellungen der symmetrischen Gruppe},
  language={German},
  journal={Math. Z.},
  volume={39},
  date={1935},
  number={1},
  pages={696--711},
  issn={0025-5874},
  review={\MR {1545531}},
  doi={10.1007/BF01201387},
}
\bib{steinberg:differential}{article}{
  author={Steinberg, Robert},
  title={Differential equations invariant under finite reflection groups},
  journal={Trans. Amer. Math. Soc.},
  volume={112},
  date={1964},
  pages={392--400},
  issn={0002-9947},
  review={\MR {0167535 (29 \#4807)}},
}
\bib{wallach:invariantdiff}{article}{
  author={Wallach, Nolan R.},
  title={Invariant differential operators on a reductive Lie algebra and Weyl group representations},
  journal={J. Amer. Math. Soc.},
  volume={6},
  date={1993},
  number={4},
  pages={779--816},
  issn={0894-0347},
  review={\MR {1212243 (94a:17014)}},
  doi={10.2307/2152740},
}
\bib{young-collected}{book}{
  author={Young, Alfred},
  title={The collected papers of Alfred Young (1873--1940)},
  note={With a foreword by G. de B. Robinson and a biography by H. W. Turnbull; Mathematical Expositions, No. 21},
  publisher={University of Toronto Press, Toronto, Ont., Buffalo, N. Y.},
  date={1977},
  pages={xxvii+684},
  review={\MR {0439548 (55 \#12438)}},
}
               \end{biblist}
                \end{bibsection}
\end{document}